\newtheorem{theorem}{Theorem}[section]
\newtheorem{lemma}[theorem]{Lemma}
\newtheorem{prop}[theorem]{Proposition}
\newtheorem{corollary}[theorem]{Corollary}
\newtheorem{definition}[theorem]{Definition}
\theoremstyle{remark}
\newtheorem{remark}[theorem]{Remark}
\newcommand{\p}{\partial}
\newcommand{\R}{\mathbb{R}}
\newcommand{\s}{\mathsf{s}}
\newcommand{\ds}{\dot{\mathsf{s}}}
\newcommand{\ts}{\mathrm{T}_*}
\newcommand{\ti}{\mathrm{T_{in}}}
\newcommand{\tf}{\mathrm{T_{fin}}}
\newcommand{\rW}{\mathring{W}}
\newcommand{\rZ}{\mathring{Z}}
\newcommand{\rK}{\mathring{S}}
\newcommand{\rS}{\mathring{S}}
\newcommand{\rw}{\mathring{w}}
\newcommand{\rz}{\mathring{z}}
\newcommand{\rb}{\mathring{b}}
\renewcommand{\sp}[1]{#1|_\s^+}
\newcommand{\sm}[1]{#1|_\s^-}
\newcommand{\vsp}{v{\scriptstyle |_{\s}^+}}
\newcommand{\usp}{u{\scriptstyle|_{\s}^+}}
\newcommand{\csp}{c{\scriptstyle|_{\s}^+}}
\newcommand{\psp}{p{\scriptstyle|_{\s}^+}}
\newcommand{\rsp}{\rho{\scriptstyle|_{\s}^+}}
\newcommand{\vsm}{v{\scriptstyle|_{\s}^-}}
\newcommand{\psm}{p{\scriptstyle|_{\s}^-}}
\newcommand{\rsm}{\rho{\scriptstyle|_{\s}^-}}
\newcommand{\wsp}{w{\scriptstyle|_{\s}^+}}
\newcommand{\zsp}{z{\scriptstyle|_{\s}^+}}
\newcommand{\wsm}{w{\scriptstyle|_{\s}^-}}
\newcommand{\zsm}{z{\scriptstyle|_{\s}^-}}
\newcommand{\bsm}{b{\scriptstyle|_{\s}^-}}
\newcommand{\usm}{u{\scriptstyle|_{\s}^-}}
\newcommand{\csm}{c{\scriptstyle|_{\s}^-}}
\newcommand{\ssm}{\sigma{\scriptstyle|_{\s}^-}}
\newcommand{\ssp}{\sigma{\scriptstyle|_{\s}^+}}
\newcommand{\ltsm}{\lambda_3{\scriptstyle|_{\s}^-}}
\newcommand{\ltsp}{\lambda_3{\scriptstyle|_{\s}^+}}
\newcommand{\losm}{\lambda_1{\scriptstyle|_{\s}^-}}
\newcommand{\losp}{\lambda_1{\scriptstyle|_{\s}^+}}
\newcommand{\lwsm}{\lambda_2{\scriptstyle|_{\s}^-}}
\newcommand{\lwsp}{\lambda_2{\scriptstyle|_{\s}^+}}
\newcommand{\rwsm}{\rw{\scriptstyle|_{\s}^-}}
\newcommand{\rzsm}{\rz{\scriptstyle|_{\s}^-}}
\newcommand{\rbsm}{\rb{\scriptstyle|_{\s}^-}}
\newcommand{\h}{\mathsf{h}}
\renewcommand{\div}{\makebox{div}}
\newcommand{\g}{\mathsf{g}}
\newcommand{\tsh}{\mathrm{T}_{\! \circ}}
\newcommand{\tfl}{\mathrm{T}_{\! \flat}}
\newcommand{\m}{\mathsf{m}}
\newcommand{\mo}{\mathsf{m}_1}
\renewcommand{\l}{ {\mathsf{c}} }
\newcommand{\dsh}{\delta_{\! \circ}}
\newcommand{\dfl}{\delta_{\! \flat}}
\def\jump#1{{[\hspace{-1.5pt}[#1]\hspace{-1.5pt}]}}
\def\mean#1{{\langle\hspace{-2.5pt}\langle#1\rangle\hspace{-2.5pt}\rangle}}
\definecolor{shcu}{RGB}{255,0,0}
\definecolor{fakeshock}{RGB}{218,165,32}
\numberwithin{equation}{section}
\title{Classical Euler flows generate the strong Guderley imploding shock wave}
\author{
  Giorgio Cialdea\thanks{Courant Institute of Mathematical Sciences, New York University, New York, NY 10012, \href{mailto:gc2987@nyu.edu}{gc2987@nyu.edu}}
  \and
  Steve Shkoller\thanks{Department of Mathematics, University of California, Davis, CA 95616, \href{mailto:shkoller@math.ucdavis.edu}{shkoller@math.ucdavis.edu}}
  \and
  Vlad Vicol\thanks{Courant Institute of Mathematical Sciences, New York University, New York, NY 10012, \href{mailto:vcv224@nyu.edu}{vcv224@nyu.edu}}
}
\date{}
\begin{document}
\maketitle

\begin{abstract}
\noindent
\textbf{Abstract.} We prove that Guderley's self-similar imploding shock solution for the compressible Euler equations with ideal--gas law ($\gamma>1$) arises from classical, radially symmetric, shock--free data. For such data prescribed at initial time $\ti < 0$, we prove that the flow remains classical up to a first singular time 
$t=\ts \in (\ti, 0)$, where a preshock forms with a $C^{\frac{1}{3}}$ cusp in the fast acoustic  variable. 
From this preshock a unique, initially weak, regular shock is born, whose strength can be made arbitrarily large on a controlled time interval; the front then deforms onto the Guderley shock and implodes at the origin at the collapse time $t=0$. There exists a matching time $t=\tf \in (\ts,0)$ such that on $[\tf,0)$ the solution \emph{coincides exactly} with the classical Guderley self--similar profile, and at $t=\tf$ the shock trajectory matches the self--similar front to all orders. 
As $t {\scriptscriptstyle \nearrow} 0$, the Euler solution implodes at the center, and continues for
 $t>0$ as a reflected  blast wave, providing a global-in-time unique Euler solution which evolves from regular initial conditions.
\end{abstract}

\tableofcontents

\section{Introduction}
The study of powerful implosions in compressible fluids is dominated by a few canonical solutions that capture the essential dynamics of focusing waves. 
Among the most significant is the Guderley imploding shock \cite{Guderley1942}, a self-similar solution to the Euler equations that describes a strong, 
symmetric shock wave converging to a point. Beyond its historical importance (as the first analytical solution of Euler describing an imploding shock), the Guderley solution is widely regarded in the physics and numerical 
communities as a \textit{universal attractor}\footnote{The modern concept of universal attractor is discussed by Barenblatt~\cite{Barenblatt1996} in terms of the stability of invariant solutions. One linearizes the (autonomous) similarity dynamical system about the self‑similar profile to get a spectral problem for perturbations. If all perturbation eigenvalues have negative real part except the neutral one corresponding to shifting the focusing time (a symmetry), the similarity solution is stable. In Barenblatt’s language, that meant that the flow forgets initial/boundary details and approaches the self‑similar profile in an intermediate‑asymptotic sense—the modern “universal attractor” phrasing.}. This concept is a classic example of \textit{intermediate asymptotics}, a powerful theory of self-similarity 
developed in the foundational works of Barenblatt~\cite{Barenblatt1996}, Zel'dovich and Raizer~\cite{ZeldovichRaizer}, Sedov~\cite{Sedov}, and 
Stanyukovich~\cite{Stanyukovich}.\footnote{Intermediate asymptotics describe the behavior of a physical system at a stage that is late enough for the solution to have ``forgotten'' the fine details of the initial conditions, but early enough that it is not yet influenced by the ultimate boundaries of the domain. In this stage, the system's evolution often follows a universal, self-similar form that depends only on global conserved quantities and physical parameters of the problem (such as the adiabatic exponent $\gamma$ and the geometry). Other canonical examples governed by this principle include the Taylor-von Neumann-Sedov blast wave from a point explosion \cite{Sedov1946, vonNeumann1947, Taylor1950,Taylor1950b}
 and the propagation of a thermal wave from an instantaneous source in a porous medium (the Barenblatt-Pattle solution)~\cite{Barenblatt1996, ZeldovichRaizer}.} 
The theory posits that a wide range of different strong, converging shock waves will naturally evolve towards the unique Guderley self-similar profile, a 
notion that has been confirmed in modern numerical studies~\cite{LaRi1977,Lazarus1981,Ponchaut2006,Hornung2008,Ramsey2010test, RaKaBo2012,RaKa2017,RaBa2019, GiBaKr2023}.

This notion of universal attraction, however, is accompanied by a subtle paradox: the Guderley solution itself is known to be \textit{linearly unstable} to 
(non-radial) perturbations \cite{Brushlinskii1963}. This dual nature, where the solution acts as both an attractor and an unstable separatrix, is illustrated schematically in 
Figure~\ref{fig:attractor}. Any small deviation from the perfect self-similar profile is expected to be amplified as the shock approaches the center. This 
suggests that the Guderley profile acts as an \textit{intermediate asymptotic}: physical solutions are first attracted towards it, and then, as the 
collapse becomes imminent, they are driven away from it by the growth of inherent instabilities.

\begin{figure}[h!]
    \centering
    \begin{tikzpicture}[font=\small]
        \draw[{Stealth}-] (4.5,-0.2) node[below, anchor=north east, xshift=10pt] {Progression of Shock Collapse} -- (10,-.2);
        \node[blue!60!black] at (3.0,-0.8) {(Radius decreases $\leftarrow$)};
        \node[rotate=90, anchor=center] at (11, 0) {Deviation from Guderley Profile};
        
        \draw[black, thick] (0,0) -- (10,0) node[pos=0.5, above=10pt, black] {Guderley Solution};
        
        \node at (7.5, 2.3) {\textbf{Attractor Phase}};
        \node at (7.5, 1.9) {(Forgetting Initial Conditions)};
        \draw[blue!60!black, thick] (10, 1.5) to[out=180, in=20] (6, 0.05);
        \draw[blue!60!black, thick] (10, -1.5) to[out=180, in=-20] (6, -0.05);
        \draw[blue!60!black, thick] (10, 0.8) to[out=180, in=10] (6, 0.02);
        \node[blue!60!black] at (8.0, -1.8) {Various Initial Conditions};

        \draw[dashed, gray] (5,-2.2) -- (5,2.2);
        
        \node at (2.5, 2.3) {\textbf{Instability Phase}};
        \node at (2.5, 1.9) { (Amplification of Perturbations)};
        \draw[red, thick] (4.9, 0) .. controls (4, 0.1) and (3, -0.2) .. (2, 0.5) .. controls (1, 1.5) .. (0.2, 2);
        \node[red, anchor=east] at (1.5, 1) {Perturbation Growth};
    \end{tikzpicture}
    \caption{A schematic illustrating the dual nature of the Guderley solution. At early stages (right), various initial conditions are attracted towards the universal self-similar solution. At late stages (left), as the shock approaches collapse, the inherent instability of this solution amplifies any small residual perturbation, causing the flow to diverge from the ideal path.}
    \label{fig:attractor}
\end{figure}
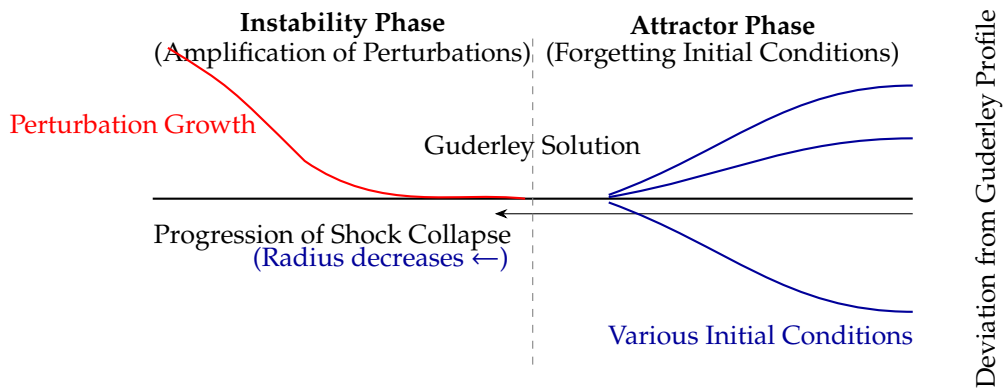

While this physical picture is well-established, it has remained largely a concept of numerical and heuristic asymptotic analysis. A central open problem in the 
mathematical theory of shock waves has been to rigorously establish the basin of attraction for the Guderley state. That is,  

\begin{quote}
\textit{Prove from first principles that 
there exists a set of regular, shock-free initial data whose corresponding unique regular solutions evolve to match the Guderley implosion profile exactly.}
\end{quote}

This work provides an answer to this question; we provide an explicit construction of solutions to the compressible Euler equations that evolve from 
\textit{classical, shock-free initial data} into the \textit{strong shock regime}, culminating in a state that is identical to the classical Guderley self-similar shock 
profile. In doing so, we rigorously prove that the basin of attraction for the Guderley solution is non-empty and contains a large set of initial conditions, 
thereby validating  its role as an attractor.

Our result also overcomes the limitations of prior mathematical constructions for shock development from smooth data, such as those in \cite{Lebaud1994,Yi2004,ChLi2016,Ch2019,BuDrShVi2022}, which were confined to the \textit{weak shock regime}. In that regime, a preshock forms with a $C^{\frac{1}{3}}$ cusp in the solution at the first time of singularity
$t=\ts$, and the subsequent shock strength grows slowly, proportional to $(t-\ts)_ {+}^{\frac{1}{2}}$ for times $t>\ts$ such that $|t-\ts|\ll 1$.
Our  constructed solution successfully navigates the transition from this weak shock state into the fully nonlinear strong shock dynamics described by Guderley.

Furthermore, in the process of proving that smooth data can evolve towards the Guderley imploding shock, we have established a remarkable and complete evolutionary pathway. We present the first known solution to the Euler equations that:
\begin{enumerate}
    \item[(i)] Emanates from regular, shock-free initial data.
    \item[(ii)] Forms a gradient catastrophe (a preshock).
    \item[(iii)] Develops a weak shock.
    \item[(iv)] Transitions into a strong shock.
    \item[(v)] Evolves to \textit{exactly match} the classical Guderley self-similar shock profile at a finite time.
    \item[(vi)] Continues as the Guderley solution to a final implosion at the origin.
\end{enumerate}
See \cref{fig:global:1:tikz} below for a schematic representation of this evolutionary pathway of the Euler solution from initial time $t=\ti$ to the implosion
time $t=0$. For times $t>0$, the Guderley solution exists as a reflected strong shock, modeling an explosive blast wave \cite{Guderley1942, Lazarus1981}.
By combining the steps (i)-(vi) with the rigorous global-in-time  existence theorem \cite{JaJiSc2024} for Guderley's reflected shock solution, we obtain the following corollary to our main: a global-in-time solution for the multi-dimensional Euler equations starting from classical initial data.

We note also that complementary to the strong-shock formation and focusing considered here, there is now a rigorous theory for \emph{smooth} (shock‑free) imploding flows \cite{MeRaRoSz2022a, BuCaGo2025,ShWeWaZh2025} as well as their stability \cite{Bi2021, BuCaGo2025, ChCiShVi2024, CLGSShSt2023,MeRaRoSz2022b}. See also \cite{Je2025,JeJo2024,JeTs2023} for shock-free finite regularity imploding radial Euler solutions.

\subsection{The Euler equations}
In conservation law form, the Euler equations are given by
\begin{subequations}
 \label{euler:md}
\begin{align}
\p_t (\rho u) + \div(\rho u \otimes u + pI) &= 0\,,   \label{eq:momentum}\\
\p_t \rho + \div( \rho u) &=0\,, \label{eq:mass} \\
\p_t E + \div( (p+E)u) &=0\,, \label{eq:energy}
\end{align}
\end{subequations}
where \eqref{eq:momentum} is the conservation of momentum, \eqref{eq:mass} is the conservation of mass, and \eqref{eq:energy} is the conservation of energy.  We shall consider multi-dimensional flows on the spatial domain $ \mathbb{R}^d$ with dimension $d$ either $2$ or $3$. The primitive variables are 
the velocity vector field
$u: \R^d \times \R \to \R^d$, the positive density function $\rho:\R^d \times \R \to \R_+$, the total energy function $E: \R^d \times \R \to \R$ , and
the pressure function $p:\R^d \times \R \to \R$.  The momentum $\rho u$ is the product of density and velocity.   To close the system of conservation laws \eqref{euler:md}, we employ the ideal gas law as the equation
of state and require the pressure function $p=p(u, \rho, E)$ to satisfy
 \begin{equation} \label{eq:EOS}
p = (\gamma-1) ( E - \tfrac{1}{2} \rho |u|^2) \,,
\end{equation}
where $\gamma>1$ denotes the adiabatic exponent.

\subsection{Regular shock solutions}
From classical initial data prescribed at time $t=\ti < 0$, we construct solutions that form a preshock at time $t=\ts \in (\ti, 0)$ and subsequently develop a shock discontinuity. In our construction, this smoothly evolving shock front is not described by a single smooth function but is a composite curve defined piecewise over the time interval $(\ts, \tf]$ for some $\tf < 0$. Let us denote this global shock surface by $\mathcal{S}\subset \R^d \times (\ts, \tf]$.

For the main phase of its evolution, $t \in [\tsh, \tf]$ (where $\tsh$ is carefully chosen to satisfy $\ts < \tsh \ll \tf$), the shock front is given by a curve $r = \s(t)$. This curve is chosen to smoothly 
connect the strong, self-similar Guderley shock at time $\tf$ to a weak shock state at time $\tsh$. However, a direct backwards extension of this curve to the 
preshock time $\ts$ is not viable. The process of extinguishing a strong shock's strength down to zero would force the shock acceleration $\ddot{\s}(t)$ to 
become singular as $t \searrow \ts$ (see \ref{prop:weak2pre} below).
 This would, in turn, create an unphysical, asymmetric preshock state—
one that is smooth on the exterior ($r > \s(\ts)$) but only Hölder continuous on the interior ($r < \s(\ts)$)—which cannot be generated from $C^1$  
initial data.

To resolve this  \textit{strong shock}-to-\textit{weak shock}-to-\textit{preshock} transition, and to ensure the formation of a physically correct singularity, we introduce a second, highly regular curve $r = \l(t)$ on the short, initial time 
interval $t \in (\ts, \tsh]$. This curve provides a $C^\infty$ transition from the weak shock at $\tsh$ to a  $C^{\frac{1}{3}}$ cusp at the preshock time $\ts$. 
The complete shock surface is thus the union of these two pieces:
\[
\mathcal{S} = \big\{ (x,t) : r = \l(t) \text{ for } t \in (\ts, \tsh] \big\} \cup \big\{ (x,t) : r = \s(t) \text{ for } t \in (\tsh, \tf] \big\}.
\]
Across this orientable space-time hypersurface, the fluid variables $(u, \rho, E)$ experience a jump discontinuity. The shock speed and the jumps must satisfy the Rankine-Hugoniot conditions, which state that
\begin{subequations} \label{RHg}
\begin{align*}
\dot{\mathfrak{s}} \jump{\rho u_n}&= \jump{\rho u_n^2 + p} |n| \,, \\
\dot{\mathfrak{s}} \jump{\rho}&= \jump{\rho u_n} |n| \,, \\
\dot{\mathfrak{s}} \jump{E} &= \jump{ (p+E)u_n} |n| \,,\end{align*}
\end{subequations}
where $\dot{\mathfrak{s}}$ denotes the shock speed ($\dot{\l}$ or $\dot{\s}$), and $n$ is the  spatial (outward) unit normal  to the shock front. Our constructed solution satisfies the following standard definitions.

\begin{definition}[\textsl{Regular shock solution \cite{BuDrShVi2022}}] \label{def:regular:shock}
We say that~$(u, \rho, E, \mathcal{S})$ is a regular shock solution on~$\R^d \times (\ts, \tf]$ if:
\begin{itemize}
    \item $(u, \rho, E)$ is a weak solution of~\eqref{euler:md} with strictly positive density.
    \item The shock front~$\mathcal{S}$ is a codimension-$1$ orientable hypersurface.
    \item The solution $(u, \rho, E)$ is $C^1$ smooth in space and time away from $\mathcal{S}$.
    \item The discontinuities across $\mathcal{S}$ satisfy the Rankine-Hugoniot conditions~\eqref{RHg}.
    \item The Lax geometric (entropy) conditions \eqref{lax} are satisfied.
\end{itemize}\end{definition}

\begin{definition}[\textsl{Regular shock solution emanating from a preshock}] \label{def:regular:shock:pre}
A regular shock solution is said to emanate from a preshock if the state variables $(u,\rho, E)$ are continuous across the entire spatial domain at the initial instant of shock formation, $t=\ts$.\end{definition}

\subsection{A symmetric form of the Euler equations}
Let   $S$ denote the specific entropy and let us denote the \textit{square-root of pseudo entropy} by 
\begin{equation*} 
b = e^{\frac{S}{2}}  \,.\end{equation*}
 Then  the ideal gas law  \eqref{eq:EOS} can be re-expressed as 
\begin{equation}
p = \tfrac{1}{\gamma} \rho^\gamma b^2\,. \label{eq:EOS2}
\end{equation}
The sound speed $c$ and the rescaled sound speed $\sigma$ are given by
\begin{equation*}
\sigma = \tfrac{1}{\alpha} 	\rho^\alpha b, \quad c= \alpha \sigma = 	\rho^\alpha b \,,
\qquad \mbox{where} \qquad
\alpha=\tfrac{\gamma-1}{2}.\end{equation*}
It follows that \eqref{eq:EOS2} can be written as
\begin{equation*}
p = \tfrac{\alpha^2}{\gamma} \sigma^2 \rho = \tfrac{1}{\gamma} c^2 \rho = \tfrac{1}{\gamma} \rho^\gamma b^2.
\end{equation*}
Furthermore, the introduction of the new adiabatic exponent $ \alpha >0$ and the rescaled sound speed $\sigma$
allow us to write the Euler equations \eqref{euler:md} in a symmetric form given by
\begin{subequations} \label{euler:sound}
\begin{align}
\p_t u + u\cdot \nabla u + \alpha \sigma \nabla \sigma &= \tfrac{1}{\alpha \gamma} \rho^{2\alpha} b \nabla b \,, \\
\p_t \sigma +u\cdot \nabla \sigma + \alpha \sigma \div u &=0 \,,  \\
\p_t b+ u\cdot \nabla b &=0 \,.  \label{euler:sound:S}
\end{align}
\end{subequations}

\subsection{The Euler equation in radial symmetry}

We will consider radially symmetric solutions of the Euler equations. To this end,  we introduce $r=|x|$ and $e_r = \tfrac{x}{|x|}$ and we assume that
\begin{equation*}
\rho = \rho(r,t), \quad u = u(r,t) e_r, \quad b = b(r,t), \quad \sigma = \sigma (r,t) .
\end{equation*}
In radial symmetry,
the symmetric form of the Euler equations \eqref{euler:sound} is given as
\begin{subequations} \label{euler:r}
\begin{align*}
\p_t u + u \p_r u + \alpha \sigma \p_r \sigma& = \tfrac{1}{\alpha \gamma} \rho^{2\alpha} b \p_r b, \\
\p_t \sigma + u \p_r \sigma + \alpha \sigma \left( \p_r u + \tfrac{d-1}{r} u \right) & = 0, \\
\p_t b + u \p_r b &= 0.\end{align*}
\end{subequations}

We introduce the Riemann variables
 \begin{equation*}
  z = u - \sigma\,, \qquad b \,, \qquad  w = u + \sigma\,, 
 \end{equation*}
 which are associated to the three wave speeds
  
 \label{wave:sp}
\begin{align*} 
 \lambda_1  &= u- \alpha \sigma = \tfrac{1+\alpha}{2} z + \tfrac{1-\alpha}{2} w \,,   \\
 \lambda_2 &= u = \tfrac{1}{2} (z+w) \,,   \\
 \lambda_3 &= u+\alpha\sigma  =  \tfrac{1-\alpha}{2} z + \tfrac{1+\alpha}{2} w \,. \end{align*}

In the case of radial collapse, in which the fluid velocity  $u\leq 0$, the wave speed
$\lambda_1$ represents the fast acoustic wave speed and  $\lambda_3$ is the slow acoustic wave speed.  In such a flow regime, $z$ represents the dominant
Riemann variable, and $w$ is the subdominant Riemann variable.  
The Euler equations \eqref{euler:r} can then be written as
\begin{subequations} 
 \label{euler:rv}
\begin{align}
\p_t z + \lambda_1 \p_r z - \tfrac{\alpha(d-1)}{4r}( w^2-z^2) &= \tfrac{ 1}{\alpha\gamma }\rho^{2\alpha}b\p_r b  \,,   \\
\p_t w + \lambda_3 \p_r w + \tfrac{\alpha(d-1)}{4r}( w^2-z^2)& = \tfrac{1}{\alpha \gamma }\rho^{2\alpha} b \p_r b \,,  \\
\p_t b + \lambda_2 \p_r b &= 0 \,.
\end{align}
\end{subequations}
For non-isentropic dynamics, characteristic variables are traditionally obtained in differential form.  As such, we employ 
 the \textit{Differentiated Riemann Variables} (DRV), defined by
\begin{equation} 
\label{ring:v}
\mathring{z} = \p_r z + \tfrac{1}{\gamma \alpha} \rho^\alpha \p_r b\,, 
\qquad
\mathring{b} = \p_r b\,, 
\qquad 
\mathring{w} = \p_r w - \tfrac{1}{\gamma \alpha} \rho^\alpha \p_rb \,.
\end{equation}
The evolution equations for the system of DRVs is obtained from \cref{euler:rv,ring:v}; the DRV system is written as
\begin{subequations} \label{euler-DRV}
\begin{align}
\p_t \mathring{z} + \lambda_1 \p_r \rz 
& = - \rz \left(\tfrac{1+\alpha}{2}\rz + \tfrac{1-\alpha}{2}\rw - \tfrac{1}{\gamma} \rho^\alpha \rb \right) - \tfrac{1}{4 \gamma} \rho^\alpha \rb (\rw+\rz)
\notag \\
& \qquad \qquad
+ (d-1)\tfrac{ \gamma \alpha(\rw w - \rz z)+ (w+z)\rho^\alpha \p_r b }{2 \gamma r} - 2(d-1)\alpha \tfrac{ w^2 - z^2}{r^2}  \label{DRV-z}
\,,  \\
\p_t \mathring{w} + \lambda_3 \p_r \rw 
& = - \rw\left(\tfrac{1+\alpha}{2}\rw + \tfrac{1-\alpha}{2}\rz + \tfrac{1}{\gamma} \rho^\alpha \rb \right)+ \tfrac{1}{4 \gamma} \rho^\alpha \rb (\rw+\rz) 
 \notag \\
& \qquad \qquad
-(d-1)\tfrac{ \gamma \alpha(\rw w - \rz z)+ (w+z)\rho^\alpha \p_r b }{ 2 \gamma r} + 2(d-1)\alpha \tfrac{ w^2 - z^2}{r^2} 
\,, \\
\p_t \rb + \lambda_2 \p_r \rb 
&= - \rb \left(\tfrac{1}{2}\rw + \tfrac{1}{2}\rz \right) \,,
\end{align}
\end{subequations}
and these equations are coupled with \eqref{euler:rv}.
Similar DRV evolution equations  were used in \cite{NSV2025-CAM}, \cite{NSV2025-JLMS}, and \cite{ShVi2024}.

\subsection{Guderley's self-similar imploding shock solution}
\label{sec:Guderley}
The focusing strong shock solution of  Guderley \cite{Guderley1942} was the first self-similar solution of the second kind, 
requiring the similarity exponent to be determined by solving a nonlinear eigenvalue problem\footnote{%
In the self-similar formulation \eqref{guderley:JLS}–\eqref{ODE:nd}, set
$\mathcal D(\xi)=(1+U(\xi)/\xi)^{2}-(C(\xi)/\xi)^{2}.$
A sonic (critical) point \(\xi_s\) is characterized by \(\mathcal D(\xi_s)=0\), i.e. $\xi_s+U(\xi_s)=\pm\,C(\xi_s)$
(the imploding branch typically takes the minus sign). Because \(\mathcal D\)
appears in the denominators of \eqref{ODE}, regularity of the solution at
\(\xi=\xi_s\) requires the numerators to vanish simultaneously:
\[
  G\!\Big(\tfrac{U(\xi_s)}{\xi_s},\,\tfrac{C(\xi_s)}{\xi_s}\Big)=0,
  \qquad
  F\!\Big(\tfrac{U(\xi_s)}{\xi_s},\,\tfrac{C(\xi_s)}{\xi_s}\Big)=0,
\]
with $F,G$ given in \eqref{ODE:nd}. Together with the Rankine--Hugoniot
shock data at $\xi=1$, these algebraic compatibility conditions are satisfied
only for a (typically unique) value of the similarity exponent
$\lambda=\lambda(\gamma,d)$. Since $\lambda$ enters $F$ and $G$, Barenblatt has termed this a so-called
\emph{nonlinear eigenvalue problem}: one shoots from $\xi=1^+$ using the jump
conditions and adjusts $\lambda$ until the conditions at $\xi_s$ are met and
the continuation remains regular; the density \(R\) then follows from
\eqref{id:rho}. See also~\cite{HiGr2001} for a modern perspective on the computation of $\lambda$.} 
rather than being determined by dimensional analysis arguments and scaling.\footnote{The modern method of self-similarity emerged in the 1940s, seeking special solutions for which the spatial profile of the flow remains the same shape over time, only changing in scale according to a power law. This simplifies the complex system of partial differential equations into a more manageable system of ordinary differential equations. Two of the most famous and physically significant self-similar solutions from this era describe opposite phenomena.
The \textit{Guderley  implosion} describes a strong, symmetric shock wave converging to a point and the subsequent reflected shock that propagates outward after collapse. It was first developed by K. G. Guderley in Germany in 1942 \cite{Guderley1942}. Due to later, independent work, it is often referred to as the Guderley–Landau–Stanyukovich problem. The 
 \textit{Taylor–von Neumann–Sedov  blast wave}  describes a strong, symmetric shock wave expanding from a point explosion. It was famously solved independently by G. I. Taylor in the United Kingdom and John von Neumann in the United States during their work on the atomic bomb, and by L. I. Sedov in the Soviet Union.
The complete theories of these solutions and their underlying mathematical frameworks are detailed in the foundational texts of the field by Landau \& Lifshitz \cite{LandauLifshitz}, Zel'dovich \& Raizer \cite{ZeldovichRaizer}, Stanyukovich \cite{Stanyukovich}, and Sedov \cite{Sedov}.}

The focusing phase of the solution is assumed to occur for time $t<0$ with the physical radial coordinate $r \in [0, \infty )$,  and the similarity variables are chosen so that the implosion occurs at the center $r=0$,  precisely at time $t=0$.  The  self-similar variable $\xi$  assumes the form
\begin{equation*}
\xi = \tfrac{\  \ r^\lambda}{-t} \,.
\end{equation*}
We refer to $\lambda>1$ as the \textit{similarity exponent}.
For $r\ge0$ and $t<0$, the primitive variables have the self-similar ansatz
\begin{subequations} 
\label{guderley:JLS}
\begin{align}
u(r, t) &= 0, \quad c(r,t ) = 0, \quad \rho(r,t) = 1, \quad &0 \le r^\lambda < -t, \\
u(r,t) &= \tfrac{1}{\lambda}r^{1-\lambda} U\big(\tfrac{r^\lambda}{-t} \big), \quad &-t<r^\lambda, \\
c(r,t)& = \tfrac{1}{\lambda} r^{1-\lambda} C\big(\tfrac{r^\lambda}{-t} \big), \quad &-t<r^\lambda, \\
\rho(r,t) &= R\big(\tfrac{r^\lambda}{-t} \big), \quad &-t<r^\lambda ,
\end{align}
\end{subequations}
where the self-similar profiles $U, C: [1,+\infty] \to \R$ are solutions of
\begin{subequations} 
\label{ODE}
\begin{align}
\xi\p_\xi \Big( \frac{U}{\xi}\Big) = -\tfrac{1}{\lambda } \tfrac{G\big(\tfrac{U}{\xi}, \tfrac{C}{\xi} \big)}{D\big(\tfrac{U}{\xi}, \tfrac{C}{\xi}\big)} \,, \\
\xi\p_\xi \Big(\frac{C}{\xi} \Big) = -\tfrac{1}{\lambda } \tfrac{F\big(\tfrac{U}{\xi}, \tfrac{C}{\xi}\big)}{D\big(\tfrac{U}{\xi}, \tfrac{C}{\xi}\big)} \,,
\end{align}
\end{subequations}
and
\begin{subequations} 
\label{ODE:nd}
\begin{align}
 D(U,C) =& (1+U)^2 - C^2 \,,\\
 G(U,C) = &C^2 ( d\, U + \tfrac{2(\lambda-1)}{\gamma} ) - U(1+U)(\lambda+U) \,, \\
 F(U,C) = &C \Big( C^2 ( 1+ \tfrac{\lambda-1}{\gamma}) - \big( 1+ \tfrac{(d-1)(\gamma-1)}{2} \big)(1+U)^2\\ \notag
 & \qquad + \tfrac{(d-1)(\gamma-1) + (\lambda-1)(\gamma-3)}{2}(1+U) -(\lambda-1)(\gamma-1)\Big) \,. 
 \end{align}
For $\xi\in[1,+\infty]$,  the density profile $R(\xi)$  is then computed using the algebraic identity
\begin{equation} 
\label{id:rho}
R(\xi)^{ \tfrac{2(\lambda-1)}{d} + 1 - \gamma} C(\xi)^2 \big(1 + \tfrac{U(\xi)}{\xi}\big)^{\frac{2(\lambda-1)}{d}} = \tfrac{2 \gamma (\gamma-1)^{1-\gamma} }{(1+\gamma)^{1+\gamma}} .
\end{equation}
The identity~\eqref{id:rho} is a simple consequence of the transport of specific entropy~\eqref{euler:sound:S} and the self-similar ansatz. 

In the self-similar coordinate $\xi$, the shock front is fixed at $\xi=1$, and the  Rankine-Hugoniot jump conditions\footnote{
For the Guderley solution, the gas ahead of the shock is assumed to be cold and at rest. It is assumed that $u_- = 0$,  $p_- = 0$,  and $\rho_- =1$.
The assumption of zero pressure implies that the sound speed in the quiescent region, $c_- = \sqrt{\gamma p_- / \rho_-}$, is also zero.
The speed of the shock front is $\dot{\g} = d\g/dt$. The Mach number is defined as the ratio of the shock speed to the sound speed of the medium into which it is propagating: $ M = \frac{|\dot\g|}{c_-} \,. $
Given that $c_- = 0$ in the Guderley problem, the Mach number is technically infinite from the outset.  For this reason, the problem is described as a \textit{strong} shock. To see the relationship clearly, we first derive the jump conditions for a general finite Mach number and then take the limit.
In the frame of reference moving with the shock, the conservation laws across the discontinuity can be written as
\begin{align*}
\rho_- |\dot\g| &= \rho_+ (|\dot\g| - u_+) \quad &\text{(Mass)}  \,,  \\
p_- + \rho_- \dot\g^2 &= p_+ + \rho_+ (|\dot\g| - u_+)^2 \quad &\text{(Momentum)}  \,, \\
\tfrac{\gamma}{\gamma-1}\tfrac{p_-}{\rho_-} + \tfrac{1}{2}\dot\g^2 &= \tfrac{\gamma}{\gamma-1}\tfrac{p_+}{\rho_+} + \tfrac{1}{2}(|\dot\g| - u_+)^2 \quad &\text{(Energy)} \,.
\end{align*}
Solving this system for a general $p_-$ and $c_- = \sqrt{\gamma p_- / \rho_-}$ yields the jump conditions in terms of the Mach number $M = |\dot\g|/c_-$:
\begin{align*}
\tfrac{\rho_+}{\rho_-} = \tfrac{(\gamma+1)M^2}{(\gamma-1)M^2 + 2} \,,  \quad
\tfrac{p_+}{p_-} = \tfrac{2\gamma M^2 - (\gamma-1)}{\gamma+1}  \,,  \quad
\tfrac{u_+}{|\dot\g|} = \tfrac{2}{\gamma+1}\left(1 - \tfrac{1}{M^2}\right) \,. 
\end{align*}
Taking the limit as $M \to \infty$ gives the conditions used for the Guderley solution as
\begin{align*}
\tfrac{\rho_+}{\rho_-} \to \tfrac{\gamma+1}{\gamma-1} \,,  \quad
p_+ \to \tfrac{2\rho_- \dot\g^2}{\gamma+1}  \,, \quad
u_+ \to \tfrac{2|\dot\g|}{\gamma+1} \,.
\end{align*}
}
 (which we will detail below)
provide the boundary conditions for
the system of ODE in \eqref{ODE}, which we write as
\begin{equation} 
U(1) =-\tfrac{2}{\gamma+1}, \quad C(1) = \tfrac{\sqrt{\gamma(\gamma-1)}}{\gamma+1}, \quad R(1) = \tfrac{\gamma+1}{\gamma-1}.
\end{equation}
\end{subequations}
Denoting the Guderley shock curve by $\{r=\g(t)\}$ with $\g(t) = (-t)^{\frac{1}{\lambda}}$ (see Figure \ref{fig:shock_schematic}), the restriction (or trace) of the Guderley shock solution \eqref{guderley:JLS} to both
the $^-$ and $^+$ sides of $\g(t)$ (in physical spacetime coordinates $(r,t)$) are given by
\begin{subequations} 
\begin{align}
u(r,t)|_{\g}^- &=0, \quad c(r,t)|_{\g}^- =0, \quad \rho(r,t)|_{\g}^- =1, \\
u(r,t)|_{\g}^+ &= -\tfrac{2}{(\gamma+1) \lambda} (-t)^{^{-\frac{\lambda-1}{\lambda}}}, 
\quad c|_{\g}^+ = \tfrac{ \sqrt{ 2\gamma (\gamma-1)}}{(\gamma+1)\lambda} (-t)^{^{-\frac{\lambda-1}{\lambda}}}, \quad \rho|_{\g}^+ = \tfrac{\gamma+1}{(\gamma-1)}, \label{u-c-rho-plus}
\end{align}
\end{subequations} 
which show that
\begin{equation*} 
\jump{u} \sim ( -t)^{^{-\frac{\lambda-1}{\lambda}}}, \quad \jump{c} \sim ( -t)^{^{-\frac{\lambda-1}{\lambda}}}, \quad \jump{\rho}\sim 1.\end{equation*}
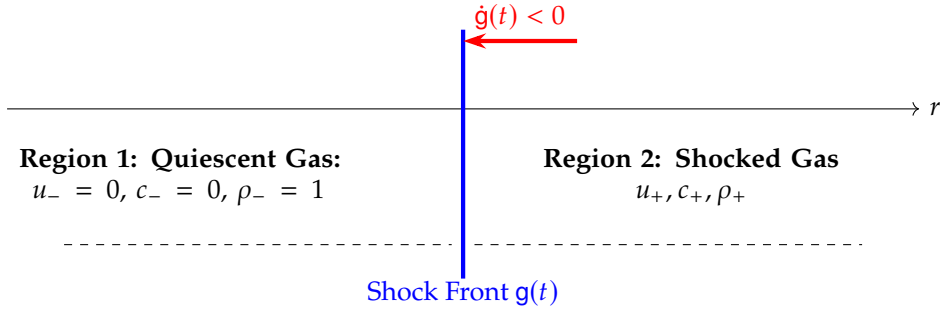
\begin{figure}[h!]
    \centering
    \begin{tikzpicture}[font=\small,scale=1.5]
        \draw[->] (-1,0) -- (7,0) node[right]{$r$};
        \draw[ultra thick, blue] (3,-1.5) -- (3,0.7) node[below=90pt, blue]{Shock Front $\g(t)$};
        \draw[ultra thick, -{Stealth[length=3mm, width=2mm]}, red] (4,0.6) -- (3.,0.6) node[midway, above]{$\dot{\g}(t) < 0$};
        
        \node[text width=5cm, align=center] at (0.5, -0.6) {
            \textbf{Region 1: Quiescent Gas: } 
            $u_-=0$, $c_-=0$, $\rho_-=1$
        };
        \draw[dashed] (-0.5,-1.2) -- (2.9,-1.2);
        
        \node[text width=5cm, align=center] at (5, -0.6) {
            \textbf{Region 2: Shocked Gas} \\
            $u_+, c_+, \rho_+$
        };
        \draw[dashed] (3.1,-1.2) -- (6.5,-1.2);
    \end{tikzpicture}
    \caption{A schematic of the converging shock front. The shock moves left into the quiescent Region 1 (the core), leaving behind the compressed, hot gas of Region 2 (the outer layer it has passed through).}
    \label{fig:shock_schematic}
\end{figure}

We observe, that even if $\jump{u} \to 0 $ and  $\jump{c} \to 0$  as $t \to - \infty,$ the local Mach number remains a constant of size $O(1)$ and  
\begin{equation*}
\tfrac{\jump{u}}{\mean{c}} = \tfrac{2}{\sqrt{2 \gamma(\gamma-1)}}, \quad \tfrac{\jump{c}}{\mean{c}} = 2\,,
\end{equation*}
where $\mean{\cdot }$ denotes the average across the shock.
Note that for any fixed $\kappa>0$, there exists a time $\tf<0$ such that
\begin{align*}
u|_{\g}^- (\tf)&=0, \quad c|_{\g}^-(\tf) =0, \quad \rho|_{\g}^-(\tf) =1, \\
\big|u|_{\g}^+(\tf)\big| & \le \kappa, \quad c|_{\g}^+(\tf) \le \kappa \quad \rho|_{\g}^+(\tf) = \tfrac{\gamma+1}{\gamma-1}.
\end{align*}
We have chosen $\kappa$ to be proportional to $ (-\tf)^{\frac{1-\lambda}{\lambda}} $.

Guderley's seminal work \cite{Guderley1942} formally derived the self-similar laws governing an imploding shock and numerically identified the critical similarity exponents. In the following decades, related analyses by others, such as Hunter \cite{Hunter1960}, further elucidated the self-similar ODE reductions. Despite this long history, these foundational results remained at a formal or numerical level, leaving key mathematical questions unanswered. In particular, a rigorous proof for the existence of analytic solutions, the justification for continuing these solutions across the sonic degeneracy, and the verification that the resulting profiles constitute genuine weak entropy solutions of the full non-isentropic Euler equations were all missing.

These long-standing challenges were recently resolved by Jang-Liu-Schrecker \cite{JaLiSc2025}, where the existence of self-similar converging shock solutions was
established for the full relevant range of adiabatic exponents, $\gamma \in (1,3]$. The  analysis in  \cite{JaLiSc2025} rigorously proves that the smooth regions of the flow are real-analytic, crucially resolves the analytic continuation across the sonic point, and demonstrate that the constructed profiles satisfy the Rankine--Hugoniot jump conditions as well as the necessary entropy conditions. The results of   \cite{JaLiSc2025} contain the first complete 
PDE-theoretic justification of the self-similar implosion scenario. 
We shall henceforth fix one adiabatic exponent $\gamma \in (1, 3]$, and one profile~$(U, C, R)$ from \cite{JaLiSc2025}. All constants will be allowed to depend on this chosen profile and~$\gamma$.

\subsection{Statement of the main theorem}
\label{sec:main:result}
\begin{theorem}
For fixed spatial dimension $d \in \{2,3\}$, and for any final matching time $\tf < 0$, there exist an initial time $\ti$ and a preshock time $\ts$ satisfying $\ti < \ts < \tf$, and a corresponding radial initial data set $(w_{\mathrm{in}}, z_{\mathrm{in}},b_{\mathrm{in}})$ at time $t=\ti$ which has the following properties:
\begin{itemize}
    \item \textsl{Regularity:} The data belongs to the space $C^{1,{\frac{1}{3}}}(\R^+)$.
    \item \textsl{Quiescent Core:} The data represents a fluid at rest in the core region, with $(u_{\mathrm{in}},\rho_{\mathrm{in}},b_{\mathrm{in}}) \equiv (0,1,0)$ for all radii $r < (-\tf)^{\frac{1}{\lambda}}$.
\end{itemize}
This initial data generates a unique radial solution to the Euler equations \eqref{euler:rv} whose evolution unfolds in the following stages:

\begin{enumerate}
    \item \textsl{Smooth Evolution:} For $t \in [\ti, \ts)$, the solution $(w,z,b)$ evolves smoothly and remains in $C^{1,{\frac{1}{3}}}(\R^+)$.
    
    \item \textsl{Preshock Formation:} At the precise time $t=\ts$, a preshock singularity forms at a single spatial location $r_* > 0$. The dominant Riemann variable $z$ develops a $C^{\frac{1}{3}}$ cusp, while $w$ and $b$ retain higher regularity. The solution exhibits the local asymptotic behavior:
\begin{subequations} 
 \label{exp:xs}
\begin{align}
z(r,\ts) &= z(r_*, \ts) + \mathsf{a} (r-r_*)^{\frac{1}{3}} + \mathsf{b}(r-r_*)^{\frac{2}{3}} + O(|r-r_*|), \\
w(r,\ts) &= w( r_*, \ts) + \mathsf{c}_w (r-r_*) + O(|r-r_*|^{\frac{4}{3}}), \\
b(r,\ts) &= b(r_*,\ts) + \mathsf{c}_b (r-r_*) + O(|r-r_*|^{\frac{4}{3}}),
\end{align}
\end{subequations}
for suitable constants $\mathsf{a}, \mathsf{b}, \mathsf{c}_w, \mathsf{c}_b \in \R$.

    \item \textsl{Shock Development:} For $t \in (\ts, \tf]$, a discontinuous shock emerges from the preshock point, propagating along a front $\{ r = \s(t) \}$ with $\s(\ts)=r_*$. The flow evolves as a unique ``regular shock solution'' to \eqref{euler:md}, remaining $C^\infty$ on either side of the shock front.

    \item \textsl{Matching with Guderley Solution:} At time $t=\tf$, the solution $(\rho,u,E)$ (satisfying the Lax entropy conditions)  and the shock front $\s(\tf)$ seamlessly match the classical Guderley self-similar imploding-shock solution (described in \S~\ref{sec:Guderley}). For all subsequent times $t \in [\tf, 0)$, the evolution is identical to the Guderley solution.

    \item \textsl{Final Implosion:} At $t=0$, the evolution culminates in an implosion singularity at the origin $r=0$, where the velocity $u \to -\infty$ and the energy $E \to +\infty$, while the density $\rho$ remains bounded.
\end{enumerate}\end{theorem}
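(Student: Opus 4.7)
I would construct the solution backward in time over four intervals: (a) the Guderley phase $[\tf,0)$, furnished directly by \cite{JaLiSc2025} (which also supplies the final implosion at $t=0$ and its continuation as a reflected blast wave for $t>0$); (b) the strong regular-shock phase $[\tsh,\tf]$ along a smooth front $r=\s(t)$; (c) the weak-shock collapse phase $(\ts,\tsh]$ along an auxiliary smooth front $r=\l(t)$; and (d) the classical smooth phase $[\ti,\ts]$. Given the prescribed $\tf$, one chooses $\tsh$ and $\ts$ sufficiently negative that the composite interval $[\ts,\tf]$ accommodates a gradual growth of the shock strength from zero at $\ts$ up to the Guderley value \eqref{u-c-rho-plus} at $\tf$.

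\textbf{The free-boundary construction (stages (b) and (c)).} On $[\tsh,\tf]$ I would select $r=\s(t)$ as a $C^\infty$ curve with $\s(\tf)=\g(\tf)$, with infinite-order Hermite matching to $\g$ at $\tf$, and with $\s(\tsh),\dot\s(\tsh)$ tuned so that the prescribed jumps at $\tsh$ are of a small parameter. The minus state remains quiescent $(u,c,\rho)=(0,0,1)$ throughout this window because the inward characteristic cone from $\s$ does not reach the core; the free-boundary problem thus reduces to a single one-sided backward Cauchy problem for \eqref{euler:rv}--\eqref{euler-DRV} on $\{r>\s(t)\}$, with Guderley terminal data at $t=\tf$ and boundary data on $\s$ supplied by the Rankine--Hugoniot conditions \eqref{RHg} (the Lax geometric condition is automatic since one is perturbing a Lax $3$-shock). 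On $(\ts,\tsh]$ I would then introduce a second smooth front $r=\l(t)$ with $\l(\tsh)=\s(\tsh)$, infinite-order Hermite matching at $\tsh$, and $\l(t)\to r_*$ as $t\searrow\ts$ at the precise rate needed to drive $\jump{z},\jump{w},\jump{b}\to 0$ with the asymptotics dictated by \eqref{exp:xs}. The weak-shock collapse is governed by the Riccati dynamics of $\rz$ in \eqref{DRV-z} along the fast $1$-characteristic engulfed by $\l$; imposing symmetric $C^{\frac{1}{3}}$ behavior on both sides of $r_*$ fixes the leading asymptotics of the jumps and of $\dot\l$. The use of the auxiliary curve $\l$, rather than a backward extension of $\s$, is essential: a direct extension would force $\ddot\s$ to be unbounded and would produce an asymmetric cusp incompatible with $C^1$ data. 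Solvability of the two-sided Cauchy problem across $\l$ with jumps extinguishing at the correct rate follows the weak-shock formation scheme of \cite{Lebaud1994,Yi2004,ChLi2016,Ch2019,BuDrShVi2022}, reversed in time.

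\textbf{Stage (d), forward identification, and main obstacle.} From the preshock state at $t=\ts$ with local expansion \eqref{exp:xs} and globally $C^{1,\frac{1}{3}}$ profile, I would solve \eqref{euler:rv}--\eqref{euler-DRV} backward on $[\ti,\ts]$ using classical hyperbolic well-posedness: the cube-root cusp at $r_*$ is the tangency of a \emph{forward} fold of the fast characteristic map and unfolds smoothly under backward flow on a short interval $[\ti,\ts]$. Setting $(w_{\mathrm{in}},z_{\mathrm{in}},b_{\mathrm{in}}):=(w,z,b)(\cdot,\ti)$ supplies the initial data; the quiescent core is preserved because it is untouched by the backward characteristic cone emanating from the composite shock front. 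Running the evolution forward, $C^{1,\frac{1}{3}}$ well-posedness up to the first gradient catastrophe reproduces Stage (d); uniqueness of the regular shock solution in the sense of \cref{def:regular:shock}, together with finite propagation speed, identifies the forward evolution with the backward construction on $[\ts,\tf]$ and hence with Guderley on $[\tf,0)$. The main obstacle is Stage (c): constructing $\l$ so that the jumps extinguish at precisely the rate needed to realize \eqref{exp:xs}, while simultaneously matching $\s$ at $\tsh$ to the order required for the DRV system \eqref{euler-DRV} to admit a $C^1$ joint solution across the junction. This is a delicate nonlinear inverse problem: the Riccati dynamics along the engulfed characteristic dictate the asymptotics of $\dot\l,\ddot\l$ and of the jumps as $t\searrow\ts$, and those must be reconcilable with the strong-shock terminal data handed down from Stage (b). Establishing this compatibility, and the corresponding Hölder regularity of the two one-sided flows across $\l$, is where most of the technical effort will be concentrated.
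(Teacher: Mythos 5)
Your four-stage backward decomposition and the two-curve trick ($\s$ for the strong-shock regime, a smoother $\l$ near the preshock) are the right skeleton, and you correctly identify tuning $\l$ to realize \eqref{exp:xs} as the technical crux. But Stage~(b) contains a step that would fail. You keep the minus (ahead) state quiescent, $(u,c,\rho)=(0,0,1)$, on the whole interior $\{r<\s(t)\}$ for $t\in[\tsh,\tf]$, and solve a one-sided backward problem only on $\{r>\s(t)\}$. With a cold quiescent ahead state ($p^-=c^-=0$) the shock always has infinite Mach number; the strong-shock jump formulas \eqref{u-c-rho-plus} give $\dot\s-\lambda_1\big|_\s^+=\tfrac{\sqrt{2\gamma(\gamma-1)}-(\gamma-1)}{\gamma+1}\,|\dot\s|$, a fixed positive multiple of $|\dot\s|$ for every $\gamma>1$, so the preshock condition $\dot\s=\lambda_1\big|_\s^+$ can only be reached at $\dot\s=0$, where all jumps in $u,c$ also vanish and the putative preshock degenerates to a spatially constant state with no $C^{1/3}$ cusp. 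In short, one cannot extinguish a shock down to a genuine preshock while keeping the gas ahead of it cold and at rest. The paper avoids this by flipping the roles of interior and exterior: it sets the exterior $\{r>\s(t)\}$ to be \emph{exactly} the Guderley solution (which has positive pressure and sound speed along $\s$, since $\s\ge\g$), and solves a backward problem in the interior $\{r<\s(t)\}$, where the Rankine--Hugoniot--inverted trace then satisfies $c^-\ge\mo>0$ (Eq.~\eqref{lower:bound:rsm}) and the shock can genuinely weaken. This also fixes a well-posedness defect in your version: backward in time, only the $\lambda_1^+$ family is outgoing from $\s$ into $\{r>\s\}$, so at most one scalar may be prescribed on $\s$, yet RH inversion against a quiescent minus state returns a full three-component trace, overdetermining the one-sided problem for a generically prescribed $\s$; in the paper's interior problem all three backward characteristic families emanate from $\s$ so the RH trace is exactly the right amount of data, and $\s$ itself is generated by the closed ODE \eqref{s-ode} parameterized by a free modulation function $g(t)$.

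Stages~(c)--(d) also need more than you give them: producing the $C^{1/3}$ cusp requires the quantitative gap rate $\dot\l-\lambda_1\big|_\l^+\sim(t-\ts)^{1/2}$ (Definition~\ref{def:admissible:pair}) rather than merely letting the jumps extinguish, plus a further tuning of $\ddot\l(\ts)$ (Lemma~\ref{lemma:modulate}) to cancel the asymmetry in the $(r-r_*)^{2/3}$ coefficient; and the backward unfolding in Stage~(d) is not covered by classical $C^{1,\alpha}$ well-posedness since the data is only $C^{1/3}$ at $r_*$ --- one needs the degenerate fast-acoustic chart $\psi(x,\ts)=r_*+\tfrac{x^3}{3}$ of \eqref{choice:psi:smooth} together with the weighted transport estimates of Lemmas~\ref{lemma:max:p:z} and \ref{lemma:2weights}, whose output is only $C^{1,1/3}$ with residual kinks propagating along the $\lambda_2$ and $\lambda_3$ characteristics through $(r_*,\ts)$, which your sketch does not track.
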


\begin{figure}[htb!]
\centering
\begin{tikzpicture}[
 	font=\sffamily\small,
 	scale=1.2,
 	>=latex
]
 	\coordinate (P) at ({6/pow(10,0.7) * pow(10-4,0.7)}, 4);
 	\coordinate (Q) at ({6/pow(10,0.7) * pow(10-3,0.7)}, 3);
 	\coordinate (R) at ({6/pow(10,0.7) * pow(10-8,0.7)}, 8);
 	
 	\coordinate (rin3) at (5.25, 1); 	
 	\coordinate (rstar) at (4.7, 1); 
 	\coordinate (rin2) at (5.75, 1); 	
 	\coordinate (rin1) at (6.25, 1); 	
 	\coordinate (r01) at (7.0, 1); 	

 	\fill[cyan!20, opacity=0.8] (R |- 0,1) -- (R) -- plot[domain=8:9.99, samples=50, variable=\t] ({6/((10)^0.7) * (10-\t)^0.7}, {\t}) -- (0,10) -- (0,1) -- cycle;

 	\fill[white] (R |- 0,1) -- (rin1) -- (Q) -- (Q |- R) -- cycle;

 	\begin{scope}
 	 	\clip (rin1) -- (Q) -- plot[domain=3:9.99, samples=100, variable=\t] ({6/((10)^0.7) * (10-\t)^0.7}, {\t}) -- (8.5, 10) -- (8.5, 1) -- cycle;
 	 	\fill[yellow!20, opacity=0.8] (0,1) rectangle (8,10);
 	\end{scope}

 	\fill[white] (Q) -- (P) -- (r01) -- (rin1) -- cycle;

 	
 	\draw[->, thick] (0, 1) -- (8.5, 1);
 	\node at (8.5, 0.8) {Space r};
 	\draw[->, thick] (0, 1) -- (0, 10.5) node[above] {Time t};

 	\draw[orange, ultra thick] plot[domain=3:8, samples=100, variable=\t] ({6/((10)^0.7) * (10-\t)^0.7}, {\t});
 	\draw[red!80!black, ultra thick] plot[domain=8:9.99, samples=50, variable=\t] ({6/((10)^0.7) * (10-\t)^0.7}, {\t});
 	
 	\coordinate (g_label_pos) at ({6/pow(10,0.7) * pow(10-9.5,0.7)},9.5);
 	\node[anchor=west, xshift=4pt, text=red!80!black] at (g_label_pos) {$\g(t)$};
 	\coordinate (s_label_pos) at ({6/pow(10,0.7) * pow(10-5.5,0.7)-0.1},5.5);
 	\node[anchor=west, xshift=4pt, text=red!80!black] at (s_label_pos) {$\s(t)$};
 	\coordinate (n_label_pos) at ({6/pow(10,0.7) * pow(10-3.5,0.7)},3.5);
 	\node[anchor=west, xshift=-22pt, text=red!80!black] at (n_label_pos) {$\l(t)$};
 	
 	
 	\node[blue!60!black, align=center] at (1.1, 6.5) {Rest State \\ $(u, \rho, c) \hspace{1.25em}$ \\ $= (0,1,0)$};
 	\node[orange!80!black, align=center] at (6.5, 6) {Guderley's \\ Self-Similar \\ Solution};

 	\node[rotate=90, anchor=center, red!60!black, align=center] at (-0.5, 9) {\scriptsize Guderley\\ \scriptsize Solution};
 	
 	\draw[dotted, thick] (0, 8) -- (8, 8);
 	\node[anchor=east] at (0, 8) {$\tf$};
 	\node[rotate=90, red!60!black, align=center, anchor=center] at (-0.5, 6) {\scriptsize Shock Development\\ \scriptsize Weak to Strong };

 	\draw[dotted, thick] (0, 10) -- (8, 10);
 	\node[anchor=east] at (0, 10) {$0$};	

 	\draw[dotted, thick] (0, 4) -- (8, 4);
 	\node[anchor=east] at (0, 4) {$\tsh$};
 	\node[rotate=90, red!60!black, align=center, anchor=center] at (-0.5, 3.5) {\scriptsize Weak \\ \scriptsize Shock};

 	\draw[dotted, thick] (0, 3) -- (8, 3);
 	\node[anchor=east] at (-0.1, 3) {$\ts$};
 	
 	\node[anchor=east] at (0, 1) {$\ti$};
 	\node[rotate=90, red!60!black, align=center, anchor=center] at (-0.5, 2) {\scriptsize Shock\\ \scriptsize Formation};

 	\draw[dotted, thick] (R) -- (R |- 0,1) node[below] {$r_{\mathsf{fin}}$};
 	
 	\fill (P) circle (2pt);
 	\fill (Q) circle (2pt);
 	\fill (R) circle (2pt);

 	\path[draw,blue!70!black,%
 	 	 	decoration={%
 	 	 	 	markings,%
 	 	 	 	mark=at position 0.5 	with {\arrow[scale=1.25]{stealth}},%
 	 	 	},%
 	 	 	postaction=decorate] (rin3) -- (Q) node[pos=0.2, left=-0.2em, blue!70!black] {$\eta$};
 	\path[draw,purple!80!black,%
 	 	 	decoration={%
 	 	 	 	markings,%
 	 	 	 	mark=at position 0.5 	with {\arrow[scale=1.25]{stealth}},%
 	 	 	},%
 	 	 	postaction=decorate] (rin2) -- (Q) node[pos=0.25, right=-0.1em, purple!80!black] {$\phi$};
 	\path[draw,green!60!black,%
 	 	 	decoration={%
 	 	 	 	markings,%
 	 	 	 	mark=at position 0.5 	with {\arrow[scale=1.25]{stealth}},%
 	 	 	},%
 	 	 	postaction=decorate] (rin1) -- (Q) node[pos=0.5, right, green!60!black] {$\psi$}; 	 	
 	\path[draw,green!60!black,%
 	 	 	decoration={%
 	 	 	 	markings,%
 	 	 	 	mark=at position 0.5 	with {\arrow[scale=1.25]{stealth}},%
 	 	 	},%
 	 	 	postaction=decorate] (r01) -- (P) node[pos=0.5, right, green!60!black] {$\psi$}; 	 	 	
 	\draw[dotted, black, thick] (rstar) -- (Q) node[pos=0.5, above, black] {$ $};

 	\node[below] at (rin3) {$r_{\mathsf{in}}^{(3)}$};
 	\node[below] at (rstar) {$r_*$};
 	\node[below] at (rin2) {$r_{\mathsf{in}}^{(2)}$};
 	\node[below] at (rin1) {$r_{\mathsf{in}}^{(1)}$};
 	\node[below] at (r01) {$r_0^{(1)}$};
 
\end{tikzpicture}

\caption{\footnotesize The global picture showing the main result. The \emph{initial data} is specified at time $t=\ti$; this data is $C^2$-smooth except for the points $\{r_{\mathrm{in}}^{(i)} \}_{i=2}^{3}$, which are the backwards-in-time images of the preshock along the wave speeds $\{\lambda_i\}_{i=2}^3$; at these locations, the initial data has $C^{1,{\frac{1}{3}}}$ regularity. \emph{Shock formation} occurs on the time interval $[\ti,\ts)$. At time $t=\ts$ and position $r=r_*$ a $C^{\frac{1}{3}}$ preshock develops for the Riemann-variable $z$; the Riemann-variables $w$ and $b$ remain $C^{1,{\frac{1}{3}}}$-smooth. \emph{Shock development} occurs on the time interval $(\ts,0)$; this interval is sub-divided intro three pieces. On $(\ts,\tsh)$ we transition from the preshock into a weak-shock, for all three Riemann variables; on either side of the shock surface $\{r = \s(t)\}$ the solution is $C^\infty$ smooth in space. On $(\tsh,\tf)$ we transition from a weak-shock into the exact Guderley shock state (the self-similar profile evaluated at $\tf$). On $(\tf,0)$ the dynamics is precisely described by the exact Guderley imploding shock. We emphasize that in the light blue region of spacetime, the solution is at ``rest'', meaning that velocity and sound speed equal vanish identically, while the density is a constant. In the yellow region of spacetime, the solution exactly coincides with the Guderley solution, when restricted to that spacetime. We also emphasize that the shock surface $\{r=\s(t) \colon t\in (\tsh,\tf)\} \cup \{ r = \l(t) \colon t \in (\ts, \tsh]\}$ cannot be determined solely by knowledge of the Guderley data along the time-slice$\{t = \tf\}$. Hence, we \emph{choose} the shock surface $ \{ r=\s(t) \colon t \in (\tsh, \tf)\}\cup \{ r = \l(t) \colon t \in (\ts, \tsh]\}$ and the trace $\lambda_1(\l(t)^+, t)$ for~$t \in (\ts, \tsh)$. This gives us all the data required in order to \emph{uniquely} solve the Euler equations in the white region.}
\label{fig:global:1:tikz}
\end{figure}
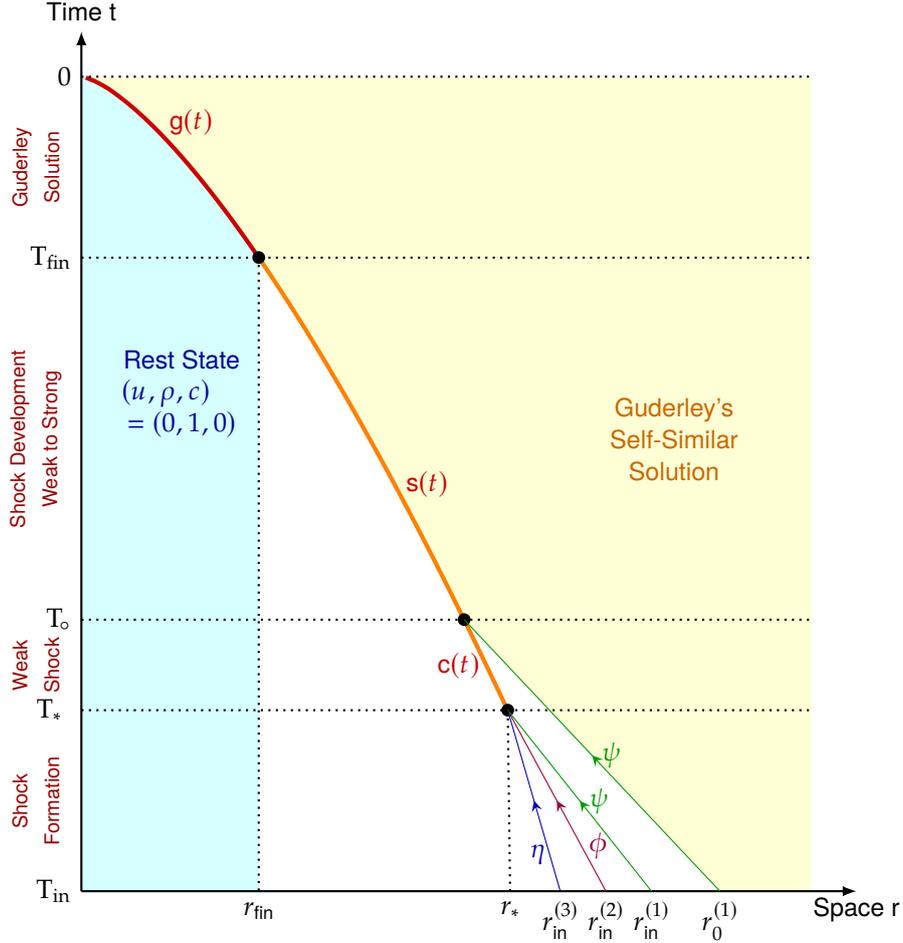

\begin{corollary}[Global-in-time Euler evolution from classical data]
\label{cor:global}
For spatial dimension $d \in \{2,3\}$, there exists an initial time $\ti < 0$ and radially symmetric initial data $(u_{\mathrm{in}}, \rho_{\mathrm{in}}, E_{\mathrm{in}})$ belonging to the space $C^{1,{\frac{1}{3}}}(\R^+)$, such that the corresponding unique solution $(u,\rho,E)$ to the Euler equations \eqref{euler:md} exists globally in time on the interval $[\ti, \infty)$.

This solution evolves through the complete sequence of physical phenomena described in Theorem~\ref{sec:main:result}: smooth evolution, shock formation, transition to the strong Guderley implosion at $t=0$, and subsequent reflection outward as the strong, self-similar Guderley blast wave for all $t>0$.
\end{corollary}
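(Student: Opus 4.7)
The plan is to concatenate the solution produced by the main theorem on the half-open interval $[\ti, 0)$ with the reflected Guderley blast-wave solution on $[0,\infty)$ constructed in~\cite{JaJiSc2024}, and to verify that the spliced function is a weak solution of~\eqref{euler:md} satisfying the appropriate entropy/admissibility conditions on the full interval $[\ti,\infty)$.

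First, I would invoke the main theorem to obtain the initial time $\ti < 0$, the $C^{1,\frac{1}{3}}(\R^+)$ radial initial data $(u_{\mathrm{in}}, \rho_{\mathrm{in}}, E_{\mathrm{in}})$, and the corresponding unique radial regular shock solution $(u,\rho,E,\mathcal S)$ defined on $\R^d \times [\ti,0)$. By items~(4)--(5) of that theorem, this solution coincides \emph{identically} with the Guderley self-similar imploding profile of~\S\ref{sec:Guderley} on $[\tf, 0)$, with shock front $r=\g(t)=(-t)^{1/\lambda}$; in particular, its trace as $t\nearrow 0$ is precisely the focusing Guderley state used as the ``initial'' condition for the reflected blast wave in~\cite{JaJiSc2024}. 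Thus the $t\to 0^-$ limit of our solution is \emph{exactly} the Cauchy datum for which~\cite{JaJiSc2024} produces a global-in-time radial weak solution $(u_{\mathrm{R}},\rho_{\mathrm{R}},E_{\mathrm{R}},\mathcal S_{\mathrm{R}})$ on $\R^d\times[0,\infty)$, describing the outgoing reflected shock emanating from the origin at $t=0$.

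Second, I would define the global-in-time candidate by
\begin{equation*}
(u,\rho,E)(x,t):=
\begin{cases}
(u,\rho,E)(x,t), & t\in[\ti,0),\\
(u_{\mathrm{R}},\rho_{\mathrm{R}},E_{\mathrm{R}})(x,t), & t\in[0,\infty),
\end{cases}
\end{equation*}
and check that it is a weak solution of~\eqref{euler:md} on $[\ti,\infty)$. For any test function $\varphi \in C_c^\infty(\R^d\times(\ti,\infty))$ I would split the integration at $t=0$ and use that both pieces are weak solutions on their respective half-intervals; the boundary terms at $t=0$ cancel because the traces at $t=0^-$ and $t=0^+$ agree pointwise away from the origin (by the exact matching with the incoming/outgoing Guderley profile) and the focusing singularity $\{r=0,\,t=0\}$ is a measure-zero set on which $\rho$ remains bounded. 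The Rankine--Hugoniot conditions hold along $\mathcal S \cup \mathcal S_{\mathrm{R}}$ by construction, the Lax entropy conditions on each piece are inherited from the main theorem and from~\cite{JaJiSc2024}, and uniqueness in the class of radial regular shock solutions then follows by combining the uniqueness assertion of Theorem~\ref{sec:main:result} on $[\ti,0)$ with the uniqueness of the radial reflected Guderley blast wave established in~\cite{JaJiSc2024}.

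The main obstacle is the matching across the focusing instant $t=0$, where the flow is singular at $r=0$ (velocity to $-\infty$, energy to $+\infty$). Everything hinges on showing that the spliced function is a genuine distributional solution despite this singularity; but since our solution coincides with the \emph{exact} Guderley self-similar profile on $[\tf,0)$ and the reflected solution of~\cite{JaJiSc2024} is built precisely to continue that profile through $t=0$ as a self-similar outgoing shock, the matching is automatic and the proof reduces to a routine verification of the weak formulation and a citation of the uniqueness statements on either side of $t=0$.
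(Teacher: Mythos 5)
Your proposal is correct and takes essentially the same route as the paper: invoke the main theorem for the implosion phase on $[\ti,0)$, invoke \cite{JaJiSc2024} for the reflected blast wave on $[0,\infty)$, and concatenate at $t=0$ using the exact agreement with the Guderley profile on $[\tf,0)$. You supply somewhat more detail on the weak-formulation matching at the focusing instant than the paper's terse two-bullet proof, but the underlying argument is identical.
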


\begin{proof}[Proof of \cref{cor:global}]
The proof relies on combining the construction in Theorem~\ref{sec:main:result} with the established theory for the reflected Guderley shock.

\begin{enumerate}
    \item \textsl{Implosion Phase ($t \in [\ti, 0)$):} Theorem~\ref{sec:main:result} establishes the existence of a unique solution emanating from $C^{1,{\frac{1}{3}}}$ initial data at $t=\ti$. This solution evolves to exactly match the Guderley self-similar imploding profile on an interval $[\tf, 0)$, culminating in the implosion singularity at $t=0$.

    \item \textsl{Explosion Phase ($t \ge 0$):} The Guderley solution possesses a unique continuation past the implosion time $t=0$ as a reflected, diverging strong shock wave (a blast wave). The rigorous existence, uniqueness, and analyticity of this reflected self-similar solution for $t\ge 0$ were established in \cite{JaJiSc2024}.
\end{enumerate}
By concatenating the solution constructed in Theorem~\ref{sec:main:result} on the time interval $[\ti, 0)$ with the unique reflected Guderley solution on $[0, \infty)$, we obtain the desired global-in-time solution.
\end{proof}

\begin{remark}[\textsl{Sharp Regularity of the Initial Data: Finitely Differentiable Structure}]
The initial data constructed in Theorem~\ref{sec:main:result} is globally $C^{1,{\frac{1}{3}}}(\R^+)$. This guarantees that the first derivatives of the physical variables are bounded and ${\frac{1}{3}}$-H\"older continuous everywhere. However, the higher-order regularity profile is structured and distinguishes between regions outside and inside the backward acoustic cone emanating from the preshock (the interval $[r_{\mathrm{in}}^{(3)}, r_0^{(1)}]$ in Figure~\ref{fig:global:1:tikz}).

\begin{enumerate}
    \item \textsl{Exterior Regions ($C^\infty$):} Outside the acoustic cone ($r < r_{\mathsf{in}}^{(3)}$ and $r > r_{\mathsf{in}}^{(1)}$), the solution is $C^\infty$. These regions include the quiescent core ($r<r_{\mathsf{fin}}$) and the smooth Guderley profile ($r>r_{0}^{(1)})$.

    \item \textsl{Interior Region (Finitely Differentiable, not $C^\infty$):} Inside the acoustic cone, the solution is only finitely differentiable. It is not $C^\infty$. This arises from two complementary perspectives:
    
    \begin{itemize}
        \item[a)] \textsl{Construction Methodology:} The backward evolution (Section 5) utilizes a coordinate transformation (Eq. \eqref{choice:psi:smooth}) designed to regularize the $C^{1/3}$ preshock cusp. This transformation maps the physical preshock profile (Eq. \eqref{exp:xs}) into a state that is generally only $C^2$ at $t=\ts$. This occurs because the $O(r-r_*)$ term in the expansion \eqref{exp:xs} generally possesses asymmetry (different coefficients on the left and right of $r_*$), which translates into a jump in the third derivative of the transformed variables. The backward evolution propagates this finite regularity, resulting in initial data that is generally $C^2$ (locally $W^{2,\infty}$), but not $C^\infty$.
        
        \item[b)] \textsl{Nonlinear Contamination:} For nonlinear hyperbolic systems, localized lack of smoothness can affect the entire domain of dependence. The coefficients of the Euler system (the wave speeds $\lambda_i$) depend on the solution; since the solution is globally only $C^{1,1/3}$ within the cone (see Item 3a below), the coefficients are also only $C^{1,1/3}$, which generally prevents the solution from attaining $C^\infty$ regularity.
    \end{itemize}

    \item \textsl{Localized Singularities:} The regularity is further reduced at specific points corresponding to the characteristics emanating from the preshock:

    \begin{enumerate}
        \item \textsl{Slow/Entropy Characteristics ($r_{\mathrm{in}}^{(2)}, r_{\mathrm{in}}^{(3)}$) —  $C^{1,{\frac{1}{3}}}$:}
        The singularity propagates backward along the $\phi$ (entropy) and $\eta$ (slow acoustic) characteristics. The backward regularization yields  $C^{1,{\frac{1}{3}}}$ regularity at $r_{\mathrm{in}}^{(2)}$ and $r_{\mathrm{in}}^{(3)}$.
        
        While the first derivatives are bounded, the second derivatives are unbounded near these points. The analysis provides the upper bound: $|\partial_r^2 u(r)| = O\left(|r-r_{\mathrm{in}}^{(i)}|^{-{\frac{2}{3}}}\right)$ as $r \to r_{\mathrm{in}}^{(i)}$. Thus, the solution is not $W^{2,\infty}$ globally inside the cone.

        \item \textsl{Fast Characteristic ($r_{\mathrm{in}}^{(1)}$) —  $C^2$:}
        A special cancellation occurs along the fast characteristic $\psi$, due to the engineered symmetry of the preshock profile up to the $(r-r_*)^{{\frac{2}{3}}}$ order (Lemma 4.17). This ensures the second derivatives remain bounded and continuous ($C^2$) at $r_{\mathrm{in}}^{(1)}$.
        
        However, the higher-order asymmetry mentioned in Item 2a results in a jump discontinuity in the third derivative at $r_{\mathrm{in}}^{(1)}$. The solution here is precisely $C^2$, but generally fails to be $C^3$.
    \end{enumerate}
\end{enumerate}
    
In summary, the initial data is characterized by $C^\infty$ exterior regions connected by a finitely differentiable interior region ($C^2$ almost everywhere), with localized singularities where the regularity drops precisely to $C^{1,{\frac{1}{3}}}$. This structured lack of global smoothness is necessary to generate the precise shock formation described in the theorem.
\end{remark}

\begin{remark}[\textsl{Cancellation of Weak Characteristic Singularities}]
A crucial subtlety of this construction is that the initial data cannot be generic and smooth. If one were to start with generic $C^2$ initial data at $t=\ti$ that 
evolves to a preshock, then for times $t>\ts$, weak characteristic singularities would necessarily emanate from the preshock point $(r_*, \ts)$. Specifically, 
one would expect a \textit{weak contact discontinuity} to propagate along the particle path characteristic $\phi$ and a \textit{weak rarefaction wave} to 
propagate along the slow acoustic characteristic $\eta$.

The presence of these additional singular waves would render the solution non-smooth on either side of the main shock front, making it structurally 
inconsistent with the target Guderley solution, which consists only of a single, clean shock front collapsing into a quiescent core. Therefore, a key 
feature of the backwards-in-time proof is that it constructs a very specific, non-generic initial data set. The precise $C^{1,{\frac{1}{3}}}$ nature of this data is 
engineered to create a delicate cancellation at the preshock instant, which is necessary to prevent the formation of these unwanted weak characteristic 
singularities and ensure the resulting solution is smooth away from the main shock.
\end{remark}

\begin{remark}[\textsl{Non-uniqueness of the initial data}]
While the forward-in-time evolution from any single constructed initial data set is unique, the initial data itself is not. Our backwards-in-time proof generates 
an entire family of infinitely many valid initial data sets, each of which evolves to match the Guderley solution perfectly at time $\tf$.

This non-uniqueness is a direct consequence of the degrees of freedom available when prescribing the shock front's history for $t < \tf$. The 
Rankine-Hugoniot conditions constrain the relationship between the fluid states and the shock speed, but they do not uniquely determine the shock path 
backwards in time. Our construction leverages this freedom by making specific choices for the shock curve from admissible open sets of functions. 
Specifically, the infinite dimensionality of the set of initial data stems from:
\begin{itemize}
    \item The choice of the shock curve $\s(t)$ on the time interval $[\tsh, \tf]$, which transitions the solution from the strong Guderley shock to a weak shock.
    \item The subsequent choices of a refined shock curve $\l(t)$ and the wavespeed trace $\lambda_1|_{\l}^+$ on the interval $[\ts, \tsh]$, which transition the weak shock into the desired $C^{\frac{1}{3}}$ preshock cusp.
\end{itemize}
Each distinct choice of this shock history generates a different (but still valid) preshock state at time $\ts$, which in turn corresponds to a unique member of the infinite family of initial data sets at time $\ti$.
\end{remark}

\begin{remark}[\textsl{Forward-in-time uniqueness}]
In contrast to the non-uniqueness of the initial data, the forward-in-time evolution from any single data set $(u_{\mathrm{in}}, \rho_{\mathrm{in}}, b_{\mathrm{in}})$ constructed in our proof is unique. This uniqueness is established piece-by-piece across the three distinct phases of the evolution:

\begin{itemize}
    \item \textsl{Classical Flow ($t \in [\ti, \ts]$):} In the interval leading up to the singularity, the solution is classical. Standard well-posedness theories \cite{Friedrichs1948,Kato1975} are not directly applicable here, as the sound speed and pressure both vanish and hence ensure a failure of the strict hyperbolicity required by such existence theorems.  Nevertheless, for the specific case of radial symmetry, local well-posedness in $C^{1, \alpha }$, $\alpha \in (0,1)$, is guaranteed by the method of characteristics.

    \item \textsl{From Preshock to Weak Shock ($t \in (\ts, \tsh]$):} Once the preshock forms at $t=\ts$, uniqueness continues to hold within the specific class of radial regular shock solutions emanating from the preshock. The precise statement of this  uniqueness, which relies on appropriate a priori bounds, is given in Lemma~\ref{lemma:uniqueness:preshock} in Appendix~\ref{app:uniqueness}.

    \item \textsl{Regular Shock Evolution ($t \in (\tsh, 0)$):} For the subsequent evolution, where the solution is a regular strong shock that eventually matches the Guderley profile, uniqueness is guaranteed by the classical theory for multi-dimensional shock fronts, as established by Majda \cite{Ma1983a,Ma1983b}.
\end{itemize}
Thus, a unique evolutionary path is established at each stage, from the classical initial data to the final implosion.
\end{remark}

\begin{remark}[\textsl{Qualitative nature of the initial data}]
It is important to note that our theorem is a constructive existence proof; the initial data at $t=\ti$ is not given by an explicit formula. Its existence is guaranteed by our backwards-in-time construction, which involves evolving the complex  Guderley solution backward through the nonlinear Euler equations. An analytical 
representation of this process is intractable.

Nevertheless, we can describe the essential qualitative features that this initial data must possess. Referring to Figure~\ref{fig:global:1:tikz}, the profiles of the Riemann variables are highly asymmetric around their respective points of non-smoothness 
($r_{\mathrm{in}}^{(1)}, r_{\mathrm{in}}^{(2)}, r_{\mathrm{in}}^{(3)}$). For the dominant variable $z$, the profile must be significantly steeper on the left side (smaller radii) of $r_{\mathrm{in}}^{(1)}$. This carefully engineered gradient is precisely what is required to ensure that the characteristics originating from a 
wide spatial interval at $t=\ti$ will all focus and converge at the single preshock point $(r_*, \ts)$ at the later time. A similar asymmetric steepness is required 
for the $w$ and $b$ profiles to achieve this focusing.

A crucial aspect of this construction is the avoidance of additional, spurious shocks. One might expect that the large initial gradients required by the 
construction would themselves lead to new, independent gradient catastrophes. This is prevented by the global dynamics of the solution. The primary shock 
front, which travels inward along the path $r=\s(t)$, moves with sufficient speed to overtake these steep regions, effectively consuming them before they have 
the time to form new singularities. The dynamics are entirely dominated by the single, coherent implosion front that evolves into the Guderley shock.\end{remark}

\begin{remark}[\textsl{Construction within the Guderley Acoustic Cone}]
It is an important feature of our result that the entire constructed evolution, from the initial data at $t=\ti$ to the matching time $t=\tf$, takes place at radii 
smaller than the trajectory of the Guderley sonic point. This trajectory can be understood as the boundary of the acoustic cone emanating backward-in-time 
from the final implosion event at $(r=0, t=0)$.

Therefore, our initial data, the formation of the preshock, and the entire shock development process are constructed to exist strictly within the region of 
spacetime that is in causal contact with the final singularity. This avoids any complexities of the flow at larger radii (beyond the sonic point) and shows that the 
mechanism for generating the Guderley shock from smooth data can be contained entirely within its core acoustic domain.
\end{remark}

\begin{remark}[\textsl{Regularization from preshock to initial data}]
At $t=\ts$ the dominant Riemann variable $z$ has a $C^{\frac{1}{3}}$ cusp at $r=r_*$; see the expansion \eqref{taylor:rs}. The improvement to $C^{1,{\frac{1}{3}}}$ when evolving backward to $t=\ti$ is \emph{not} due to any dissipative mechanism. It is purely kinematic, coming from a degenerate but monotone reparametrization of space, tailored to the fast--acoustic characteristics.

The construction fixes the fast--acoustic ``Lagrangian'' chart $(x,s)$ by solving
\[
\partial_s \psi(x,s)=\lambda_1\!\circ\!\psi(x,s)\,, \qquad 
\psi(x,\ts)=r_*+\tfrac{x^3}{3} \,;
\]
see \eqref{choice:psi:smooth}. The terminal condition forces the Jacobian $J=\partial_x\psi(\cdot,\ts)=x^2$ to vanish quadratically at $x=0$, matching the preshock geometry. Composing the data with $\psi$ removes the cusp and we have that
\[
Z(x,\ts):=z(\ts,\psi(x,\ts))=z(\ts,r_*)+\mathsf{a}\,x+\mathsf{b}\,x^2+O(x^3),
\]
and similarly $W(\cdot,\ts)$ and $S(\cdot,\ts)$ become smooth in $x$ (substitute $r-r_*={\frac{x^3}{3}} $ in \eqref{taylor:rs}).\footnote{Here, we are using the convention that the
capitalized version of a  symbol (or letter) represents the composite function,  with the composition taken with the fast acoustic flow map $\psi$ as defined below in \eqref{ring:def}.}
Thus the ``bad” factor $(r-r_*)^{-\frac{2}{3}}$ in $\partial_r z$ is exactly canceled by $J=x^2$ via the chain rule.

In these variables (see \eqref{ring:def}) the composition with the fast acoustic flow $\psi$ produces a first–order system with weights $J$ and $\Sigma$ (e.g. \eqref{euler:bp:lag}, \eqref{eq:J:sl}–\eqref{eq:JrZ:sl}). Derivatives propagate along the $x$–trajectories $\Upsilon_3,\Upsilon_2$ defined in \eqref{ups:1}–\eqref{ups:2}, and the bootstrap bounds \eqref{bt:s}–\eqref{bt:e} give
\[
|\partial_s \mathring{W}(x,s)| \lesssim (t-\Upsilon_3(x))^{-\frac{2}{3}}, 
\qquad
|\partial_s  \mathring{S}(x,s)| \lesssim (t-\Upsilon_2(x))^{-\frac{2}{3}}.
\]
The corresponding ``weights” are integrable with a $\frac{1}{3}$-derivative gain (see \eqref{bounds:weight}–\eqref{bounds:weight:3}), so integrating in $s$ yields $\partial_x(W,Z , \mathsf{S})(\cdot,t)\in C^{0,{\frac{1}{3}}}_x$ for every $t<\ts$. Since $J(\cdot,t)=\partial_x\psi(\cdot,t)$ is strictly positive and $C^1$ for $t<\ts$, the map $x\mapsto r=\psi(x,t)$ is a $C^2$ diffeomorphism. Pulling back to the physical coordinate $r$ then gives
\[
(w,z,b)(\cdot,t)\in C^{1,{\frac{1}{3}}}_r \quad \text{for every } t\in[\ti,\ts),
\]
which is the regularity asserted in Proposition~\ref{prop:pre2smooth}.

In short,  the cubic terminal labeling $\psi(\cdot,\ts)=r_*+\tfrac{x^3}{3}$ \emph{flattens} the preshock cusp; the characteristic system carries this smoothness backward; and invertibility of $\psi(\cdot,t)$ for $t<\ts$ transfers it back to $r$, producing $C^{1,{\frac{1}{3}}}$ initial data.
\end{remark}

\section{Strategy of the proof}
\label{sec:strategy}

Our main theorem asserts the existence of a specific, non-generic set of initial data at an initial time $t=\ti$,  which generates a unique solution to the Euler equations that undergoes shock formation and development, culminating in an exact match with the Guderley self-similar imploding shock at a later time $t=\tf$.

To prove this forward-in-time existence claim, our strategy is to reverse the problem. Instead of attempting to evolve an unknown initial state forward into a known final state, we begin with the well-understood Guderley solution at the terminal time $t=\tf$ and solve the Euler equations backwards in time. The objective of this backwards construction is to explicitly identify a valid preshock state at an intermediate time $t=\ts$ and, ultimately, the required $C^{1,{\frac{1}{3}}}$ initial data at $t=\ti$. The proof is organized into the following three main steps.

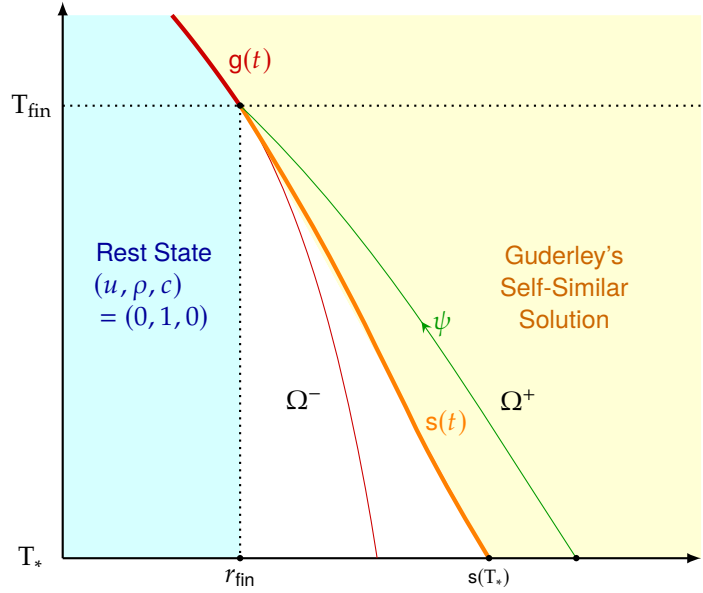
\begin{figure}[htb!]
\centering
\begin{tikzpicture}[
 	font=\sffamily\small,
 	scale=1.2,
 	>=latex
]
 	
 	\coordinate (Q) at ({6/pow(10,0.7) * pow(10-3,0.7)}, 3);
 	
 	\coordinate (R) at ({6/pow(10,0.7) * pow(10-8,0.7)}, 8);
 	\coordinate (rright) at ({6/pow(10,0.7) * pow(10-8,0.7)}, 3);
 	\coordinate (rfake) at ($(rright)!0.55!(Q)$);
 	\coordinate (rleft) at ($(Q)!-0.35!(rright)$);
 	\coordinate (a_point) at 	({6/pow(10,0.7) * pow(10-6,0.7)-0.055}, 6);
 	\coordinate (eta_3_anchor) at ($(rright)!0.25!(Q)$);
 	\coordinate (eta_2_anchor) at ($(rright)!0.5!(Q)$);
 	\coordinate (eta_1_anchor) at ($(rright)!0.75!(Q)$);

 	\fill[cyan!20, opacity=0.8] (R |- 0,3) -- (R) -- plot[domain=8:9, samples=50, variable=\t] ({6/((10)^0.7) * (10-\t)^0.7}, {\t}) -- (0,9) -- (0,3) -- cycle;

 	\begin{scope}

 	 	\clip (7,3)-- (Q) -- plot[smooth, tension = 1] coordinates {(R) (2.55,7) (3.1,6) (3.6, 5) (4.1,4) (Q)} -- plot[domain=8:9, samples=50, variable=\t] ({6/((10)^0.7) * (10-\t)^0.7}, {\t}) -- ({6/((10)^0.7) * (10-9)^0.7}, 9) -- (7, 9) -- cycle;
 	 	
 	 	\fill[yellow!20, opacity=0.8] (0,1) rectangle (8,10);
 	\end{scope}
 
%
 	\draw[->, thick] (0, 3) -- (7, 3);
 	\draw[->, thick] (0, 3) -- (0, 9.15);
%
 	\draw [red!80!black, thin] plot [smooth, tension = 1] coordinates {(R) (2.8, 6) (rfake)};
 	\draw[orange, ultra thick] plot[smooth, tension = 1] coordinates {(R) (2.55,7) (3.1,6) (3.6, 5) (4.1,4) (Q)};
 	\draw[red!80!black, ultra thick] plot[domain=8:9, samples=50, variable=\t] ({6/((10)^0.7) * (10-\t)^0.7}, {\t});
%
 	\coordinate (g_label_pos) at ({6/pow(10,0.7) * pow(10-8.5,0.7)},8.5);
 	\node[anchor=west, xshift=4pt, text=red!80!black] at (g_label_pos) {$\g(t)$};

 	\coordinate (s_label_pos) at ({6/pow(10,0.7) * pow(10-4.5,0.7)-0.2},4.5);
 	\node[anchor=west, xshift=4pt, text=orange] at (s_label_pos) {$\s(t)$};

 	\node[blue!60!black, align=center] at (1, 6) {Rest State \\ $(u, \rho, c) \hspace{1.25em}$ \\ $= (0,1,0)$};
 	\node[orange!80!black, align=center] at (5.5, 6) {Guderley's \\ Self-Similar \\ Solution};

 	\draw[dotted, thick] (0, 8) -- (7, 8);
 	\node[anchor=east] at (0, 8) {$\tf$};	
 	\node[anchor=east] at (-0.1, 3) {$\ts$};
 
 	\path[draw,green!60!black,%
 	 	 	decoration={%
 	 	 	 	markings,%
 	 	 	 	mark=at position 0.5 	with {\arrow[scale=1.25]{stealth}},%
 	 	 	},%
 	 	 	postaction=decorate] (rleft) .. controls (4.5,4.75) and (3.5,6.5) .. (R) node[pos=0.5, right, green!60!black] {$\psi$}; 	 	
 	
 	\draw[dotted, thick] (R) -- (R |- 0,3) node[below] {$r_{\mathsf{fin}}$};
 	\node at (5,4.75) {$\Omega^+$};
 	\node at (2.65,4.75) {$\Omega^-$};
 	\fill (Q) circle (1pt) node[below] {$\scriptstyle \s(\ts)$};
 	\fill (R) circle (1pt);
 	\fill (rright) circle (1pt);
 	\fill (rleft) circle (1pt);

\end{tikzpicture}
\caption{{\footnotesize In thick red, we have displayed the classical Guderley shock curve~$\g$ for $t > \tf$. In thin red, we have continued the Guderley shock curve for $t \in [\ts,\tf]$, in order to emphasize the relative placement of the new, \emph{chosen}, shock curve~$\s$, which we have drawn in orange. The green $\psi$ characteristic is the backwards-in-time fast-acoustic characteristic emanating from $(\g(\tf)^+,\tf) = (\s(\tf)^+,\tf)$; the intersection of this characteristic curve with $\{t = \ts\}$ imposes an upper bound for the choice of $\s(\ts)$. In this step, we solve the Euler equations in the white shaded region $\Omega^-$, by tracing back characteristics backwards-in-time to $\{t = \ts\}$.}}
\label{fig:step:1:outline}
\end{figure}

\subsection{Step 1: From the Guderley Shock to a Weak Preshock}

The construction begins at the chosen final time $t=\tf$, where our solution is defined to be identical to the classical Guderley solution. The first step of the proof is to solve the Euler equations backwards in time on an interval $[\ts, \tf]$ to find a state at $t=\ts$ that corresponds to a preshock. The key is to prescribe a shock trajectory, $r=\s(t)$, that connects the known Guderley shock at $t=\tf$ to this yet-to-be-determined preshock state.

This prescribed shock curve $\s(t)$ is not arbitrary; it is carefully engineered to satisfy several critical properties. First, it must seamlessly match the Guderley shock curve $\g(t)$ at the handover time $t=\tf$ to a high order of smoothness. Second, for the entire interval, it must satisfy the \textsl{Lax geometric entropy conditions}~\eqref{lax}. These inequalities ensure the shock is compressive: the fast family converges on the shock front from both sides, while the remaining families are outrun by the shock ahead of it. For this shock, which moves with a speed near $\lambda_1$, this specifically means that $\lambda_1|_{\s}^+ < \ds(t) < \lambda_1|_{\s}^-$, while the other characteristic speeds remain slower than the shock, i.e. $\dot\s(t) < \lambda_j^\pm(t)$ for $j=2,3$.

Most importantly, the curve is designed such that the shock strength gradually vanishes as time evolves backward to $\ts$. This is achieved by forcing the shock speed to approach the fast acoustic wavespeed from the right, satisfying the preshock condition $\ds(\ts) = \lambda_1|_{\s}^+(\ts)$. This controlled approach dictates the rate at which the jumps across the shock decay, with $\jump{z} \sim (\ds - \lambda_1|_{\s}^+)$ as $t \searrow \ts$, where the tilde symbol ($\sim$) indicates asymptotic proportionality (i.e., the ratio of the two quantities tends to a non-zero constant).

With the shock path $\s(t)$ fixed, in the spacetime $\Omega^+=\left\{ (r,t) \in \R_+ \times (\ts, \tf] : r> \s(t)\right\}$, we let the Euler solution exactly equal the Guderley one. We then use the Rankine-Hugoniot jump conditions and the Guderley traces $(w,z,b)|_\s^+$, to obtain values for all unknowns on the ``left'' side of $\s$, namely $(w,z,b)|_\s^-$. This yields boundary conditions for the Euler equations on the future temporal boundary of the spacetime $\Omega^-=\left\{ (r,t) \in \R_+ \times (\ts, \tf] : r < \s(t)\right\}$. In the region~$\Omega^-$, we \emph{uniquely} solve the Euler equations by tracing all wave families backwards in time, up to the time slice $\{t = \ts\}$. See Figure~\ref{fig:step:1:outline}.

\subsection{Step 2: Crafting a Well-Posed Preshock} \label{strategy:3}

The construction in Step 1 successfully reduces a strong shock to a continuous state at $t=\ts$, but it comes at a cost. The required shock curve $\s(t)$ must have a singular acceleration, with $\ddot{\s}(t) \to +\infty$ as $t \searrow \ts$. This creates an ``imbalanced preshock" with mismatched regularity: the solution is $C^\infty$ on the exterior of the preshock sphere (matching the Guderley data) but only Hölder continuous on the interior. Such an asymmetric singularity cannot be generated by initial data with $C^k$ regularity for $k\ge 1$.

To resolve this technical issue, we refine the construction on a short time interval $[\ts, \tsh)$, where $\tsh$ is chosen close enough to $\ts$ that the shock from Step 1 is already very weak. We discard the singular curve $\s(t)$ on this interval and instead construct a new, highly regular shock curve, $r=\l(t)$, which is specifically engineered to produce a physically realistic preshock. The goal is to create a symmetric $C^{\frac{1}{3}}$ cusp in the dominant Riemann variable $z$. This is achieved by prescribing the precise rate at which the shock speed approaches the characteristic speed, ensuring that $\dot{\l}(t) - \lambda_1|_{\l}^+(t) \sim (t-\ts)^{\frac{1}{2}}$.

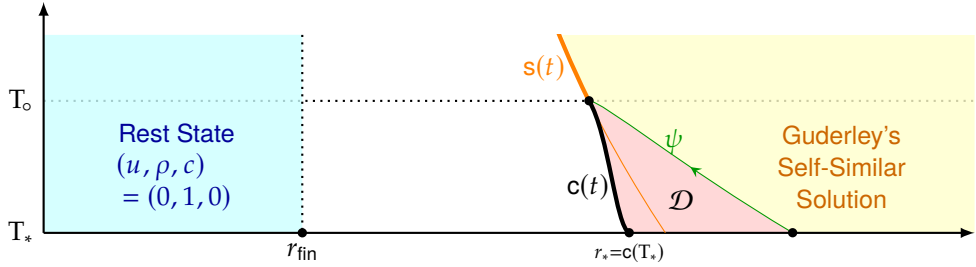
\begin{figure}[htb!]
\centering

\begin{tikzpicture}[
 	font=\sffamily\small,
 	scale=1.75,
 	>=latex
]
 	\coordinate (Q) at ({6/pow(10,0.7) * pow(10-3,0.7)}, 3);
 	\coordinate (R) at ({6/pow(10,0.7) * pow(10-5,0.7)}, 5);
 	\coordinate (R_new) at (4.1, 4);
 	\coordinate (rright) at ({6/pow(10,0.7) * pow(10-8,0.7)}, 3);
 	\coordinate (rright_top) at ({6/pow(10,0.7) * pow(10-8,0.7)}, 4.5);
 	\coordinate (rleft) at ($(Q)!-0.35!(rright)$);
 	\coordinate (n_anchor) at ($(rright)!0.9!(Q)$);
 	\coordinate (n_left) at ($(rright)!0.6!(Q)$);

 	\draw[dotted, thick] (0, 4) -- (7, 4);
 	\node[anchor=east] at (0, 4) {$\tsh$};

 	\fill[cyan!20, opacity=0.8] (rright) -- (rright_top) -- (0,4.5) -- (0,3) -- cycle;

 	\begin{scope}
 	 	\clip (7,3)-- (Q) -- plot (rleft) .. controls (4.8,3.5) and (4.2,4) .. (R_new) -- plot[smooth, tension = 1] coordinates {(R) (4.1,4) (R_new)} -- (R) -- (7, 4.5) -- cycle;
 	 	\fill[yellow!20, opacity=0.8] (rright) rectangle (7,4.5);
 	\end{scope}
 	
 	 	\begin{scope}
 	 	\clip (R_new) -- (R_new).. controls (4.25,3.75) and (4.3,3.05) .. (n_anchor) -- (n_anchor) -- (rleft) -- (rleft) .. controls (4.8,3.5) and (4.2,4) .. (R_new) -- cycle;
 	 	\fill[pink, opacity=0.6] (rright) rectangle (6,4);
 	\end{scope}

 	\draw[->, thick] (0, 3) -- (7, 3);
 	\draw[->, thick] (0, 3) -- (0, 4.75);

 \begin{scope}
 	\clip (0,4) rectangle (7,4.5);
 	\draw[orange, ultra thick] plot[smooth, tension = 1] coordinates {(R) (4.1,4) (Q)};
\end{scope}

 \begin{scope}
 	\clip (0,3) rectangle (7,4);
 	\draw[orange, thin] plot[smooth, tension = 1] coordinates {(R) (4.1,4) (Q)};
\end{scope}

 	\coordinate (n_label_pos) at (4.3,3.35);
 	\node[anchor=west, xshift=-22pt, text=black] at (n_label_pos) {$\l(t)$};
 	\draw[black, ultra thick] (R_new).. controls (4.25,3.75) and (4.3,3.05) .. (n_anchor);
 	\node[below, text=black] at (n_anchor) {$\scriptstyle r_* = \l(\ts)$};
 
 	\coordinate (s_label_pos) at ({6/pow(10,0.7) * pow(10-4.6,0.7)-0.45},4.25);
 	\node[anchor=west, xshift=4pt, text=orange] at (s_label_pos) {$\s(t)$};
 	
 	\node[blue!60!black, align=center] at (1, 3.5) {Rest State \\ $(u, \rho, c) \hspace{1.25em}$ \\ $= (0,1,0)$};
 	\node[orange!80!black, align=center] at (6, 3.5) {Guderley's \\ Self-Similar \\ Solution};
 	
 	\node[anchor=east] at (0, 3) {$\ts$};
 
 	\path[draw,green!60!black,%
 	 	 	decoration={%
 	 	 	 	markings,%
 	 	 	 	mark=at position 0.5 	with {\arrow[scale=1.25]{stealth}},%
 	 	 	},%
 	 	 	postaction=decorate] (rleft) .. controls (4.8,3.5) and (4.2,4) .. (R_new) node[pos=0.5, right, green!60!black] {$\psi$};

 	\draw[dotted, thick] ({6/pow(10,0.7) * pow(10-8,0.7)},4.5) -- ({6/pow(10,0.7) * pow(10-8,0.7)},3) node[below] {$r_{\mathsf{fin}}$};
 	\node at (4.8,3.25) {${\mathcal D}$};
 	\fill (R_new) circle (1pt);
 	\fill (rright) circle (1pt);
 	\fill (rleft) circle (1pt);

 	\fill (n_anchor) circle (1pt);
\end{tikzpicture}
\caption{{\footnotesize We have drawn in black the new shock curve~$\l$, while in orange we have displayed the old shock curve~$\s$. In this step we solve the equations in the light-pink shadowed region~$\mathcal{D}$. The boundary value for~$\lambda_1$ is given along the black~$\l$ curve, while the data for~$w, b$ is given along the green~$\psi$ characteristic in green.}}
\label{fig:step:2:a:outline}
\end{figure}

This setup defines a well-posed, backwards-in-time Goursat problem.\footnote{ In this context, well-posedness means that a solution is guaranteed to exist, is unique, and depends continuously on the prescribed boundary data—a small perturbation of the data results in a correspondingly small perturbation of the solution. 
}
 As shown in Figure~\ref{fig:step:2:a:outline}, boundary data is prescribed on two intersecting curves:
\begin{itemize}
    \item On the newly chosen shock front $r=\l(t)$, we prescribe the trace of the fast acoustic wavespeed, $\lambda_1|_{\l}^+(t)$.
    \item On the fast-acoustic characteristic $\psi$ emanating backward-in-time from the handover point $(\s(\tsh)^+, \tsh)$, we inherit the data for the subdominant variables, $w$ and $b$, from the solution constructed in Step 1.
\end{itemize}
This mixed boundary data is sufficient to uniquely determine the solution in the spacetime region $\mathcal{D}$ between the characteristic and the new shock front. Subsequently, the Rankine-Hugoniot conditions along $\l(t)$ provide the necessary data to solve for the flow on the interior side of the shock, in region $\mathcal{L}$ (see Figure~\ref{fig:step:2:b:outline}).
\begin{figure}[htb!]
\centering
\begin{tikzpicture}[
 	font=\sffamily\small,
 	scale=1.75,
 	>=latex
]
 	\coordinate (Q) at ({6/pow(10,0.7) * pow(10-3,0.7)}, 3);
 	\coordinate (R) at ({6/pow(10,0.7) * pow(10-5,0.7)}, 5);
 	\coordinate (R_new) at (4.1, 4);
 	\coordinate (rright) at ({6/pow(10,0.7) * pow(10-8,0.7)}, 3);
 	\coordinate (rright_top) at ({6/pow(10,0.7) * pow(10-8,0.7)}, 4.5);
 	\coordinate (rleft) at ($(Q)!-0.35!(rright)$);
 	\coordinate (n_anchor) at ($(rright)!0.9!(Q)$);
 	\coordinate (n_left) at ($(rright)!0.6!(Q)$);

 	\draw[dotted, thick] (0, 4) -- (7, 4);
 	\node[anchor=east] at (0, 4) {$\tsh$};

 	\fill[cyan!20, opacity=0.8] (rright) -- (rright_top) -- (0,4.5) -- (0,3) -- cycle;

 	\begin{scope}
 	 	\clip (7,3)-- (Q) -- plot (rleft) .. controls (4.8,3.5) and (4.2,4) .. (R_new) -- plot[smooth, tension = 1] coordinates {(R) (4.1,4) (R_new)} -- (R) -- (7, 4.5) -- cycle;
 	 	\fill[yellow!20, opacity=0.8] (rright) rectangle (7,4.5);
 	\end{scope}
 	
 	 	\begin{scope}
 	 	\clip (R_new) -- (R_new).. controls (4.25,3.75) and (4.3,3.05) .. (n_anchor) -- (n_anchor) -- (rleft) -- (rleft) .. controls (4.8,3.5) and (4.2,4) .. (R_new) -- cycle;
 	 	
 	 	\fill[pink, opacity=0.6] (rright) rectangle (6,4);
 	\end{scope}

 	\begin{scope}
 	 	\clip (R_new) -- (n_left) .. controls (3.8,3.5) and (4.2,4.2) .. (R_new) -- (n_left) -- (n_anchor) -- (R_new).. controls (4.25,3.75) and (4.3,3.05) .. (n_anchor) -- cycle;
 	 	\fill[brown, opacity=0.25] (rright) rectangle (6,4);
 	\end{scope}
 	
 	 	\begin{scope}
 	 	\clip ({6/pow(10,0.7) * pow(10-8,0.7)},3) -- ({6/pow(10,0.7) * pow(10-8,0.7)},4.5) -- ({6/pow(10,0.7) * pow(10-8,0.7)},4.5) -- (R_new) -- (R_new) .. controls (4.2,4.2) and (3.8,3.5) .. (n_left) -- (n_left) -- ({6/pow(10,0.7) * pow(10-8,0.7)},3) -- ({6/pow(10,0.7) * pow(10-8,0.7)},3) -- cycle;
 	 	\fill[green,opacity=0.15] (1,3) rectangle (6,4);
 	\end{scope}
 
%
 	\draw[->, thick] (0, 3) -- (7, 3);
 	\draw[->, thick] (0, 3) -- (0, 4.75);

 \begin{scope}
 	\clip (0,4) rectangle (7,4.5);
 	\draw[orange, ultra thick] plot[smooth, tension = 1] coordinates {(R) (4.1,4) (Q)};
\end{scope}

 	\coordinate (n_label_pos) at (4.65,3.55);
 	\node[anchor=west, xshift=-22pt, text=black] at (n_label_pos) {$\l(t)$};
 	\draw[black, ultra thick] (R_new).. controls (4.25,3.75) and (4.3,3.05) .. (n_anchor);
 	\node[below, text=black] at (n_anchor) {$\scriptstyle r_* = \l(\ts)$};
 
 	\coordinate (s_label_pos) at ({6/pow(10,0.7) * pow(10-4.6,0.7)-0.45},4.25);
 	\node[anchor=west, xshift=4pt, text=orange] at (s_label_pos) {$\s(t)$};
 	
 	\node[green!60!black, align=center] at (3, 3.5) {Previously\\ solved \\ in Step $1$};
 	\node[blue!60!black, align=center] at (1, 3.5) {Rest State \\ $(u, \rho, c) \hspace{1.25em}$ \\ $= (0,1,0)$};
 	\node[orange!80!black, align=center] at (6, 3.5) {Guderley's \\ Self-Similar \\ Solution};
 	
 	\node[anchor=east] at (0, 3) {$\ts$};
 
 	\path[draw,green!60!black,%
 	 	 	decoration={%
 	 	 	 	markings,%
 	 	 	 	mark=at position 0.5 	with {\arrow[scale=1.25]{stealth}},%
 	 	 	},%
 	 	 	postaction=decorate] (rleft) .. controls (4.8,3.5) and (4.2,4) .. (R_new) node[pos=0.5, right, green!60!black] {$\psi$}; 	 	
 	 	 	
 	\path[draw,blue!60!black,%
 	 	 	decoration={%
 	 	 	 	markings,%
 	 	 	 	mark=at position 0.5 	with {\arrow[scale=1.25]{stealth}},%
 	 	 	},%
 	 	 	postaction=decorate] (n_left) .. controls (3.8,3.5) and (4.2,4.2) .. (R_new) node[pos=0.35, left , blue!60!black] {$\eta$};

 	\draw[dotted, thick] ({6/pow(10,0.7) * pow(10-8,0.7)},4.5) -- ({6/pow(10,0.7) * pow(10-8,0.7)},3) node[below] {$r_{\mathsf{fin}}$};
 	\node at (4.8,3.25) {${\mathcal D}$};
 	\node at (4,3.25) {${\mathcal L}$};
 	\fill (R_new) circle (1pt);
 	\fill (rright) circle (1pt);
 	\fill (rleft) circle (1pt);
 	\fill (n_left) circle (1pt);
 	\fill (n_anchor) circle (1pt);
\end{tikzpicture}

\caption{{\footnotesize In this step we solve the equations in the light-brown shadowed region~$\mathcal{L}$. To do so, we use that along the non-characteristic new shock curve $\l$ the Rankine-Hugoniot jump conditions provide sufficient boundary conditions for Euler solvability within $\mathcal{L}$. In the green region to the left of region $\mathcal{L}$ the solution was previously obtained in Step $1$.}}
\label{fig:step:2:b:outline}
\end{figure}
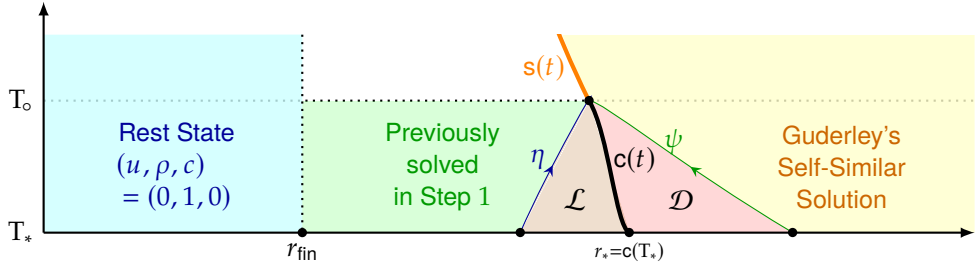

The outcome of this step is a regular shock solution on the interval $[\ts,\tsh)$ that connects the weak shock state at $t=\tsh$ to a genuine $C^{\frac{1}{3}}$-preshock at the initial time $t=\ts$. At this preshock instant, occurring at the location $r_* := \l(\ts)$, the solution exhibits a clear separation of regularity between the dominant and subdominant variables:
\begin{itemize}
    \item The dominant Riemann variable $z$ forms the characteristic cusp singularity, with the fractional series expansion
    $$
    z(r, \ts) = z(r_*, \ts) + \mathsf{a}(r-r_*)^{\frac{1}{3}} + \mathsf{b}(r-r_*)^{\frac{2}{3}} + O(|r-r_*|).
    $$
    \item In contrast, the subdominant variables $w$ and $b$ are smoother, remaining Lipschitz continuous up to the preshock point. Their higher-order behavior is given by
    \begin{align*}
    w(r, \ts) &= w(r_*, \ts) + \mathsf{c}_w (r-r_*) + O( |r-r_*|^{\frac{4}{3}}), \\
    b(r, \ts) &= b(r_*, \ts) + \mathsf{c}_b (r-r_*) + O( |r-r_*|^{\frac{4}{3}}).
    \end{align*}
\end{itemize}
The constants in these expansions are determined by the details of the construction, as established in Lemma~\ref{lemma:expansion:0}.

\subsection{Step 3: From Preshock to Smooth Initial Data}

This final step of the construction completes the backwards-in-time journey by evolving the preshock state to find the required initial data. The well-posed $C^{\frac{1}{3}}$-preshock data, constructed at time $t=\ts$ in the previous step, now serves as the Cauchy data for a backwards-in-time initial value problem for the Euler equations.

Unlike the preceding steps, which involved choosing shock curves from admissible sets, this stage of the proof is deterministic. With the data at $\ts$ fully specified, the evolution backwards to an initial time $\ti$ (for $\ts-\ti$ sufficiently small) is unique, leaving no remaining degrees of freedom in the construction.

The main result of this step is the demonstration of a ``backwards regularization effect," which produces a $C^{1,{\frac{1}{3}}}$ solution at the initial time $t=\ti$. More precisely, the non-smoothness in this initial data is confined to three distinct spatial locations. As illustrated in Figure~\ref{fig:step:3:outline}, these points correspond to the intersection of $\{t=\ti\}$ with the slow-acoustic, entropy, and fast-acoustic characteristics emanating backwards-in-time from the single preshock point $(r_*,\ts)$.

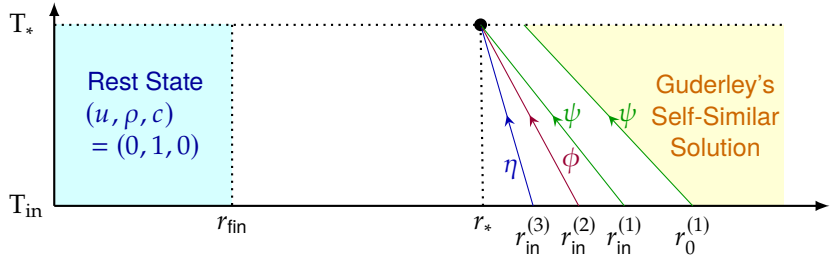
\begin{figure}[htb!]
\centering

\begin{tikzpicture}[
 	font=\sffamily\small,
 	scale=1.2,
 	>=latex
]
 	\coordinate (P) at ({6/pow(10,0.7) * pow(10-4,0.7)}, 4);
 	\coordinate (Q) at ({6/pow(10,0.7) * pow(10-3,0.7)}, 3);
 	\coordinate (R) at ({6/pow(10,0.7) * pow(10-8,0.7)}, 8);
 	
 	\coordinate (rin3) at (5.25, 1); 	
 	\coordinate (rstar) at (4.7, 1); 
 	\coordinate (rin2) at (5.75, 1); 	
 	\coordinate (rin1) at (6.25, 1); 	
 	\coordinate (r01) at (7.0, 1);

 	\fill[cyan!20, opacity=0.8] (0,1) -- (0,3) -- (1.95,3) -- (1.95,1) -- (0,1) -- cycle;

 	\begin{scope}
 	 	\clip (r01) -- (5.15,3) -- (8.5, 3) -- (8.5, 1) -- (r01) -- cycle;
 	 	\fill[yellow!20, opacity=0.8] (0,1) rectangle (8,10);
 	\end{scope}

 	
 	\draw[->, thick] (0, 1) -- (8.5, 1);
 	\draw[->, thick] (0, 1) -- (0, 3.25);

 	
 	\node[blue!60!black, align=center] at (1, 2) {Rest State \\ $(u, \rho, c) \hspace{1.25em}$ \\ $= (0,1,0)$};
 	\node[orange!80!black, align=center] at (7.25, 2) {Guderley's \\ Self-Similar \\ Solution};


 	\draw[dotted, thick] (0, 3) -- (8, 3);
 	\node[anchor=east] at (-0.1, 3) {$\ts$};
 	
 	\node[anchor=east] at (0, 1) {$\ti$};
 
 	\draw[dotted, thick] (1.95,3) -- (1.95,1) node[below] {$r_{\mathsf{fin}}$};
 	
 	\fill (Q) circle (2pt);

 	\path[draw,blue!70!black,%
 	 	 	decoration={%
 	 	 	 	markings,%
 	 	 	 	mark=at position 0.5 	with {\arrow[scale=1.25]{stealth}},%
 	 	 	},%
 	 	 	postaction=decorate] (rin3) -- (Q) node[pos=0.2, left=-0.2em, blue!70!black] {$\eta$};
 	\path[draw,purple!80!black,%
 	 	 	decoration={%
 	 	 	 	markings,%
 	 	 	 	mark=at position 0.5 	with {\arrow[scale=1.25]{stealth}},%
 	 	 	},%
 	 	 	postaction=decorate] (rin2) -- (Q) node[pos=0.25, right=-0.1em, purple!80!black] {$\phi$};
 	\path[draw,green!60!black,%
 	 	 	decoration={%
 	 	 	 	markings,%
 	 	 	 	mark=at position 0.5 	with {\arrow[scale=1.25]{stealth}},%
 	 	 	},%
 	 	 	postaction=decorate] (rin1) -- (Q) node[pos=0.5, right, green!60!black] {$\psi$}; 	 	
 	\path[draw,green!60!black,%
 	 	 	decoration={%
 	 	 	 	markings,%
 	 	 	 	mark=at position 0.5 	with {\arrow[scale=1.25]{stealth}},%
 	 	 	},%
 	 	 	postaction=decorate] (r01) -- (5.15,3) node[pos=0.5, right, green!60!black] {$\psi$}; 	 	 	
 	\draw[dotted, black, thick] (rstar) -- (Q) node[pos=0.5, above, black] {$ $};

 	\node[below] at (rin3) {$r_{\mathsf{in}}^{(3)}$};
 	\node[below] at (rstar) {$r_*$};
 	\node[below] at (rin2) {$r_{\mathsf{in}}^{(2)}$};
 	\node[below] at (rin1) {$r_{\mathsf{in}}^{(1)}$};
 	\node[below] at (r01) {$r_0^{(1)}$};
 
\end{tikzpicture}
\caption{{\footnotesize In Step 3 we solve the Euler system backwards-in-time on $[\ti,\ts)$ from Cauchy data given by the $C^{\frac{1}{3}}$-preshock at time $\{t = \ts\}$, which was obtained at the end of Step 2.}}
\label{fig:step:3:outline}
\end{figure}

 \begin{remark}[\textsl{A tunable family of preshock cusps}]
Our theorem is stated for a $C^{\frac{1}{3}}$ preshock, but the mechanism yields a one–parameter family of cusps. The exponent is set by how fast the shock speed approaches the $1$–characteristic as $t\uparrow\ts$. If
\[
\dot{\l}(t)-\lambda_1(\s(t)^+,t)\sim (\ts-t)^{\frac{\beta}{1-\beta}} \qquad \text{for some }\beta\in(0,1),
\]
then the dominant Riemann variable satisfies
\[
z(r, \ts)=z(r_*, \ts)+\mathsf a\,(r-r_*)^\beta+\mathsf b\,(r-r_*)^{2\beta}+O(|r-r_*|),
\]
so $z(\cdot,\ts)\in C^\beta$ and $\partial_r z\sim |r-r_*|^{\beta-1}$. The arguments of Section~\ref{sec:weak2pre} are robust for any such $\beta$, and the backward regularization in Section~\ref{sec:pre2smooth} requires only adjusting the terminal labeling of the fast–acoustic flow to
\[
\psi(x,\ts)=r_*+x\,|x|^{\frac{1}{1-\beta}},
\]
which enforces $r-r_*\propto |x|^{\frac{1}{\beta}}$ so that $(r-r_*)^\beta$ becomes $|x|$ in the $x$–coordinate, removing the blow‑up in $z_r$ by the chain rule. Evolving backward in this chart yields
\[
(w,z,b)(\cdot,t)\in C^{1,\beta}_r \quad \text{for every } t\in[\ti,\ts).
\]
The case $\beta=\tfrac13$ corresponds to the choice $\psi(x,\ts)=r_*+\tfrac{x^3}{3}$ and produces specially crafted $C^{\frac{1}{3}}$ preshock, matching
the $C^{\frac{1}{3}}$ preshock which is produced by \textit{stable, smooth} and \textit{generic} initial data.
\end{remark}

\section{From the Guderley Shock to a Weak Preshock}
\label{sec:large2weak}

The main result of this section is the following.

\begin{prop}[\textsl{From a strong Guderley shock to a weak preshock}]
\label{prop:large2weak}
We take the Guderley self–similar solution as the target final state and fix any final matching time $\tf<0$.
Then there exist a preshock time $\ts<\tf$ with $|\ts-\tf|\ll|\tf|$, an exponent $\varepsilon>0$,
and a shock trajectory $\s:[\ts,\tf]\to\R_+$ such that there is a radial solution $(w,z,b)$ to \eqref{euler:rv} (equivalently, $(u,\rho,E)$ to \eqref{euler:md}) on $[\ts,\tf]$ with the following properties:
\begin{enumerate}
\item \textsl{Regular shock and quiescent core.} \label{prop:large2weak:1}
We set $\mathcal{S}=\{(x,t)\in\R^d\times[\ts,\tf]:\,|x|=\s(t)\}$. Then $(u,\rho,E,\mathcal{S})$ is a regular shock solution emanating from a preshock (Definition~\ref{def:regular:shock:pre}), is $C^\infty$ on $\R^d\times[\ts,\tf]\setminus\mathcal{S}$, and its traces $(\sm{u},\sm{\rho},\sm{E})$, $(\sp{u},\sp{\rho},\sp{E})$ satisfy the Lax entropy conditions~\eqref{lax}. Moreover,
\[
\mathcal{H}=\{(r,t):\,0<r<(-\tf)^{\frac{1}{\lambda}},\ t\in[\ts,\tf]\}
\]
is a quiescent core: $(u,\rho,E)=(0,1,0)$ on $\mathcal{H}$.

\item \textsl{Exact exterior matching.} \label{prop:large2weak:2}
For every $t\in(\ts,\tf]$, in the exterior region
\[
\Omega^+(t)=\{(r,t)\in\R_+\times(\ts,\tf]:\ r>\s(t)\}
\]
the restriction $(u^+,\rho^+,E^+)=(u,\rho,E)|_{\Omega^+(t)}$ coincides \emph{exactly} with the Guderley solution at the same spacetime location $(r,t)$.
In particular, $(u^+,\rho^+,E^+)|_{t=\ts}$ is smooth up to $r=\s(\ts)$. The interior state $(u^-,\rho^-,E^-)=(u,\rho,E)|_{\Omega^-(t)}$ is smooth for $t>\ts$, but the trace $(u^-,\rho^-,E^-)|_{t=\ts}$ retains only $C^\frac{\varepsilon}{1+\varepsilon}$ regularity as $r\to\s(\ts)^-$.

\item \textsl{Global agreement at the final time.} \label{prop:large2weak:3}
At $t=\tf$ the solution equals the Guderley self–similar imploding shock, and the shock curves match to $C^\infty$ order:
\[
\tfrac{d^k}{dt^k}\s(\tf)=\tfrac{d^k}{dt^k}\g(\tf)\qquad\text{for all }k\ge0.
\]

\item \textsl{Regularity of the shock path.} \label{prop:large2weak:4}
$\s\in C^\infty((\ts,\tf])\cap C^{1,\varepsilon}([\ts,\tf])$.

\item \textsl{Vanishing strength as $t\downarrow\ts$.} \label{prop:large2weak:5}
There exists $C_1>0$ such that, as $t\to\ts^+$,
\begin{align*}
\dot{\s}(t)-\sp{\lambda_1}(t) &= C_1\,|t-\ts|^{\varepsilon}+O\!\big(|t-\ts|^{2\varepsilon}\big), \\
\jump{z}(t) &= \tfrac{4}{1+\alpha} C_1\,|t-\ts|^{\varepsilon}+O\!\big(|t-\ts|^{2\varepsilon}\big), \\
\jump{w}(t) &= O\!\big(|t-\ts|^{3\varepsilon}\big), \\
\jump{S}(t) &= O\!\big(|t-\ts|^{3\varepsilon}\big).\end{align*}

\end{enumerate}
\end{prop}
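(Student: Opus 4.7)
The plan is to build the solution by \emph{designing} the shock trajectory $\s$ and using the exact Guderley solution in $\Omega^+$ as boundary data. First I would parameterize
\[
\s(t) = \g(t) + \phi(t),
\qquad
\dot\s(t) - \sp{\lambda_1}(t) = C_1\,|t-\ts|^{\varepsilon} + O(|t-\ts|^{2\varepsilon}),
\]
where $\phi$ is a smooth correction that vanishes to infinite order at $\tf$ (enforcing the $C^\infty$ matching in item~\ref{prop:large2weak:3}) and that enforces the preshock condition $\dot\s(\ts)=\sp{\lambda_1}(\ts)$ with the prescribed rate. Because $\sp{\lambda_1}$ is computed along the curve $r=\s(t)$ from the analytic Guderley profile, this is a nonlinear but standard first-order ODE for $\s$ on $[\ts,\tf]$, solvable by a fixed-point argument; the exponent $\varepsilon>0$ is a free parameter, chosen small enough that the weak-shock perturbation analysis below closes, and the resulting shock curve belongs to $C^\infty((\ts,\tf]) \cap C^{1,\varepsilon}([\ts,\tf])$, yielding item~\ref{prop:large2weak:4}.

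With $\s$ fixed, set $(u,\rho,E)|_{\Omega^+}$ equal to the exact Guderley solution, whose traces $\sp{u},\sp{\rho},\sp{E}$ are smooth up to the front. The Rankine--Hugoniot jump conditions~\eqref{RHg} then algebraically determine the interior traces $\sm{u},\sm{\rho},\sm{E}$ and equivalently $\sm{z},\sm{w},\sm{S}$. Expanding about the weak-shock limit $\delta:=\dot\s-\sp{\lambda_1}\to 0$---the classical Lax asymptotics for a $1$-shock in a genuinely nonlinear extremal family, with the entropy and transverse Riemann variable being linearly degenerate to this order---yields
\[
\jump{z} = \tfrac{4}{1+\alpha}\,\delta + O(\delta^{2}),
\qquad
\jump{w} = O(\delta^{3}),
\qquad
\jump{S} = O(\delta^{3}).
\]
The coefficient $\tfrac{4}{1+\alpha}$ comes from writing $\dot\s \approx \mean{\lambda_1}$ to leading order and substituting $\jump{w}=O(\delta^3)$. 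Inserting the ansatz $\delta \sim C_1|t-\ts|^\varepsilon$ gives item~\ref{prop:large2weak:5}. The Lax conditions $\sp{\lambda_1}<\dot\s<\sm{\lambda_1}$ and $\dot\s<\sm{\lambda_2},\sm{\lambda_3}$ follow from $\delta>0$ and the fact that the Guderley shock already satisfies Lax strictly, so a small perturbation on $[\ts,\tf]$ preserves the strict inequalities.

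Third, I would solve the radial system~\eqref{euler:rv} inside $\Omega^-$ by the method of characteristics, backwards in time from the Cauchy datum at $t=\tf$---which is precisely the quiescent state $(0,1,0)$ on $[0,\g(\tf))$, since $\s(\tf)=\g(\tf)$---with lateral data on $\{r=\s(t)\}$ provided by the RH traces. The $\lambda_1^-$-characteristics transport the shock information inward from $\s$, whereas the $\lambda_2^-$- and $\lambda_3^-$-characteristics, which outrun the shock in the Lax sense, connect points of $\Omega^-$ to the quiescent Cauchy data, so their transported content is negligible near $\ts$. A bootstrap argument in the small parameter $|\tf-\ts|$ closes the nonlinear estimates and yields $C^\infty$ interior regularity for $t\in(\ts,\tf]$. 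The main obstacle is the regularity of the trace at $t=\ts$: tracking how the non-smoothness of $\s$ at $\ts$ is transported by the characteristic flow gives an interior trace that is $C^{\varepsilon/(1+\varepsilon)}$ in $r$ as $r\to\s(\ts)^-$, after composing the $C^{1,\varepsilon}$ shock datum with the characteristic foliation (which itself is $C^\infty$ on $(\ts,\tf]$ but only Hölder at $\ts$). A secondary technical point is ensuring that the quiescent core $\mathcal{H}=\{0<r<(-\tf)^{1/\lambda}\}$ is exactly preserved---i.e., that no $\lambda_1^-$-characteristic from $\s$ reaches radii below $(-\tf)^{1/\lambda}$ on $[\ts,\tf]$---which can be enforced by taking $|\tf-\ts|\ll|\tf|$, so that the characteristic travel distance is smaller than the gap $\s(t) - (-\tf)^{1/\lambda}$. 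This establishes items~\ref{prop:large2weak:1} and~\ref{prop:large2weak:2} and completes the construction.
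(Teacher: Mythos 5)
Your skeleton matches the paper's construction (prescribe the shock so that $\dot\s-\sp{\lambda_1}\sim(t-\ts)^\varepsilon$, keep the exact Guderley state in $\Omega^+$, invert Rankine--Hugoniot for the interior traces with the weak-shock expansions, solve backwards in $\Omega^-$ by characteristics, and read off the $C^{\frac{\varepsilon}{1+\varepsilon}}$ interior trace from the spatial/value scaling at the preshock). However, the proposal skips the step that is actually hard and that dictates the whole design. The difficulty is not the existence of some $\varepsilon$ making ``the weak-shock perturbation analysis close''; it is that the interior \emph{derivative} traces along the shock blow up as $t\downarrow\ts$: since $\partial_t\zsm\sim\ddot\s\sim\varepsilon(t-\ts)^{\varepsilon-1}$ and the Lax gap $\losm-\dot\s\sim(t-\ts)^\varepsilon$ vanishes, one gets $|\rzsm(t)|\sim\varepsilon/(t-\ts)$, which is exactly the critical rate for the Riccati equation satisfied by $\rz$ along the backward fast characteristics. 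A bootstrap ``in the small parameter $|\tf-\ts|$'' cannot close this: shrinking the time interval does not tame a $(t-\ts)^{-1}$ boundary blow-up; what closes it is the smallness of the \emph{coefficient} $\varepsilon$ (then $\kappa,\delta$), together with localized DRV bounds and a characteristic-separation estimate needed to control $\rho$ along particle paths and to show the source integrals along $\psi$ are subdominant to the trace variation (this last point is also needed to justify your $C^{\frac{\varepsilon}{1+\varepsilon}}$ claim, which otherwise is only a heuristic composition of Hölder maps). This is the content of the paper's Proposition~\ref{prop:choice:g} and Lemma~\ref{lemma:om}, and it is absent from your argument.

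Two further points are stated incorrectly. First, your claim that the $\lambda_2^-$- and $\lambda_3^-$-characteristics connect points of $\Omega^-$ to the quiescent Cauchy data, so that their ``transported content is negligible near $\ts$,'' has the geometry backwards: by the Lax inequalities $\dot\s<\lambda_2^-,\lambda_3^-$ these characteristics impinge on the shock forward in time, hence backwards in time they emanate \emph{from} the shock and carry the nontrivial RH traces of $w$ and $b$ (with derivative traces of size $\sim\varepsilon(t-\ts)^{\varepsilon-1}$) into $\Omega^-$; only the core rectangle is filled from the quiescent slice. Second, your justification of the quiescent core fails near $t=\tf$, where the gap $\s(t)-(-\tf)^{\frac{1}{\lambda}}$ vanishes since $\s(\tf)=\g(\tf)$; the correct reason is that the core state has zero velocity and sound speed, so its backward domain of determinacy has a vertical lateral boundary, combined with the sign/ordering of the interior characteristic speeds relative to $\dot\s$. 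Similarly, the Lax conditions cannot be obtained purely as a ``small perturbation of a strictly Lax shock,'' because the first inequality degenerates as $t\downarrow\ts$; in the paper they are checked directly from the constraint $\sqrt{\alpha/\gamma}\le g(t)\le1$ on the prescribed speed ratio.
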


\subsection{The region $\R_+ \times [\tf,0)$}
We fix a final matching time $\tf<0$. On the entire time interval $[\tf,0)$ we impose the Guderley imploding shock in self–similar form. With
\[
\xi := \tfrac{r^{\lambda}}{-t},
\]
the state is
\begin{equation}\label{gud:sol}
(u,c,\rho)(r,t)=
\begin{cases}
(0,0,1), & 0\le r^{\lambda}< -t,\\[2mm]
\big(\tfrac1{\lambda}\,r^{1-\lambda}U(\xi),\ \tfrac1{\lambda}\,r^{1-\lambda}C(\xi),\ R(\xi)\big), & r^{\lambda}>-t,
\end{cases}
\qquad t\in[\tf,0),
\end{equation}
where $U,C,R$ are the imploding self–similar profiles built in~\cite{JaLiSc2025}. The shock is located at $\xi=1$, i.e. along $r^{\lambda}=-t$; the interior core $0\le r^{\lambda}<-t$ is quiescent.  
As established in \cite{JaLiSc2025},   the profiles $U, C, R$ are smooth on $[1, +\infty)$, and hence there exists a constant $\m>0$ 
(depending only on the chosen  $(\gamma,\lambda)$) such that for every $\xi \in [1, +\infty)$, 
\begin{subequations}  \label{bounds:profile}
\begin{align}
 |U(\xi)| &\le \m, \quad  0<C(\xi) \le \m, \quad 1 \le | R(\xi)| \le \m,  \\
  |\p_\xi U(\xi)| &\le \m, \quad  |\p_\xi C(\xi)| \le \m, \quad | \p_\xi R(\xi)| \le \m.
 \end{align}
\end{subequations}

For later estimates we parameterize the final time by $\kappa>0$ via 
\begin{equation*}
\kappa^{\frac{\lambda}{\lambda-1}}  = (-\tf)^{-1}
\qquad\Longleftrightarrow\qquad
\kappa = (-\tf)^{\frac{1-\lambda}{\lambda}} .
\end{equation*}
This temporal scaling is encoded in the definition of the exterior velocity and sound speed in \eqref{u-c-rho-plus}.
By the self–similar scaling in \eqref{gud:sol} and the bounds \eqref{bounds:profile}, there exists $\m>0$ (depending only on the chosen $(\gamma,\lambda)$) such that
\begin{equation}
\|u(\cdot,\tf)\|_{\infty}\le \m\,\kappa,\qquad
\|c(\cdot,\tf)\|_{\infty}\le \m\,\kappa,\qquad
\|b(\cdot,\tf)\|_{\infty}\le \m\,\kappa.
\label{ucb-bounds}
\end{equation}

\subsection{The region $\R_+ \times [\ts, \tf]$} \label{sub:sec:r+}
Our objective is the construction of a regular shock solution to the Euler equations \emph{emanating from a preshock} in $\R^d \times [\ts,\tf]$, with data on the future slice $\R_+\times\{\tf\}$ given by \eqref{gud:sol}. At this stage $\ts$ is free; we set $\delta:=\tf-\ts$. The parameters $\delta$ and $\kappa$ will be chosen later to close the bootstrap assumptions (see Proposition~\ref{prop:choice:g} and Lemma~\ref{lemma:om}).

We construct a shock curve $\s:[\ts,\tf]\to\R_+$ (see \eqref{h:ode} and Proposition~\ref{prop:choice:g}) such that $\s(t)\ge \g(t)$. 
In the exterior region $\Omega^+ = \{(r,t): r>\s(t),\ t\in[\ts,\tf]\}$ the Euler solution equals the Guderley solution \eqref{gud:sol}; since $\Omega^+\subset\{(r,t): r>\g(t)\}$, the exterior flow is $C^\infty$.

From the Guderley solution on the right of the shock, we obtain the right traces of $(u,c,\rho)$ for $t\in[\ts,\tf]$:
\begin{subequations} 
\label{shock:rs}
\begin{align}
\usp(t) &:= u(\s(t)^+, t) = \tfrac{\s(t)^{1-\lambda}}{\lambda}\, U\!\left( \tfrac{\s(t)^\lambda}{-t}\right), \\
\csp(t) &:= c(\s(t)^+, t) = \tfrac{\s(t)^{1-\lambda}}{\lambda}\, C\!\left( \tfrac{\s(t)^\lambda}{-t}\right), \\
\rsp(t) &:= \rho(\s(t)^+, t) = R\!\left( \tfrac{\s(t)^\lambda}{-t}\right).
\end{align}
\end{subequations}
Inverting the Rankine–Hugoniot conditions (Lemma~\ref{lemma:rh:inversion}) yields the left traces:
\begin{align*} 
(\usm(t), \csm(t), \rsm(t)) 
&:=( u(\s(t)^-, t), c(\s(t)^-, t), \rho(\s(t)^-, t) )  \\
&= \mbox{  Explicit function of }\big( \usp(t), \csp(t), \rsp(t), \ds(t)\big).
\end{align*}
 
To complete the construction, we solve the Euler equations in $\Omega^- = \{(r,t): 0<r<\s(t),\ t\in[\ts,\tf]\}$ with data on the future boundary of $\Omega^-$ given by
\begin{align*}
u(r, \tf) &= 0, \qquad c(r, \tf) = 0, \qquad \rho(r, \tf) = 1, \qquad 0 \le r< \s(\tf), \\
u(\s(t)^-, t) &= \usm(t), \qquad
c(\s(t)^-, t) = \csm(t), \qquad
\rho(\s(t)^-, t) = \rsm(t), \qquad t \in [\ts, \tf).
\end{align*}
We follow the three characteristic wave families backward-in-time to the slice $\{t=\ts\}$ using a bootstrap argument (Lemma~\ref{lemma:om}).

To close the bootstrap we introduce three parameters\footnote{Because of self‑similarity, the only actual free combination is $\kappa^{\frac{\lambda}{\lambda-1}}\delta$.} $(\delta,\varepsilon,\kappa)$, and we determine them in the order
\begin{equation*} 
\varepsilon \ \to\ \kappa \ \to\ \delta.\end{equation*}
Here $\varepsilon$ is chosen small in Lemma~\ref{lemma:om}, while $\kappa^{\frac{\lambda}{\lambda-1}}\delta$ is chosen small relative to $\varepsilon$ in Proposition~\ref{prop:choice:g}. These constants depend only on the self‑similar Guderley profiles. We let $\m>1$ denote a constant that may change from line to line and depends only on the profile; similarly, we let $\mo\in(0,1)$ denote a profile‑dependent constant that may also change from line to line.

\subsection{The Rankine--Hugoniot jump conditions in spherical symmetry}
Because the solution is radially symmetric, the shock surface is spherical. We take $\mathcal{S} = \{(x,t): \s(t)-|x|=0\}$; the spacetime normal is $(-\ds, e_r)$. We set $u_n=u\cdot n=u\cdot e_r$. Since there is no tangential component, we write $u$ for $u_n$. The Rankine--Hugoniot conditions \eqref{RHg} reduce to the system
\begin{subequations} 
\label{RH}
\begin{align}
\ds \jump{\rho u}&= \jump{\rho u^2 + p}, \\
\ds \jump{\rho}&= \jump{\rho u}, \\
\ds \jump{E} &= \jump{ (p+E)u}.
\end{align}
\end{subequations}
We denote $\Omega^-=\{(r,t)\in\R_+\times(\ts,\tf]: r<\s(t)\}$ and $\Omega^+=\{(r,t)\in\R_+\times(\ts,\tf]: r>\s(t)\}$.

\paragraph{The Lax entropy condition}
The Lax entropy conditions for a $1$-shock are
\begin{equation} \label{lax}
(u-c)|_\s^{+} < \ds< ( u - c)|_\s^{-}, \qquad \ds< u|_\s^+ \le (u + c)|_\s^+ , \qquad \ds< u|_\s^- \le (u + c)|_\s^-.
\end{equation}

\begin{lemma} \label{lemma:rh:inversion}
Let $(\usp,\rsp,\psp,\ds)$ satisfy
\begin{equation}
 \rsp>0, \qquad 0 \le \psp \le \tfrac{(\vsp)^2 \rsp}{\alpha}, \qquad \usp > \ds, \qquad \ds > \usp - \alpha\,\ssp,
 \label{cond:shock}
\end{equation}
where $v=u-\ds$. Then there exists a unique left state $(\usm,\rsm,\psm)$ solving the Rankine--Hugoniot system \eqref{RH} such that
\begin{equation} \label{cond:lax}
\rsm>0, \qquad \psm\ge0, \qquad \usm - \alpha\,\ssm > \ds,
\end{equation}
and it is given explicitly by
\begin{equation} \label{RH:re}
\vsm = \tfrac{ \gamma \psp + \alpha (\vsp)^2 \rsp}{(1+\alpha)\,\vsp\,\rsp}, \qquad
 \rsm = \tfrac{ (1+\alpha) (\vsp)^2 (\rsp)^2}{\gamma \psp + \alpha (\vsp)^2 \rsp}, \qquad
 \psm = \tfrac{ -\alpha \psp + (\vsp)^2 \rsp}{1+\alpha}.
\end{equation}
Moreover, there exist constants $\mo,\m>0$, depending only on $\gamma$, such that
\begin{subequations} 
\label{ratio:bounds}
\begin{align}
\mo \le \tfrac{\ds-\losp}{\losm-\ds} \le \m,\\
\mo \le \tfrac{\lwsp-\ds}{\lwsm-\ds} \le \m,\\
\mo \le \tfrac{\ltsp-\ds}{\ltsm-\ds} \le \m.
\end{align}
\end{subequations}

In the near‑preshock regime where $\sp{\lambda_1}-\ds \searrow 0$ while $\sp{\lambda_3}-\ds$ stays bounded away from $0$ and $(\usp,\rsp,\psp)$ remain in a compact set, the jump vector admits the Taylor expansions
\begin{subequations} \label{RH:taylor}
\begin{align}
\jump{z} &= -\tfrac{4(\sp{\lambda_1}-\ds)}{1+\alpha}
           + \tfrac{4\,(\sp{\lambda_1}-\ds)^2}{(1+\alpha)\,(\sp{\lambda_3}-\ds)}
           + O\!\big((\sp{\lambda_1}-\ds)^3\big),\\
\jump{w} &= \tfrac{4\,(\sp{\lambda_1}-\ds)^3}{(1+\alpha)\,(\sp{\lambda_3}-\ds)^2}
           + O\!\big((\sp{\lambda_1}-\ds)^4\big), \\
\jump{S} &= \tfrac{64\,\alpha\gamma\,(\sp{\lambda_1}-\ds)^3}{3(1+\alpha)^2\,(\sp{\lambda_3}-\ds)^3}
           + O\!\big((\sp{\lambda_1}-\ds)^4\big), \\
\tfrac{\sm{\lambda_1}-\ds}{\ds-\sp{\lambda_1}} &= 1 + O\!\big(\ds-\sp{\lambda_1}\big).
\label{RH:taylor:l1}
\end{align}
\end{subequations}
with constants in the $O(\cdot)$ terms depending only on $\gamma$ and the chosen compact set.\end{lemma}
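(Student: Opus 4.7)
The overall strategy is to pass to the shock frame by substituting $v = u - \ds$, in which the Rankine--Hugoniot system \eqref{RH} becomes continuity of the mass flux $\rho v$, the momentum flux $\rho v^2 + p$, and the energy flux (after subtracting $\ds$ times the momentum jump). Combined with the ideal-gas law \eqref{eq:EOS}, this yields a closed algebraic system for $(v^-,\rho^-,p^-)$ in terms of $(v^+,\rho^+,p^+)$. I would solve it by eliminating $v^-$ via the mass equation, substituting into the momentum equation to obtain a quadratic in $p^-$, and discarding the trivial root $p^- = p^+$, which corresponds to the no-shock branch and is excluded by the strict inequality $\ds > u^+ - \alpha\sigma^+$ in \eqref{cond:shock}. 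The remaining root is exactly the $p^-$ in \eqref{RH:re}, and back-substitution into the mass equation gives the formulas for $\rho^-$ and $v^-$. Uniqueness within the class \eqref{cond:lax} follows from the discreteness of the two algebraic branches.

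Next, I would verify the Lax entropy conditions \eqref{cond:lax}. Positivity $\rho^- > 0$ is immediate from \eqref{RH:re} together with $v^+ > 0$ (which is $u^+ > \ds$). The sign $p^- \ge 0$ is exactly the hypothesis $(v^+)^2\rho^+ \ge \alpha p^+$ rewritten using \eqref{eq:EOS}. The back inequality $\lambda_1^- > \ds$, equivalent to $v^- > \alpha\sigma^-$, reduces after substituting \eqref{RH:re} and $\sigma^2 = \gamma p/(\alpha^2\rho)$ to a strict algebraic inequality that holds whenever the front condition $\ds > u^+ - \alpha\sigma^+$ holds strictly. For the ratio bounds \eqref{ratio:bounds}, I would express each $\lambda_j^\pm - \ds$ as a continuous function of $(v^+,\rho^+,\sigma^+,\ds)$ via \eqref{RH:re}; the Lax conditions just established guarantee strict positivity of both numerator and denominator on the compact admissible set cut out by \eqref{cond:shock}, so compactness and continuity yield uniform upper and lower bounds.

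The most delicate task is the weak-shock expansion \eqref{RH:taylor}. Setting $\mu := \ds - \sp{\lambda_1} = \alpha\sigma^+ - v^+$, the right state depends on $\mu$ only through $v^+ = \alpha\sigma^+ - \mu$, with $\rho^+,\sigma^+$ held fixed. At $\mu = 0$ one has $(v^+)^2\rho^+ = \alpha^2(\sigma^+)^2\rho^+ = \gamma p^+$, so \eqref{RH:re} collapses to $(v^-,\rho^-,p^-)=(v^+,\rho^+,p^+)$ and all jumps vanish. I would therefore Taylor expand the right-hand sides of \eqref{RH:re} in $\mu$ about $\mu = 0$ up to order three, from which the expansions of $\sigma^- = \sqrt{\gamma p^-/(\alpha^2\rho^-)}$ and $S^- = \log(\gamma p^-/(\rho^-)^\gamma)$ follow by a single square root and logarithm. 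Assembling the jumps of $z = u - \sigma$, $w = u + \sigma$, and $S$, and using $\sp{\lambda_3} - \ds = 2\alpha\sigma^+ - \mu = 2\alpha\sigma^+(1 + O(\mu))$, yields the stated formulas. The expansion \eqref{RH:taylor:l1} follows analogously from $\sm{\lambda_1} - \ds = v^- - \alpha\sigma^-$ inserted into the $\mu$-expansion.

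The main obstacle is verifying the cubic cancellations in $\jump{w}$ and $\jump{S}$. Since $\jump{v}$ and $\jump{\sigma}$ are each generically $O(\mu)$, one would expect $\jump{w} = \jump{v} + \jump{\sigma}$ to be $O(\mu)$ as well; the claim is that the $O(\mu)$ and $O(\mu^2)$ contributions cancel exactly, leaving a pure cubic proportional to $(\sp{\lambda_3} - \ds)^{-2}$. This is the classical weak-shock fact, going back to Landau, that a compressive $1$-shock produces the outgoing Riemann invariant and the specific entropy only at cubic order in its strength. Structurally, it reflects the fact that the Hugoniot locus through $(v^+,\rho^+,p^+)$ is tangent to the $1$-rarefaction curve to second order, and $w$ and $S$ are exact Riemann invariants along the latter. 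Organized this way, the cancellations follow from careful bookkeeping of the $O(\mu^3)$ expansions of $v^-,\rho^-,p^-$, and the explicit coefficients $\tfrac{4}{(1+\alpha)(\sp{\lambda_3} - \ds)^2}$ and $\tfrac{64\alpha\gamma}{3(1+\alpha)^2(\sp{\lambda_3} - \ds)^3}$ then emerge from substituting $\alpha^2(\sigma^+)^2\rho^+ = \gamma p^+$ and simplifying at leading order in $\mu$.
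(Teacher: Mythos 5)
Your proposal follows essentially the same route as the paper's proof: pass to the shock frame with $v=u-\ds$, reduce the Rankine--Hugoniot relations to a two-branch algebraic problem, discard the trivial branch (left state equal to right state) because by \eqref{cond:shock} it would violate the backside condition in \eqref{cond:lax}, read off the explicit formulas \eqref{RH:re}, and obtain \eqref{RH:taylor} by Taylor expansion of those formulas in the shock strength, with the cubic cancellations in $\jump{w}$ and $\jump{S}$ checked by direct bookkeeping --- which is exactly what the paper's computation of the explicit functions $F_z,F_w$ in \eqref{ugly:rh}--\eqref{taylor:fz:fw} amounts to. Your individual checks (e.g.\ $\psm\ge0$ being a restatement of the hypothesis, the back inequality $\vsm>\csm$ following from $\csp>\vsp$, and the collapse of \eqref{RH:re} to the identity state when $(\vsp)^2\rsp=\gamma\psp$) are all correct.

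The one step that does not work as you stated it is the justification of the ratio bounds \eqref{ratio:bounds}. The set cut out by \eqref{cond:shock} is not compact: it is unbounded in $\rsp$ and $\vsp$ and invariant under the scaling $(\vsp,\csp)\mapsto(\tau\vsp,\tau\csp)$, so ``compactness and continuity'' cannot by itself produce constants depending only on $\gamma$. More seriously, on the closure of that set one reaches the weak-shock boundary $\ds=\usp-\alpha\,\ssp$, where the numerator $\ds-\losp=\csp-\vsp$ and the denominator $\losm-\ds=\vsm-\csm$ of the first ratio \emph{both} vanish, so the claimed ``strict positivity of numerator and denominator'' fails there and the first ratio is a $0/0$ form; a pure continuity argument gives no uniform two-sided bound near that boundary. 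The repair is what the paper does: by \eqref{RH:re} each ratio is homogeneous of degree zero in $(\vsp,\csp)$ (and independent of $\rsp$), hence a function of the single parameter $\csp/\vsp$, which \eqref{cond:shock} confines to the compact interval $[1,\sqrt{\gamma/\alpha}]$ (this is \eqref{ineq:ratio:bounds}); away from the endpoint $\csp/\vsp=1$ compactness applies, and at the degenerate endpoint the first ratio extends continuously with limit $1$ --- precisely the content of \eqref{RH:taylor:l1}, which your own weak-shock expansion supplies. So the gap is repairable with ingredients already in your plan, but the compactness argument as written is incorrect and must be replaced by this homogeneity reduction plus the endpoint limit.
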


\begin{proof}[Proof of Lemma~\ref{lemma:rh:inversion}]
We set $v= u -\ds$. A short calculation shows that \eqref{RH} is equivalent to
\begin{subequations} \label{RH:t}
\begin{align}
\jump{ \rho v^2 + p} = 0, \label{RH:t1}\\
\jump{\rho v}=0, \label{RH:t2}\\
\jump{v^2+\tfrac{2\gamma}{\gamma+1}\tfrac{p}{\rho}} =0.
\label{RH:t3}
\end{align}
\end{subequations}
Substituting \eqref{RH:t2} and \eqref{RH:t3} into \eqref{RH:t1}, we find that $\rsm$ solves
\begin{equation} \label{eq:rhop}
f(\rsm) = \rsp (\vsp)^2 + \psp,
\end{equation}
where
\[
f(x)= \tfrac{\gamma+1}{2\gamma} \tfrac{(\vsp \rsp)^2}{x} + \tfrac{\alpha}{\gamma} \Big( (\vsp)^2 + \tfrac{\gamma}{\alpha} \tfrac{\psp}{\rsp} \Big) x, \qquad x>0.
\]
Differentiating,
\[
f'(\bar{x})=0 \iff \bar{x} = \sqrt{ \tfrac{\gamma+1}{\gamma-1} \tfrac{ (\vsp \rsp)^2 }{ (\vsp)^2 + \tfrac{\gamma}{\alpha} \tfrac{\psp}{\rsp} }}.
\]
Because $\rsp>0$, the Lax condition in \eqref{cond:shock} implies that
\begin{align*} 
\rsp> \bar{x}
\iff
\gamma \psp > \rsp (\vsp)^2
\iff
\sqrt{ \tfrac{\gamma \psp}{\rsp}} > \vsp
\iff
\vsp - \alpha \ssp <0,\end{align*}
where we used that $\vsp>0$. Since $f(\rsp) = \rsp (\vsp)^2 + \psp$ and $f$ is convex with $\lim_{x\to0^+}f(x)=\lim_{x\to\infty}f(x)=\infty$, there are exactly 
two positive solutions of \eqref{eq:rhop}. One is $x=\rsp$, which fails \eqref{cond:lax}; thus the admissible solution is the unique $\rsm\in(0,\bar{x})$. Then 
$\vsm$ and $\psm$ follow from \eqref{RH:t2}–\eqref{RH:t3}, giving \eqref{RH:re}. To prove \eqref{ratio:bounds}, we note that \eqref{cond:shock} implies
that
\begin{equation} \label{ineq:ratio:bounds}
(\vsp)^2 \le (\csp)^2 \le \tfrac{\gamma}{\alpha} (\vsp)^2.
\end{equation}
The quoted bounds follow from \eqref{ineq:ratio:bounds} by elementary estimates.

By writing $\vsp = \tfrac12(\ltsp+\losp-2\ds)$ and $\csp=\tfrac12(\ltsp-\losp)$ in \eqref{RH:re}, we obtain that
\begin{align*}
\jump{z} &= \vsm-\vsp -\tfrac{1}{\alpha} ( \csm-\csp) = F_z( \ds-\losp, \ds-\ltsp) \,, \\
\jump{w}& = \vsm-\vsp +\tfrac{1}{\alpha} ( \csm-\csp) = F_w( \ds-\losp, \ds-\ltsp) \,,
\end{align*}
where
\begin{subequations} \label{ugly:rh}
\begin{align}
 &F_z(x_1, x_2) = \tfrac{8 x_1 x_2^3}{(x_1+x_2) \left(4 \alpha x_1 x_2 +(1+\alpha)x_1^2-(1+\alpha)x_2^2 - \sqrt{\left((x_1-x_2)^2+\alpha(x_1+x_2)^2 \right)\left( -\alpha(x_1+x_2)^2 +\gamma(x_1-x_2)^2 \right)} \right)} \,,   \label{eq:Fz} \\
 & F_w(x_1, x_2) = \tfrac{8 x_1^3 x_2}{(x_1+x_2) \left(4 \alpha x_1 x_2 -(1+\alpha)x_1^2+(1+\alpha)x_2^2 + \sqrt{\left((x_1-x_2)^2+\alpha(x_1+x_2)^2 \right)\left( -\alpha(x_1+x_2)^2 +\gamma(x_1-x_2)^2 \right)} \right)} \,. 
\end{align}
\end{subequations}
Since, for any $\bar{x}_2 >0$, we have $F_z, F_w \in C^\infty\left( \left(\tfrac{-\bar{x}_2}{2}, 0 \right) \times ( \bar{x}_2, +\infty) \right)$, we can compute the Taylor series of $F_z$ and $F_w$ around $x_1=0$ for any fixed $x_2>\bar{x}_2$, and find that
\begin{equation} \label{taylor:fz:fw}
 F_z(x_1, x_2) = -\tfrac{4 x_1}{1+\alpha} + \tfrac{4 x_1^2}{(1+\alpha)x_2} + O( x_1^3) \,, \quad
 F_w(x_1, x_2) = \tfrac{4 x_1^3}{(1+\alpha) x_2^2} + O(x_1^4) \,,
\end{equation}
where the constants in the terms $O(\cdot)$ depend only on $\bar{x}_2$ and an upper bound for $x_2$. 
The Taylor series for $ F_z$ and $ F_w$ in  \eqref{taylor:fz:fw} then yield the Taylor series expansions for $\jump{z}$ and $\jump{w}$ stated
in \eqref{RH:taylor}.   A similar calculation yields the Taylor series for $\jump{S}=\jump{2\log(\rho^{-\alpha}c)}$ and $\frac{\losm-\ds}{\ds-\losp}$.
\end{proof}

Next we compute analogues of \eqref{RH:taylor} for the time derivatives of the jumps.

\begin{lemma} \label{lemma:RH:taylor:pt}
We consider $(\usp(t),\rsp(t),\psp(t),\ds(t))\in C^2(\ts,\tf]$ satisfying the hypotheses of Lemma~\ref{lemma:rh:inversion} for every $t\in(\ts,\tf]$. We assume $\losp(\ts)-\ds(\ts)\searrow 0$ as $t\searrow\ts$, and that there exist constants $\rho_0>0$, $\sigma_0>0$, $\m$ with
\[
\rho_0 < \rsp < \m, \qquad \sigma_0 \le \ssp \le \m, \qquad |\usp|\le \m, \qquad \bigl|\tfrac{d}{dt}\wsp\bigr|\le \m, \qquad \bigl|\tfrac{d}{dt}\sp{S}\bigr|\le \m.
\]
Then, there exists $\mathcal{C}>0$, depending only on $\sigma_0,\rho_0,\m$, such that
\begin{subequations} 
\label{RH:taylor:pt}
\begin{align}
\Bigl|\tfrac{d}{dt} \Bigl( \jump{z} +\tfrac{4}{1+\alpha} (\sp{\lambda_1}-\ds)\Bigr) \Bigr| &\le \mathcal{C}\, \bigl( (\sp{\lambda_1}-\ds)\, \tfrac{d}{dt}(\sp{\lambda_1}-\ds) \bigr), \label{taylor-dt-jumpz} \\
\Bigl|\tfrac{d}{dt} \jump{w} \Bigr|& \le \mathcal{C}\, \bigl( (\sp{\lambda_1}-\ds)\, \tfrac{d}{dt}(\sp{\lambda_1}-\ds)^2 \bigr) \,,  \label{taylor-dt-jumpw}\\
\Bigl|\tfrac{d}{dt} \jump{S} \Bigr| &\le \mathcal{C}\, \bigl( (\sp{\lambda_1}-\ds)\, \tfrac{d}{dt}(\sp{\lambda_1}-\ds)^2 \bigr) \,.  \label{taylor-dt-jumps}
\end{align}
\end{subequations}
\end{lemma}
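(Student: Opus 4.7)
The plan is to derive all three estimates by chain-rule differentiation of the explicit Rankine--Hugoniot representations obtained in Lemma~\ref{lemma:rh:inversion}. Writing $x_1 := \ds - \losp$ and $x_2 := \ds - \ltsp$, the jumps admit the smooth representations $\jump{z} = F_z(x_1,x_2)$ and $\jump{w} = F_w(x_1,x_2)$ given in \eqref{ugly:rh}; the same algebraic reduction applied to $S = 2\log(\rho^{-\alpha}c)$ yields an explicit smooth function $F_S$ with $\jump{S} = F_S(x_1,x_2)$. Under the hypotheses $\sigma_0 \le \sp{\sigma} \le \m$, $\rho_0 < \rsp < \m$, $|\usp|\le \m$, combined with the Lax ratio bounds \eqref{ratio:bounds}, the point $(x_1,x_2)$ stays in a compact subset of $\{x_2 \ne 0\}$ for every $t\in(\ts,\tf]$, so $F_z$, $F_w$, $F_S$ and their first partial derivatives are smooth and uniformly bounded on the relevant parameter set.

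The key algebraic input is the refinement of the Taylor expansions \eqref{taylor:fz:fw} to their partial derivatives. Termwise differentiation around $x_1=0$ gives
\[
\partial_{x_1}F_z + \tfrac{4}{1+\alpha} = O(x_1), \qquad \partial_{x_2}F_z = O(x_1^2),
\]
and
\[
\partial_{x_1}F_w = O(x_1^2), \qquad \partial_{x_2}F_w = O(x_1^3).
\]
The analogous refinements $\partial_{x_1}F_S = O(x_1^2)$ and $\partial_{x_2}F_S = O(x_1^3)$ follow from the fact that $\jump{S} = O(x_1^3)$, as recorded in \eqref{RH:taylor}. These vanishings encode the essential observation that the nonlinear residual in $\jump{z}$ after the leading linear correction is quadratic in the shock strength, while $\jump{w}$ and $\jump{S}$ are themselves cubic.

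Chain rule then yields
\[
\tfrac{d}{dt}\bigl(\jump{z} + \tfrac{4}{1+\alpha}(\sp{\lambda_1}-\ds)\bigr) = \bigl(\partial_{x_1}F_z + \tfrac{4}{1+\alpha}\bigr)\dot x_1 + \partial_{x_2}F_z \cdot \dot x_2 = O(x_1)\,\dot x_1 + O(x_1^2)\,\dot x_2,
\]
together with the analogous decompositions for $\tfrac{d}{dt}\jump{w}$ and $\tfrac{d}{dt}\jump{S}$. To convert these into the stated bounds, I would use the identity $x_2 - x_1 = -2\csp$, so that $\dot x_2 = \dot x_1 - 2\dot{\csp}$, and combine the relations $\csp = \alpha\sp{\sigma}$, $\sp{\sigma} = \tfrac{1}{\alpha}\rsp^\alpha \sp{b}$, $\sp{b} = e^{\sp{S}/2}$, $\wsp = \usp + \sp{\sigma}$ with the assumed bounds on $|\dot{\wsp}|$ and $|\dot{\sp{S}}|$ and the boundedness of the traces $(\usp,\rsp,\sp{\sigma},\sp{b})$ to obtain $|\dot{\csp}| \le \mathcal{C}$, and hence $|\dot x_2| \le |\dot x_1| + \mathcal{C}$.

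The main technical obstacle is the $O(x_1^2)\,\dot x_2$ term (and its $O(x_1^3)\,\dot x_2$ cousins for $\jump{w},\jump{S}$): after the reduction $\dot x_2 = \dot x_1 + O(1)$, it leaves a bounded residual $O(x_1^2)$ that is not literally of the form $|x_1|\,|\dot x_1|$ appearing on the right of \eqref{RH:taylor:pt}. I would read the claimed bound asymptotically: in the near-preshock regime where Proposition~\ref{prop:large2weak} applies the lemma, the anticipated scaling $x_1 \sim |t-\ts|^{\varepsilon}$ gives $|\dot x_1| \sim |t-\ts|^{\varepsilon-1}$, so that $|x_1\dot x_1| \sim |t-\ts|^{2\varepsilon-1}$ strictly dominates $x_1^2 \sim |t-\ts|^{2\varepsilon}$ as $t\searrow\ts$; the residual is thereby absorbed, and the factor-of-$x_1$ gain in $\partial_{x_2}F_z$ (respectively $x_1^2$ in $\partial_{x_2}F_w,\partial_{x_2}F_S$) relative to the corresponding $\partial_{x_1}$ derivatives is exactly what is needed to conclude.
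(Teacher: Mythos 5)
Your core computation coincides with the paper's. The paper packages the weak--shock degeneracy by writing $\jump{z}+\tfrac{4}{1+\alpha}(\sp{\lambda_1}-\ds)=(\losp-\ds)^2\,\tilde F_z(\losp-\ds,\ltsp-\ds)$ and $\jump{w},\jump{S}=(\losp-\ds)^3\,\tilde F_{w,S}(\losp-\ds,\ltsp-\ds)$ with remainder functions having bounded derivatives, and then differentiates in time; your termwise-differentiated expansions $\partial_{x_1}F_z+\tfrac{4}{1+\alpha}=O(x_1)$, $\partial_{x_2}F_z=O(x_1^2)$, $\partial_{x_1}F_w=O(x_1^2)$, $\partial_{x_2}F_w=O(x_1^3)$ (and the analogues for $F_S$) are exactly the content of those factorizations, so up to bookkeeping you and the paper are running the same argument, and your derivative claims are justified by the smoothness of $F_z,F_w,F_S$ on the compact parameter set singled out by \eqref{ratio:bounds}.

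Two of your steps, however, do not follow from what the lemma assumes. First, the claimed bound $|\tfrac{d}{dt}\csp|\le\mathcal C$ cannot be extracted from the listed hypotheses by the identities you cite: $\dot{\sp S}$ controls $\tfrac{d}{dt}\sp b$, and $\dot\wsp$ controls $\tfrac{d}{dt}(\usp+\ssp)$, but $\csp=\alpha\ssp$ with $\ssp=\wsp-\usp=\tfrac1\alpha(\rsp)^\alpha\sp b$, so a bound on $\tfrac{d}{dt}\csp$ requires control of $\tfrac{d}{dt}\usp$ or $\tfrac{d}{dt}\rsp$, neither of which is assumed; with only $\dot\wsp,\dot{\sp S}$ bounded, $\dot x_2$ is genuinely unconstrained. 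Second, your absorption of the leftover $O(x_1^2)$ (resp.\ $O(x_1^3)$) residual invokes the scaling $x_1\sim(t-\ts)^\varepsilon$, $\dot x_1\sim(t-\ts)^{\varepsilon-1}$ from the application (Proposition~\ref{prop:choice:g}); that scaling is not a hypothesis of the lemma, so as written you prove the estimate only in a regime where $|\dot x_1|\gtrsim|x_1|$, not the stated inequality with a constant depending only on $\sigma_0,\rho_0,\m$. In fairness, the paper's two-line proof suppresses exactly the same two points: differentiating $(\losp-\ds)^2\tilde F_z$ produces a $\partial_{x_2}\tilde F_z\cdot\tfrac{d}{dt}(\ltsp-\ds)$ contribution that likewise requires control of $\dot\csp$ and leaves an $O((\losp-\ds)^2)$ term not pointwise dominated by $(\sp{\lambda_1}-\ds)\tfrac{d}{dt}(\sp{\lambda_1}-\ds)$ unless the strength grows at least at a rate comparable to itself. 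The clean repair, which you should state rather than claim as a consequence of the hypotheses, is either to add the missing assumptions (a bound on $\dot\csp$, and the growth condition on the strength satisfied in the application) or to weaken the conclusion by an additive $\mathcal C(\losp-\ds)^2$ (resp.\ $\mathcal C|\losp-\ds|^3$) term, which suffices for every use of the lemma in Sections~\ref{sec:large2weak}--\ref{sec:weak2pre}.
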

\begin{proof}[Proof of Lemma~\ref{lemma:RH:taylor:pt}] 
From the identity \eqref{eq:Fz}, we obtain a function $\tilde{F}_z$ with bounded derivatives (whose bounds depend on $\sigma_0$ and $\m$) such that
\begin{equation}\label{jumpz-temp}
\jump{z} +\tfrac{4}{1+\alpha} (\sp{\lambda_1}-\ds) = (\losp-\ds)^2\, \tilde{F}_z(\losp-\ds,\ltsp-\ds) \,. 
\end{equation} 
Differentiating \eqref{jumpz-temp} in time provides  the festimate \eqref{taylor-dt-jumpz}. 
The same argument applies to $\jump{w}$ and $\jump{S}$, 
Employing the identities
\[
\jump{w} = (\losp-\ds)^3 \tilde{F}_w(\losp-\ds,\ltsp-\ds), \qquad
\jump{S} = (\losp-\ds)^3 \tilde{F}_S(\losp-\ds,\ltsp-\ds),
\]
and their time-derivatives, the identical argument as used for \eqref{taylor-dt-jumpz} also gives the bounds \eqref{taylor-dt-jumpw} and
\eqref{taylor-dt-jumps}.
\end{proof}

\subsection{Estimates from the  Rankine--Hugoniot jump conditions}
From \eqref{RH:re}, we obtain that
\begin{subequations} 
\label{RH:re2}
\begin{align}
\vsm &= \tfrac{ (\csp)^2 + \alpha ( \vsp)^2 }{(1+\alpha) \vsp }, \label{RH-v} \\
 \rsm &= \tfrac{ (1+\alpha) ( \vsp)^2 \rsp}{(\csp)^2+ \alpha ( \vsp)^2 }, \\
 (\csm)^2 &= \tfrac{ \big(-\alpha (\csp)^2 + \gamma ( \vsp)^2\big)\big( (\csp)^2 + \alpha (\vsp)^2\big)}{(1+\alpha)^2 (\vsp)^2}.
 \end{align}
 \end{subequations} 
Letting
\[
 \xi(t):=\tfrac{\s(t)^\lambda}{-t}\,,
\]
and substituting \eqref{shock:rs} into \eqref{RH:re}, find that
\begin{subequations}
\label{RH:ex:s}
\begin{align}
& \usm(t) - \dot\s(t) = \tfrac{ \s(t)^{1-\lambda}}{\lambda}
\tfrac{ C( \xi )^2 + \alpha ( U( \xi ) - \lambda \ds(t) \s(t)^{\lambda-1})^2 }{ (1+\alpha)( U( \xi ) - \lambda \ds(t) \s(t)^{\lambda-1} ) },  \label{vsm-s}\\
& \rsm(t) = \tfrac{ (1+\alpha)( U( \xi ) - \lambda \ds(t) \s(t)^{\lambda-1} )^2 R( \xi) }{ C( \xi)^2 + \alpha ( U( \xi) - \lambda \ds(t) \s(t)^{\lambda-1} )^2 }, \\
& (\csm(t))^2 = \tfrac{ \s(t)^{2-2\lambda}}{\lambda^2 (1+\alpha)^2 ( U( \xi ) - \lambda \ds(t) \s(t)^{\lambda-1} )^2 } 
 \notag \\
 & \qquad \qquad
\times   ( C( \xi )^2 + \alpha \big( U( \xi ) - \lambda \ds(t) \s(t)^{\lambda-1} )^2 )
  ( -\alpha C( \xi )^2 + \gamma ( U( \xi ) - \lambda \ds(t) \s(t)^{\lambda-1} )^2 ) \,.
 \end{align}
 \end{subequations}

\subsection{The choice of $\s(t)$}

The Rankine-Hugoniot jump condition \eqref{RH-v}, when written explicitly using the Guderley ansatz as in \eqref{vsm-s}, takes the form
\begin{align*}
  \dot\s(t) &=\usm(t) - \tfrac{ \s(t)^{1-\lambda}}{\lambda}
\tfrac{ C( \xi )^2 + \alpha ( U( \xi ) - \lambda \dot\s(t) \s(t)^{\lambda-1} )^2 }{ (1+\alpha)( U\!\big( \xi ) - \lambda \dot\s(t) \s(t)^{\lambda-1} ) }\,.
\end{align*}
This equation highlights a fundamental difficulty in the backwards-in-time construction: the velocity of the shock curve $\dot\s(t)$ depends not only on the known upstream (exterior) state $(\cdot)^+$, but also on the \textit{a priori} unknown downstream (interior) state $\usm(t)$.

To circumvent this, we prescribe the shock trajectory by introducing an auxiliary function $g\colon [\ts, \tf] \to \R$, defined as
\begin{equation*}
g(t) = 1+\tfrac{\losp(t)-\dot\s(t)}{\csp(t)}.
\end{equation*}
This function measures the normalized deviation of the shock speed $\dot\s(t)$ from the fast acoustic speed $\losp(t)$ ahead of the shock. Crucially, $g(t)$ depends only on the exterior state and the shock speed itself.

We can unpack this definition to derive an explicit ODE for the shock curve $\s(t)$. Recalling the definition of the fast acoustic speed, $\lambda_1 = u-c$, the exterior trace is $\losp(t) = \usp(t) - \csp(t)$. Substituting this into the definition of $g(t)$ yields
\begin{align*}
g(t) 
= 1 + \tfrac{(\usp(t) - \csp(t)) - \dot\s(t)}{\csp(t)} 
= 1 + \tfrac{\usp(t) - \dot\s(t)}{\csp(t)} - \tfrac{\csp(t)}{\csp(t)} 
= \tfrac{\usp(t) - \dot\s(t)}{\csp(t)}.
\end{align*}
This reveals that $g(t)$ is the Mach number of the flow relative to the shock front in the exterior region. We can rearrange this expression to solve for the shock speed:
\begin{align*}
g(t) \csp(t) &= \usp(t) - \dot\s(t) \\
\dot\s(t) &= \usp(t) - g(t) \csp(t).
\end{align*}
This equation now defines the evolution of the shock entirely in terms of the exterior state and the chosen function $g(t)$.

Finally, substituting the Guderley ansatz for the exterior traces (from \eqref{shock:rs}),
\begin{align*}
\usp(t) &= \tfrac{\s(t)^{1-\lambda}}{\lambda} U\left( \tfrac{\s(t)^\lambda}{-t}\right), \qquad
\csp(t) = \tfrac{\s(t)^{1-\lambda}}{\lambda} C\left( \tfrac{\s(t)^\lambda}{-t}\right),
\end{align*}
we arrive at the explicit, non-autonomous ODE for the shock trajectory:
\begin{equation}
\dot{\s}(t) = \tfrac{\s^{1-\lambda}(t)}{\lambda}
\left( U\!\left(\tfrac{\s^\lambda(t)}{-t}\right) - g(t)\, C\!\left(\tfrac{\s^\lambda(t)}{-t}\right) \right), \qquad \s(\tf) = (-\tf)^{\frac{1}{\lambda}}.
\label{s-ode}
\end{equation}

At this stage, $g(t)$ is a free parameter of the construction. By carefully engineering $g(t)$ (see Proposition~\ref{prop:choice:g} below) with specific matching conditions at $t=\tf$ and $t=\ts$, we can enforce key properties of the shock curve. In particular, we ensure that the preshock occurs precisely at time $t=\ts$ (which, due to the Taylor expansions \eqref{RH:taylor}, corresponds to $g(\ts)=1$), and that we obtain suitable bounds for the Differentiated Riemann Variables $\rwsm, \rzsm, \rbsm$ along the shock curve. These bounds are essential to close the bootstrap argument in the region~$\Omega^-$ (see Lemma~\ref{lemma:om} below).

\begin{remark}[\textsl{Prescribing the shock trajectory}]
It is important to emphasize that the ODE \eqref{s-ode} is derived directly from the definition of the auxiliary function $g(t)$, not by manipulating the Rankine-Hugoniot condition \eqref{vsm-s}. The latter involves the unknown interior state $\usm(t)$, making it unsuitable for determining $\s(t)$ directly in this backwards construction. Instead, the strategy is to use $g(t)$ to \emph{prescribe} the shock path $\s(t)$ via \eqref{s-ode}. The Rankine-Hugoniot conditions (including \eqref{vsm-s}) are subsequently used algebraically to determine the interior state $\usm(t)$, ensuring the resulting solution remains consistent with the conservation laws.
\end{remark}

In order to simplify the ODE \eqref{s-ode} and facilitate the analysis of the shock dynamics, we introduce the change of variables
\[
\h(t):=\s(t)^\lambda.
\]
This transformation\footnote{By defining the function $\h(t):=\s(t)^\lambda$, \eqref{RH:re2} has the equivalent  form
\begin{subequations} \label{RH:ex:h}
\begin{align}
\usm(t) - \dot\s(t) &= \tfrac{1}{\lambda}  \h(t)^{\frac{1-\lambda}{\lambda}} \,
\tfrac{ C(\xi(t))^2 + \alpha( U(\xi(t)) - \dot{\h}(t))^2 }{(1+\alpha)( U(\xi(t)) - \dot{\h}(t)) }, \label{vsm} \\
\rsm(t) &= \tfrac{ (1+\alpha)( U(\xi(t)) - \dot{\h}(t))^2\, R(\xi(t)) }{ C( \xi(t))^2 + \alpha( U(\xi(t)) - \dot{\h}(t))^2 }, \label{rsm} \\
 (\csm(t))^2
 & = \tfrac{ \h(t)^{\frac{2-2\lambda}{\lambda}}}{\lambda^2 (1+\alpha)^2( U(\xi(t)) - \dot{\h}(t))^2 } 
   ( C( \xi(t))^2 + \alpha( U(\xi(t)) - \dot{\h}(t))^2)\, ( -\alpha C( \xi(t))^2 + \gamma ( U(\xi(t)) - \dot{\h}(t))^2).
 \label{csm}
\end{align}
\end{subequations}}
is designed to convert the term $\lambda\dot \s \s^{\lambda-1}$ into a perfect time-derivative. Differentiating the definition of $\h(t)$ and substituting the expression for $\dot\s(t)$ from \eqref{s-ode}, we find  
\begin{align*}
\dot{\h}(t) &= \lambda \s(t)^{\lambda-1} \dot{\s}(t) \\
&= \lambda \s(t)^{\lambda-1} \left[ \tfrac{\s(t)^{1-\lambda}}{\lambda}
\left( U\!\left(\tfrac{\s(t)^\lambda}{-t}\right) - g(t)\, C\!\left(\tfrac{\s(t)^\lambda}{-t}\right) \right) \right] \\
&= U\!\left(\tfrac{\h(t)}{-t}\right) - g(t)\, C\!\left(\tfrac{\h(t)}{-t}\right).
\end{align*}
Thus, $\h(t)$ solves the (backward-in-time) ODE
\begin{equation} \label{h:ode}
\dot{\h}(t) = U\!\left(\tfrac{\h(t)}{-t}\right) - g(t)\, C\!\left(\tfrac{\h(t)}{-t}\right), \qquad \h(\tf) = -\tf \,.
\end{equation}

For the construction to yield a valid physical solution that matches the Guderley profile at $t=\tf$ and emanates from a preshock at $t=\ts$, the solution $\h(t)$ to the ODE \eqref{h:ode} must satisfy several constraints. The necessary physical requirements on the shock trajectory $\s(t)$ and the associated flow translate into the following equivalent conditions on the transformed variable $\h(t)$:
\begin{subequations} 
\label{cond:shock:h}
\begin{align}
\s(\tf) = (-\tf)^{\frac{1}{\lambda}}
&\iff \h(\tf) = - \tf, \label{cond:h:tf}\\
\dot\s(\tf) = -\tfrac{1}{\lambda}(-\tf)^{\frac{1}{\lambda}-1}
&\iff \dot{\h}(\tf) = -1, \label{cond:d1:h:tf} \\
\frac{d^k}{dt^k}\s(\tf) = \frac{d^k}{dt^k}\g(\tf)
&\iff \frac{d^k}{dt^k} \h(\tf) = 0 \quad (k\ge 2), \label{cond:dk:h:tf} \\
\dot\s(\ts) = \lambda_1|_\s^+(\ts)
& \iff \dot{\h}(\ts) = U\!\left(\tfrac{\h(\ts)}{-\ts}\right) -C\!\left(\tfrac{\h(\ts)}{-\ts}\right), \label{cond:h:ts} \\
\s(t) \ge \g(t) & \iff \h(t) \ge - t \quad \text{for } t \in [\ts, \tf], \label{cond:h:gtr:t}\\
\sp{\rho} > 0 & \iff R\!\left(\tfrac{\h(t)}{-t}\right) > 0 \quad \text{for } t \in [\ts, \tf], \label{cond:shock:h:R} \\
\gamma (\vsp(t))^2 \ge \alpha (\csp(t))^2
&\iff \gamma \big( U\!\left(\tfrac{\h(t)}{-t}\right) - \dot{\h}(t) \big)^2 \ge \alpha\, C\!\left(\tfrac{\h(t)}{-t}\right)^2 \quad \text{for } t \in [\ts, \tf], \label{cond:shock:h:p} \\
\sp{u}(t) > \dot\s(t) &\iff U\!\left(\tfrac{\h(t)}{-t}\right) > \dot{\h}(t) \quad \text{for } t \in [\ts, \tf], \label{cond:shock:h:lax2}  \\
\dot\s(t) > \sp{u}(t) - \sp{c}(t) &\iff U\!\left(\tfrac{\h(t)}{-t}\right) - C\!\left(\tfrac{\h(t)}{-t}\right) < \dot{\h}(t) \quad \text{for } t \in (\ts, \tf]. \label{cond:shock:h:lax}
\end{align}
\end{subequations}

The constraints \eqref{cond:shock:h} serve the following distinct purposes in the construction:
\begin{itemize}
    \item \textsl{Matching at $\tf$:} Conditions~\eqref{cond:h:tf},~\eqref{cond:d1:h:tf}, and~\eqref{cond:dk:h:tf} guarantee that the shock curve~$\s(t)$ seamlessly matches the Guderley shock curve $\g(t)$ to all orders at the final time~$\tf$.
    \item \textsl{Preshock formation at $\ts$:} Condition~\eqref{cond:h:ts} ensures that the shock strength vanishes at $t=\ts$ (the shock speed matches the characteristic speed), so that $(\s(\ts),\ts)$ is a preshock.
    \item \textsl{Geometric positioning:} Condition \eqref{cond:h:gtr:t} ensures that the constructed shock curve $\s(t)$ lies to the ``right'' (exterior) of the original Guderley shock curve $\g(t)$, ensuring the exterior solution remains the smooth Guderley flow.
    \item \textsl{Physical validity and RH inversion:} The remaining conditions ensure that we can apply Lemma~\ref{lemma:rh:inversion} to invert the Rankine-Hugoniot jump conditions. Specifically, \eqref{cond:shock:h:R} ensures positive exterior density. Condition \eqref{cond:shock:h:p} guarantees that the resulting interior pressure $\psm$ is non-negative (as required by \eqref{cond:lax} in Lemma~\ref{lemma:rh:inversion}). Finally, \eqref{cond:shock:h:lax2} and~\eqref{cond:shock:h:lax} are the necessary Lax geometric (entropy) conditions \eqref{lax} for the exterior state.
\end{itemize}

These constraints on $\h(t)$ translate directly into requirements on the auxiliary function $g(t)$. By substituting the ODE \eqref{h:ode} into the constraints \eqref{cond:shock:h:p}, \eqref{cond:shock:h:lax2}, and \eqref{cond:shock:h:lax}, and assuming $C>0$, we find that
\begin{align*}
    \eqref{cond:shock:h:p} &\iff \gamma (g(t) C)^2 \ge \alpha C^2 \iff g(t)^2 \ge \alpha/\gamma, \\
    \eqref{cond:shock:h:lax2} &\iff g(t) > 0, \\
    \eqref{cond:shock:h:lax} &\iff g(t) < 1.
\end{align*}
Furthermore, the preshock condition \eqref{cond:h:ts} requires $g(\ts)=1$, and the Guderley matching condition \eqref{cond:d1:h:tf} requires $g(\tf)=\sqrt{ \alpha/\gamma}$ (the value corresponding to the strong shock limit). This motivates the precise properties imposed on the function $g(t)$ (which will be constructed in Proposition \ref{prop:choice:g}) as follows:
\begin{subequations} \label{g:prop}
\begin{align}
&g \in C[\ts, \tf]\cap C^\infty(\ts, \tf], \\
&g(\tf)= \sqrt{\tfrac{\alpha}{\gamma}} <1, \qquad g(\ts)=1, \qquad \dot{g}(t)<0 \text{ for all } t\in[\ts,\tf), \\
&\tfrac{d^k}{dt^k}g(\tf) = 0 \text{ for every } k\ge1.
\end{align}
\end{subequations}
The $C^\infty$ flatness at $\tf$ ensures the higher-order matching conditions \eqref{cond:dk:h:tf} are satisfied.

\begin{prop} \label{prop:cond:h}
If $\h:[\ts,\tf]\to\R$ solves the ODE \eqref{h:ode} with $g$ satisfying the properties \eqref{g:prop}, then $\h$ satisfies all the constraints listed in \eqref{cond:shock:h}.
\end{prop}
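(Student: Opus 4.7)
The plan is to verify the nine conditions in \eqref{cond:shock:h} by grouping them into the terminal-value identities, the pointwise inequalities that collapse once \eqref{cond:h:gtr:t} is established, and the geometric ordering \eqref{cond:h:gtr:t} together with the $C^\infty$ matching \eqref{cond:dk:h:tf}, which are the only items requiring genuine analysis. First I would dispose of \eqref{cond:h:tf}, \eqref{cond:d1:h:tf}, and \eqref{cond:h:ts} by direct substitution into \eqref{h:ode}: at $t=\tf$ one has $\xi=1$ and $g(\tf)=\sqrt{\alpha/\gamma}$, which combined with $U(1), C(1)$ from \eqref{ODE:nd} yields $\dot{\h}(\tf)=-1$; at $t=\ts$, $g(\ts)=1$ directly produces the stated identity. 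Once \eqref{cond:h:gtr:t} is known, substituting $\dot{\h}=U-gC$ into \eqref{cond:shock:h:R}, \eqref{cond:shock:h:p}, \eqref{cond:shock:h:lax2}, and \eqref{cond:shock:h:lax} collapses them to $R(\xi)>0$, $g^2\ge\alpha/\gamma$, $g>0$, and $g<1$, all of which follow from $R\ge 1$ on $[1,\infty)$ (by \eqref{bounds:profile}, using $\xi\ge 1$ from \eqref{cond:h:gtr:t}) together with $g\in[\sqrt{\alpha/\gamma},1]$ with strict inequality except at the endpoints (by $\dot{g}<0$).

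The heart of the argument will be the geometric ordering \eqref{cond:h:gtr:t}. I would set $\Theta(t):=\h(t)+t$, so that $\xi=1-\Theta/t$ and $\{\Theta=0\}=\{\xi=1\}$, and evaluate the ODE on the zero set to obtain
\begin{equation*}
\dot{\Theta}\big|_{\Theta=0}=U(1)-g(t)\,C(1)+1=\tfrac{\gamma-1}{\gamma+1}-g(t)\,C(1).
\end{equation*}
The calibration $g(\tf)=\sqrt{\alpha/\gamma}$ forces this expression to vanish at $t=\tf$ and to be strictly negative on $[\ts,\tf)$ by $\dot{g}<0$. A contradiction based on $t^{**}:=\sup\{t\in[\ts,\tf]:\Theta(t)<0\}$ would then rule out $t^{**}\in[\ts,\tf)$: by continuity $\Theta(t^{**})=0$, so the strict inequality $\dot{\Theta}|_{\Theta=0}<0$ applies, but $\Theta$ jumping from negative to zero at $t^{**}$ forces $\dot{\Theta}(t^{**})\ge 0$, a contradiction. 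The remaining case $t^{**}=\tf$ (i.e.\ $\Theta<0$ on a left-neighborhood of $\tf$) would be excluded by linearizing \eqref{h:ode} to get $\dot{\Theta}=A(t)\,\Theta+B(t)+O(\Theta^2)$ with $B(t):=-(g(t)-g(\tf))\,C(1)\le 0$, and invoking the integrating-factor formula
\begin{equation*}
\Theta(t)=-\int_t^{\tf}e^{\int_s^t A(r)\,dr}\,B(s)\,ds+\text{(higher order)},
\end{equation*}
whose linear part is non-negative from $-B\ge 0$ and whose nonlinear correction is too small to reverse the sign for $t$ sufficiently close to $\tf$.

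The $C^\infty$ matching \eqref{cond:dk:h:tf} will be proved by induction on $k\ge 2$: differentiating the ODE at $t=\tf$ and using $\dot{\h}(\tf)=-1$, $\h(\tf)=-\tf$, one computes $\dot{\xi}(\tf)=-\dot{\h}(\tf)/\tf+\h(\tf)/\tf^2=0$, and together with $\dot{g}(\tf)=0$ from \eqref{g:prop} this yields $\ddot{\h}(\tf)=0$. Iterating, each $\h^{(k)}(\tf)$ is a universal polynomial in $\h^{(j)}(\tf)$ for $j<k$ and $g^{(i)}(\tf)$ for $1\le i<k$; the former vanish by induction and the latter by the $C^\infty$ flatness of $g$ at $\tf$. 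I expect the hardest part of the proof to be the sign control of $\Theta$ in a left-neighborhood of $\tf$: since $\dot{\Theta}(\tf)=0$ and $B$ is $C^\infty$-flat at $\tf$, neither finite Taylor expansions nor first-order monotonicity can directly conclude the desired sign, and the combination of $B\le 0$ with the integrating-factor representation is genuinely essential to close the argument.
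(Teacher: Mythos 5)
Your proposal matches the paper's proof for the terminal-value identities \eqref{cond:h:tf}, \eqref{cond:d1:h:tf}, \eqref{cond:h:ts}, for the $C^\infty$ matching \eqref{cond:dk:h:tf} (the same induction on $k$, using $\dot\xi(\tf)=0$ and $\p_t^k g(\tf)=0$), and for the reduction of the four pointwise conditions \eqref{cond:shock:h:R}--\eqref{cond:shock:h:lax} to $g\in[\sqrt{\alpha/\gamma},1]$ once $\xi\ge 1$ is known. Where you diverge is the key step \eqref{cond:h:gtr:t}, and there your route, as written, has a genuine gap.

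The paper proves $\h(t)\ge -t$ by the standard backward-in-time ODE comparison principle: with $\h_G(t)=-t$ one checks $\dot{\h}_G(t)=-1\ge F(t,\h_G(t))$ (using $g$ decreasing), $\h(\tf)=\h_G(\tf)$, and $F$ locally Lipschitz in $\h$; the conclusion $\h\ge\h_G$ follows with no case analysis. Your version works by contradiction with $t^{**}=\sup\{t:\Theta(t)<0\}$, and the case $t^{**}<\tf$ is fine (there $\dot\Theta|_{\Theta=0}<0$ strictly and you get a sign contradiction). The problem is the boundary case $t^{**}=\tf$. You propose $\dot\Theta=A(t)\Theta+B(t)+O(\Theta^2)$ with $B\le0$, and claim the $O(\Theta^2)$ correction is "too small to reverse the sign for $t$ close to $\tf$." This does not close: because $g$ is $C^\infty$-flat at $\tf$, the favorable term $-\int_t^{\tf}e^{\int_s^t A}B$ vanishes to \emph{infinite} order as $t\uparrow\tf$, so there is no fixed power $(\tf-t)^m$ that dominates the $O(\Theta^2)$ error. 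Smallness of $\tf-t$ alone cannot beat a quadratic error against an infinitely flat main term. The fix is to drop the Taylor expansion: by the integral mean value theorem, write $F(t,\h)-F(t,\h_G)=A(t)\Theta(t)$ exactly, with $A(t)=\int_0^1\partial_\h F(t,\h_G(t)+\tau\Theta(t))\,d\tau$ bounded by the Lipschitz constant. Then $\dot\Theta=A(t)\Theta+B(t)$ with no remainder, and the integrating factor gives $\Theta(t)\,e^{-\int_{\tf}^{t}A}=-\int_t^{\tf}B(s)\,e^{-\int_{\tf}^{s}A}\,ds\ge0$ on all of $[\ts,\tf]$ since $B\le0$. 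This eliminates both the $O(\Theta^2)$ issue and the two-case split, and is in effect exactly the proof of the comparison principle the paper invokes as a black box.
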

\begin{proof}[Proof of Proposition~\ref{prop:cond:h}]
Let $F(t, \h) = U(\h/(-t)) - g(t)C(\h/(-t))$ denote the right-hand side of \eqref{h:ode}.

\textit{Existence and Uniqueness:} Since the Guderley profiles $U, C$ are smooth on $[1, \infty)$ (by \eqref{bounds:profile}), and $g(t)$ is continuous (by \eqref{g:prop}), $F(t, \h)$ is continuous in $t$ and locally Lipschitz in $\h$ (as $t$ is bounded away from 0). By the Cauchy–Lipschitz (Picard-Lindelöf) theorem, a unique local solution exists backward from $t=\tf$. Furthermore, since $U, C, g$ are bounded, $|F(t,\h)| \le \m$. This guarantees the solution $\h(t)$ is defined globally on the entire interval $[\ts,\tf]$.

\textit{Matching Conditions:} Condition \eqref{cond:h:tf} is the prescribed terminal condition. For \eqref{cond:d1:h:tf}, we evaluate the ODE at $t=\tf$. Using $\h(\tf)=-\tf$ (so $\xi=1$) and $g(\tf)=\sqrt{\alpha/\gamma}$, we verify the required shock speed:
\[
\dot{\h}(\tf)=U(1)-\sqrt{\tfrac{\alpha}{\gamma}}\, C(1)= -\tfrac{2}{\gamma+1} - \sqrt{\tfrac{\gamma-1}{2\gamma}} \tfrac{\sqrt{2\gamma(\gamma-1)}}{\gamma+1} = -\tfrac{2}{\gamma+1} - \tfrac{\gamma-1}{\gamma+1} = -1.
\]
The higher-order matching conditions \eqref{cond:dk:h:tf} follow by differentiating \eqref{h:ode}. Since $\frac{d^k}{dt^k}g(\tf) = 0$ for $k\ge 1$ (by \eqref{g:prop}), and $\dot{\h}(\tf)=-1$, the time derivatives of the similarity variable $\xi(t)=\tfrac{\h(t)}{-t}$ vanish at $t=\tf$. By induction, we conclude $\frac{d^k}{dt^k} \h(\tf) = 0$ for $k\ge 2$.

\textit{Preshock Condition:} The preshock condition \eqref{cond:h:ts} follows immediately from the ODE \eqref{h:ode} because $g(\ts)=1$.

\textit{Geometric Condition (Comparison Principle):} We must verify \eqref{cond:h:gtr:t}, i.e., $\h(t) \ge -t$. Let $\h_G(t) = -t$ denote the Guderley shock path, so $\dot{\h}_G(t) = -1$. We compare $\h(t)$ with $\h_G(t)$.
We evaluate the vector field $F$ along the Guderley path:
\[
F(t, \h_G(t)) = U(1) - g(t)C(1).
\]
Since $g(t)$ is decreasing, $g(t) \ge g(\tf) = \sqrt{\alpha/\gamma}$ (by \eqref{g:prop}). As calculated above, $\sqrt{\alpha/\gamma}C(1) = U(1)+1$. Since $C(1)>0$, we have that
\[
F(t, \h_G(t)) \le U(1) - g(\tf)C(1) = -1 = \dot{\h}_G(t) \,.
\]
We have $\dot{\h}(t) = F(t, \h(t))$ and $\dot{\h}_G(t) \ge F(t, \h_G(t))$, with $\h(\tf) = \h_G(\tf)$. Since $F$ is Lipschitz in $\h$, the standard ODE comparison principle (applied backward in time) implies that $\h(t) \ge \h_G(t)$ for all $t \in [\ts, \tf]$.

\textit{Physical and Lax Conditions:} Condition \eqref{cond:h:gtr:t} ensures the similarity variable $\xi(t) = \h(t)/(-t) \ge 1$. Since the Guderley density profile $R(\xi)$ is positive for $\xi \ge 1$, \eqref{cond:shock:h:R} holds. The remaining conditions \eqref{cond:shock:h:p}–\eqref{cond:shock:h:lax} are equivalent to the requirement $\sqrt{\alpha/\gamma} \le g(t) \le 1$ (given $C>0$), which is guaranteed by the properties of $g(t)$ in \eqref{g:prop}.
\end{proof}

We now derive essential bounds on $\h(t)$ and its derivatives, which are crucial for the subsequent analysis. Since the profiles $U, C$ are smooth and bounded, and $g(t)$ is bounded, the ODE \eqref{h:ode} immediately implies that $\dot{\h}$ is bounded. Furthermore, since $U<0$ and $C>0$ for the Guderley profile, we have $\dot{\h}(t) < 0$.
\begin{subequations}
\label{h:bounds}
\begin{align}
-\m \le \dot{\h}(t) \le 0.
\end{align}
Integrating this bound backwards from $t=\tf$ over the interval of length $\delta = \tf - \ts$, we obtain bounds on $\h(t)$:
\begin{align}
|\tf| \le \h(t) \le |\tf| + \m \delta.
\end{align}
These bounds on $\h(t)$ translate into bounds on the similarity variable $\xi(t)=\h(t)/(-t)$ and its time derivative. Recalling $\kappa^{\frac{\lambda}{\lambda-1}} = (-\tf)^{-1}$, and assuming $\delta$ is small relative to $|\tf|$, we find that
\begin{align}
1 \le \tfrac{\h(t)}{-t} \le 1 + \m \delta \kappa^{\frac{\lambda}{\lambda-1}}, \qquad &\left| \tfrac{d}{dt} \left(\tfrac{\h(t)}{-t}\right) \right| = \left| \tfrac{\dot{\h}(t)(-t) + \h(t)}{(-t)^2} \right| \le \tfrac{\m}{-t} \le \m \kappa^{\frac{\lambda}{\lambda-1}}.
\end{align}
\end{subequations}
(We allow the constant $\m$ to change line by line, depending only on the Guderley profile).

Finally, we estimate the acceleration $\ddot{\h}(t)$. Differentiating the ODE~\eqref{h:ode} in time, we obtain the identity:
\begin{equation*}
\ddot{\h}(t) = \tfrac{d}{dt}\left(\tfrac{\h(t)}{-t} \right) \left( \p_\xi U \left(\tfrac{\h(t)}{-t} \right) - g(t) \p_\xi C \left(\tfrac{\h(t)}{-t} \right) \right) - \dot{g}(t) C\left(\tfrac{\h(t)}{-t} \right).
\end{equation*}
We analyze the magnitudes of these terms. The second term, $-\dot{g}(t) C(\xi(t))$, is strictly positive since $\dot{g}(t)<0$ (by \eqref{g:prop}) and $C>0$. The first term is bounded using the estimates in \eqref{h:bounds} (for the time derivative of $\xi(t)$) and the bounds on the profiles \eqref{bounds:profile} (for the derivatives of $U$ and $C$). It follows that the acceleration is controlled by $| \dot{g}(t)|$ and the background scale $\kappa^{\frac{\lambda}{\lambda-1}}$:
\begin{equation} \label{second:h:bound}
\frac{1}{\m} ( | \dot{g}(t)| + \kappa^{\frac{\lambda}{\lambda-1}}) \le \ddot{\h}(t) \le \m ( | \dot{g}(t)| + \kappa^{\frac{\lambda}{\lambda-1}}).
\end{equation}

Using \eqref{RH:ex:h},  we obtain the trace bounds
\begin{equation} \label{shock:bound}
|\sm{u}| \le \m \kappa, \qquad
 | \sm{\rho}| \le \m, \qquad
|\sm{c}| \le \m \kappa.
\end{equation}
In what follows, we  shall compute time derivatives of the left traces using \eqref{RH:ex:h} and deduce bounds needed to close the bootstrap in $\Omega^-$.

\subsection{Time Derivatives of the Interior Traces}
In \S~\ref{drv:s}, we will obtain bounds for the DRVs~$\rwsm, \rzsm, \rbsm$ along the shock curve. These bounds rely on estimates for the time derivatives of the interior traces: $\p_t \usm, \p_t \csm, \p_t \rsm$, and $\p_t \bsm$.

We first simplify the expressions for the interior traces given in \eqref{RH:ex:h} by utilizing the relationship between $\dot{\h}(t)$ and $g(t)$ established by the ODE \eqref{h:ode}. Let $\xi(t) = \h(t)/(-t)$. The ODE implies the crucial identity:
\[
\dot{\h}(t) = U(\xi(t)) - g(t)C(\xi(t)) \implies U(\xi(t)) - \dot{\h}(t) = g(t)C(\xi(t)).
\]
Substituting this identity into \eqref{RH:ex:h} significantly simplifies the algebraic structure.

\subsubsection{Computing $\p_t \rsm$}
We begin with the density $\rsm$. Substituting the identity above into \eqref{rsm}, we find:
\begin{align}
\rsm(t) 
= \tfrac{ (1+\alpha)( U(\xi) - \dot{\h})^2\, R(\xi) }{ C( \xi)^2 + \alpha( U(\xi) - \dot{\h})^2 } = \tfrac{ (1+\alpha)( g(t)C(\xi))^2\, R(\xi) }{ C( \xi)^2 + \alpha( g(t)C(\xi))^2 }
= \tfrac{(1+\alpha) g(t)^2}{1+\alpha g(t)^2}\, R\!\left(\tfrac{\h(t)}{-t}\right). \label{rsm:g}
\end{align}
We differentiatie \eqref{rsm:g} in time; we first compute the derivative of the $g$-dependent coefficient as
\[
\tfrac{d}{dg}\left( \tfrac{(1+\alpha) g^2}{1+\alpha g^2} \right) = \tfrac{2(1+\alpha)g}{(1+\alpha g^2)^2} \,,
\]
so that
\begin{equation*}
\p_t \rsm = \dot{g}(t)\, R\!\left(\tfrac{\h}{-t}\right) \tfrac{2(1+\alpha) g(t)}{(1+\alpha g(t)^2)^2} + \tfrac{(1+\alpha) g(t)^2}{1+\alpha g(t)^2}\, \p_t \!\left(\tfrac{\h}{-t}\right)\, \p_\xi R\!\left(\tfrac{\h}{-t}\right).
\end{equation*}
Using the bounds on the profiles \eqref{bounds:profile} and the bounds on $\h$ and $\p_t(\h/(-t))$ from \eqref{h:bounds}, we find that
\begin{equation} \label{pt:rsm:b}
\big| \p_t \rsm \big| \le \m \Big( |\dot{g}| + \kappa^{\frac{\lambda}{\lambda-1}} \Big).
\end{equation}
Furthermore, since $g(t)^2 \ge \alpha/\gamma$ (by \eqref{g:prop}) and $R(\xi)\ge \mo$ (by \eqref{bounds:profile}), we obtain a uniform lower bound from \eqref{rsm:g} (noting that the coefficient function is increasing in $g^2$) which is given by
\begin{equation} \label{lower:bound:rsm}
\sm{\rho}(t) \ge \tfrac{\alpha}{\gamma} R\!\left(\tfrac{\h(t)}{-t}\right) \ge \mo > 0.
\end{equation}

\subsubsection{Computing $\p_t \csm$}
Next, we analyze the sound speed $\csm$. Substituting $U-\dot{\h} = gC$ into \eqref{csm}, we find that
\begin{align*}
(\csm(t))^2
&= \Big( \tfrac{\h(t)^{\frac{1-\lambda}{\lambda}}}{\lambda} C(\xi(t)) \Big)^2 \tfrac{ (1+ \alpha g(t)^2)( \gamma g(t)^2 - \alpha) }{(1+\alpha)^2 g(t)^2}.
\end{align*}
Taking the square root yields the simplified expression
\begin{equation*}
\csm(t) = \tfrac{\h(t)^{\frac{1-\lambda}{\lambda}}}{\lambda}\, C(\xi(t))\, \mathcal{G}(g(t)), \quad \text{where} \quad \mathcal{G}(g) = \tfrac{ \sqrt{\big(1+ \alpha g^2\big)\big( \gamma g^2 - \alpha\big)} }{(1+\alpha) g}.
\end{equation*}
Differentiating this expression yields a lengthy expression involving derivatives of $\h, g,$ and $C$ which we write as
\begin{align*}
\p_t \csm &= \tfrac{ 1-\lambda}{\lambda^2 \h}\,\dot{\h}\, \csm + \tfrac{\h^{\frac{1-\lambda}{\lambda}}}{\lambda} \mathcal{G}(g) \, \p_t \!\left( \tfrac{\h}{-t} \right) \p_\xi C(\xi) + \tfrac{\h^{\frac{1-\lambda}{\lambda}}}{\lambda}\, C(\xi)\, \dot{g} \mathcal{G}'(g).
\end{align*}
The crucial observation is that the derivative $\mathcal{G}'(g)$ involves the term $(\gamma g^2 - \alpha)^{-1/2}$ in the denominator. Since $g(t)$ approaches $\sqrt{\alpha/\gamma}$ (as $t \to \tf$), this derivative can become large. Using the established bounds \eqref{h:bounds} and \eqref{bounds:profile}, and recalling that $\h^{\frac{1-\lambda}{\lambda}}$ is of order $\kappa$, we find that the estimate is given by
\begin{equation} \label{pt:csm:b}
\bigl|\p_t \csm\bigr| \le \m \kappa \left(\kappa^{\frac{\lambda}{\lambda-1}} + \left|\dot{g}\mathcal{G}'(g)\right| \right) \le \m \kappa \Big(\kappa^{\frac{\lambda}{\lambda-1}} + \tfrac{ |\dot{g}|}{ \sqrt{ \gamma g^2 - \alpha}} \Big).
\end{equation}

\subsubsection{Computing $\p_t \bsm$}
Since the pseudo-entropy is related by $b=c\,\rho^{-\alpha}$, we differentiate in time: $\p_t b = \p_t c \,\rho^{-\alpha} - \alpha c \rho^{-\alpha-1} \p_t \rho$. Applying the bounds \eqref{pt:rsm:b}, the lower bound \eqref{lower:bound:rsm}, and the estimate \eqref{pt:csm:b} implies the same bound holds for $\p_t \bsm$ and is given by
\begin{equation}
\label{pt:bsm}
\bigl|\p_t \bsm\bigr| \le \m \kappa \Big(\kappa^{\frac{\lambda}{\lambda-1}} + \tfrac{ |\dot{g}|}{ \sqrt{ \gamma g^2 - \alpha}} \Big).
\end{equation}

\subsubsection{Computing $\p_t \usm$}
\label{pt-vsm}
Finally, we compute the derivative of the interior velocity $\usm$. We first derive the simplified expression for $\usm$ using $g(t)$. We use $\usm = \vsm + \dot\s$. We simplify $\vsm$ (from \eqref{vsm} using $U-\dot{\h}=gC$) and obtain that
\[
\vsm(t) = \tfrac{\h(t)^{\frac{1-\lambda}{\lambda}}}{\lambda} C(\xi(t)) \tfrac{1+\alpha g(t)^2}{(1+\alpha)g(t)}.
\]
Combining this with the ODE for $\dot\s$ (from \eqref{s-ode}) which is
$\dot\s(t) = \tfrac{\h(t)^{\frac{1-\lambda}{\lambda}}}{\lambda} (U(\xi(t)) - g(t)C(\xi(t)))$, 
we obtain the simplified expression for $\usm(t)$:
\begin{align*}
\usm(t) 
&= \tfrac{\h(t)^{\frac{1-\lambda}{\lambda}}}{\lambda} \left( U\!\left(\tfrac{\h(t)}{-t}\right) + C\!\left(\tfrac{\h(t)}{-t}\right)\tfrac{1-g(t)^2}{(1+\alpha)g(t)} \right).
\end{align*}
Differentiating this expression in time yields
\begin{align*}
\p_t \usm &= \tfrac{1-\lambda}{\lambda^2} \tfrac{\dot{\h}}{\h}\, \usm + \tfrac{ \h^{\frac{1-\lambda}{\lambda}}}{\lambda}\, \p_t \!\left(\tfrac{ \h}{-t}\right)\!\left( \p_\xi U\!\left(\tfrac{\h}{-t}\right) + \p_\xi C\!\left(\tfrac{\h}{-t}\right)\tfrac{1-g(t)^2}{(1+\alpha)g(t)} \right)  
+ \tfrac{ \h^{\frac{1-\lambda}{\lambda}}}{\lambda}\, C\!\left(\tfrac{\h}{-t}\right) \dot{g}(t) \tfrac{d}{dg}\left(\tfrac{1-g^2}{(1+\alpha)g}\right).
\end{align*}
The derivative $\tfrac{d}{dg}(\tfrac{1-g^2}{(1+\alpha)g}) = \tfrac{-(1+g^2)}{(1+\alpha)g^2}$ is bounded since $g(t)$ is bounded away from zero by \eqref{g:prop}. Therefore, using \eqref{h:bounds} and \eqref{bounds:profile}, we arrive at the estimate
\begin{equation}
\label{pt:usm}
\bigl| \p_t \usm \bigr|  \le \m \kappa \Big( |\dot{g}| + \kappa^{\frac{\lambda}{\lambda-1}} \Big).
\end{equation}

\subsection{The Differentiated Riemann Variables along $\s(t)$ and the Construction of $g(t)$} \label{drv:s}

To solve the Euler equations in the interior region $\Omega^-$ using the method of characteristics (Lemma \ref{lemma:om}), we must establish control over the spatial derivatives of the flow variables along the shock boundary $\s(t)$. We achieve this by relating the time derivatives of the traces (computed in the previous subsections) to the spatial derivatives via the compatibility conditions along the shock trajectory.

We apply the total time derivative $\tfrac{d}{dt} = \p_t + \dot\s \p_r$ to the traces $\wsm(t), \zsm(t), \bsm(t)$. Substituting the Euler equations \eqref{euler:rv} evaluated on the interior side $(\cdot)|_\s^-$ to eliminate the partial time derivatives $\p_t w, \p_t z, \p_t b$ yields
\begin{subequations}
\label{pt:s:pr}
\begin{align}
\p_t \wsm &= ( \dot\s - \ltsm)\, \p_r w|_\s^- - \tfrac{(d-1)\alpha \big( (\wsm)^2-(\zsm)^2\big)}{4 \s(t)} + \tfrac{b\, \rho^{2\alpha}\, \p_r b}{\gamma\alpha}\Big|_\s^-, \\
\p_t \zsm &= ( \dot\s - \losm)\, \p_r z|_\s^- + \tfrac{(d-1)\alpha \big( (\wsm)^2-(\zsm)^2\big)}{4 \s(t)}+ \tfrac{b\, \rho^{2\alpha}\, \p_r b}{\gamma\alpha}\Big|_\s^-, \\
\p_t \bsm &= ( \dot\s - \lwsm)\, \p_r b|_\s^-. \label{pt:s:pr-b}
\end{align}
\end{subequations}
(Note that $\p_t \wsm$ here denotes the total time derivative $\tfrac{d}{dt} \wsm(t)$).

We can rearrange these equations to solve for the spatial derivatives. The key challenge is that the coefficients involve the differences between the shock speed and the characteristic speeds. In particular, the coefficient $(\dot\s - \losm)$ vanishes as $t\to\ts$, potentially leading to singular behavior in $\p_r z|_\s^-$.

First, we bound the geometric source terms. From the bounds on $\h(t)$ in \eqref{h:bounds} (which estimates $\s(t)$ away from zero) and the bounds on the traces in \eqref{shock:bound} (which bounds $w, z$ by $\m\kappa$), we have that 
\begin{equation*}
\left| \tfrac{(d-1)\alpha \big( (\wsm)^2-(\zsm)^2\big)}{4 \s(t)} \right| \le \m\, \kappa^{1+ \tfrac{\lambda}{\lambda-1}}.
\end{equation*}

We now utilize the estimates for the time derivatives established previously (\eqref{pt:usm}, \eqref{pt:csm:b}, \eqref{pt:bsm}). By the Lax conditions, the characteristic gaps are positive (e.g., $\losm-\dot\s > 0$).

From \eqref{pt:s:pr-b}, we have that
\begin{align*}
 \big| \p_r b |_\s^- \big|  &\le \tfrac{| \p_t \bsm |}{ \lwsm-\dot\s } \le \tfrac{ \m \kappa }{\lwsm-\dot\s}\Big(\kappa^{\frac{\lambda}{\lambda-1}} 
 + \tfrac{ |\dot{g}|}{ \sqrt{ \gamma g^2 - \alpha}} \Big).
\end{align*}
The entropy source terms (involving $b\rho^{2\alpha}\p_r b$) and the geometric source term are also bounded using these estimates. Since these forcing terms are comparable in magnitude to the time derivative terms (as $b\sim\kappa$ and the characteristic speeds are $\sim\kappa$), we can absorb them into the main estimate by allowing the constant $\m$ to increase, and hence, we have that
\begin{align*}
 \big| \p_r w |_\s^-\big|  &\le \tfrac{| \p_t \wsm |}{ \ltsm-\dot\s } + O(\text{Forcing}) \le \tfrac{ \m \kappa }{\ltsm-\dot\s}\Big(\kappa^{\frac{\lambda}{\lambda-1}} + \tfrac{ |\dot{g}|}{ \sqrt{ \gamma g^2 - \alpha}} \Big), \\
 \big| \p_r z |_\s^- \big|& \le \tfrac{| \p_t \zsm |}{ \losm-\dot\s } + O(\text{Forcing}) \le \tfrac{ \m \kappa }{\losm-\dot\s}\Big(\kappa^{\frac{\lambda}{\lambda-1}} + \tfrac{ |\dot{g}|}{ \sqrt{ \gamma g^2 - \alpha}} \Big).
\end{align*}

To understand the behavior near the preshock time, we relate the denominators to the function $g(t)$. By Lemma \ref{lemma:rh:inversion} (the ratio bounds \eqref{ratio:bounds}), the interior speed differences are comparable to the exterior ones. We recall that the identities for the exterior gaps are given by
\begin{align*}
    |\lwsp - \dot\s| = g(t)\csp, \quad
    |\ltsp - \dot\s| = (1+g(t))\csp, \quad
    |\dot\s - \losp| = (1-g(t))\csp.
\end{align*}
Since $\csp \sim \kappa$ (by \eqref{shock:rs} and \eqref{h:bounds}), the characteristic gaps are directly proportional to $g(t)$, $1+g(t)$, and $1-g(t)$. Substituting these relationships into the bounds above (and reabsorbing constants into $\m$), we obtain that
\begin{align*}
 \big| \p_r b |_\s^-\big| &\le \tfrac{\m}{g(t)} \Big(\kappa^{\frac{\lambda}{\lambda-1}} + \tfrac{ |\dot{g}|}{ \sqrt{ \gamma g^2 - \alpha}} \Big), \\
 \big| \p_r w |_\s^-\big| &\le \tfrac{\m}{1+g(t)} \Big(\kappa^{\frac{\lambda}{\lambda-1}} + \tfrac{ |\dot{g}|}{ \sqrt{ \gamma g^2 - \alpha}} \Big), \\
 \big| \p_r z |_\s^-\big| &\le \tfrac{\m}{1-g(t)} \Big(\kappa^{\frac{\lambda}{\lambda-1}} + \tfrac{ |\dot{g}|}{ \sqrt{ \gamma g^2 - \alpha}} \Big).
\end{align*}

Finally, we translate these bounds into estimates for the Differentiated Riemann Variables (DRVs) defined in \eqref{ring:v}. Since $\rho$ is bounded (by the analysis in Lemma~\ref{lemma:om}), the DRVs inherit the scaling from the spatial derivatives $\p_r z, \p_r w, \p_r b$:
\begin{subequations}
\label{ring:v:s}
\begin{align}
|\rwsm| &\le \m\Big(\kappa^{\frac{\lambda}{\lambda-1}} + \tfrac{ |\dot{g}|}{ \sqrt{ \gamma g^2 - \alpha}}\Big), \label{ringw:v:s} \\
|\rbsm| &\le \m\Big(\kappa^{\frac{\lambda}{\lambda-1}} + \tfrac{|\dot{g}|}{ \sqrt{ \gamma g^2 - \alpha}}\Big), \label{ringb:v:s} \\
| \rzsm| &\le \tfrac{\m}{1-g(t)}\Big(\kappa^{\frac{\lambda}{\lambda-1}} + \tfrac{ |\dot{g}|}{ \sqrt{ \gamma g^2 - \alpha}}\Big). \label{ringz:v:s}
\end{align}
\end{subequations}
These inequalities reveal the central challenge and dictate the required behavior of the function $g(t)$. The behavior of all DRVs is strongly coupled to the rate of change $\dot{g}(t)$. Furthermore, the dominant variable $\rzsm$ has an additional singularity due to the vanishing denominator $1-g(t)$ as $t\to\ts$ (which corresponds to the vanishing Lax gap $\lambda_1^- - \dot\s$).

To ensure the solution exists in $\Omega^-$ and forms the desired preshock structure, we must design $g(t)$ such that the singularities remain controllable (specifically, integrable in time along characteristics). The construction (detailed in Proposition~\ref{prop:choice:g}) requires $1-g(t) \sim (t-\ts)^\varepsilon$, where $0 < \varepsilon < 1$. This necessarily implies that the time derivative has the singular behavior
\[
|\dot{g}(t)| \sim \varepsilon(t-\ts)^{\varepsilon-1} \,.
\]
Since $\varepsilon < 1$, $\dot{g}(t)$ blows up as $t\to\ts$. Substituting this scaling into \eqref{ring:v:s} reveals the following asymptotic behavior (noting that $\sqrt{\gamma g^2 - \alpha}$ approaches a positive constant as $g\to 1$):
\begin{align*} 
  &\text{ subdominant DRVs: }  |\rwsm|, |\rbsm| \sim |\dot{g}| \sim \varepsilon (t-\ts)^{\varepsilon-1} \,, \\
   & \text{ dominant DRV: } |\rzsm| \sim \tfrac{|\dot{g}|}{1-g} \sim \varepsilon (t-\ts)^{-1} \,.
\end{align*} 
Thus, \emph{all} DRVs blow up as $t\to\ts$, but the dominant variable $\rzsm$ exhibits a significantly stronger singularity. The analysis must focus on controlling this sharp $(t-\ts)^{-1}$ behavior. The bounds stated next in Proposition~\ref{prop:choice:g} (\cref{rz:g,rw:g,rq:g}) are consistent with this analysis, as they use the worst-case scaling $O((t-\ts)^{-1})$ conservatively for all three DRVs.

\begin{prop} \label{prop:choice:g}
For any sufficiently small $\varepsilon>0$, there exist $\kappa\ll1, \delta\ll1$ (depending on $\varepsilon$ and the Guderley profile), and a function $g:[\ts,\tf]\to\R$ satisfying \eqref{g:prop} such that the resulting interior DRV traces satisfy the bounds
\begin{subequations}
\begin{align}
| \rzsm(t) | &\le \tfrac{\m \varepsilon}{ t - \ts}, \label{rz:g} \\
| \rwsm(t) | &\le \tfrac{\m \varepsilon}{ t-\ts}, \label{rw:g} \\
| \rbsm(t) | &\le \tfrac{\m \varepsilon}{ t - \ts}. \label{rq:g}
\end{align}
\end{subequations}
Moreover, there are constants $\mathsf{C}_1,\mathsf{C}_2>0$ such that, as $t\to\ts^+$, the shock strength vanishes at a controlled rate given by
\begin{subequations}
\label{Taylor:choice:g}
\begin{align}
1-g(t) &= \mathsf{C}_1 (t-\ts)^{\varepsilon} +O\big((t-\ts)^{2\varepsilon}\big), \label{Taylor:g} \\
\ds(t) - \sp{\lambda_1}(t) &= \mathsf{C}_2 (t-\ts)^{\varepsilon} + O\big((t-\ts)^{2\varepsilon}\big). \label{Taylor:shock_strength}
\end{align}
\end{subequations}
\end{prop}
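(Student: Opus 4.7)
The plan is to construct $g$ in three explicit layers --- a power-law cusp at $\ts$ of anomalously large amplitude $\mathsf{C}_1\sim\delta^{-\varepsilon}$, a slow $O(\varepsilon^2)$ descent on an intermediate layer, and a $C^\infty$-flat completion at $\tf$ --- and then to translate the trace estimates \eqref{ring:v:s} from \S\ref{drv:s} into \eqref{rz:g}--\eqref{rq:g} by locking the three small parameters in the order $\varepsilon\to\kappa\to\delta$ via the condition $\kappa^{\lambda/(\lambda-1)}\delta\le\varepsilon$ anticipated in \S\ref{sub:sec:r+}.

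Partition $[\ts,\tf]=I_1\cup I_2\cup I_3$ with $I_1=[\ts,\ts+\delta/4]$, $I_2=[\ts+\delta/4,\tf-\delta/8]$, $I_3=[\tf-\delta/8,\tf]$. On $I_1$ set
\begin{equation*}
g(t)=1-\mathsf{C}_1(t-\ts)^{\varepsilon},\qquad \mathsf{C}_1:=(1-\sqrt{\alpha/\gamma}-\varepsilon^2)(\delta/4)^{-\varepsilon},
\end{equation*}
so $g(\ts+\delta/4)=\sqrt{\alpha/\gamma}+\varepsilon^2$ and the bulk of the descent is already complete. On $I_2$ continue by a smooth strictly decreasing function dropping only by $O(\varepsilon^2)$, and on $I_3$ complete $g$ by a $C^\infty$-flat profile at $\tf$ (e.g.\ $g(t)-\sqrt{\alpha/\gamma}=A\exp(-1/(\tf-t))$ for an appropriate $A>0$), glued to the interior piece by cutoffs tuned to preserve $\dot g<0$. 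The conditions of \eqref{g:prop} are verified directly. The exact equality on $I_1$ gives \eqref{Taylor:g} (with error $O((t-\ts)^\infty)$); combining with the identity $\ds-\sp{\lambda_1}=(1-g)\sp{c}$ (immediate from \eqref{s-ode} and $\sp{\lambda_1}=\sp{u}-\sp{c}$) and $\csp(\ts)>0$ (from \eqref{bounds:profile} and $\xi(\ts)\ge 1$ via \eqref{cond:h:gtr:t}) yields \eqref{Taylor:shock_strength} with $\mathsf{C}_2=\mathsf{C}_1\csp(\ts)$.

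For the DRV bounds, the key is the pointwise inequality $|\dot g|/\sqrt{\gamma g^2-\alpha}\le\m\varepsilon/(t-\ts)$ on $(\ts,\tf]$, checked region by region. On $I_1$ the denominator is $\gtrsim\sqrt{\gamma-\alpha}$ (since $g\approx 1$) and the numerator is $\mathsf{C}_1\varepsilon(t-\ts)^{\varepsilon-1}$, so the ratio equals $\mathsf{C}_1(t-\ts)^\varepsilon\cdot\varepsilon/(t-\ts)\lesssim\varepsilon/(t-\ts)$ using $\mathsf{C}_1(t-\ts)^\varepsilon\le 1-\sqrt{\alpha/\gamma}$. On $I_2$ the anomalous tuning forces $\sqrt{\gamma g^2-\alpha}\sim\varepsilon$ and $|\dot g|\sim\varepsilon^2/\delta$, yielding ratio $\lesssim\varepsilon/\delta\le 4\varepsilon/(t-\ts)$. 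On $I_3$ the compatible-order flatness of $|\dot g|$ and $\sqrt{\gamma g^2-\alpha}$ at $\tf$ gives the same bound. Substituting into \eqref{ringw:v:s}--\eqref{ringz:v:s}, the $1/(1-g)$ factor in \eqref{ringz:v:s} is absorbed on $I_1$ by the exact matching $(1-g)\varepsilon/(t-\ts)=\mathsf{C}_1\varepsilon(t-\ts)^{\varepsilon-1}$, and on $I_2\cup I_3$ by $1-g\ge\mo>0$; the ordering $\kappa^{\lambda/(\lambda-1)}\delta\le\varepsilon$ absorbs the $\kappa^{\lambda/(\lambda-1)}$ contribution, giving \eqref{rz:g}--\eqref{rq:g}.

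The main obstacle is the non-obvious anomalous scaling $\mathsf{C}_1\sim\delta^{-\varepsilon}$. No generic smooth interpolation from $1$ to $\sqrt{\alpha/\gamma}$ can satisfy $|\dot g|/\sqrt{\gamma g^2-\alpha}\lesssim\varepsilon/(t-\ts)$, because on the middle region $\sqrt{\gamma g^2-\alpha}$ is forced to be small whenever $g$ is close to $\sqrt{\alpha/\gamma}$, constraining the total variation of $g$ there to be $O(\varepsilon^2)$ rather than the target $O(1)$. One must therefore front-load essentially the entire descent onto the initial cusp layer $I_1$, where the $1/(t-\ts)$ singularity in the target bound is precisely large enough to absorb the $\varepsilon(t-\ts)^{\varepsilon-1}$ scaling of the ratio. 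Reconciling this with the $C^\infty$-flatness required at $\tf$ by \eqref{cond:dk:h:tf}, and maintaining strict monotonicity across the cusp/transition/flat glueings, is the technical heart of the construction.
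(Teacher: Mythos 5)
Your approach is genuinely different from the paper's: you build $g$ explicitly as a piecewise glued function (power-law cusp on $I_1$, slow descent on $I_2$, $C^\infty$-flat tail on $I_3$) and verify the bound $\frac{|\dot g|}{(1-g)\sqrt{\gamma g^2-\alpha}}\lesssim\varepsilon/(t-\ts)$ region by region, whereas the paper defines $g$ \emph{implicitly} by inverting the Abel-type integral
\[
F(x)=\tfrac{1}{\sqrt{\gamma}}\int_\mu^x\tfrac{dy}{(1-y)\sqrt{y^2-\mu^2}},\qquad \mu=\sqrt{\alpha/\gamma},
\]
setting $g(t)=F^{-1}\bigl(\int_t^{\tf}\ell(s)\,ds\bigr)$ for a tuned smooth $\ell\le\upnu/(t-\ts)$. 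Since
\[
\tfrac{d}{dt}F(g(t))=\tfrac{\dot g(t)}{(1-g)\sqrt{\gamma g^2-\alpha}}=-\ell(t),
\]
the constraint holds \emph{identically by construction}, and $F$ absorbs both degeneracies (the logarithmic one at $g=1$ and the square-root one at $g=\mu$) simultaneously. Your version forgoes that automatic cancellation and must check the inequality by hand, and that is where a genuine gap appears.

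The gap is in the $I_1$ estimate, at the line ``On $I_1$ the denominator is $\gtrsim\sqrt{\gamma-\alpha}$ (since $g\approx 1$).'' On your $I_1$, $g$ does \emph{not} stay near $1$: with $\mathsf{C}_1=(1-\sqrt{\alpha/\gamma}-\varepsilon^2)(\delta/4)^{-\varepsilon}$, the cusp $1-g=\mathsf{C}_1(t-\ts)^\varepsilon$ carries $g$ all the way from $1$ down to $\mu+\varepsilon^2$ as $t$ traverses $[\ts,\ts+\delta/4]$. Near the right endpoint of $I_1$ one has $\gamma g^2-\alpha=2\sqrt{\gamma\alpha}\,(g-\mu)+O((g-\mu)^2)\sim\varepsilon^2$, so $\sqrt{\gamma g^2-\alpha}\sim\varepsilon$, not $\gtrsim\sqrt{\gamma-\alpha}$. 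Since the cusp gives $|\dot g|/(1-g)=\varepsilon/(t-\ts)$ exactly, the quantity $\frac{|\dot g|}{(1-g)\sqrt{\gamma g^2-\alpha}}$ is $\sim\frac{1}{t-\ts}$ there, i.e.\ larger by a factor $1/\varepsilon$ than the required bound $\m\varepsilon/(t-\ts)$ with $\m$ independent of $\varepsilon$. That missing $\varepsilon$ is not cosmetic: the bootstrap in Lemma~\ref{lemma:om} (Step~3) closes precisely because the integrated Riccati contribution is $O(\m\varepsilon)\le 1$, so a trace bound $\m/(t-\ts)$ instead of $\m\varepsilon/(t-\ts)$ breaks the argument. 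In essence you have tried to separate the two singularities of $F$ onto different subintervals --- the $(1-g)^{-1}$ blowup on $I_1$ and the $\sqrt{\gamma g^2-\alpha}$ degeneracy on $I_2\cup I_3$ --- but your choice of $\mathsf{C}_1$ forces the power-law cusp to run past $g\approx\mu$, where the second singularity already bites. Fixing this by stopping the cusp at some $g_1$ bounded away from $\mu$ reintroduces an allocation problem (the remaining $F$-variation from $g_1$ to $\mu$ is $O(1)$, but the time budget $\int\varepsilon/(t-\ts)\,dt$ over $I_2\cup I_3$ is only $O(\varepsilon\log(\delta/|I_1|))$, forcing $|I_1|\lesssim\delta e^{-c/\varepsilon}$, not $\delta/4$), which pushes you back toward solving an ODE for $F(g)$ --- that is, toward the paper's construction.
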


\begin{proof}[Proof of Proposition~\ref{prop:choice:g}]
The objective is to construct $g(t)$ such that the bound \eqref{rz:g} holds, based on the inequality \eqref{ringz:v:s}. This requires controlling the balance between the numerator (involving $|\dot{g}|$) and the denominator (involving $1-g$). We achieve this by constructing $g(t)$ as a solution to a specific ODE.

Let $\mu=\sqrt{\alpha/\gamma} = g(\tf)$. We introduce an auxiliary function $F(x)$ defined by integrating the differential form that precisely captures the singular structure in \eqref{ringz:v:s}:
\begin{equation*}
F(x) = \tfrac{1}{\sqrt{\gamma}} \int_{\mu}^x \tfrac{1}{ (1-y) \sqrt{y^2-\mu^2}}\, dy, \quad x\in [\mu, 1).
\end{equation*}
$F$ is smooth and strictly increasing on $(\mu, 1)$, mapping $[\mu, 1)$ to $[0, \infty)$. Its inverse $F^{-1}$ is smooth. The key differential identity is
\begin{equation} \label{ell:def}
\tfrac{d}{dt} \big(F(g(t))\big) = F'(g(t))\dot{g}(t) = \tfrac{ \dot{g}(t) }{ (1-g(t))\sqrt{ \gamma g(t)^2- \alpha}} \,.
\end{equation}

We now define $g(t)$ implicitly by prescribing the rate of this composite function. Let $\upnu$ be proportional to $\varepsilon$ (small if $\varepsilon$ is small). We choose a smooth, positive function $\ell(t)$ that behaves like the target rate $\upnu/(t-\ts)$ near $t=\ts$, but is $C^\infty$-flat near $t=\tf$ to ensure matching. Specifically, construct $\ell(t)$ such that
 \begin{align*}
\ell(t) &\le \tfrac{\upnu}{t-\ts}, \qquad \tfrac{d^k}{dt^k} \ell(\tf) =0 \ \ (k\ge0), \\
\ell(t) &= \tfrac{\upnu}{t-\ts} \quad \text{for } t \text{ close to } \ts.
\end{align*}

We define $g(t)$ by setting $\tfrac{d}{dt} F(g(t)) = -\ell(t)$ (since $\dot{g}<0$) and integrating from $t$ to $\tf$. Since $g(\tf)=\mu$ and $F(\mu)=0$, we have that
\[
F(g(t)) = \int_t^\tf \ell(s)\,ds \implies g(t) := F^{-1}\! \left( \int_t^\tf \ell(s)\,ds \right).
\]
By construction, this $g(t)$ satisfies \eqref{g:prop}.

We now verify the bounds. By the properties of $\ell(t)$ and the asymptotics of $F(x)$ near $x=1$, we can establish control over $1-g(t)$. Specifically, there exists a constant $C$ such that
\begin{equation} \label{1-g}
1 \le \tfrac{1-g(\tf)}{1-g(t)} \le \left( \tfrac{\delta}{t-\ts} \right)^{C\upnu}.
\end{equation}

We return to the bound for $\rzsm$ in \eqref{ringz:v:s} and substitute the defining relation \eqref{ell:def} (using $|\dot{g}|=-\dot{g}$):
\begin{align*}
| \rzsm(t)| &\le \tfrac{\m}{1-g(t)} \left(\kappa^{\frac{\lambda}{\lambda-1}} + \tfrac{ |\dot{g}(t)|}{ \sqrt{ \gamma g(t)^2 - \alpha}} \right)
= \m \left( \tfrac{\kappa^{\frac{\lambda}{\lambda-1}}}{1-g(t)} + \ell(t) \right).
\end{align*}
Using the bounds on $\ell(t)$ and $1-g(t)$ (from \eqref{1-g}), we find (for a new constant $\mathsf{B}$) that
\begin{align*}
| \rzsm(t)| \le \m \mathsf{B} \left( \tfrac{\kappa^{\frac{\lambda}{\lambda-1}} \delta^{C\upnu}}{(t-\ts)^{C\upnu}} + \tfrac{\upnu}{t-\ts}\right).
\end{align*}
We first choose $\upnu$ (and thus $\varepsilon$) small enough such that $C\upnu < 1$. The second term dominates near $t=\ts$. By subsequently choosing $\kappa$ and $\delta$ sufficiently small relative to $\varepsilon$, we ensure the first term is also bounded by $O(\tfrac{\varepsilon}{t-\ts})$. This establishes \eqref{rz:g}. The bounds \eqref{rw:g} and \eqref{rq:g} follow easily as they do not have the singular $(1-g)^{-1}$ factor (see \eqref{ringw:v:s} and \eqref{ringb:v:s}).

The Taylor expansions \eqref{Taylor:choice:g} follow from the precise behavior of $g(t)$ near $t=\ts$ derived from the ODE when $\ell(t)=\upnu/(t-\ts)$. The asymptotic inversion of $F$ near $1$ yields $1- g(t) \sim (t-\ts)^{\varepsilon}$ (identifying $\varepsilon$ with the leading order exponent related to $\upnu$). The expansion for the shock strength $\ds - \sp{\lambda_1} = (1-g)\csp$ follows immediately since $\csp(t)$ is $C^1[\ts,\tf]$.
\end{proof}

\begin{remark}[\textsl{Controlling shock strength via a Riccati mechanism}]
The construction presented in Proposition \ref{prop:choice:g} is the cornerstone of this section. It provides a mechanism to control the evolution of the shock strength, allowing us to connect a strong shock (the Guderley state at $t=\tf$) to a state of vanishing strength (the preshock at $t=\ts$). The key requirement driving this construction is the need to ensure the solvability of the Euler equations in the interior region $\Omega^-$.

The evolution of the dominant Differentiated Riemann Variable, $\rz$, is governed by a Riccati-type equation. When tracing $\rz$ backward along the fast acoustic characteristic $\psi(t,s)$ (defined later in \eqref{bw:flows:bt})\footnote{As will be detailed in \eqref{bw:flows:bt}, the time coordinate $t$ along the shock curve $\s(t)$ will be used as a ``label'' for data along the shock curve, and $s$ will be used to denote the evolutionary time coordinate.}, the equation takes the following approximate form (ignoring lower-order terms):
\begin{equation}
\label{riccati:ode}
\p_s ( \rz \circ \psi(t,s)) \approx -\tfrac{1+\alpha}{2}( \rz \circ \psi(t,s))^2, \quad \text{with terminal data } \rz \circ \psi(t,t) = \rzsm(t).
\end{equation}
We must integrate this backward in time $s$ from $s=t$ to $s=\ts$. The solution is
\begin{equation}\label{simple-rz}
\rz \circ \psi(t,s) = \tfrac{\rzsm(t)}{1 + \tfrac{1+\alpha}{2} \rzsm(t) (t-s)}.
\end{equation} 
To prevent this solution from blowing up before reaching $s=\ts$, we must ensure the denominator remains positive. Since the flow is compressive, we have $\rzsm(t) < 0$. Therefore, from \eqref{simple-rz},  the critical requirement is
\begin{equation} \label{remark:bound}
\big| \rzsm(t) \big| < \tfrac{2}{(1+\alpha)(t-\ts)}.
\end{equation}
This is the controllability condition that motivates the bounds required in Proposition \ref{prop:choice:g}, specifically $|\rzsm(t)| \le \tfrac{\m\varepsilon}{t-\ts}$ with $\varepsilon$ sufficiently small.

We now illustrate how this requirement dictates the dynamics of the shock strength. Heuristically, the spatial derivative $\rzsm$ is related to the time derivative of the trace $\p_t \zsm$ via the compatibility condition \eqref{pt:s:pr}. Ignoring lower-order terms, we can write this as
\begin{equation} \label{remark:prz}
\rzsm \sim \tfrac{\p_t \zsm}{\ds - \losm}.
\end{equation}
We established (in the derivation of \eqref{pt:usm}, \eqref{pt:csm:b}, and \eqref{second:h:bound}) that $|\p_t \zsm| \sim |\ddot{h}|$ (ignoring $\kappa$ scaling for simplicity). Furthermore, by Lemma \ref{lemma:rh:inversion}, $|\ds - \losm| \sim |\ds - \losp|$, which represents the shock strength. In terms of $\h(t)$, the shock strength is proportional to $|\dot{h} - (U-C)|$.

Combining these heuristics, the requirement \eqref{remark:bound} becomes
\begin{equation} \label{remark:frac}
\left| \rzsm \right| \sim \left| \tfrac{\ddot{h}}{\dot{h} - (U-C) } \right| \lesssim \tfrac{\varepsilon}{t-\ts}.
\end{equation}
If we approximate the profiles $U(\xi(t))$ and $C(\xi(t))$ by their values at the preshock $U_*-C_*$ (which equals $\dot{\h}(\ts)$ by \eqref{cond:h:ts}), this inequality can be written as
\begin{equation} \label{ode:main}
\left| \p_t \log| \dot{h}(t) - \dot{\h}(\ts) |\right| \lesssim \tfrac{\varepsilon}{t-\ts}.
\end{equation}
Integrating this differential inequality yields the required behavior for the shock strength
\[
| \dot{h}(t) - \dot{\h}(\ts) | \sim (t-\ts)^\varepsilon.
\]
This corresponds exactly to the behavior constructed in Proposition \ref{prop:choice:g} (\cref{Taylor:choice:g}). This mechanism—controlling the gradient blowup rate by solving an ODE for the shock trajectory—is what allows us to rigorously construct the transition from the weak to the strong shock regime.
\end{remark}

\subsection{Solving the Euler Equations in $\Omega^-$ via Bootstrap}

We aim to construct the solution to the Euler equations in the interior region
\[
\Omega^- = \{(r,t) : t\in[\ts,\tf], 0\le r < \s(t)\}.
\]
We solve the system backward in time, starting from the future temporal boundary $\Gamma^-$ (see Figure \ref{fig:step:1:outline}), defined as
\begin{equation}\label{Gammaminus}
\Gamma^- = \{(\s(t),t) \colon t\in[\ts,\tf] \} \cup \{ (r, \tf) \colon 0 \le r \le \s(\tf) \}.
\end{equation}

Formulated  as a boundary value problem for the Riemann variables (and density), we require $(w, z, b)$ and $\rho$ to solve
\begin{subequations}
\label{euler:rv:bp}
\begin{align}
\p_t w + \lambda_3 \p_r w + \tfrac{\alpha(d-1)}{4r}\,( w^2-z^2)&= \tfrac{1}{\gamma \alpha}\, b\p_r b\,\rho^{2\alpha}, \label{euler:rv:bp:w} \\
\p_t z + \lambda_1 \p_r z - \tfrac{\alpha(d-1)}{4r}\,( w^2-z^2)&= \tfrac{1}{\gamma \alpha}\, b\p_r b\,\rho^{2\alpha}, \label{euler:rv:bp:z} \\
\p_t b + \lambda_2 \p_r b &= 0, \label{euler:rv:bp:b} \\
\p_t \rho + \lambda_2 \p_r \rho &= - \rho \div u = - \rho \left( (d-1) \tfrac{w+z}{2r} + \p_r \tfrac{w+z}{2} \right). \label{euler:rv:bp:rho}
\end{align}
 Boundary conditions on $\Gamma^-$ consist of
\begin{align}
(w, z, b, \rho)(\s(t),t) &= (\wsm(t), \zsm(t), \bsm(t), \rsm(t)) \quad \forall t \in [\ts, \tf], \label{euler:rv:bp:bc_s} \\
(w, z, b)(r, \tf) &= (0, 0, 0), \quad \rho(r, \tf) = 1 \quad \text{for } r \in [0, \s(\tf)]. \label{euler:rv:bp:bc_tf}
\end{align}
\end{subequations}
The construction of $\s(t)$ via Proposition \ref{prop:choice:g} guarantees that the boundary data satisfies the crucial bounds established previously (combining \eqref{shock:bound}, \eqref{lower:bound:rsm}, and \eqref{rz:g}-\eqref{rq:g}). We let $\m, \mo$ denote fixed constants depending only on the Guderley profile and have the following bounds:
\begin{subequations}\label{boundary_data_bounds}
\begin{align}
\text{Riemann variables traces:} \quad &\big\| \sm{w}\big\|_\infty, \big\| \sm{z} \big\|_\infty, \big\| \sm{b} \big\|_\infty \le \m\kappa; \quad \mo \le \rsm(t) \le \m \,,  \\
\text{DRV traces:} \quad &\bigl| \sm{\rw } (t) \bigr|, \bigl|\sm{ \rz} (t) \bigr|, \bigl|\sm{ \rb} (t) \bigr| \le \tfrac{\m\varepsilon}{t-\ts} \,.
\end{align}
\end{subequations}
Furthermore, the $C^\infty$ matching at $t=\tf$ (ensured by \eqref{g:prop}) guarantees compatibility of the data to all orders at the corner $(\s(\tf), \tf)$.

\begin{lemma} \label{lemma:om}
If $\varepsilon>0$ is chosen sufficiently small (depending only on the Guderley profile constants $\m, \mo$), and $\kappa, \delta$ are subsequently chosen small enough (as required by Proposition \ref{prop:choice:g}), then the boundary value problem \eqref{euler:rv:bp} admits a unique solution in $\Omega^-$. The solution is $C^\infty$ on any compact subset of $\Omega^-$ bounded away from $t=\ts$. Moreover, the solution satisfies the improved bounds given by
\begin{subequations} \label{boot:tp}
\begin{align}
\| w\|_\infty(t), \| z\|_\infty(t), \| b \|_\infty(t) &\le 2\m \kappa , \label{boot:tp:wzb} \\
\| \rho \|_\infty(t) < 2\m, \qquad \big\| \tfrac{1}{\rho} \big\|_\infty(t) &< \tfrac{2}{\mo}, \label{boot:tp:rho} \\
\| \rw \|_\infty(t), \| \rz \|_\infty(t), \| \rb \|_\infty(t) &\le \tfrac{2\m \varepsilon}{ t- \ts}. \label{boot:tp:drv}
\end{align}
\end{subequations}
\end{lemma}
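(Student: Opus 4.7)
The plan is to close the bootstrap bounds \eqref{boot:tp} by a characteristic-based fixed-point argument, running backward in time from $\Gamma^-$. Fix $\varepsilon,\kappa,\delta$ as permitted by Proposition~\ref{prop:choice:g}, and let $\mathcal{X}$ be the closed subset of continuous tuples $(w,z,b,\rho,\rw,\rz,\rb)$ on $\Omega^-$ satisfying \eqref{boot:tp}. For any candidate in $\mathcal{X}$, define the three backward-in-time characteristic families $\phi,\eta,\psi$ (for $\lambda_2,\lambda_3,\lambda_1$); tracing them from any $(r_0,t_0)\in\Omega^-$, each exits $\Omega^-$ either through the shock curve $\s$, where the data is given by \eqref{euler:rv:bp:bc_s} and controlled by \eqref{boundary_data_bounds}, or through the top face $\{t=\tf\}$, where the data is the rest state. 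A natural solution map is then defined by integrating \eqref{euler:rv:bp:w}--\eqref{euler:rv:bp:rho} along these characteristics. The goal is to show this map sends $\mathcal{X}$ into itself with strict improvement of all constants, then invoke Picard iteration on the difference of two iterates to obtain a unique fixed point. The $C^\infty$ regularity on compact subsets of $\Omega^-\cap\{t>\ts\}$ follows by differentiating the system, propagating higher-derivative norms along the same characteristics, and using the all-order compatibility of the terminal data at $(\s(\tf),\tf)$ ensured by \eqref{g:prop}.

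For the Riemann variables $(w,z,b)$, the forcing terms $\tfrac{(d-1)\alpha(w^2-z^2)}{4r}$ and $\tfrac{b\rho^{2\alpha}\p_r b}{\gamma\alpha}$ are of size $O(\kappa^2)$ under the bootstrap, using that the quiescent core $\mathcal{H}$ separates the nontrivial support from $r=0$ so that $1/r$ is bounded where $w,z,b$ are nonzero. Integrating along a characteristic over a time interval of length at most $\delta$ improves the boundary bound $\m\kappa$ from \eqref{boundary_data_bounds} to $(1+O(\kappa\delta))\m\kappa$, strictly below $2\m\kappa$, thereby closing \eqref{boot:tp:wzb}. Similarly, $\p_t\log\rho+\lambda_2\p_r\log\rho=-\div u$ has right-hand side controlled by $\|\rw\|_\infty+\|\rz\|_\infty+\|(w+z)/r\|_\infty$, whose time integral along particle paths is $O(\varepsilon\log(\delta/(t-\ts)))+O(\kappa\delta)$; choosing $\varepsilon$ small keeps $\log\rho$ within $O(\varepsilon)$ of $0$, closing \eqref{boot:tp:rho}.

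The main obstacle is closing the DRV bound \eqref{boot:tp:drv}, since the system \eqref{euler-DRV} carries a Riccati self-interaction $-\tfrac{1+\alpha}{2}\rz^2$ in the $\rz$ equation. Following the heuristic of the Remark preceding this lemma, I would fix $t\in[\ts,\tf]$, integrate the $\rz$ equation backward along $s\mapsto\psi(t,s)$ from the terminal datum $\rzsm(t)$, and obtain a representation of the form
\begin{equation*}
\rz(\psi(t,s),s)\;=\;\frac{\rzsm(t)}{1+\tfrac{1+\alpha}{2}\rzsm(t)(t-s)+\mathcal{E}(t,s)},
\end{equation*}
where $\mathcal{E}(t,s)$ collects the cross terms in $\rw,\rb$, the $(d-1)/r$ geometric contributions, and the entropy source integrated along $\psi$. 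The boundary bound $|\rzsm(t)|\le\frac{\m\varepsilon}{t-\ts}$ together with $\ts\le s\le t$ forces the linear piece of the denominator to stay within $\tfrac{(1+\alpha)\m\varepsilon}{2}$ of $1$, which is bounded away from $0$ once $\varepsilon$ is small (depending only on $\m,\alpha$)---precisely the condition \eqref{remark:bound}. The error $\mathcal{E}$ is $O(\kappa\delta)+O(\varepsilon)$ by Grönwall, provided $\|\rw\|_\infty,\|\rb\|_\infty$ are controlled simultaneously; this is achievable because the $\rw,\rb$ equations have no Riccati coefficient of order $1$, so their analogous integration along $\eta,\phi$ is a classical Grönwall argument with integrable forcing of size $\varepsilon/(t-\ts)$. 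Reparametrizing via $\psi(t,s)=r$ then yields $|\rz(r,s)|\le\tfrac{2\m\varepsilon}{s-\ts}$, closing \eqref{boot:tp:drv} jointly for the three DRVs. The delicacy of this Riccati balance---the a priori bound $\frac{\m\varepsilon}{t-\ts}$ sits exactly at the borderline rate at which the denominator could vanish---is the real difficulty, and it is resolved precisely because Proposition~\ref{prop:choice:g} delivers the sharp prefactor $\m\varepsilon$ with the freedom to take $\varepsilon$ arbitrarily small relative to $1/((1+\alpha)\m)$.
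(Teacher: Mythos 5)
Your overall framework---backward characteristics, a bootstrap set with the bounds \eqref{boot:tp}, closing the Riemann variables and the Riccati structure for $\rz$ via the $\m\varepsilon/(t-\ts)$ prefactor with $\varepsilon$ small---matches the paper's Steps 1--3, 5--6 in spirit, and your identification of \eqref{remark:bound} as the threshold condition is correct.

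The genuine gap is in the density estimate, which the paper explicitly calls ``the most delicate step.'' You write that $\int|\div u|\circ\phi\,ds'=O(\varepsilon\log(\delta/(t-\ts)))+O(\kappa\delta)$ and that ``choosing $\varepsilon$ small keeps $\log\rho$ within $O(\varepsilon)$.'' This is false: for any fixed $\varepsilon>0$, the quantity $\varepsilon\log(\delta/(t-\ts))$ diverges as $t\searrow\ts$, so no choice of $\varepsilon$ closes \eqref{boot:tp:rho} uniformly on $(\ts,\tf]$. The naive Gr\"onwall step---inserting $|\div u|\lesssim\m\varepsilon/(s'-\ts)$ along the particle path and integrating from $s$ to $t$---produces exactly this logarithm and cannot be salvaged. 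The paper's resolution requires two ingredients you do not include in your bootstrap set $\mathcal{X}$. First, a \emph{localized} DRV bound on the subregions $\mathcal{A}_T$ foliated by slow characteristics launched from $\{\s(t'):t'\ge T\}$, in addition to the global bound $\|(\rw,\rz,\rb)\|_\infty(t)\le 4\m\varepsilon/(t-\ts)$. Second, a separation estimate: writing $T(s)=\eta^{-1}(\phi(t,s),s)$ for the slow-characteristic label through the particle path, one derives the ODE $\p_sT=\alpha\sigma/|J|$ for $J=\p_{t'}\eta$, shows its right-hand side is bounded by a constant $K^*<1$ (this uses the Rankine--Hugoniot ratio $\alpha\sigma^-/(\lambda_3^--\dot\s)<1$ together with the DRV bounds to propagate the boundary ratio into the interior), and integrates the resulting differential inequality to obtain $T(s)-\ts\ge C_{\mathrm{sep}}(t-\ts)$ with $C_{\mathrm{sep}}>0$. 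With that, $|\div u\circ\phi(t,s')|\le C\m\varepsilon/(C_{\mathrm{sep}}(t-\ts))$ is \emph{constant in $s'$}, the time integral is $O(\varepsilon)$ with no logarithm, and \eqref{boot:tp:rho} closes. Without this characteristic-separation argument the bootstrap does not close, and you also misidentify the Riccati self-interaction (rather than the density--DRV coupling) as the principal obstacle.
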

\begin{proof}[Proof of Lemma~\ref{lemma:om}]
We employ the method of characteristics and a continuation argument based on a bootstrap estimate.\footnote{In this proof, the constants $\m, \mo$ are fixed. The parameters $\varepsilon, \kappa, \delta$ are chosen small relative to these constants to close the argument.}

\paragraph{Step 1: Construction of the Characteristic Foliation.}
We define the backward-in-time characteristics emanating from $\Gamma^-$.

First, we define the characteristics emanating from the shock curve $(\s(t),t)$ for $t \in [\ts, \tf]$. We use $t$ as the label parameterizing the starting point and $s \in [\ts, t]$ as the running time along the trajectory. The trajectories $s \mapsto \big( \eta(t,s), \phi(t,s), \psi(t,s) \big) $ solve the ODEs:
\begin{subequations}
\label{bw:flows:bt}
\begin{align}
\p_s \eta(t,s) &= \lambda_3(\eta(t,s), s), \qquad \eta(t,t) = \s(t), \label{bw:flows:bt:eta}\\
\p_s \phi(t,s) &= \lambda_2(\phi(t,s), s), \qquad \phi(t,t) = \s(t), \label{bw:flows:bt:phi}\\
\p_s \psi(t,s) &= \lambda_1(\psi(t,s), s), \qquad \psi(t,t) = \s(t). \label{bw:flows:bt:psi}
\end{align}
\end{subequations}
The coordinate $t$ denotes a particle ``label'' for the data on the shock curve $\s(t)$, and $s \in [\ts,\tf]$ denotes the evolutionary time coordinate.

Next, we address the characteristics emanating from the time slice $\{ (r, \tf),\, 0 \le r \le \s(\tf) \}$. The data here is quiescent (given in \eqref{euler:rv:bp:bc_tf}). The speeds $\lambda_i$ are zero, and the solution remains quiescent in the rectangular region
\[
\mathcal{H} = \{(r,t): 0\le r \le \s(\tf),\ t\in[\ts,\tf]\}.
\]
The characteristics in $\mathcal{H}$ are vertical lines. The $C^\infty$ compatibility of the boundary data at the corner $(\s(\tf), \tf)$ ensures we can smoothly connect the characteristics defined in \eqref{bw:flows:bt} with those in $\mathcal{H}$, yielding a regular foliation of $\Omega^-$ (smooth away from $t=\ts$).

\paragraph{Step 2: Bootstrap Assumptions.}
To handle the singular behavior of the derivatives near $t=\ts$, we introduce localized control. We define the region $\mathcal{A}_T \subset \Omega^-$ as the region foliated by the slow acoustic characteristics $\eta$ emanating from the shock at times $t\ge T  > \ts$, defined as
\begin{equation*}
\mathcal{A}_T = \{ (x, s) \in \Omega^- : \exists\, t\in[T, \tf] \text{ such that } x = \eta(t, s),\ \ts \le s \le t \}.
\end{equation*}

We assume the solution exists on a time interval $(T_*, \tf]$ and satisfies the following weakened bootstrap bounds:
\begin{subequations}
\label{boot}
\begin{align}
\text{Riemann:} \quad &\| (w, z, b) \|_\infty(t) \le 4\m \kappa, \label{boot:wzb}\\
\text{Density:} \quad &\| \rho \|_\infty(t) < 4\m, \quad \big\| \tfrac{1}{\rho} \big\|_\infty(t) < \tfrac{4}{\mo}, \label{boot:rho} \\
\text{DRV (Global):} \quad &\| (\rw, \rz, \rb) \|_\infty(t) \le \tfrac{4\m \varepsilon}{ t- \ts}, \label{boot:drv} \\
\text{DRV ($\mathcal{A}_T$):} \quad &\max\{|\rw(x,t)|, |\rz(x,t)|, |\rb(x,t)|\} \le \tfrac{ 8\m \varepsilon}{T-\ts} \quad \text{for } (x,t) \in \mathcal{A}_T. \label{boot:drv:at}
\end{align}
\end{subequations}

\textit{Geometric Control:} Under assumptions \eqref{boot}, the speeds $|\lambda_i|$ are $O(\kappa)$. Since $\s(t) \ge \s(\tf) = \kappa^{\tfrac{\lambda}{1-\lambda}}$, if $\delta$ is small enough relative to $\kappa$ (guaranteed by Proposition \ref{prop:choice:g}), the characteristics remain bounded away from the origin, and we have that
\begin{equation}
\label{bound:nothing}
\eta(t,s), \phi(t,s), \psi(t,s) \ge \tfrac{1}{2}\kappa^{\tfrac{\lambda}{1-\lambda}} > 0.
\end{equation}
This ensures the geometric terms $\tfrac{1}{r} $ are regular.\footnote{The formulation of the Euler equations in radial symmetry introduces "geometric terms" involving $\tfrac{1}{r} $, arising from the divergence operator (e.g., $\div u = \p_r u + \frac{d-1}{r}u$). These appear explicitly in the system \eqref{euler:rv:bp}. Furthermore, the equations governing the Differentiated Riemann Variables (DRVs, Eq. \eqref{euler-DRV}), which are analyzed in Step 3, contain even stronger singularities involving $1/r^2$. Mathematically, these coefficients blow up at the origin $r=0$. The bootstrap argument relies fundamentally on integrating these equations along characteristics, a process that requires the coefficients and source terms to remain bounded. The Geometric Control established in Eq. \eqref{bound:nothing} guarantees that the entire domain $\Omega^-$ is strictly bounded away from the origin ($r \ge \tfrac{1}{2}\kappa^{\frac{\lambda}{1-\lambda}} > 0$). This ensures the geometric terms are "regular"—meaning bounded (e.g., $\tfrac{1}{r}  \le 2\kappa^{\frac{\lambda}{1-\lambda}}$)—throughout $\Omega^-$, thereby preventing the coordinate singularity at $r=0$ from interfering with the analysis.}

\paragraph{Step 3: Improving the Bounds on DRVs.}
We analyze the evolution of the DRVs using the system \eqref{euler-DRV}. We analyze $\rw$ along $\eta(t,s)$. The evolution equation \eqref{euler-DRV}a is given schematically by
\begin{equation*}
\p_s (\rw \circ \eta) = Q(\text{DRV}, \text{DRV}) + R(\text{Radial terms}).
\end{equation*}
We estimate the RHS using the bootstrap assumptions \eqref{boot}. The quadratic terms $Q$ are bounded by $C\m^2 \varepsilon^2/(s-\ts)^2$. The radial terms $R$ involve factors of $1/r \sim \kappa^{\frac{\lambda}{\lambda-1}}$ and $w, z \sim \kappa$. We utilize the smallness conditions on $\kappa, \delta$ to ensure $R$ are dominated by $Q$. Thus, there exists a constant $C_1$ (depending on $\gamma, d$) such that
\begin{equation}
\label{improv:rw_p_s}
|\p_s (\rw \circ \eta(t,s))| \le C_1\m^2 \tfrac{\varepsilon^2}{ (s- \ts)^2}.
\end{equation}
We integrate this backward from $s=t$. Using the boundary condition from \eqref{boundary_data_bounds}, we have that
\begin{align}
| \rw\circ\eta(t,s) | & \le | \rwsm(t)| + \int_s^t C_1\m^2 \tfrac{\varepsilon^2}{(s'-\ts)^2}\, ds' \le \tfrac{\m\varepsilon}{s-\ts} (1 + C_1\m\varepsilon). \label{improv:rw}
\end{align}
We choose $\varepsilon$ small enough such that $C_1\m\varepsilon \le 1$. This yields the improved global bound \eqref{boot:tp:drv}. The arguments for $\rz$ (along $\psi$) and $\rb$ (along $\phi$) are analogous.

\textit{Improving localized bounds in $\mathcal{A}_T$:} Consider $(x,s) \in \mathcal{A}_T$. This implies $(x,s)$ lies on some slow characteristic $\eta(t,s)$ where $t\ge T$. If $s\ge T$, the improved global bound suffices. If $s < T$, we analyze the DRVs along their respective characteristics starting from this point.

For $\rw$, we analyze the trajectory along $\eta(t,s')$. We integrate from $s$ to $T$. Since the trajectory remains in $\mathcal{A}_T$ for $s' \in [s, T]$, we use the localized bootstrap assumption \eqref{boot:drv:at} to bound the source terms. The RHS of the evolution equation (similar to \eqref{improv:rw_p_s}, but now using localized bounds) is bounded by $\tfrac{C_1\m^2 \varepsilon^2}{(T-\ts)^2}$, and we find that
\begin{equation} \label{improv:rw:at}
| \rw\circ\eta(t,s) | \le |\rw\circ\eta(t,T)| + \int_s^T C_1\m^2 \tfrac{\varepsilon^2}{(T-\ts)^2}\, ds'.
\end{equation}
Using the improved global bound at time $T$ (which is $\le \tfrac{2\m\varepsilon}{T-\ts}$) and $T-s < T-\ts$, we have that
\begin{equation}
| \rw\circ\eta(t,s) | \le \tfrac{2\m\varepsilon}{T-\ts} + C_1\m^2 \tfrac{\varepsilon^2}{T-\ts}.
\end{equation}
The arguments for improving the localized bounds for $\rz$ (integrated along $\psi$) and $\rb$ (integrated along $\phi$) are analogous. Due to the ordering of the characteristic speeds ($\lambda_1 < \lambda_2 < \lambda_3$), these trajectories also remain within the region $\mathcal{A}_T$ (which is bounded by the slowest characteristic $\eta$). This allows us to use the localized bootstrap assumptions to bound their respective source terms in the same manner.

Choosing $\varepsilon$ such that $C_1\m\varepsilon \le 2$ ensures the total for all DRVs is less than $\frac{4\m\varepsilon}{T-\ts}$, improving the localized bounds \eqref{boot:drv:at}.

\paragraph{Step 4: Improving the Bounds on Density (Inverse Flow Map Analysis).}
This is the most delicate step. We must control the integral of the divergence $\div u$ along the particle path $\phi(t,s)$, based on \eqref{euler:rv:bp:rho}. This requires analyzing the evolution of the characteristic labels.

We introduce the inverse flow map $\eta^{-1}(r,s)$, defined by the identity
\begin{equation}
\eta(\eta^{-1}(r,s), s) = r \,.
\label{eta-inv}
\end{equation}
Let $T(s) = \eta^{-1}(\phi(t,s), s)$ be the label of the slow characteristic passing through the particle path at time $s$.

We compute the derivative of $T(s)$. Differentiating the identity $\eta(T(s), s) = \phi(t,s)$ with respect to $s$ yields:
$\p_{t'} \eta \cdot \p_s T(s) + \p_s \eta = \p_s \phi$.
Let $J(t',s) = \p_{t'} \eta(t',s)$ be the Jacobian determinant. Since $\p_s \eta = \lambda_3$ and $\p_s \phi = \lambda_2$ (evaluated at the same spatial point), we have:
\begin{align}
J(T(s), s) \cdot \p_s T(s) = \lambda_2 - \lambda_3 = -\alpha\sigma. \label{p_s_T_raw}
\end{align}

We analyze the Jacobian $J$. At the boundary $s=t'$, $J(t',t') = \dot\s(t') - \lambda_3^-(t')$. By the Lax conditions \eqref{lax}, $\dot\s < \lambda_3^-$, so $J(t',t') < 0$. Since $J$ satisfies $\p_s J = J\, (\p_r \lambda_3\circ \eta)$, the sign is preserved, and $J(t,s) < 0$.

We rewrite \eqref{p_s_T_raw} as
\begin{align}
\p_s T(s) = \tfrac{\alpha\sigma(s)}{|J(T(s), s)|} \,. \label{p_s_T}
\end{align}
We must ensure $\p_s T(s) < 1$, which is equivalent to $|J| > \alpha\sigma$.

We analyze this ratio at the boundary $s=t'$. Let $R(t') = \frac{\alpha\sigma^-(t')}{|J(t',t')|}$. We have
$|J(t',t')| = \lambda_3^-(t') - \dot\s(t') = (u^- + \alpha\sigma^-) - \dot\s = v^- + \alpha\sigma^-$,
so $R(t') = \frac{\alpha\sigma^-}{v^- + \alpha\sigma^-}$. Since $v^->0$ (by RH conditions, Lemma 3.3), we have $R(t') < 1$.
This ratio is maximized in the strong shock limit (as analyzed in Section 3.4). The maximum value is $\frac{1}{1+\sqrt{\gamma/\alpha}}$. Since $\gamma>1$, we define $K^* = \sup_{t'\in[\ts,\tf]} R(t') < 1$.

Now we analyze the evolution away from the boundary using the DRV bounds. Let $t'=T(s)$. Using the improved DRV bounds (Step 3), the derivatives $|\p_r \lambda_3|$ and the terms governing the evolution of $\sigma$ (e.g., $\p_r u$) are bounded by $\frac{C'\m\varepsilon}{s-\ts}$. Let $\nu = C'\m\varepsilon$. Integrating the evolution equations for $J$ and $\sigma$ from $s$ to $t'$ yields
\begin{align*}
|J(s)| &\ge |J(t')| e^{-\big|\int_s^{t'} \p_r\lambda_3 ds''\big|} \ge |J(t')| \big(\tfrac{s-\ts}{t'-\ts}\big)^{\nu}, \\
\sigma(s) &\le \sigma(t') e^{|\int_s^{t'} O(\p_r u, \dots) ds''|} \le \sigma(t') \big(\tfrac{t'-\ts}{s-\ts}\big)^{\nu}.
\end{align*}
(We can absorb constants by adjusting $C'$).

Substituting these bounds into \eqref{p_s_T} shows that
\begin{align}
\p_s T(s) &\le \tfrac{\alpha\sigma(t')}{|J(t')|} \left(\tfrac{t'-\ts}{s-\ts}\right)^{2\nu} \le K^* \left(\tfrac{T(s)-\ts}{s-\ts}\right)^{2\nu}. \label{psT_diff_ineq}
\end{align}

We now integrate this differential inequality. Let $Y(s) = T(s)-\ts$ and $\mu=2\nu$. We choose $\varepsilon$ sufficiently small such that $\mu < 1$. We have $Y'(s) \le K^* Y(s)^\mu (s-\ts)^{-\mu}$. Integrating from $s$ to $t$ (where $Y(t)=t-\ts$) gives
\begin{align*}
\int_s^t Y(s')^{-\mu} Y'(s') ds' &\le K^* \int_s^t (s'-\ts)^{-\mu} ds' \\
\tfrac{1}{1-\mu} (Y(t)^{1-\mu} - Y(s)^{1-\mu}) &\le \tfrac{K^*}{1-\mu} ((t-\ts)^{1-\mu} - (s-\ts)^{1-\mu}).
\end{align*}
Rearranging the terms, we obtain that
\begin{align*}
Y(s)^{1-\mu} &\ge (1-K^*)(t-\ts)^{1-\mu} + K^* (s-\ts)^{1-\mu}.
\end{align*}
Since $K^* < 1$, this provides a positive lower bound. Let $C_{sep} = (1-K^*)^{1/(1-\mu)} > 0$. We obtain the crucial separation bound
\begin{equation}
\label{int:eta:phi}
T(s)-\ts \ge C_{sep} (t-\ts) \,.
\end{equation}

Now we estimate the divergence along $\phi(t,s)$. The point $(r,s')=\phi(t,s')$ belongs to the localized region $\mathcal{A}_{T(s')}$. We use the improved localized DRV bounds (Step 3) corresponding to $T(s')$, and we obtain that
\begin{equation*}
|\div u \circ \phi(t,s')| \le C\m\kappa^{\tfrac{\lambda-1}{\lambda}} + C \tfrac{\m\varepsilon}{T(s')-\ts} \,.
\end{equation*}
Using the lower bound \eqref{int:eta:phi}, we have that
\begin{equation*}
|\div u \circ \phi(t,s')| \le C\m\kappa^{\frac{\lambda-1}{\lambda}} + C \tfrac{\m\varepsilon}{C_{sep}(t-\ts)} \,. 
\end{equation*}
We integrate the density equation \eqref{euler:rv:bp:rho} along $\phi$, and determine that
\begin{equation*}
\left|\log\tfrac{\rsm(t)}{\rho(\phi(t,s), s)}\right| = \left|\int_s^t (\div u)\circ\phi(t,s') ds'\right| \le C\m\left(\kappa^{\frac{\lambda-1}{\lambda}}\delta + \tfrac{\m\varepsilon}{C_{sep}}\right).
\end{equation*}
By choosing $\varepsilon, \kappa, \delta$ sufficiently small (with $\varepsilon$ chosen first to ensure $\mu<1$ and fix $C_{sep}$), the exponent is small (e.g., $<\log 2$). Since $\rsm(t) \in [\mo, \m]$ (by \eqref{boundary_data_bounds}), this improves the density bounds \eqref{boot:rho} to the target bounds \eqref{boot:tp:rho}.

\paragraph{Step 5: Improving Riemann Variable Bounds and Conclusion.}
We integrate the equations for $w, z, b$. For $w$ along $\eta$, using \eqref{euler:rv:bp:w}, we have that
\begin{equation}
\p_s (w \circ \eta) = \text{Source Terms (Geometric + Entropy)}.
\label{eq:ps-W}
\end{equation}
The point $(\eta(t,s), s)$ is in $\mathcal{A}_t$. We use the improved localized bounds (Step 3) with $T=t$. The function $\p_r b$ is bounded by $\tfrac{C\m\varepsilon}{t-\ts}$. 
Using the bounds on Riemann variables and the geometric bound \eqref{bound:nothing}, the source terms are bounded by 
$C\m(\kappa^{\frac{\lambda}{\lambda-1}+2} + \tfrac{\kappa\varepsilon}{t-\ts})$. Integrating \eqref{eq:ps-W} from $s$ to $t$ shows that
\begin{equation} \label{int:w}
|w(\eta(t,s), s)| \le | \wsm(t) | + \int_s^t |\text{Sources}|\, ds' \le \m\kappa + C\m(\kappa^{\frac{\lambda}{\lambda-1}+2}\delta + \kappa\varepsilon) \,.
\end{equation}
Choosing $\varepsilon, \kappa, \delta$ small enough ensures $|w| \le 2\m\kappa$. The bounds for $z$ and $b$ follow similarly, improving \eqref{boot:wzb} to \eqref{boot:tp:wzb}.

We have improved all bootstrap assumptions \eqref{boot} to the target bounds \eqref{boot:tp}. By a standard continuation argument, the solution exists on the entire interval $[\ts, \tf]$.

\paragraph{Step 6: Higher order regularity.}
The $C^\infty$ regularity away from $t=\ts$ follows from standard theory. Since the boundary data is $C^\infty$ for $t>\ts$ (by Proposition \ref{prop:choice:g}) and the coefficients are smooth (as $r>0$ by \eqref{bound:nothing}), we can differentiate the equations \eqref{euler:rv:bp} and \eqref{euler-DRV} repeatedly. By induction, we obtain uniform bounds on higher derivatives on any compact interval $[T, \tf]$ where $T>\ts$.
\end{proof}

\subsection{Proof of Proposition~\ref{prop:large2weak}}
\begin{proof}
We synthesize the results established in this section to construct the global solution on $[\ts,\tf]$ and verify the properties claimed in Proposition~\ref{prop:large2weak}.

\textit{Construction of the Solution:}
We fix the parameters. First, we choose $\varepsilon>0$ sufficiently small (specifically, we require $\varepsilon < 1$) as needed for the bootstrap argument in Lemma~\ref{lemma:om}. Subsequently, we choose $\kappa, \delta$ and the function $g(t)$ according to Proposition~\ref{prop:choice:g}. Let $\s(t)$ be the corresponding shock trajectory obtained by solving the ODE \eqref{s-ode}. Let $(u^-, c^-, \rho^-)$ be the unique solution in the interior region $\Omega^-$ established by Lemma~\ref{lemma:om} (derived from the Riemann variables $w, z, b$).

We define the global solution $(u,c,\rho)$ (and the associated energy $E$) on $\R_+\times[\ts,\tf]$ by patching the interior solution with the exterior Guderley solution (where $\xi=r^\lambda/(-t)$):
\begin{equation*}
(u, c, \rho)(r,t) =
\begin{cases}
    \left(\tfrac{1}{\lambda} r^{1-\lambda} U(\xi),\, \tfrac{1}{\lambda} r^{1-\lambda} C(\xi),\, R(\xi)\right), & r > \s(t)\quad (\Omega^+), \\
    (u^-, c^-, \rho^-)(r,t), & 0 \le r < \s(t)\quad (\Omega^-).
\end{cases}
\end{equation*}

We now verify the items of Proposition~\ref{prop:large2weak}.

\textit{Item~\ref{prop:large2weak:1} (Regular shock and quiescent core).}
The construction defines a weak solution to the Euler equations \eqref{euler:md}. The equations hold in $\Omega^+$ (as the Guderley solution) and in $\Omega^-$ (by Lemma~\ref{lemma:om}). The Rankine--Hugoniot conditions are satisfied across the shock front $\mathcal{S}$ because the interior traces were defined precisely by the RH inversion formulas \eqref{RH:ex:h}. The Lax entropy conditions \eqref{lax} are satisfied due to the constraints imposed on $g(t)$ (verified in Proposition~\ref{prop:cond:h} and guaranteed by Lemma~\ref{lemma:rh:inversion}). The solution is $C^\infty$ away from $\mathcal{S}$ for $t>\ts$ (by Lemma~\ref{lemma:om}). The existence of the quiescent core $\mathcal{H}$ was established in the proof of Lemma~\ref{lemma:om} (Step 1).

Furthermore, the solution emanates from a preshock (Definition~\ref{def:regular:shock:pre}). The shock strength vanishes at $t=\ts$ (verified in Item 5 below). The continuity of the state variables $(u, \rho, E)$ up to $t=\ts$ is guaranteed by the existence result and the uniform bounds established in Lemma~\ref{lemma:om} (\cref{boot:tp:wzb,boot:tp:rho}).

\textit{Item~\ref{prop:large2weak:3} (Global agreement at $\tf$) and Item~\ref{prop:large2weak:4} (Shock path regularity).}
The global agreement at $t=\tf$ follows from the boundary conditions \eqref{euler:rv:bp:bc_tf} and the definition of the exterior solution. The $C^\infty$ matching of the shock trajectories $\s(t)$ and $\g(t)$ at $t=\tf$ was enforced by the properties of $g(t)$ (\cref{g:prop}) and verified in Proposition~\ref{prop:cond:h}. The regularity $\s\in C^\infty((\ts,\tf])\cap C^{1,\varepsilon}([\ts,\tf])$ follows from the asymptotics established in Proposition~\ref{prop:choice:g}.

\textit{Item~\ref{prop:large2weak:5} (Vanishing strength as $t\downarrow\ts$).}
The asymptotic behavior of the shock strength as $t\to\ts^+$ is determined by combining the engineered shock trajectory with the Rankine-Hugoniot (RH) conditions in the weak shock limit. We assume $0 < \varepsilon < 1$.

\begin{enumerate}
    \item \textsl{Speed Gap:} By the construction detailed in Proposition~\ref{prop:choice:g} (specifically \cref{Taylor:shock_strength}), the shock curve $\s(t)$ is designed such that the speed gap $\chi(t) = \dot{\s}(t)-\sp{\lambda_1}(t)$ possesses an expansion in powers of the strength parameter $(t-\ts)^\varepsilon$. This yields:
    \[
    \dot{\s}(t)-\sp{\lambda_1}(t) = C_1\,(t-\ts)^{\varepsilon}+O\!\big((t-\ts)^{2\varepsilon}\big),
    \]
    for some constant $C_1>0$.

    \item \textsl{Jumps across the shock:} We utilize the Taylor expansion of the RH jump conditions in the weak shock limit, established in Lemma~\ref{lemma:rh:inversion} (\cref{RH:taylor}). The jumps are related to the speed gap $\chi(t)$ by
    \begin{align*}
    \jump{z} &= -\tfrac{4}{1+\alpha} \chi(t) + O(\chi(t)^2)\,, \ \ 
    \jump{w} = O(\chi(t)^3)\,, \ \
    \jump{b} = O(\chi(t)^3) \,.
    \end{align*}
    Substituting the expansion for $\chi(t)$ yields the precise asymptotics. For the dominant Riemann variable $z$, we have that
    \begin{align*}
    \jump{z}(t) &= -\tfrac{4}{1+\alpha} \left(C_1\,(t-\ts)^{\varepsilon}+O\!\big((t-\ts)^{2\varepsilon}\big)\right) + O\big(((t-\ts)^\varepsilon)^2\big) 
    \\
    &
    = -\tfrac{4}{1+\alpha}C_1\,(t-\ts)^{\varepsilon}+O\!\big((t-\ts)^{2\varepsilon}\big).
    \end{align*}
    For the subdominant variables $w$ and $b$, 
    \[
    \jump{w}(t) = O\!\big((t-\ts)^{3\varepsilon}\big), \qquad \jump{b}(t) = O\!\big((t-\ts)^{3\varepsilon}\big) \,.
    \]
\end{enumerate}

\textit{Item~\ref{prop:large2weak:2} (Exact exterior matching and interior regularity at $\ts$).}
The exact exterior matching holds by definition. We analyze the regularity of the interior solution at the preshock time $t=\ts$. Let $r_*=\s(\ts)$. Since $\s(t) \ge \g(t)$ (by \eqref{cond:h:gtr:t}), the exterior solution is smooth up to $r_*^+$. We determine the interior regularity of the dominant variable $z(r,\ts)$ as $r\to r_*^-$.

We analyze the geometry of the fast acoustic characteristics $\psi(t,s)$ emanating backward-in-time from the shock (\cref{bw:flows:bt:psi}). We study the map $t \mapsto r(t) = \psi(t, \ts)$.

\textit{Spatial Scaling:} We analyze the asymptotics of $r(t)-r_*$ as $t\to\ts^+$. We have that
\begin{equation}\label{proof:prop31:r_expansion}
r(t) - r_* = (\s(t) - \s(\ts)) + (\psi(t, \ts) - \s(t)) \,.
\end{equation}
The second term is the integral of the speed along the characteristic, given as
\[
\psi(t, \ts) - \s(t) = \int_t^{\ts} \lambda_1(\psi(t,s'), s') ds' \,.
\]
Let $\lambda_* = \lambda_1(r_*, \ts)$. Since the solution is continuous at $\ts$ (Item 1), $\lambda_1$ is continuous at $(r_*, \ts)$. The deviation of the integrand from $\lambda_*$ is $O(|\psi(t,s')-r_*| + |s'-\ts|) = O(s'-\ts)$. Thus,
\[
\int_t^{\ts} \lambda_1(\psi(t,s'), s') ds' = \lambda_*(\ts-t) + O((t-\ts)^2) \,.
\]
For the shock displacement, we use the asymptotics from Proposition~\ref{prop:choice:g}. By the preshock condition, $\dot\s(\ts)=\lambda_*$. Integrating \eqref{Taylor:shock_strength} yields
\[
\s(t) - \s(\ts) = \lambda_*(t-\ts) + a(t-\ts)^{1+\varepsilon} + O((t-\ts)^{1+2\varepsilon}) \,,
\]
where $a = \tfrac{C_1}{1+\varepsilon} \neq 0$. Substituting these into \eqref{proof:prop31:r_expansion}, the linear terms cancel, and we obtain that
\begin{equation} \label{spatial_separation_proof}
r(t) - r_* = a(t-\ts)^{1+\varepsilon} + O((t-\ts)^{\min(2, 1+2\varepsilon)}) \,.
\end{equation}
Since $\varepsilon < 1$, the leading order spatial scaling is $|r(t)-r_*| \sim (t-\ts)^{1+\varepsilon}$.

\textit{Value Scaling\footnote{The term "value scaling" refers to the asymptotic behavior of the function's magnitude (its value) as the singularity is approached along a specific path.}:} We analyze the behavior of $z(r(t), \ts)$. With $z_* = z(r_*, \ts)$, we have that
\begin{equation}\label{value_identity_proof}
z(r(t), \ts) - z_* = (\zsm(t) - z_*) + (z(r(t), \ts) - \zsm(t)) \,.
\end{equation}
The first term is the variation of the trace along the shock. By Item 5, $\jump{z}(t) \sim (t-\ts)^\varepsilon$. The exterior trace $\zsp(t)$ is smooth, so $\zsp(t) - z_* = O(t-\ts)$. Since $\varepsilon<1$, the jump dominates, and the interior trace satisfies
\begin{equation}\label{z_trace_variation_proof}
\zsm(t) - z_* \sim (t-\ts)^\varepsilon \,.
\end{equation}
The second term in \eqref{value_identity_proof} is the integral of the source terms $S_z$ (from \eqref{euler:rv:bp:z}) along the characteristic $\psi(t,s')$ from $t$ to $\ts$.

A detailed analysis confirms that this integral is subdominant to the trace variation. Although the global bounds on the DRVs in Lemma~\ref{lemma:om} (
Step 2) allow the source terms (specifically the entropy gradient) to blow up like $\tfrac{1}{s'-\ts}$ (which would lead to a logarithmic divergence if 
integrated directly) the  bootstrap analysis within Lemma~\ref{lemma:om} carefully balances this blow-up. 
By utilizing the localized bounds (Step 3) 
and the analysis of characteristic separation (Step 4), 
the analysis ensures that the integral term vanishes as $t\to\ts$ 
(establishing the continuity claimed in Item 1) and is consistent with the leading order behavior given as
\begin{equation}\label{z_variation_proof}
z(r(t), \ts) - z_* \sim (t-\ts)^\varepsilon \,.
\end{equation}

\textit{Combined Regularity:} We combine the spatial scaling \eqref{spatial_separation_proof} and the value scaling \eqref{z_variation_proof}. 
We invert the spatial relationship to get $(t-\ts) \sim |r-r_*|^{\frac{1}{1+\varepsilon}}$. 
Substituting this into the value scaling yields the spatial regularity
\[
|z(r, \ts) - z_*| \sim \big(|r-r_*|^{\frac{1}{1+\varepsilon}}\big)^\varepsilon = |r-r_*|^{\frac{\varepsilon}{1+\varepsilon}} \,.
\]
This confirms the interior solution has precisely $C^{\frac{\varepsilon}{1+\varepsilon}}$ regularity at the preshock point, concluding the proof.
\end{proof}

\section{Backwards-in-time shock development: from weak shock to the preshock} \label{sec:weak2pre}

The construction in Section~\ref{sec:large2weak} successfully evolved a strong Guderley shock backward-in-time to a state of vanishing strength at $t=\ts$. However, as detailed in the proof of Proposition~\ref{prop:large2weak} (Item~\ref{prop:large2weak:2}), the resulting state at $t=\ts$ is highly asymmetric: it is smooth on the exterior but only $C^{\frac{\varepsilon}{1+\varepsilon}}$ Hölder continuous on the interior. This imbalance stems from the fact that the shock acceleration $\ddot{\s}(t)$ necessarily becomes singular as $t\searrow\ts$ (since $\varepsilon<1$ in Proposition~\ref{prop:choice:g}). Such a state cannot be generated from $C^1$ initial data.

Our goal in this section is to resolve this obstruction by refining the construction on a short time interval $[\ts,\tsh]$, where the shock is already weak. We replace the singular trajectory $\s(t)$ with a new, $C^\infty$-smooth shock curve $\l(t)$. This new curve is engineered to produce a physically realistic preshock: a symmetric $C^{\frac{1}{3}}$ cusp in the dominant variable $z$, with $C^{1,\frac{1}{3}}$ regularity for the subdominant variables $w$ and $b$. This construction corresponds to Step 2 outlined in Section~\ref{strategy:3}.

\paragraph{Roadmap.}
\begin{enumerate}
    \item We define the spacetime regions and the characteristic framework for a Goursat-type problem (see Figures~\ref{fig:step:2:a:outline} and \ref{fig:step:2:b:outline}).
    \item We introduce the concept of an \emph{admissible preshock-generating pair} $(\l, \lambda_1|_\l^+)$, defining the new shock curve and the exterior fast wave speed trace.
    \item We prove the existence of such a pair (Lemma~\ref{lemma:l1:l}).
    \item We use this pair to construct a unique patching solution on $[\ts,\tsh]$ that matches the solution from Section 3 at $t=\tsh$ and develops the desired $C^{\frac{1}{3}}$ preshock at $t=\ts$ (Proposition~\ref{prop:weak2pre}).
\end{enumerate}

\paragraph{The Weak-Shock Regime and the Target Behavior.}
We fix an intermediate time $\tsh \in (\ts,\tf)$ close to $\ts$, and set $\dsh:=\tsh-\ts$. At this time, the solution constructed in Proposition~\ref{prop:large2weak} is weak, with shock strength $\dot{\s}(\tsh)-\sp{\lambda_1}(\tsh)\sim \dsh^{\varepsilon}$.

We now introduce the new shock curve $\l(t)$ for $t\in[\ts,\tsh]$, matching $\s(t)$ at $t=\tsh$. The key requirement for generating a $C^{\frac{1}{3}}$ preshock (as formalized in Lemma~\ref{lemma:expansion:0} below) is that the shock speed approaches the characteristic speed at a specific, faster rate:
\[
\dot{\l}(t)-\lambda_1\big(\l(t)^+,t\big)\sim (t-\ts)^{\frac{1}{2}}\quad\text{as }t\searrow\ts.
\]
 Our construction will ensure that  the shock acceleration $\ddot{\l}(t)$ remains bounded, avoiding the singularity encountered in Section 3.

\paragraph{Characteristic Framework and Regions.}
We set up a Goursat problem by prescribing data along characteristics emanating backward-in-time from the handover point $(\s(\tsh),\tsh)$.

Since the flow is discontinuous at the shock, we must distinguish between characteristics generated by the exterior and interior states. Let $\psi^+(t)$ be the \emph{exterior} fast acoustic characteristic (governed by the $\lambda_1^+$-flow) and $\eta^-(t)$ be the \emph{interior} slow acoustic characteristic (governed by the $\lambda_3^-$-flow). Both are initialized at $t=\tsh$ such that $\psi^+(\tsh)=\eta^-(\tsh)=\s(\tsh)$.

Based on the characteristic speeds and the Lax conditions (which imply $\lambda_3^- < \dot{\l} < \lambda_1^+$ near $\tsh$), the backward-in-time geometry satisfies $\eta^-(t) < \l(t) < \psi^+(t)$ for $t \in [\ts, \tsh)$. We define the two triangular spacetime regions (consistent with Figures~\ref{fig:step:2:a:outline} and \ref{fig:step:2:b:outline}):
\begin{itemize}
    \item The exterior patching region (Right side):
    \[
    \mathcal{D}=\big\{(r,t):\,\ts\le t\le\tsh,\ \l(t)\le r\le \psi^+(t)\big\}.
    \]
    \item The interior patching region (Left side):
    \[
    \mathcal{L}=\big\{(r,t):\,\ts\le t\le\tsh,\ \eta^-(t)\le r\le \l(t)\big\}.
    \]
\end{itemize}

The construction proceeds as follows:
\begin{enumerate}
    \item In the exterior region $\mathcal{D}$, we solve a Goursat problem. We prescribe the subdominant variables $(w, b)$ along the characteristic $\psi^+(t)$ (inherited from the Guderley solution) and the dominant wave speed $\lambda_1$ along the shock curve $\l(t)$.
    \item The solution in $\mathcal{D}$ determines the full exterior state $(w, z, b)|_\l^+$.
    \item We use the Rankine-Hugoniot conditions across $\l(t)$ to determine the interior state $(w, z, b)|_\l^-$.
    \item In the interior region $\mathcal{L}$, we solve the Euler equations using the traces $(w, z, b)|_\l^-$ as boundary data.
\end{enumerate}

The crucial element is the simultaneous choice of $\l(t)$ and $\lambda_1|_\l^+(t)$, formalized below. Throughout, we treat $\varepsilon$ and $\kappa$ (from Section 3) as fixed; implicit constants depend only on the Guderley profiles. We use $\sp{\cdot}$ for traces along the original shock $r=\s(t)$.

\begin{definition}[Admissible preshock–generating pair]\label{def:admissible:pair}
We call a pair $\big(\l,\lambda_1\big|_{\l}^{+}\big)$ on $[\ts,\tsh]$ \emph{admissible} if $\lambda_1\big|_{\l}^{+}\in C^\infty((\ts,\tsh])$ and $\l\in C^\infty([\ts,\tsh])$ satisfy the following properties:
\begin{enumerate}
\item \textup{Matching at $t=\tsh$:} The pair matches the previous construction smoothly. For all $k\ge0$,
\[
\partial_t^{\,k}\l(\tsh)=\partial_t^{\,k}\s(\tsh), \qquad
\partial_t^{\,k}\!\big(\lambda_1\big|_{\l}^{+}\big)(\tsh)=\partial_t^{\,k}\!\big(\sp{\lambda_1}\big)(\tsh).
\]
\item \textup{Time–derivative bounds:} The time variation of the exterior speed is controlled. For $t\in[\ts,\tsh]$,
\[
\partial_t\!\big(\lambda_1\big|_{\l}^{+}\big)(t) \le \m,
\qquad
\bigl|\partial_t\!\big(\lambda_1\big|_{\l}^{+}\big)(t)\bigr|\le \m\,\dsh^{\,\varepsilon-\tfrac{1}{2}}\,(t-\ts)^{-\frac{1}{2}}.
\]
(The second bound allows for a singularity at $\ts$ consistent with the $\sqrt{t-\ts}$ behavior).

\item \textup{Positive speed gap (Target Behavior):} The shock strength vanishes at the desired rate, consistent with the matching at $\tsh$. For $t\in[\ts,\tsh]$,
\begin{equation}\label{speedgap}
\dot{\l}(t)-\lambda_1\big|_{\l}^{+}(t)\ \ge\ \tfrac{1}{\m}\,\dsh^{\,\varepsilon-\tfrac{1}{2}}\,(t-\ts)^{\frac{1}{2}}.
\end{equation}
(The scaling factor $\dsh^{\varepsilon-\frac{1}{2}}$ ensures that at $t=\tsh$, the gap is $\sim \dsh^\varepsilon$, matching the previous construction).

\item \textup{Acceleration bound:} The shock acceleration remains bounded (unlike $\ddot\s(t)$). For $t\in[\ts,\tsh]$, we have $\ |\ddot{\l}(t)| \le \m\,\dsh^{\,\varepsilon-1}$.

\item \textup{Taylor expansions at $t=\ts$:} The pair ensures a preshock $\lambda_1\big|_{\l}^{+}(\ts)=\dot{\l}(\ts)$ and possesses the required asymptotic structure. There exist constants $\mathsf{l}_1, \mathsf{l}_2$ such that
\begin{subequations}
\begin{align}
&\lambda_1\big|_{\l}^{+}(t)-\lambda_1\big|_{\l}^{+}(\ts)-\mathsf{l}_1\,(t-\ts)^{\frac{1}{2}}-\mathsf{l}_2\,(t-\ts) =O\!\big(|t-\ts|^{\frac{3}{2}}\big) \,,
\label{Taylor-lambda1} \\
&\dot{\l}(t)-\dot{\l}(\ts)-\ddot{\l}(\ts)\,(t-\ts)=O\!\big(|t-\ts|^{2}\big) \,. \label{Taylor-dotc}
\end{align}
\end{subequations}
\end{enumerate}
\end{definition}

\begin{remark}[Terminology]
We refer to $\big(\l,\lambda_1|_{\l}^{+}\big)$ as a ``preshock–generating'' pair because the specific $\sqrt{t-\ts}$ approach enforced by the speed gap \eqref{speedgap} is the mechanism that generates the $C^{{\frac{1}{3}}}$ cusp for $z$ at the preshock point $(r_*,\ts)$, where $r_*=\l(\ts)$ (cf. Lemma~\ref{lemma:expansion:0}).
\end{remark}

\begin{lemma}[Construction of an admissible pair]\label{lemma:l1:l}
There exists an admissible pair $\big(\l,\lambda_1\big|_{\l}^{+}\big)$ on $[\ts,\tsh]$ in the sense of Definition~\ref{def:admissible:pair}.
\end{lemma}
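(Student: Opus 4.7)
The plan is to construct $(\l,\lambda_1|_\l^+)$ by an explicit partition-of-unity gluing: near $\tsh$ I set them equal to $\s$ and $\sp{\lambda_1}$ respectively (which trivially gives the $C^\infty$ matching of Definition~\ref{def:admissible:pair}, item~1), while near $\ts$ I replace them by smooth local models engineered to produce the prescribed Taylor structure and preshock-generating speed gap. Fix a cutoff $\chi\in C^\infty([\ts,\tsh])$ with $\chi\equiv 1$ on $[\ts,\ts+\dsh/3]$, $\chi\equiv 0$ on $[\ts+2\dsh/3,\tsh]$, and $|\chi^{(k)}|\lesssim\dsh^{-k}$. Set $r_*:=\s(\ts)$ and $\ell_0:=\dot\s(\ts)=\sp{\lambda_1}(\ts)$ (the latter equality coming from the preshock condition at $\ts$ in Proposition~\ref{prop:large2weak}); pick $\mathsf{l}_1:=-c_0\dsh^{\varepsilon-1/2}$ with $c_0>0$ a profile-dependent constant to be fixed at the end, $\mathsf{l}_2:=\partial_t\sp{\lambda_1}(\ts)$, and $a_1\in\R$ with $|a_1|\leq\m\dsh^{\varepsilon-1}$. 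Introduce the local models
\[
M(t):=r_*+\ell_0(t-\ts)+\tfrac12 a_1(t-\ts)^2,\qquad
N(t):=\ell_0+\mathsf{l}_1\sqrt{t-\ts}+\mathsf{l}_2(t-\ts),
\]
and define
\[
\l(t):=\chi(t)\,M(t)+(1-\chi(t))\,\s(t),\qquad
\lambda_1|_\l^+(t):=\chi(t)\,N(t)+(1-\chi(t))\,\sp{\lambda_1}(t).
\]

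Items~1 and~5 of Definition~\ref{def:admissible:pair} are immediate from this construction: $\chi$ is $C^\infty$-flat at $\tsh$, hence $\l\equiv\s$ and $\lambda_1|_\l^+\equiv\sp{\lambda_1}$ in a neighborhood of $\tsh$, matching to all orders; $\chi\equiv 1$ in a neighborhood of $\ts$, hence $\l\equiv M$ and $\lambda_1|_\l^+\equiv N$ there, producing the expansions \eqref{Taylor-lambda1}-\eqref{Taylor-dotc} with vanishing remainders and the preshock identity $\dot\l(\ts)=\ell_0=\lambda_1|_\l^+(\ts)$. For the quantitative bounds in items~2 and~4, I expand $\ddot\l$ and $\partial_t\lambda_1|_\l^+$ via the Leibniz rule and use the key closeness estimates inherited from Propositions~\ref{prop:large2weak} and~\ref{prop:choice:g}: from $\dot\s(t)-\ell_0=O((t-\ts)^\varepsilon)$ one gets $|\s(t)-M(t)|\lesssim(t-\ts)^{1+\varepsilon}$ and $|\dot\s(t)-\dot M(t)|\lesssim(t-\ts)^\varepsilon$, and the Riccati mechanism behind Proposition~\ref{prop:choice:g} yields $|\ddot\s(t)|\lesssim(t-\ts)^{\varepsilon-1}$. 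Combined with $|\chi^{(k)}|\lesssim\dsh^{-k}$ and $(t-\ts)\sim\dsh$ in the transition region, these imply $|\ddot\l|\lesssim\dsh^{\varepsilon-1}$. The analogous computation for $\lambda_1|_\l^+$ uses $|N(t)-\sp{\lambda_1}(t)|\lesssim\dsh^\varepsilon$ (the $\sqrt{t-\ts}$ term dominates the smooth part since $\varepsilon<\tfrac12$) and $|\partial_t N(t)|\lesssim\dsh^{\varepsilon-1/2}(t-\ts)^{-1/2}$ to produce the derivative bounds of item~2.

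The main obstacle is verifying the speed-gap inequality~\eqref{speedgap} uniformly on $[\ts,\tsh]$. Where $\chi\equiv 1$ the gap equals $\dot M(t)-N(t)=c_0\dsh^{\varepsilon-1/2}\sqrt{t-\ts}+(a_1-\mathsf{l}_2)(t-\ts)$, dominating the target for $c_0$ chosen large. Where $\chi\equiv 0$ the gap equals $\dot\s(t)-\sp{\lambda_1}(t)\sim(t-\ts)^\varepsilon$, and the elementary rewriting $(t-\ts)^\varepsilon=(t-\ts)^{\varepsilon-1/2}(t-\ts)^{1/2}\geq\dsh^{\varepsilon-1/2}(t-\ts)^{1/2}$ (valid because $\varepsilon<\tfrac12$ and $t-\ts\leq\dsh$) gives the target with room to spare. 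The delicate piece is the transition region $t-\ts\in[\dsh/3,2\dsh/3]$, where the gap equals $\dot\s-\sp{\lambda_1}$ plus correction terms involving $\chi'(M-\s)$, $\chi'(N-\sp{\lambda_1})$, $\chi(\dot M-\dot\s)$, and $\chi(N-\sp{\lambda_1})$, each of magnitude at most $O(\dsh^\varepsilon)$ but of potentially negative sign. The core of the proof is a careful sign and magnitude bookkeeping which shows that by taking $c_0$ large relative to the (profile-only) constants controlling these corrections, the leading contribution $c_0\dsh^{\varepsilon-1/2}\sqrt{t-\ts}\sim c_0\dsh^\varepsilon$ absorbs any negative correction, preserving the positivity at the required rate. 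This final choice of $c_0$ is compatible with items~2 and~4 because $c_0$ is a fixed profile-dependent constant determined before $\dsh$ is taken small at the end of the bootstrap in Proposition~\ref{prop:weak2pre}.
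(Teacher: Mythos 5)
Your construction differs from the paper's in \emph{what} gets glued. The paper glues the speed \emph{gap}
\[
\chi(t):=\phi(t)\,\mathsf{c}_1(t-\ts)^{\frac12}+(1-\phi(t))\bigl(\dot\s(t)-\sp{\lambda_1}(t)\bigr)
\]
as a convex combination of two \emph{strictly positive} quantities, then defines $\l$ as the solution of the ODE $\dot\l=\lambda_1|_\l^++\chi$, $\l(\tsh)=\s(\tsh)$. Positivity of the gap (Item 3 of Definition~\ref{def:admissible:pair}) is then free by construction, and smoothness of $\dot\l$ is arranged by the cancellation $\mathsf{c}_1=-\mathsf{l}_1$. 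You instead glue $\l$ and $\lambda_1|_\l^+$ directly. This is a legitimate alternative, but it transfers the burden of positivity from a definition to an estimate: in your decomposition
\[
\dot\l-\lambda_1|_\l^+=\chi\,(\dot M-N)+(1-\chi)\bigl(\dot\s-\sp{\lambda_1}\bigr)+\chi'\,(M-\s),
\]
there is a genuine ``commutator'' term $\chi'(M-\s)$ on top of the two gap pieces, so the gap is \emph{not} a convex combination of positive quantities. (Incidentally, your list of correction terms includes $\chi'(N-\sp{\lambda_1})$, which does not actually occur in the gap; it occurs only in $\partial_t\lambda_1|_\l^+$ for Item 2.)

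The softest step in your outline is precisely the one you flag, and the resolution you sketch needs sharpening. You say the leading contribution $c_0\,\dsh^{\varepsilon-\frac12}\sqrt{t-\ts}\sim c_0\dsh^\varepsilon$ absorbs the negative corrections, but this term appears with a prefactor $\chi$, which is small near the right end of the transition zone; so ``take $c_0$ large'' does not, on its face, work uniformly. It \emph{can} be made to work because the only genuinely negative correction in this region is $\chi(\dot M-\dot\s)\approx-\chi\,(1+\varepsilon)\,a\,(t-\ts)^\varepsilon$ (using $\s(t)-\s(\ts)=\lambda_*(t-\ts)+a(t-\ts)^{1+\varepsilon}+\cdots$ with $a>0$ from the proof of Item~\ref{prop:large2weak:2} of Proposition~\ref{prop:large2weak}), which carries the \emph{same} $\chi$ weight, so comparing coefficients is legitimate; moreover the commutator term $\chi'(M-\s)$ turns out to be \emph{non-negative} (since $M-\s\approx-a(t-\ts)^{1+\varepsilon}<0$ and $\chi'\le0$). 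But none of this is visible from ``careful sign and magnitude bookkeeping''; you need to spell it out. The paper's formulation simply sidesteps the issue.
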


\begin{proof}[Proof of Lemma~\ref{lemma:l1:l}]
We construct the pair using a smooth partition of unity to blend the required behavior near $\ts$ with the matching conditions near $\tsh$.

Let $\phi\in C^\infty([\ts,\tsh])$ be a smooth cut-off function such that
\begin{equation} \label{flatness}
 \phi\equiv1 \text{ near } \ts, \quad \phi\equiv 0 \text{ near } \tsh, \quad \text{and } \partial_t^{\,k}\phi(\tsh)=0 \text{ for all } k\ge0.
\end{equation}

\textit{Construction of $\lambda_1|_\l^+$:} We define the trace by blending a Taylor expansion near $\ts$ (Item 5) with the known trace $\sp{\lambda_1}(t)$ from Section \ref{sec:large2weak}:
\begin{equation} \label{def:l1:c}
\lambda_1\big|_{\l}^{+}(t)
:=\ \phi(t)\Big(\Lambda_*+\mathsf{l}_1\,(t-\ts)^{\frac{1}{2}}+\mathsf{l}_2\,(t-\ts)\Big)
+\bigl(1-\phi(t)\bigr)\,\sp{\lambda_1}(t).
\end{equation}
Here $\Lambda_*$ is the speed at the preshock (determined later). We will choose constants $\mathsf{l}_1$ and $\mathsf{l}_2$ shortly, specifically requiring $\mathsf{l}_1 < 0$. This construction ensures $C^\infty$ matching at $\tsh$ (Item 1) and the Taylor expansion \eqref{Taylor-lambda1} at $\ts$ (Item 5).

\textit{Construction of $\l(t)$:} We define the shock speed $\dot\l(t)$ by first defining the speed gap $\chi(t) = \dot\l(t) - \lambda_1\big|_{\l}^{+}(t)$. We construct $\chi(t)$ by blending the target behavior $\sim (t-\ts)^{1/2}$ with the known gap from Section 3, $\dot\s(t)-\sp{\lambda_1}(t)$:
\[
\chi(t):=\phi(t)\left(\mathsf{c}_1\,(t-\ts)^{\frac{1}{2}}\right) + (1-\phi(t))\left(\dot\s(t)-\sp{\lambda_1}(t)\right).
\]

We define the shock curve by solving the ODE
\begin{equation} \label{positive:gap}
\dot{\l}(t)=\lambda_1\big|_{\l}^{+}(t)+\chi(t),
\qquad
\l(\tsh)= \s(\tsh).
\end{equation}

Crucially, to ensure the shock curve $\l(t)$ is $C^\infty$, we first show that $\dot{\l}(t)$ is $C^1$. Substituting the definitions of $\lambda_1\big|_{\l}^{+}$ and $\chi(t)$ into \eqref{positive:gap} yields
\begin{align*}
\dot{\l}(t) &= \phi(t)\Big(\Lambda_*+(\mathsf{l}_1+\mathsf{c}_1)\,(t-\ts)^{\frac{1}{2}}+\mathsf{l}_2\,(t-\ts)\Big) + (1-\phi(t))\dot\s(t).
\end{align*}
To eliminate the fractional power $(t-\ts)^{1/2}$ and guarantee smoothness at $t=\ts$, we must impose the condition $\mathsf{c}_1 = -\mathsf{l}_1$.

Furthermore, to satisfy the positive speed gap requirement (Item 3), we need $\chi(t)>0$. Near $t=\ts$, $\chi(t) \approx \mathsf{c}_1 (t-\ts)^{1/2}$, so we require $\mathsf{c}_1 > 0$. Combined with the smoothness condition, this confirms the necessity of choosing $\mathsf{l}_1 < 0$.

This construction ensures $C^\infty$ matching of $\l$ to $\s$ at $\tsh$ (Item 1). Since $\chi(\ts)=0$ (as $\dot\s(\ts)=\sp{\lambda_1}(\ts)$ by Proposition \ref{prop:choice:g}), we have $\dot{\l}(\ts)=\lambda_1\big|_{\l}^{+}(\ts)$. We set $\Lambda_* = \dot\l(\ts)$.

\textit{Verification of Bounds:} We choose the constants $\mathsf{l}_1, \mathsf{l}_2$ (with $\mathsf{c}_1 = -\mathsf{l}_1$) to be comparable to the corresponding derivatives/values of $\s(t)$ at $\tsh$. Based on the scaling in Proposition~\ref{prop:choice:g}, we choose
\[
|\mathsf{l}_1| \sim \dsh^{\varepsilon-\frac{1}{2}}, \quad |\mathsf{l}_2| \sim \dsh^{\varepsilon-1}.
\]
With these choices (ensuring $\mathsf{l}_1$ is negative and appropriately scaled), the positive speed gap \eqref{speedgap} (Item 3) follows from the definition of $\chi(t)$. The derivative bounds (Item 2) and the acceleration bound (Item 4) follow by differentiating \eqref{def:l1:c} and \eqref{positive:gap}, utilizing the properties of $\phi(t)$ and the known bounds on $\s(t)$ and $\sp{\lambda_1}(t)$, potentially after shrinking $\dsh$.
\end{proof}

We now state the main result of this section, which establishes the existence of the desired patching solution.

\begin{prop}[\textsl{From a weak shock to a well‑posed preshock}]
\label{prop:weak2pre}
Let $(u_L,\rho_L,E_L)$ be the regular shock solution of Proposition~\ref{prop:large2weak} on $[\ts, \tf]$, and fix $\tsh\in(\ts,\tf)$ sufficiently close to $\ts$. Let $\big(\l,\lambda_1\big|_{\l}^{+}\big)$ be the admissible pair provided by Lemma~\ref{lemma:l1:l}.
Then there exists a \emph{unique} radial “patching’’ solution $(u,\rho,E)$ on $[\ts,\tsh]$, associated to this fixed pair, with shock front $r=\l(t)$ and the following properties:
\begin{enumerate}
\item \textup{Seamless matching at $t=\tsh$.} \label{prop:weak2pre:item1}
We have $(u,\rho,E)(\cdot,\tsh)=(u_L,\rho_L,E_L)(\cdot,\tsh)$ and the shock curves match smoothly: $\partial_t^{k}\l(\tsh)=\partial_t^{k}\s(\tsh)$ for all $k\ge0$.

\item \textup{Characteristic construction (Goursat data).} \label{prop:weak2pre:item2}
Let $w_\circ(t)$ and $b_\circ(t)$ denote the traces of the solution $(u_L, \rho_L, E_L)$ (specifically, its associated Riemann variables $w_L, b_L$) along the exterior characteristic $\psi^+(t)$.
On $[\ts,\tsh]$, the constructed solution is the unique Euler flow satisfying the Goursat data:
\[
w\big(\psi^+(t),t\big)=w_\circ(t), \qquad
b\big(\psi^+(t),t\big)=b_\circ(t), \qquad
\lambda_1\big(\l(t)^+,t\big)=\lambda_1\big|_{\l}^{+}(t).
\]
The Rankine–Hugoniot conditions hold across $r=\l(t)$.

\item \textup{Regular shock evolution on $(\ts,\tsh]$.} \label{prop:weak2pre:item3}
The solution $(u,\rho,E)$ with shock front $\mathcal{S}_\l=\{(x,t):\,|x|=\l(t)\}$ is a regular shock emanating from a preshock (Definitions~\ref{def:regular:shock} and \ref{def:regular:shock:pre}). The flow is $C^\infty$ away from $\mathcal{S}_\l$ (for $t>\ts$) and maintains the quiescent core $\mathcal{H}$.

\item \textup{Controlled approach to the preshock.} \label{prop:weak2pre:item4}
As $t\searrow\ts$, the shock strength vanishes according to the prescribed rate:
\[
\dot{\l}(t)-\lambda_1\big|_{\l}^{+}(t)=C_1\,(t-\ts)^{\frac{1}{2}}+O\big(|t-\ts|\big).
\]
Consequently, the jumps behave as: $\jump{z}(t)\sim (t-\ts)^{\frac{1}{2}}$, and $\jump{w}(t), \jump{b}(t) \sim (t-\ts)^{\frac{3}{2}}$.

\item \textup{Preshock profile at $r_*=\l(\ts)$.} \label{prop:weak2pre:item5}
At $t=\ts$, the solution develops the target singularity structure. The dominant Riemann variable $z$ forms a $C^{\frac{1}{3}}$ cusp, while $w$ and $b$ retain $C^{1,{\frac{1}{3}}}$ regularity. Specifically, there exist constants $\mathsf{a}, \mathsf{b}, \dots$ such that:
\begin{align*}
z(r,\ts)&=z(r_*,\ts)+\mathsf{a}\,(r-r_*)^{\frac{1}{3}}+\mathsf{b}\,(r-r_*)^{\frac{2}{3}}+O\!\big(|r-r_*|\big).
\end{align*}
(The detailed expansions for $w$ and $b$, which exhibit slight asymmetry in the higher-order $C^{1, {\frac{1}{3}} }$ terms, are established in Lemma~\ref{lemma:expansion:0} below).
\end{enumerate}
Moreover, outside the local patching regions $\mathcal{D}$ and $\mathcal{L}$, the solution agrees with $(u_L,\rho_L,E_L)$.
\end{prop}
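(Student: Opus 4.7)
The plan is to construct the patching solution in three stages following the geometric decomposition of Figures~\ref{fig:step:2:a:outline}--\ref{fig:step:2:b:outline}, followed by an asymptotic analysis near the preshock corner. Throughout, $\dsh$ will be chosen small enough that all contraction and bootstrap arguments close. First I will solve the exterior Goursat problem in $\mathcal{D}$ for the Riemann variables $(w,z,b)$. The system \eqref{euler:rv} is diagonal to leading order, with $w,z,b$ transported along $\lambda_3, \lambda_1, \lambda_2$ respectively. On the lateral boundary $\psi^+$ (a $\lambda_1^+$-characteristic) I impose the Dirichlet data $w=w_\circ,\, b=b_\circ$ inherited from Section~\ref{sec:large2weak}; on the non-characteristic boundary $\l$ I use the admissible trace $\lambda_1|_\l^+$. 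Since $w|_\l^+$ can be recovered by tracing a $\lambda_3$-characteristic back from a point on $\l$ to $\psi^+$, the linear identity $\lambda_1=\tfrac{1+\alpha}{2}z+\tfrac{1-\alpha}{2}w$ then determines $z|_\l^+$. This yields a well-posed Goursat problem: three scalar data functions for three unknowns, consistent at the handover corner $(\s(\tsh),\tsh)$ by the $C^\infty$-matching in Definition~\ref{def:admissible:pair}. I propose to solve it by Picard iteration in characteristic coordinates, with convergence on $[\ts,\tsh]$ guaranteed by items~2--4 of Definition~\ref{def:admissible:pair}.

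Second, with the exterior trace $(w,z,b)|_\l^+$ in hand I apply Lemma~\ref{lemma:rh:inversion} to recover the interior trace $(w,z,b)|_\l^-$; property~\eqref{speedgap} verifies its hypotheses for $t\in(\ts,\tsh]$. Substituting the asymptotics $\dot{\l}(t)-\lambda_1|_\l^+(t)=C_1(t-\ts)^{\frac{1}{2}}+O(t-\ts)$ into \eqref{RH:taylor} produces the jump scalings $\jump{z}\sim (t-\ts)^{\frac{1}{2}}$ and $\jump{w},\jump{S}\sim (t-\ts)^{\frac{3}{2}}$ claimed in Item~\ref{prop:weak2pre:item4}. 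Third, I solve the Euler equations in the interior region $\mathcal{L}$ using these interior traces along $\l$ as boundary data, together with the Section~\ref{sec:large2weak} solution along the slow-acoustic characteristic $\eta^-$ emanating from $(\s(\tsh),\tsh)$; this is a boundary-value problem structurally analogous to Lemma~\ref{lemma:om} but substantially simpler because $[\ts,\tsh]$ is short and all bounds are uniform, so a direct bootstrap yields a unique $C^\infty$ solution on $\mathcal{L}$ for $t>\ts$ that is continuous up to $t=\ts$ and preserves the quiescent core $\mathcal{H}$. Items~\ref{prop:weak2pre:item1}--\ref{prop:weak2pre:item3} then follow by construction, and uniqueness of the patching solution is inherited from the uniqueness of the Goursat and interior problems.

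Finally, I establish the preshock expansion of Item~\ref{prop:weak2pre:item5} by an asymptotic analysis near the corner $(r_*,\ts)$. For $r$ slightly larger than $r_*=\l(\ts)$ (i.e., on the exterior side), I trace the $\lambda_1^+$-characteristic from $(r,\ts)$ forward in time until it meets the shock $\l$ at some time $\tilde t\in(\ts,\tsh]$; integrating the speed gap yields the geometric identity $r-r_*=\int_\ts^{\tilde t}(\dot{\l}-\lambda_1|_\l^+)\,dt'\sim(\tilde t-\ts)^{\frac{3}{2}}$, hence $\tilde t-\ts\sim(r-r_*)^{\frac{2}{3}}$. Combining this with $z|_\l^+(\tilde t)-z|_\l^+(\ts)\sim (\tilde t-\ts)^{\frac{1}{2}}$ from the previous paragraph, and observing that the source terms along the $\lambda_1^+$-characteristic contribute strictly higher-order corrections, gives $z(r,\ts)-z(r_*,\ts)\sim (r-r_*)^{\frac{1}{3}}$; the coefficients $\mathsf{a},\mathsf{b}$ then come from the next two Taylor coefficients of the speed-gap expansion and of $z|_\l^+$. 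For $w$ and $b$ the jumps scale as $(\tilde t-\ts)^{\frac{3}{2}}$, which composed with the same geometric map yields spatial variation $\sim |r-r_*|$ to leading order, with the next-order correction delivering the $C^{1,{\frac{1}{3}}}$ regularity of Lemma~\ref{lemma:expansion:0}. The analogous analysis on the interior side $r<r_*$ uses the characteristics in $\mathcal{L}$ and the interior trace; by the leading-order symmetry of the interior/exterior speed gaps encoded in \eqref{RH:taylor:l1}, this produces matching asymptotics on both sides of $r_*$ with the same coefficients.

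The main obstacle I anticipate is the degenerate geometry of $\mathcal{D}$ at the preshock corner: both lateral boundaries collapse to a single point and all characteristic families pinch together at $(r_*,\ts)$, so the Picard iteration of the first paragraph and the asymptotic matching of the third paragraph are delicate near $t=\ts$. The crucial enabling ingredient is the bounded shock acceleration $|\ddot{\l}|\le\m\,\dsh^{\varepsilon-1}$ from item~4 of Definition~\ref{def:admissible:pair}, which prevents the acceleration singularity that obstructed extending $\s$ all the way to $\ts$ in Section~\ref{sec:large2weak}; together with the precise $(t-\ts)^{\frac{1}{2}}$ speed-gap scaling of~\eqref{speedgap}, this produces exactly the characteristic-collapse rate at which a clean symmetric $C^{\frac{1}{3}}$ cusp can emerge.
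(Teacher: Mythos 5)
Your overall architecture matches the paper's: solve a Goursat problem in the exterior region $\mathcal{D}$ with data $(w_\circ,b_\circ)$ on the inflow fast characteristic and $\lambda_1|_\l^+$ on the shock; invert the Rankine--Hugoniot conditions to get the interior trace; solve the interior boundary-value problem; then extract the preshock profile at $t=\ts$ by composing the temporal Taylor expansions of the shock traces with the characteristic geometry. The Picard iteration you propose is also the route taken in Appendix~\ref{appendix:D}, and the bootstrap in $\mathcal{L}$ parallels Lemma~\ref{lemma:bootstrap:L}.

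However, there is a genuine gap in the last paragraph, and it concerns precisely the part of the statement that required the most work in the paper: the \emph{symmetric} form of the preshock expansion in Item~\ref{prop:weak2pre:item5}. You argue that ``the leading-order symmetry of the interior/exterior speed gaps encoded in~\eqref{RH:taylor:l1} $\ldots$ produces matching asymptotics on both sides of $r_*$ with the same coefficients.'' This is only true for the leading coefficient $\mathsf{a}$. The identity~\eqref{RH:taylor:l1} gives $(\lambda_1^--\dot\l)/(\dot\l-\lambda_1^+)=1+O(\chi)$, which equates the $O(\tau)$ Lax gaps on the two sides; combined with $\jump{z}=\tfrac{4}{1+\alpha}\chi+O(\chi^2)$ this forces $\mathsf{z}_1^-=-\mathsf{z}_1^+$ and hence a well-defined odd coefficient $\mathsf{a}$. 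But the \emph{even} coefficient $\mathsf{b}$ of $(r-r_*)^{2/3}$ is governed by the $\tau^2$ coefficients $\mathsf{z}_2^\pm$ of the shock traces, and Lemma~\ref{lemma:taylor:s} shows these differ across the shock by a jump $\jump{\mathsf{z}_2}$ which is generically nonzero. The asymmetry $\Delta\mathsf{b}=\mathsf{b}_1-\mathsf{b}_2$ in Lemma~\ref{lemma:taylor:pre} is proportional to $\jump{\mathsf{z}_2}$; for a fixed admissible pair from Lemma~\ref{lemma:l1:l} it does not vanish, and your construction would therefore produce a cusp whose $(r-r_*)^{2/3}$ term has different coefficients on the two sides, contradicting the expansion asserted in Item~\ref{prop:weak2pre:item5}. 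The missing ingredient is the modulation argument of Lemma~\ref{lemma:modulate}: the shock acceleration $\ddot\l(\ts)$ is a free parameter of the admissible class, $\jump{\mathsf{z}_2}$ depends on it linearly with a nonzero coefficient, so by the intermediate value theorem one can \emph{choose} the admissible pair so that $\jump{\mathsf{z}_2}=0$. This tuning is the mechanism that makes the preshock ``well-posed'' (i.e., realizable from $C^1$ data via the backward regularization of Section~\ref{sec:pre2smooth}), and it cannot be read off the RH relations alone.

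Two secondary points. First, to determine the preshock profile near $r_*$ you must solve on the extended interior region $\mathcal{L}^E$ up to the time $\tfl>\tsh$ where the interior slow characteristic from $(\s(\tsh),\tsh)$ reaches $\{t=\ts\}$; solving on $\mathcal{L}$ alone does not cover the full domain of dependence of the preshock slice, which is needed both for Lemma~\ref{lemma:expansion:0} and for the forward step in Section~\ref{sec:pre2smooth}. Second, ``Items~1--3 follow by construction'' glosses over the $C^\infty$ matching of your newly built solution with the old solution $(u_L,\rho_L,E_L)$ across the characteristic boundaries $\psi^+$ and $\eta^-$: identifying the zeroth-order traces is immediate, but propagating equality of all spatial derivatives across a characteristic curve requires the inductive argument in Steps~1--3 of the paper's proof (differencing the Riemann-variable equations to convert each spatial derivative of $w$, $S$, $z$ into tangential derivatives of already-matched quantities). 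Without this one does not know the flow is $C^\infty$ away from $\mathcal{S}_\l$ as claimed in Item~\ref{prop:weak2pre:item3}.
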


The proof of this proposition occupies the remainder of this section. We will first solve the Goursat problem in the exterior region $\mathcal{D}$ and then solve the boundary value problem in the interior region $\mathcal{L}$.

\subsection{The Goursat Problem in the Exterior Region $\mathcal{D}$}

We now detail the construction of the solution in the exterior patching region $\mathcal{D}$. This involves solving a Goursat problem (a characteristic boundary value problem) using the admissible preshock–generating pair $\big(\l,\lambda_1\big|_{\l}^{+}\big)$ from Lemma~\ref{lemma:l1:l}.

\paragraph{Problem Setup and Domain.}
We aim to solve the Euler equations \eqref{euler:rv} in the exterior region (see Figure~\ref{fig:step:2:a:outline}). Note that the definition of $\mathcal{D}$ here corresponds to the exterior region defined in the roadmap:
\[
\mathcal{D}=\big\{(r,t):\,\ts\le t\le\tsh,\ \l(t)\le r\le \psi(\tsh,t)\big\}.
\]
The region is bounded by the shock curve $r=\l(t)$ and the incoming fast characteristic $r=\psi(\tsh,t)$. The Goursat data is prescribed as follows:
\begin{enumerate}
    \item Along the characteristic boundary $\psi(\tsh,\cdot)$: We prescribe the subdominant variables $w$ and the specific entropy $S=2\log b$. These traces, denoted $w_\circ(t)$ and $S_\circ(t)$, are inherited from the Guderley solution constructed in Section 3.
    \item Along the shock boundary $\l(t)$: We prescribe the dominant characteristic speed $\lambda_1\big|_{\l}^{+}(t)$, as defined by the admissible pair.
\end{enumerate}

We utilize the formulation of the Euler system using $(w,z,S)$. We recall the equations \eqref{euler:rv} and use the identity $\tfrac{ 1}{\alpha\gamma }\rho^{2\alpha}b\p_r b = \tfrac{\alpha}{2\gamma}\sigma^2 \p_r S$ to write \eqref{euler:rv} as
\begin{subequations}
\label{euler:rv:bp:S}
\begin{align}
(\partial_t+\lambda_1\,\partial_r)z&=\tfrac{\alpha}{2\gamma}\,\sigma^2 \,\partial_r S + \tfrac{\alpha(d-1)}{4r}\,(w^2-z^2), \label{eulerz}\\
(\partial_t+\lambda_3\,\partial_r)w&=\tfrac{\alpha}{2\gamma}\,\sigma^2 \,\partial_r S - \tfrac{\alpha(d-1)}{4r}\,(w^2-z^2), \label{eulerw}\\
(\partial_t+\lambda_2\,\partial_r)S&=0, \label{eulerS}
\end{align}
\end{subequations}
where $\sigma=\tfrac{1}{2}(w-z)$.

\paragraph{Transformation to Characteristic Coordinates (Pull-back).}
To analyze the behavior near the preshock, where the characteristics converge, we transform the system into coordinates aligned with the fast acoustic flow.

We define the backward-in-time fast flow map $\psi(t,s)$. Here $t\in[\ts, \tsh]$ labels the characteristic that reaches the shock at time $t$, and $s\in[\ts, t]$ is the
evolutionary time along the trajectory
\begin{align*}
\partial_s \psi(t,s) = \lambda_1\!\big(\psi(t,s),s\big),\qquad \psi(t,t)=\l(t).
\end{align*}
(Note, that we are using the evolution time $s\in[\ts, t]$ here in the same manner that it was used in \eqref{bw:flows:bt}.)
This change of variables transforms the physical domain $\mathcal{D}$ into the fixed triangle:
\[
\mathcal{T}=\{(t,s):\ \ts\le s\le t\le \tsh\}.
\]
The boundaries of $\mathcal{T}$ are the shock edge $\mathcal{T}^{\l}=\{(t,t)\}$, the inflow edge $\mathcal{T}^{\psi}=\{(\tsh,s)\}$, and the corner edge $\mathcal{T}^{*}=\{(t,\ts)\}$.

We define the Jacobian of this transformation by
\begin{equation*}
J(t,s)=\partial_t\psi(t,s).
\end{equation*}

We adopt the \emph{capitalization-of-variables} notation for the pull-backs (composition with the map $(t,s) \mapsto (\psi(t,s), s)$):
\begin{equation}\label{ring:def}
\begin{gathered}
Z=z\circ\psi,\quad W=w\circ\psi,\quad \mathsf{S}=S\circ\psi,\quad \Sigma=\sigma\circ\psi, \\
\rZ=\left(\partial_r z +\tfrac{\alpha}{2\gamma}\sigma \p_r S\right)\circ\psi,\quad \rW=\left(\partial_r w -\tfrac{\alpha}{2\gamma}\sigma \p_r S\right)\circ\psi,\quad \rS=\partial_r S\circ\psi.
\end{gathered}
\end{equation}
The derivatives transform according to the chain rule: $\partial_s = (\partial_t + \lambda_1 \partial_r)\circ\psi$ (the $L_1$ derivative) and $\partial_t = J \cdot (\partial_r)\circ\psi$.

The $L_3$ derivative transforms as $L_3 = L_1 + (\lambda_3-\lambda_1)\partial_r$. Since $\lambda_3-\lambda_1 = 2\alpha\sigma$ (from Section 1.4), we have $L_3 = \partial_s + (2\alpha\Sigma) J^{-1}\partial_t$. The $L_2$ derivative transforms similarly: $L_2 = \partial_s + (\alpha\Sigma) J^{-1}\partial_t$.

Applying these rules to \eqref{euler:rv:bp:S} and the associated derivative equations yields the pulled-back system on $\mathcal{T}$:
\begin{subequations}\label{euler:bp:lag}
\begin{align}
\partial_s Z &= \tfrac{\alpha}{2\gamma}\,\Sigma^2 \rS + \tfrac{\alpha(d-1)}{4\psi}\,(W^2-Z^2), \label{eq:Z:sl}\\
J\,\partial_s W &= -J\left(\tfrac{\alpha}{2\gamma}\,\Sigma^2 \rS - \tfrac{\alpha(d-1)}{4\psi}\,(W^2-Z^2)\right) - 2\alpha \Sigma\,\partial_t W, \label{eq:W:sl}\\
J\,\partial_s \mathsf{S} &= -\alpha \Sigma\,\partial_t \mathsf{S}. \label{eq:S:sl}
\end{align}
These are coupled with the evolution of the Jacobian $J$ and the DRVs $\rZ, \rW, \rS$ and given by
\begin{align}
\partial_s J &= \tfrac{1+\alpha}{2}\,J\,\rZ + \tfrac{1-\alpha}{2}\,J\,\rW + \tfrac{\alpha}{2\gamma}\,J\,\Sigma\,\rS, \label{eq:J:sl}\\
\partial_s (J\rZ) &= -\tfrac{\alpha}{4\gamma}\,\Sigma\,\rS\,J(\rW+\rZ) + J\,\mathcal{R}, \label{eq:JrZ:sl}\\
J\,\partial_s \rS &= -\alpha \Sigma\,\partial_t \rS - \tfrac{1}{2}J \rS \rW - \tfrac{1}{2}J \rS \rZ, \label{eq:rS:sl}\\
J\,\partial_s \rW &= -2\alpha \Sigma\,\partial_t \rW - \tfrac{1-\alpha}{2}J \rW \rZ - \tfrac{1+\alpha}{2}J \rZ^2 + J\,\mathcal{R}. \label{eq:rW:sl}
\end{align}
Where $\mathcal{R}$ denotes the radial source terms:
\begin{equation}
\mathcal{R} = (d-1)\tfrac{\gamma\alpha (\rW W - \rZ Z) + (W+Z)\Sigma\,\rS}{2\gamma\,\psi} - 2(d-1)\alpha\tfrac{W^2-Z^2}{\psi^2}. \label{eq:R:sl}
\end{equation}
\end{subequations}

\paragraph{Boundary Data in Characteristic Coordinates.}
We translate the Goursat data into boundary conditions on $\mathcal{T}^{\psi}$ (inflow edge) and $\mathcal{T}^{\l}$ (shock edge).

\textit{Inflow edge $\mathcal{T}^{\psi}$ ($t=\tsh$):}
\begin{subequations}\label{bc:lag:T}
\begin{align}
W(\tsh,s) &= W_{\!\circ}(s),\qquad \mathsf{S}(\tsh,s)=S_{\!\circ}(s), \label{bc:W:T}\\
\rW(\tsh,s) &= \rW_{\!\circ}(s),\qquad \rS(\tsh,s)=\rS_{\!\circ}(s). \label{bc:rW:T}
\end{align}

\textit{Shock edge $\mathcal{T}^{\l}$ ($s=t$):}
The Jacobian $J(t,t)$ is evaluated by differentiating the identity $\psi(t,t)=\l(t)$ with respect to $t$ which gives
\[
\dot\l(t) = \tfrac{d}{dt}\psi(t,t) = \partial_t \psi(t,s)|_{s=t} + \partial_s \psi(t,s)|_{s=t} = J(t,t) + \lambda_1(\l(t), t).
\]
Thus, $J(t,t)$ is precisely the prescribed Lax gap (shock strength):
\begin{align}
J(t,t) &= \dot{\l}(t) - \lambda_1\big|_{\l}^{+}(t). \label{bc:J:T}
\end{align}
The value of $Z(t,t)$ is determined by the definition $\lambda_1 = \frac{1+\alpha}{2}z + \frac{1-\alpha}{2}w$, so that
\begin{align}
Z(t,t) &= \tfrac{2}{1+\alpha}\lambda_1\big|_{\l}^{+}(t) - \tfrac{1-\alpha}{1+\alpha}W(t,t). \label{bc:Z:T}
\end{align}
The boundary condition for the dominant derivative $J\rZ(t,t)$ is derived from the compatibility conditions along the shock (relating the spatial derivative to the time evolution along the boundary), and we have that
\begin{align}
J\rZ(t,t) &= \tfrac{2}{1+\alpha}\,\partial_t\lambda_1\big|_{\l}^{+}(t)
+ \tfrac{2(d-1)\alpha}{1+\alpha}\,\tfrac{W(t,t)^2-Z(t,t)^2}{4\,\l(t)} \notag\\[-0.2em]
&\qquad\quad + \tfrac{\alpha}{\gamma(1+\alpha)}\!\left(J\Sigma\,\rS + \Sigma^2 \rS\right)\!(t,t)
- \tfrac{1-\alpha}{1+\alpha}\,\big(J+2\alpha\Sigma\big)(t,t)\,\rW(t,t). \label{bc:rZ:T}
\end{align}
\end{subequations}

\paragraph{Quantitative Bounds on Incoming Data.}
We summarize the essential bounds on the boundary data, derived from the properties of the Guderley solution (Section 3) and the admissible pair (Definition~\ref{def:admissible:pair}). These bounds are crucial for the bootstrap argument in \S\,4.2.
{\allowdisplaybreaks
\begin{subequations}\label{data:bounds:T}
\begin{align}
\|W_{\!\circ}\|_{\infty}+\|S_{\!\circ}\|_{\infty}&\le \m, \qquad \tfrac{1}{\m}\le \Sigma_{\!\circ}(s)\le \m, \label{data:bounds:W:T}\\
\|\rW_{\!\circ}\|_{\infty}+\|\rS_{\!\circ}\|_{\infty}&\le \m, \qquad \|\partial_s\rW_{\!\circ}\|_{\infty}+\|\partial_s\rS_{\!\circ}\|_{\infty}\le \m, \label{data:bounds:rW:T}\\
\tfrac{\dsh^{\varepsilon-\frac{1}{2}}}{\m}\,(t-\ts)^{\frac{1}{2}} \le J(t,t) &\le \m\,\dsh^{\varepsilon}, \label{lax:T}\\
\bigl|\partial_t\lambda_1|_{\l}^{+}(t)\bigr|&\le \m\,\dsh^{\,\varepsilon-\frac{1}{2}}\,(t-\ts)^{-\frac{1}{2}}, \label{data:bounds:l1:T2}\\
\l(t)&>\tfrac{1}{\m}.
\end{align}
\end{subequations}}
We will solve the system \eqref{euler:bp:lag} using these boundary conditions via a Picard iteration scheme, detailed in the next subsection.

\subsection{A Priori Estimates via Bootstrap}

We establish the \emph{a priori} bounds for the system \eqref{euler:bp:lag} on the characteristic triangle $\mathcal{T}$. The argument relies on a bootstrap procedure closed via a transport maximum principle.

We first state a general lemma for obtaining $L^\infty$ bounds for the transport equations appearing in \eqref{euler:bp:lag}.

\begin{lemma}[Transport maximum principle]\label{lemma:max:pr:T}
Let $J(t,s)>0$ and $\Sigma(t,s)>0$ satisfy the bounds \eqref{a:priori} on $\mathcal{T}$. Assume $\dsh$ is sufficiently small such that $J < \beta\Sigma$ for a given $\beta>0$. Let $Y(t,s)$ solve the linear transport equation on $\mathcal{T}$ given by
\begin{equation}\label{transport:T}
J\,\partial_s Y + \beta\,\Sigma\,\partial_t Y = G(t,s)+F(t,s)\,Y,
\end{equation}
with data prescribed on the inflow boundary $t=\tsh$: $Y(\tsh,s)=Y_{\!\circ}(s)$.
Then $Y$ satisfies the bound
\[
\|Y\|_{\mathcal{T}}
\le e^{\|\frac{F}{J}\|_{\mathcal{T}}\,\dsh}
\Bigl(\|Y_{\!\circ}\|_{\infty}+\big\|\tfrac{G}{J}\big\|_{\mathcal{T}}\,\dsh\Bigr).
\]\end{lemma}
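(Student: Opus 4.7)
The plan is to reduce \eqref{transport:T} to a scalar linear ODE along characteristics, and then close via an integrating--factor estimate in which the total characteristic travel time in $s$ is controlled by $\dsh$. Dividing \eqref{transport:T} by $J>0$ yields the normal form
\[
\partial_s Y + \tfrac{\beta\Sigma}{J}\,\partial_t Y \;=\; \tfrac{F}{J}\,Y + \tfrac{G}{J}.
\]
For each reference point $(t_0,s_0)\in\mathcal T$, I define the characteristic $s\mapsto T(s;t_0,s_0)$ as the solution of
\[
\tfrac{dT}{ds} = \tfrac{\beta\,\Sigma(T(s),s)}{J(T(s),s)}\,, \qquad T(s_0)=t_0\,.
\]
Under the hypotheses $J,\Sigma>0$ and \eqref{a:priori}, the right-hand side is positive and locally Lipschitz, so a unique $C^1$ characteristic exists on its maximal interval inside $\mathcal T$.

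The geometric step is to verify that this characteristic, traced forward in $s$, exits $\mathcal T$ through the inflow edge $\{t=\tsh\}$ at a unique time $s^\ast\in(s_0,\tsh]$. The assumption $J<\beta\Sigma$ (guaranteed by the smallness of $\dsh$) gives $dT/ds > 1$, which means the characteristic's slope in the $(t,s)$-plane is strictly steeper than that of the shock edge $\{s=t\}$. Hence the curve already points into $\{t>s\}$ at every shock--edge point and cannot re-cross that edge; since $dT/ds>0$ the curve also cannot reach the corner edge $\{s=\ts\}$. It must therefore exit at $t=\tsh$, with $s^\ast - s_0 \le \tsh-\ts = \dsh$.

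Along the characteristic, setting $y(s):=Y(T(s),s)$, the chain rule converts the PDE into the scalar linear ODE
\[
y'(s) = \tfrac{F}{J}(T(s),s)\,y(s) + \tfrac{G}{J}(T(s),s)\,, \qquad y(s^\ast)=Y_{\!\circ}(s^\ast)\,,
\]
so the integrating factor $\mu(s):=\exp\!\bigl(-\int_{s^\ast}^{s}\tfrac{F}{J}(T,\tau)\,d\tau\bigr)$ and integration from $s^\ast$ back to $s_0$ produce an explicit representation for $y(s_0)=Y(t_0,s_0)$. Taking absolute values, bounding each exponent by $\|F/J\|_{\mathcal T}\,\dsh$ (since $s^\ast-s_0\le\dsh$), and bounding $\int_{s_0}^{s^\ast}|G/J|\,d\tau$ by $\|G/J\|_{\mathcal T}\,\dsh$ yields
\[
|Y(t_0,s_0)|\le e^{\|F/J\|_{\mathcal T}\,\dsh}\Bigl(\|Y_{\!\circ}\|_\infty + \|G/J\|_{\mathcal T}\,\dsh\Bigr).
\]
Since $(t_0,s_0)\in\mathcal T$ is arbitrary, taking the supremum gives the claimed inequality. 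The only genuinely subtle step is the geometric trapping of characteristics in $\mathcal T$; once the slope comparison $dT/ds>1$ is in hand the remaining argument is routine Gr\"onwall.
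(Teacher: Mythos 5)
Your proposal is correct and follows essentially the same route as the paper's proof: divide by $J$, trace the forward-in-$s$ characteristics with slope $dT/ds=\beta\Sigma/J>1$, use the condition $J<\beta\Sigma$ to guarantee the characteristics point strictly into the interior $\{t>s\}$ and exit through $\{t=\tsh\}$ within total $s$-time $\dsh$, and then close with an integrating factor (Gr\"onwall). Your explicit observation that $\tfrac{d}{ds}(T(s)-s)>0$ prevents re-crossing of the shock edge is a slightly more careful articulation of the transversality argument the paper states more briefly, but the substance is identical.
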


\begin{proof}[Proof of Lemma~\ref{lemma:max:pr:T}]
We analyze the characteristics associated with the transport operator 
$$
L_\beta = J\partial_s + \beta\Sigma\partial_t \,. 
$$
In the $(t,s)$ plane, the domain is the triangle $\mathcal{T}=\{(t,s):\ \ts\le s\le t\le \tsh\}$. The characteristic vector field is $V = (\beta\Sigma, J)$.

Since $J, \Sigma, \beta > 0$, the characteristics flow towards increasing $t$ and $s$. To ensure the characteristics properly foliate the domain and exit through the boundary $t=\tsh$ (where the data is prescribed), we must verify the transversality condition at the shock boundary $s=t$.

The slope of the characteristics is $ds/dt = J/(\beta\Sigma)$. We require this slope to be strictly less than the slope of the boundary $s=t$ (which is 1). Thus, we need the condition $J < \beta\Sigma$.

This condition is satisfied provided the time interval $\dsh$ is chosen sufficiently small. The Jacobian $J$ is related to the shock strength (e.g., $J(t,t)$ is the Lax gap, see \eqref{bc:J:T}), which vanishes as $\dsh \to 0$ (see \eqref{lax:T}). In contrast, the sound speed $\Sigma$ remains bounded away from zero (by \eqref{a:priori}). Therefore, for sufficiently small $\dsh$, we guarantee $J < \beta\Sigma$ throughout $\mathcal{T}$.

Since $ds/dt < 1$, the characteristics are well-defined within $\mathcal{T}$ and must intersect the outflow boundary $t=\tsh$. We define the backward-in-time characteristic flow $\Phi(t,s; s')$ starting at $(t,s)$ at time $s'=s$. We integrate the equation along these characteristics. Let $y(s') = Y(\Phi(t,s; s'), s')$. Then
\[
\partial_{s'} y(s') = \tfrac{G+FY}{J} \circ \Phi(t,s; s').
\]
Integrating backward-in-time from the boundary $t'=\tsh$ to $s'=s$ and applying Grönwall's inequality yields the stated bound.
\end{proof}

We now state the main bootstrap lemma, assuming $\dsh$ is sufficiently small depending on the fixed constants $\m$.

\begin{lemma}\label{lemma:D}
Let the solution to \eqref{euler:bp:lag} on $\mathcal{T}$ satisfy the following bootstrap bounds:
{\allowdisplaybreaks
\begin{subequations}\label{bt:T}
\begin{align}
\|W\|_{\infty}+\|\mathsf{S}\|_{\infty}&\le 4\m, \quad \|Z\|_{\infty}\le 4\m, \\
\|\rW\|_{\infty}+\|\rS\|_{\infty}&\le 4\m, \\
\|\partial_s\rW\|_{\infty}+\|\partial_s\rS\|_{\infty}+\|\partial_s\Sigma\|_{\infty}&\le 4\m, \\
\tfrac{1}{4\m}\le \Sigma&\le 4\m, \label{bt:Sigma:T}\\
J\rZ(t,s)&\le 32\,\m^3, \\
|J\rZ(t,s)|&\le 4\m\,\dsh^{\varepsilon-\frac{1}{2}}(t-\ts)^{-\frac{1}{2}}, \label{bt:JrZ:T_singular} \\
\tfrac{1}{4}\,J(t,t) < J(t,s) &< 8\m\,\dsh^{\varepsilon},  \label{bt:J:T}\\
\psi(t,s)&>\tfrac{1}{2\m}.
\end{align}
\end{subequations}}
Then, if $\dsh$ is sufficiently small, the solution satisfies the improved bounds
\begin{subequations}\label{2bt:T}
\begin{align}
\|W\|_{\infty}+\|\mathsf{S}\|_{\infty}&\le 2\m, \quad \|Z\|_{\infty}\le 2\m, \label{2bt:Z:T}\\
\|\rW\|_{\infty}+\|\rS\|_{\infty}&\le 2\m, \\
\|\partial_s\rW\|_{\infty}+\|\partial_s\rS\|_{\infty}+\|\partial_s\Sigma\|_{\infty}&\le 2\m, \\
\tfrac{1}{2\m}\le \Sigma &\le 2\m, \label{2bt:Sigma:T}\\
J\rZ(t,s)&\le 16\,\m^3, \\
|J\rZ(t,s)|&\le 2\m\,\dsh^{\varepsilon-\frac{1}{2}}(t-\ts)^{-\frac{1}{2}}, \label{2bt:JZ2:T}\\
\tfrac{1}{2}\,J(t,t) < J(t,s) &< 4\m\,\dsh^{\varepsilon}, \label{2bt:J:T}\\
\psi(t,s)&>\tfrac{1}{\m}. \label{2bt:psi:T}
\end{align}
\end{subequations}
\end{lemma}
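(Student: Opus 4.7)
The plan is to propagate the bounds along characteristics, ordering the estimates so that each improvement uses only previously improved quantities, and to exploit the fact that $\dsh$ is a free small parameter which absorbs all nonlinear contributions and halves every bootstrap constant. The natural order is: first the $\partial_s$-governed quantities $J,J\rZ,Z$ and the geometric quantity $\psi$, each integrated from the shock edge $s=t$; then the transport-type quantities $W,\mathsf{S},\rW,\rS$ and their $\partial_s$-derivatives via Lemma~\ref{lemma:max:pr:T}; and finally $\Sigma=\tfrac{1}{2}(W-Z)$.

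For the Jacobian I would rewrite \eqref{eq:J:sl} as $\partial_s J=\tfrac{1+\alpha}{2}(J\rZ)+\tfrac{1-\alpha}{2}J\rW+\tfrac{\alpha}{2\gamma}J\Sigma\rS$; the combination $J\rZ$ is bootstrap-controlled, so the integrand is integrable in $s$, and a Grönwall estimate from $J(t,t)$ together with \eqref{lax:T} yields \eqref{2bt:J:T}. The geometric bound \eqref{2bt:psi:T} follows from integrating $\partial_s\psi=\lambda_1\circ\psi$ with $|\lambda_1|\le 4\m$. For $Z$, equation \eqref{eq:Z:sl} is a genuine ODE with a uniformly $O(\m^3)$ right-hand side, and integration from the shock edge using \eqref{bc:Z:T}, \eqref{data:bounds:l1:T2} and the bootstrap bound on $W$ produces \eqref{2bt:Z:T}; then $\Sigma=\tfrac{1}{2}(W-Z)$ together with continuity from the $t=\tsh$ data gives \eqref{2bt:Sigma:T}.

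The decisive step is the singular bound on $J\rZ$. The key algebraic observation is that \eqref{eq:JrZ:sl} is \emph{linear} in $J\rZ$: its right-hand side can be written as
$$\partial_s(J\rZ)\;=\;-\tfrac{\alpha}{4\gamma}\,\Sigma\rS\,(J\rZ)\;-\;\tfrac{\alpha}{4\gamma}\,\Sigma\rS\,J\rW\;+\;J\mathcal R,$$
with coefficient $O(\m^2)$ and inhomogeneity $O(\m^3\dsh^{\varepsilon})$ by the bootstrap and \eqref{bt:J:T}. The shock-edge data \eqref{bc:rZ:T} decomposes into the leading term $\tfrac{2}{1+\alpha}\partial_t\lambda_1|_\l^+(t)$, which by the admissible-pair estimate \eqref{data:bounds:l1:T2} has exactly the prescribed size $\m\,\dsh^{\varepsilon-1/2}(t-\ts)^{-1/2}$, plus lower-order terms bounded by $\m^3\dsh^\varepsilon$ via \eqref{lax:T}. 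A linear Grönwall in $s$ from $s=t$ then propagates the boundary size by an exponential factor $e^{C\m\dsh}$, and the source contributes $\lesssim \m^3\dsh^{1+\varepsilon}$; one more shrinkage of $\dsh$ delivers both bounds of \eqref{2bt:T} for $J\rZ$.

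For the transport quantities $W,\mathsf{S},\rW,\rS$ I would divide \eqref{eq:W:sl}, \eqref{eq:S:sl}, \eqref{eq:rW:sl}, \eqref{eq:rS:sl} by $J$ to bring them into the form \eqref{transport:T} with $\beta\in\{\alpha,2\alpha\}$ and apply Lemma~\ref{lemma:max:pr:T}; the transversality hypothesis $J<\beta\Sigma$ is automatic from \eqref{bt:J:T}–\eqref{bt:Sigma:T} for small $\dsh$. For $W,\mathsf{S}$ the forcing is uniformly $O(\m^3)$ and the lemma closes directly via \eqref{data:bounds:W:T}. For $\rW,\rS$ the only genuinely singular source is the term $-\tfrac{1+\alpha}{2}J\rZ^2$ in \eqref{eq:rW:sl}; rewriting it as $-\tfrac{1+\alpha}{2}\rZ\cdot(J\rZ)$ trades one singular factor for the bootstrap-controlled product $J\rZ$, and pointwise integration along the $\beta\Sigma/J$-characteristics—which traverse only $O(\dsh^{1+\varepsilon})$ in $s$ before reaching the inflow $t=\tsh$—closes the bound at the $2\m$-level using \eqref{data:bounds:rW:T}. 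The $\partial_s\rW,\partial_s\rS,\partial_s\Sigma$ bounds follow by one further $\partial_s$-differentiation and an iteration of the same transport argument, with additional commutators absorbable into the $\dsh$-smallness.

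The main obstacle will be preserving the $(t-\ts)^{-1/2}$ singularity of $J\rZ$ without amplification along characteristics: any weakening of this exponent would destroy the precise $C^{1/3}$ cusp structure at $t=\ts$ that Proposition~\ref{prop:weak2pre} is designed to produce. The entire closure rests on the algebraic fact that the natural unknown in \eqref{eq:JrZ:sl} is the \emph{product} $J\rZ$ rather than $\rZ$ alone, so that what would otherwise be a Riccati-type blow-up of $\rZ$ is renormalized into a linear ODE admitting Grönwall closure.
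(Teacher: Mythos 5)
Your overall architecture coincides with the paper's: improve the shock-edge values of $Z$ and $J\rZ$ first, integrate the $s$-ODEs for $Z$, $J\rZ$, $J$ from $s=t$, and treat $W,\mathsf{S},\rW,\rS,\Sigma$ with the transport maximum principle of Lemma~\ref{lemma:max:pr:T}, exploiting that $\|J\rZ(t,\cdot)\|_\infty$ is integrable in the label $t$ with mass $O(\m\dsh^{\varepsilon})$ and absorbing everything else into $\dsh$-smallness. However, your treatment of the Jacobian does not deliver the lower half of \eqref{2bt:J:T}. The additive Gr\"onwall you describe controls the $\tfrac{1+\alpha}{2}J\rZ$ contribution through the two-sided singular bound \eqref{bt:JrZ:T_singular}; since that bound is constant in $s$, it only yields $J(t,s)\ge J(t,t)-C\m\dsh^{\varepsilon}$ (because $\int_s^t|J\rZ|\,ds'\le 4\m\,\dsh^{\varepsilon-\frac12}(t-\ts)^{\frac12}\le 4\m\,\dsh^{\varepsilon}$), and this is vacuous exactly where the statement has content: by \eqref{lax:T} the boundary value $J(t,t)$ degenerates like $\dsh^{\varepsilon-\frac12}(t-\ts)^{\frac12}$, which is far smaller than $\dsh^{\varepsilon}$ as $t\downarrow\ts$, so ``$J(t,t)$ minus $O(\dsh^{\varepsilon})$'' does not give $J(t,s)>\tfrac12 J(t,t)$. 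Closing this step requires the one-sided hypothesis $J\rZ\le 32\m^3$ --- which your argument never invokes --- combined with $t-s\le t-\ts$, so that the positive part of $\int_s^t\tfrac{1+\alpha}{2}J\rZ\,ds'$ is at most $C\m^3(t-\ts)\ll\tfrac{1}{\m}\dsh^{\varepsilon-\frac12}(t-\ts)^{\frac12}$ for $\dsh$ small; equivalently one may argue, as the paper does, via $\partial_s\log J$, bounding $\int_s^t\rZ\,ds'$ from above with the improved $J\rZ$ bounds and the bootstrap lower bound on $J$. This is the genuinely delicate point of the lemma (it is what keeps $J$ comparable to its degenerate boundary value, hence keeps $|\rZ|\le|J\rZ|/J$ and the transversality $J<\beta\Sigma$ usable), and as written your proof fails there.

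Two further steps are asserted rather than proved. For the lower bound in \eqref{2bt:Sigma:T}, the identity $\Sigma=\tfrac12(W-Z)$ plus ``continuity from the $t=\tsh$ data'' gives nothing: the improved sup bounds $\|W\|_\infty,\|Z\|_\infty\le 2\m$ impose no lower bound on $W-Z$, and what is needed is a quantitative estimate of the deviation of $\Sigma$ from its inflow value $\Sigma_{\!\circ}\ge\tfrac1\m$, i.e.\ precisely the transport estimate the paper runs on the $\Sigma$-equation (Lemma~\ref{lemma:max:pr:T} with $\beta=\alpha$). Second, your device for the term $-\tfrac{1+\alpha}{2}J\rZ^{2}$ in \eqref{eq:rW:sl} --- rewriting it as $\rZ\cdot(J\rZ)$ --- gains nothing, because $\rZ$ itself is not bootstrap-bounded: the only available pointwise bound is $|\rZ|\le|J\rZ|/J\lesssim\m^{2}(t-\ts)^{-1}$, so the product is of size $\m^{3}\dsh^{\varepsilon-\frac12}(t-\ts)^{-\frac32}$, whose integral along characteristics through points with $t-\ts$ small is not $O(\m)$. (Note that in the DRV system \eqref{euler-DRV} the quadratic self-interaction in the $\rw$-equation is $-\tfrac{1+\alpha}{2}\rw^{2}$, not $\rz^{2}$; in the pulled-back frame that term carries the small factor $J=O(\dsh^{\varepsilon})$, and what actually closes the $\rW,\rS$ estimates is the $t$-integrability of the linear coefficient $J\rZ$, not the trade you propose.)
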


\begin{proof}[Proof of Lemma~\ref{lemma:D}]
We fix $\dsh>0$ sufficiently small (depending only on $\m$) and proceed in five steps.

\emph{Step 1: Boundary control for $Z$ and $J\rZ$ on $\mathcal{T}^{\l}$.}
We first verify the bounds on the shock edge $s=t$. Using the boundary conditions \eqref{bc:Z:T}–\eqref{bc:rZ:T}, the data bounds \eqref{data:bounds:T} (specifically \eqref{data:bounds:l1:T2}), and the bootstrap assumptions \eqref{bt:T}, we obtain the improved bounds on the boundary
\begin{equation}\label{Z:bc:bound}
|Z(t,t)|\le \tfrac{3}{2}\,\m,\qquad
|J\rZ(t,t)|\le \tfrac{3}{2}\,\m\,\dsh^{\varepsilon-\frac{1}{2}}(t-\ts)^{-\frac{1}{2}},\quad
J\rZ(t,t)\le 15\,\m^3.
\end{equation}

\emph{Step 2: Bound for $Z$.}
We integrate the ODE for $Z$,  \cref{eq:Z:sl}, along the $s$-direction. The RHS source terms are bounded by $O(\m^3)$ using \eqref{bt:T}. Integrating backward-in-time from $s=t$, we find that
\[
|Z(t,s)| \le |Z(t,t)| + \int_s^t |\text{RHS}| ds' \le \tfrac{3}{2}\m + O(\m^3)\dsh.
\]
Choosing $\dsh$ small enough yields the improved bound \eqref{2bt:Z:T}.

\emph{Step 3: Bounds for $J$ and $J\rZ$.}
We analyze the evolution of $J\rZ$ using \eqref{eq:JrZ:sl}. The RHS depends on $J\rZ, J\rW$ and radial terms. Using the bootstrap assumptions \eqref{bt:T}, specifically the singular bound \eqref{bt:JrZ:T_singular}, we estimate the derivative:
\[
|\partial_s(J\rZ)| \le C\m^3\left(\dsh^{\varepsilon} + \dsh^{\varepsilon-\frac{1}{2}}(t-\ts)^{-\frac{1}{2}}\right).
\]
Integrating from $s$ to $t$ and using the improved boundary bound \eqref{Z:bc:bound}, we obtain
\begin{align*}
|J\rZ(t,s)| &\le |J\rZ(t,t)| + \int_s^t |\partial_{s'} (J\rZ)| ds' \\
&\le \tfrac{3}{2}\m\,\dsh^{\varepsilon-\frac{1}{2}}(t-\ts)^{-\frac{1}{2}} + (t-s) \cdot C\m^3 \dsh^{\varepsilon-\frac{1}{2}}(t-\ts)^{-\frac{1}{2}}.
\end{align*}
Since $t-s \le t-\ts$, the second term is bounded by $C\m^3 \dsh^{\varepsilon-\tfrac{1}{2}}(t-\ts)^{\frac{1}{2}}$. If $\dsh$ is sufficiently small, this perturbation is small enough to yield the improved bound \eqref{2bt:JZ2:T}. The positive upper bound is improved similarly.

The bounds on $J$ (\cref{2bt:J:T}) follow by integrating \eqref{eq:J:sl}, $\partial_s \log J = \rZ + \dots$. This requires controlling the integral of 
$\rZ=\tfrac{J\rZ}{J}$. Using the improved bounds on $J\rZ$ and the bootstrap lower bound on $J$, the integral $\int_s^t \rZ ds'$ is bounded. Assuming this control holds, the bounds on $J$ are improved, and subsequently, the geometric bound \eqref{2bt:psi:T} follows.

\emph{Step 4: Bound for $\Sigma$.}
The equation for $\Sigma$ (derived from \eqref{eq:W:sl} and \eqref{eq:Z:sl}) fits Lemma~\ref{lemma:max:pr:T} (with $\beta=\alpha$). The linear coefficients $F$ and source terms $G$ are bounded using \eqref{bt:T}. The integrated coefficients $\nu$ and $K$ are small if $\dsh$ is small. Applying Lemma~\ref{lemma:max:pr:T} with $\|\Sigma_\circ\|_\infty \le \m$ yields the improved bound \eqref{2bt:Sigma:T}.

\emph{Step 5: Bounds for $W$, $\mathsf{S}$, $\rW$, $\rS$.}
These variables satisfy transport equations suitable for Lemma~\ref{lemma:max:pr:T}. The key is verifying the integrability conditions on the linear coefficients $F$, which involve terms like $J\rZ$. We use the improved singular bound \eqref{2bt:JZ2:T} (which depends on $t$) to get
\[
\int_{\ts}^{\tsh} \|J\rZ(t,\cdot)\|_{\infty} dt \le \int_{\ts}^{\tsh} 2\m\,\dsh^{\varepsilon-\frac{1}{2}}(t-\ts)^{-\frac{1}{2}} dt = 4\m\dsh^{\varepsilon}.
\]
This integral is small if $\dsh$ is small, ensuring the exponent $\nu$ in Lemma~\ref{lemma:max:pr:T} is small. The integrated source terms $K$ are also small. Applying the lemma yields the remaining improved bounds.
\end{proof}

\subsubsection{Higher–order derivative bounds}
The uniform bounds established in Lemma~\ref{lemma:D} guarantee the smoothness of the solution away from the preshock time $\ts$.

\begin{lemma}\label{lemma:higher:T}
Fix any $\ts<T<\tsh$. For every $k\in\mathbb{N}$, there exists a constant $\mathsf{C}(k,T)$ such that, for all $(t,s)\in\mathcal{T}$ with $t>T$, the solution satisfies:
\begin{subequations}
\label{higher:T}
\begin{align}
|\partial_s^k(J\rZ)(t,s)|+|\partial_s^k\rW(t,s)|+|\partial_s^k\rS(t,s)|+|\partial_s^k J(t,s)|&\le \mathsf{C}(k,T),\\
|\partial_t^k(J\rZ)(t,s)|+|\partial_t^k\rW(t,s)|+|\partial_t^k\rS(t,s)|+|\partial_t^k J(t,s)|&\le \mathsf{C}(k,T).
\end{align}
\end{subequations}
\end{lemma}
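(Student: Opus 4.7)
The plan is to argue by induction on $k$, starting from the $L^\infty$ bounds of Lemma~\ref{lemma:D} as the base case. Fix $T\in(\ts,\tsh)$ and restrict to the sub-triangle $\mathcal{T}_T := \{(t,s)\in\mathcal{T} : t\ge T\}$. On $\mathcal{T}_T$, the bound \eqref{2bt:J:T} combined with \eqref{bc:J:T} and \eqref{lax:T} yields $J(t,s)\ge c(T) > 0$, so division by $J$ produces no new singularities, and the smooth lower bound on $\psi$ from \eqref{2bt:psi:T} likewise rules out coordinate degeneracies.

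The first subtask is to control the boundary data to all orders. On the inflow edge $\{t=\tsh\}$, the traces $W_{\!\circ}, S_{\!\circ}, \rW_{\!\circ}, \rS_{\!\circ}$ are $C^\infty$ in $s$, since they are restrictions of the smooth Section~\ref{sec:large2weak} solution (cf.\ the higher-regularity claim in the proof of Lemma~\ref{lemma:om}). On the shock edge $\{s=t\}$, the conditions \eqref{bc:J:T}--\eqref{bc:rZ:T} express the traces of $J, Z, J\rZ$ as polynomial combinations of $\l$, $\lambda_1|_{\l}^+$, $W(t,t), \mathsf{S}(t,t), \Sigma(t,t), \rW(t,t), \rS(t,t)$, and their $t$-derivatives. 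The admissible pair $(\l,\lambda_1|_{\l}^+)$ is $C^\infty((\ts,\tsh])$ by Definition~\ref{def:admissible:pair}, and, by the inductive hypothesis, the traces of the other unknowns along $s=t$ are $C^{k}$ on $[T,\tsh]$. Hence the shock-edge data are $C^k$ uniformly on $[T,\tsh]$, with constants depending on $k$ and $T$.

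For the inductive step, apply $\partial_s^{\alpha_s}\partial_t^{\alpha_t}$ with $\alpha_s+\alpha_t=k+1$ to the system \eqref{euler:bp:lag} and collect the commutators $[\partial_s^{\alpha_s}\partial_t^{\alpha_t},\, J\partial_s + c\alpha\Sigma\,\partial_t]$ with $c\in\{1,2\}$. The pure-$s$ ODEs \eqref{eq:Z:sl}, \eqref{eq:J:sl}, \eqref{eq:JrZ:sl} become ODEs in $s$ for $\partial_s^{\alpha_s}\partial_t^{\alpha_t}Z$, $\partial_s^{\alpha_s}\partial_t^{\alpha_t}J$, and $\partial_s^{\alpha_s}\partial_t^{\alpha_t}(J\rZ)$, with right-hand sides polynomial in lower-order derivatives (controlled inductively) and with Cauchy data on $\{s=t\}$ provided by the previous paragraph; they are solved by direct $s$-integration. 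The transport equations \eqref{eq:W:sl}, \eqref{eq:S:sl}, \eqref{eq:rS:sl}, \eqref{eq:rW:sl} retain their transport structure after differentiation, with modified source $G_k$ and linear coefficient $F_k$ that are polynomials in derivatives of order $\le k$, divided by $J$ (hence bounded on $\mathcal{T}_T$). Lemma~\ref{lemma:max:pr:T} then propagates the $L^\infty$ bound from the inflow edge. The induction is closed by a lexicographic ordering of the mixed derivatives: first bound all pure $\partial_s$ derivatives of each unknown, then the mixed ones with $\alpha_t=1$, and so on, so that every term on the right-hand side of each differentiated equation is already controlled when invoked.

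The main obstacle is bookkeeping around the dominant quantity $J\rZ$, which satisfies only an $s$-equation. Its $\partial_t$-derivatives must therefore be obtained indirectly, by $t$-differentiating the shock-edge identity \eqref{bc:rZ:T} and then propagating the resulting trace by $s$-integration of the differentiated equation. One must verify that the $k$-th $t$-derivative of \eqref{bc:rZ:T} produces only finitely many terms involving $\partial_t^j\lambda_1|_{\l}^+$ for $j\le k$, each with polynomial blow-up $(t-\ts)^{1/2-j}$ inferred from the Taylor expansion \eqref{Taylor-lambda1} (differentiated term by term), and hence bounded on $\mathcal{T}_T$ by a constant depending on $k$ and $T-\ts$. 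Combining this trace control with the $s$-integration over an interval of length at most $\dsh$, and with the already-established bounds for the transport variables, yields the announced bounds $\mathsf{C}(k,T)$, which deteriorate polynomially as $T\searrow\ts$.
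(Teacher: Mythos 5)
Your proposal is correct and follows essentially the same route as the paper: the paper's proof is a brief sketch that differentiates the system \eqref{euler:bp:lag}, notes the differentiated quantities again satisfy transport equations of the form \eqref{transport:T} with boundary data that is $C^\infty$ for $t>\ts$, and closes by induction in $k$ using Lemma~\ref{lemma:D} and Lemma~\ref{lemma:max:pr:T}. Your write-up simply fills in the standard bookkeeping (lower bound on $J$ for $t\ge T$, trace regularity on the two edges, ordering of mixed derivatives, and the indirect treatment of $\partial_t$-derivatives of $J\rZ$ via the shock-edge identity), all consistent with the paper's argument.
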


\begin{proof}[Proof of \cref{lemma:higher:T}]
We differentiate the system \eqref{euler:bp:lag} with respect to $s$ and $t$. The resulting equations for the derivatives are again transport equations of the form \eqref{transport:T}. Since the boundary data is $C^\infty$ for $t>\ts$, a standard induction argument in $k$, using Lemma~\ref{lemma:D} and Lemma~\ref{lemma:max:pr:T}, establishes the bounds \eqref{higher:T} on any region bounded away from $t=\ts$.
\end{proof}

\subsection{Asymptotics in the Exterior Region $\mathcal{D}$}

We next record the precise Taylor expansions of the solution constructed in the exterior region $\mathcal{D}$ near the preshock corner $(\ts,\ts)$. These expansions capture the asymptotic behavior of the solution as it approaches the preshock.

We analyze the behavior in the characteristic coordinates $(t,s)$ of the triangle $\mathcal{T}$. We focus on the expansions along the shock edge ($s=t$, denoted by superscript ``$+$'') and the corner edge ($s=\ts$, denoted without superscript).

\begin{lemma}\label{lemma:taylor:T}
There exist coefficients $\mathsf{w}_j^{\pm},\ \mathsf{s}_j^{\pm},\ \mathsf{z}_j^{\pm},\ \mathsf{j}_j^{\pm}$, for $j=1,2$, such that the solution in $\mathcal{T}$ admits the following expansions as $t\downarrow \ts$.

Along the shock edge ($s=t$), we have that
\begin{align*}
W(t,t)-W(\ts,\ts) &= \mathsf{w}_{1}^{+}(t-\ts)+\mathsf{w}_{2}^{+}(t-\ts)^{\frac{3}{2}} + E_w^{+}(t-\ts),\\
\mathsf{S}(t,t)-\mathsf{S}(\ts,\ts) &= \mathsf{s}_{1}^{+}(t-\ts)+\mathsf{s}_{2}^{+}(t-\ts)^{\frac{3}{2}} + E_s^{+}(t-\ts),\\
Z(t,t)-Z(\ts,\ts) &= \mathsf{z}_{1}^{+}(t-\ts)^{\frac{1}{2}}+\mathsf{z}_{2}^{+}(t-\ts) + E_z^{+}(t-\ts),\\
J(t,t)-J(\ts,\ts) &= \mathsf{j}_{1}^{+}(t-\ts)^{\frac{1}{2}}+\mathsf{j}_{2}^{+}(t-\ts) + E_j^{+}(t-\ts).
\end{align*}
Along the corner edge ($s=\ts$), we have that
\begin{align*}
\rW(t,\ts)- \rW(\ts,\ts) &= \mathsf{w}_{1}^-(t-\ts)^{\frac{1}{2}} + E^-_w(t-\ts),\\
\rS(t,\ts)- \rS(\ts,\ts) &= \mathsf{s}_{1}^-(t-\ts)^{\frac{1}{2}} + E^-_s(t-\ts),\\
Z(t,\ts)- Z(\ts,\ts) &= \mathsf{z}_{1}^-(t-\ts)+\mathsf{z}_{2}^-(t-\ts)^{2} + E^-_z(t-\ts),\\
J(t,\ts)-J(\ts,\ts) &= \mathsf{j}_{1}^-(t-\ts)^{\frac{1}{2}}+\mathsf{j}_{2}^-(t-\ts) + E^-_j(t-\ts).
\end{align*}
The error terms $E_{\bullet}^{\pm}$ satisfy the second–derivative bounds:
\begin{gather*}
|\partial_{tt}E_w^{+}|+|\partial_{tt}E_s^{+}|\le \mathsf{C},\\
|\partial_{tt} E_w^-| + | \partial_{tt} E_s^-|+\partial_{tt}E_z^{\pm}|+|\partial_{tt}E_j^{\pm}|\le \mathsf{C}\,(t-\ts)^{-\frac{1}{2}}.
\end{gather*}
\end{lemma}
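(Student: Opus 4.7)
The plan is to reduce the expansions at the corner $(\ts,\ts)$ to three ingredients: (i) the prescribed asymptotics of the admissible pair from \eqref{Taylor-lambda1}-\eqref{Taylor-dotc}; (ii) the uniform and higher-order bounds from Lemma~\ref{lemma:D} and Lemma~\ref{lemma:higher:T}; (iii) careful integration of the pulled-back system \eqref{euler:bp:lag} along the $s$- and $t$-directions of $\mathcal{T}$. I would organize the argument into a shock-edge stage, a corner-edge stage, and an error-bound stage.

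For the shock edge $s=t$, the expansion of $J(t,t)$ is immediate from the identity \eqref{bc:J:T}: since $\dot\l(\ts)=\lambda_1|_\l^+(\ts)$, subtracting \eqref{Taylor-dotc} from \eqref{Taylor-lambda1} gives $J(t,t)=-\mathsf{l}_1(t-\ts)^{\frac{1}{2}}+(\ddot\l(\ts)-\mathsf{l}_2)(t-\ts)+O((t-\ts)^{\frac{3}{2}})$, hence $\mathsf{j}_1^+=-\mathsf{l}_1$ and $\mathsf{j}_2^+=\ddot\l(\ts)-\mathsf{l}_2$. The expansion of $Z(t,t)$ then follows from \eqref{bc:Z:T} once $W(t,t)$ is known. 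To obtain expansions of $W(t,t)$ and $\mathsf{S}(t,t)$, I would integrate the transport equations \eqref{eq:W:sl}-\eqref{eq:S:sl} along the pulled-back slow-acoustic and entropy characteristics that reach the shock edge at $(t,t)$, starting from the smooth inflow data on $\{t=\tsh\}$. Because the transport coefficients and sources are smooth away from the corner (Lemma~\ref{lemma:higher:T}) and near the corner inherit the $(t-\ts)^{\frac{1}{2}}$-scaling of $J(t,t)$ and $Z(t,t)$, a single integration yields the Lipschitz leading behavior plus a $(t-\ts)^{\frac{3}{2}}$ correction for both $W(t,t)$ and $\mathsf{S}(t,t)$.

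For the corner edge $s=\ts$, I would fix $t>\ts$ and integrate \eqref{eq:Z:sl}-\eqref{eq:rS:sl} backwards in $s$ from the shock edge $s=t$ (where Stage~1 supplies the data) down to $s=\ts$ across an interval of length $t-\ts$. The bounded portions of the right-hand sides contribute Lipschitz-in-$t$ terms, producing the expansion $Z(t,\ts)-Z(\ts,\ts)=\mathsf{z}_1^-(t-\ts)+\mathsf{z}_2^-(t-\ts)^2+O(\cdot)$; meanwhile, the $(t-\ts)^{\frac{1}{2}}$-boundary contributions at $s=t$ propagate essentially unchanged through the integrations, producing the leading $(t-\ts)^{\frac{1}{2}}$ behavior of $\rW(t,\ts)$, $\rS(t,\ts)$, and $J(t,\ts)$. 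For the error bounds on $\partial_{tt}E_\bullet^\pm$, I would differentiate the integral representations twice in $t$ and apply Lemma~\ref{lemma:higher:T} on any subregion bounded away from the corner, together with the sharp bound $|J\rZ(t,s)|\le \m\,\dsh^{\varepsilon-\frac{1}{2}}(t-\ts)^{-\frac{1}{2}}$ from \eqref{bt:JrZ:T_singular} (and its $s$-derivative). The $(t-\ts)^{-\frac{1}{2}}$ rate on $\partial_{tt}E_w^-$, $\partial_{tt}E_s^-$, $\partial_{tt}E_z^\pm$ and $\partial_{tt}E_j^\pm$ is precisely what one obtains by twice-differentiating a $(t-\ts)^{\frac{3}{2}}$ remainder.

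The main obstacle is showing that the fractional-power expansions are \emph{sharp}: the coefficients $\mathsf{j}_1^+, \mathsf{z}_1^+, \mathsf{w}_1^-, \mathsf{s}_1^-, \mathsf{j}_1^-$ must appear with the correct values, and no spurious fractional powers should creep in from the source terms during the characteristic integration. The key technical step will be to establish a sharp $(t-\ts)^{-\frac{1}{2}}$ control on $\partial_s(J\rZ)$ along the shock edge by differentiating \eqref{eq:JrZ:sl} once more in $s$ and exploiting the acceleration bound $|\ddot\l(t)|\le \m\,\dsh^{\varepsilon-1}$ together with the derivative estimate $|\partial_t\lambda_1|_\l^+(t)|\le \m\,\dsh^{\varepsilon-\frac{1}{2}}(t-\ts)^{-\frac{1}{2}}$ from the admissible-pair construction (Definition~\ref{def:admissible:pair}(2)). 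This sharp control propagates into the slow-characteristic and entropy-characteristic integrals used in Stage~1 and is precisely what guarantees that the $(t-\ts)^{\frac{3}{2}}$ corrections in $W(t,t)$ and $\mathsf{S}(t,t)$ are genuine leading-order remainders rather than being swamped by unrelated fractional singularities, thereby yielding the bounded second derivatives $|\partial_{tt}E_w^+|+|\partial_{tt}E_s^+|\le\mathsf{C}$.
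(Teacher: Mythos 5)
Your overall architecture — extract the shock-edge expansions of $J(t,t)$ and $Z(t,t)$ from the admissible pair's Taylor expansions and the boundary identities \eqref{bc:J:T}, \eqref{bc:Z:T}, integrate the transport equations for $W,\mathsf{S},\rW,\rS$, and use Lemma~\ref{lemma:D} / Lemma~\ref{lemma:higher:T} for the error bounds — follows the same route as the paper's (very brief) proof, and your Stage~1 computation of $\mathsf{j}_1^+=-\mathsf{l}_1$, $\mathsf{z}_1^+=\tfrac{2}{1+\alpha}\mathsf{l}_1$ is correct and more explicit than what appears in the paper.

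However, Stage~2 contains a genuine gap, and in fact an internal contradiction, concerning the corner‑edge expansion of $Z$. Since \eqref{eq:Z:sl} is a pure $s$-ODE with boundary data at the shock edge, one has
$Z(t,\ts)=Z(t,t)-\int_{\ts}^{t}\partial_s Z(t,s')\,ds'$, and the integrand (by Lemma~\ref{lemma:D}) is uniformly bounded, so the integral contributes only $O(t-\ts)$. But you have already established in Stage~1 that $Z(t,t)-Z(\ts,\ts)=\mathsf{z}_1^{+}(t-\ts)^{\frac{1}{2}}+O(t-\ts)$ with $\mathsf{z}_1^{+}\neq 0$ (since $\mathsf{l}_1<0$). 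A bounded integral cannot cancel a $(t-\ts)^{\frac{1}{2}}$ term, so your own reasoning yields $Z(t,\ts)-Z(\ts,\ts)=\mathsf{z}_1^{+}(t-\ts)^{\frac{1}{2}}+O(t-\ts)$, not a Lipschitz leading behavior. This directly contradicts your claim that "the bounded portions of the right-hand sides contribute Lipschitz-in-$t$ terms, producing the expansion $Z(t,\ts)-Z(\ts,\ts)=\mathsf{z}_1^-(t-\ts)+\cdots$," and it is inconsistent with your own observation, one sentence later, that $(t-\ts)^{\frac{1}{2}}$ boundary data at $s=t$ "propagate essentially unchanged through the integrations" for $J$. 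Either there is a cancellation you have not exhibited, or the leading corner‑edge behavior of $Z$ is in fact $(t-\ts)^{\frac{1}{2}}$ (which is, incidentally, what the paper's own use of this lemma in the proof of Lemma~\ref{lemma:taylor:pre} seems to require: there, the value scaling "$|z(r,\ts)-z_*|\sim(t-\ts)^{\frac{1}{2}}$" is needed to produce the $C^{\frac{1}{3}}$ cusp from $|r-r_*|\sim(t-\ts)^{\frac{3}{2}}$).

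There is a second, structural issue in Stage~2: you describe the $(t-\ts)^{\frac{1}{2}}$ behavior of $\rW(t,\ts)$ and $\rK(t,\ts)$ as coming from "boundary contributions at $s=t$" propagated by backward-in-$s$ integration, but the Goursat data for $\rW$ and $\rK$ lives on the \emph{inflow} edge $\{t=\tsh\}$ (see \eqref{bc:rW:T}), not on the shock edge; the shock-edge values $\rW(t,t),\rK(t,t)$ are computed quantities, and \eqref{eq:rW:sl}, \eqref{eq:rS:sl} are transport equations, not $s$-ODEs, so fixing $t$ and integrating in $s$ is not the correct solution operator for them. The $(t-\ts)^{\frac{1}{2}}$ behavior of $\rW$, $\rK$ on the corner edge must instead be extracted by tracing the $\lambda_3$- and $\lambda_2$-characteristics from the inflow data through the region where the source terms $J\rZ^2=(J\rZ)^2/J$ become singular (at rate controlled by \eqref{bt:JrZ:T_singular} and the lower bound on $J$ from \eqref{lax:T}), and integrating the resulting singular coefficients along those characteristics — this is the real source of the fractional power, and it needs to be addressed directly.
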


\begin{proof}[Proof of \cref{lemma:taylor:T}]
These expansions are derived by analyzing the structure of the system \eqref{euler:bp:lag} near $t=\ts$. We utilize the prescribed behavior of the admissible pair (Definition~\ref{def:admissible:pair}), specifically the Taylor expansions \eqref{Taylor-lambda1} and the speed gap \eqref{speedgap}, which dictate the leading order behavior of $Z(t,t)$ and $J(t,t)$. The expansions for $W$, $\mathsf{S}$, $\rW$ and $\rS$ follow by integrating their transport equations \eqref{eq:W:sl}, \eqref{eq:S:sl},  \eqref{eq:rW:sl} and \eqref{eq:rS:sl}. The bounds on the error terms are established using the uniform estimates from Lemma~\ref{lemma:D}.
\end{proof}

We translate these expansions in characteristic coordinates back to the physical domain along the shock curve $r=\l(t)$. It is convenient to use the parameterization $\tau=(t-\ts)^{1/2}$.

\begin{corollary}\label{cor:taylor:shock:plus}
The exterior traces along the shock $\l(t)$ admit the following expansions as $t\searrow\ts$ (where $\tau=(t-\ts)^{1/2}$):
\begin{subequations}
\label{taylor:shock:plus}
\begin{align}
w|_{\l}^{+}(\ts+\tau^2) &= \mathsf{w}_0 + \mathsf{w}_{1}^{+}\tau^2 + \mathsf{w}_{2}^{+}\tau^3 + O(\tau^4),\\
S\big|_{\l}^{+}(\ts+\tau^2) &= \mathsf{s}_0 + \mathsf{s}_{1}^{+}\tau^2 + \mathsf{s}_{2}^{+}\tau^3 + O(\tau^4),\\
z|_{\l}^{+}(\ts+\tau^2) &= \mathsf{z}_0 + \mathsf{z}_{1}^{+}\tau + \mathsf{z}_{2}^{+}\tau^2 + O(\tau^3).
\end{align}
\end{subequations}
Furthermore, since $\l\in C^2$ (by Definition~\ref{def:admissible:pair}), we have that
\[
\l(\ts+\tau^2) = \l(\ts)+\dot{\l}(\ts)\tau^2 + \tfrac{\ddot{\l}(\ts)}{2}\tau^4 + O(\tau^5),
\]
with the preshock condition $\dot{\l}(\ts)= \tfrac{1+\alpha}{2}\mathsf{z}_0 + \tfrac{1-\alpha}{2}\mathsf{w}_0$.
\end{corollary}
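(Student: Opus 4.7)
The plan is to translate the asymptotics in characteristic coordinates, already established in \cref{lemma:taylor:T}, back to physical traces along $r=\l(t)$, and then substitute the time reparametrization $t=\ts+\tau^2$.

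First, observe that the definition $\psi(t,t)=\l(t)$ together with the pull-back convention \eqref{ring:def} identifies the shock-edge values of the capitalized variables with the \emph{exterior} traces of the physical variables along $\l$:
\[
W(t,t)=w|_{\l}^{+}(t), \qquad \mathsf{S}(t,t)=S|_{\l}^{+}(t), \qquad Z(t,t)=z|_{\l}^{+}(t), \qquad t\in[\ts,\tsh].
\]
Indeed, the Goursat data were prescribed precisely so that the traces of $(W,\mathsf{S},Z)$ on $s=t$ coincide with the exterior shock traces of $(w,S,z)$; in particular, $\mathsf{w}_0:=W(\ts,\ts)=w|_{\l}^{+}(\ts)$, and similarly for $\mathsf{s}_0,\mathsf{z}_0$.

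Second, I apply the shock-edge expansions of \cref{lemma:taylor:T} and substitute $t-\ts=\tau^2$. For $W$ and $\mathsf{S}$ one obtains
\begin{align*}
W(t,t)-\mathsf{w}_0 &= \mathsf{w}_1^{+}\tau^2 + \mathsf{w}_2^{+}\tau^3 + E_w^{+}(\tau^2),\\
\mathsf{S}(t,t)-\mathsf{s}_0 &= \mathsf{s}_1^{+}\tau^2 + \mathsf{s}_2^{+}\tau^3 + E_s^{+}(\tau^2),
\end{align*}
and since $|\partial_{tt}E_w^{+}|, |\partial_{tt}E_s^{+}|\le \mathsf{C}$, two integrations from $t=\ts$ (using the vanishing of $E_{w,s}^{+}$ and its first derivative at $\ts$ which is built into the expansion) yield $E_{w,s}^{+}(t-\ts)=O((t-\ts)^2)=O(\tau^4)$. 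For $Z$, the lemma gives
\[
Z(t,t)-\mathsf{z}_0 = \mathsf{z}_1^{+}\tau + \mathsf{z}_2^{+}\tau^2 + E_z^{+}(\tau^2),
\]
and the weaker second-derivative bound $|\partial_{tt}E_z^{+}|\le \mathsf{C}(t-\ts)^{-1/2}$ integrates to $E_z^{+}(t-\ts)=O((t-\ts)^{3/2})=O(\tau^3)$. These are exactly the three expansions \eqref{taylor:shock:plus}.

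Third, for the shock curve itself I use that by \cref{def:admissible:pair}(4)--(5) the function $\l$ is $C^2$ up to $\ts$ with Taylor expansion \eqref{Taylor-dotc}; integrating that estimate once from $\ts$ gives
\[
\l(t)=\l(\ts)+\dot{\l}(\ts)(t-\ts)+\tfrac{\ddot{\l}(\ts)}{2}(t-\ts)^2+O\!\big((t-\ts)^{3}\big),
\]
which upon the substitution $t=\ts+\tau^2$ becomes the stated expansion (the declared $O(\tau^5)$ remainder is in fact $O(\tau^6)$, hence certainly $O(\tau^5)$). Finally, the preshock identity follows by specializing the admissibility condition $\dot{\l}(\ts)=\lambda_1|_{\l}^{+}(\ts)$ at $t=\ts$ and inserting the representation $\lambda_1=\tfrac{1+\alpha}{2}z+\tfrac{1-\alpha}{2}w$ evaluated at $r=\l(\ts)^{+}$, giving $\dot{\l}(\ts)=\tfrac{1+\alpha}{2}\mathsf{z}_0+\tfrac{1-\alpha}{2}\mathsf{w}_0$.

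There is no serious obstacle here; the entire content of \cref{lemma:taylor:T} (construction of the asymptotic coefficients and control of the second-derivative bounds on the remainders) is already in hand, and the corollary is a clean reparametrization together with the identification of boundary traces. The only mild points to verify are (i) that $W(t,t),\mathsf{S}(t,t),Z(t,t)$ really do coincide with the physical $^+$-traces, which is immediate from $\psi(t,t)=\l(t)$ and the definition \eqref{ring:def}, and (ii) the remainder arithmetic converting second-derivative bounds in $t$ into $\tau$-order remainders, which is a two-line Taylor-with-integral-remainder computation.
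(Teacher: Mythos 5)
Your proof is correct and follows exactly the route the paper intends: identify the shock‐edge traces $W(t,t),\mathsf{S}(t,t),Z(t,t)$ with the exterior physical traces via $\psi(t,t)=\l(t)$ in the exterior Goursat region, apply Lemma~\ref{lemma:taylor:T}, convert the second‑derivative bounds on the remainders $E^{+}_{w,s,z}$ into order estimates by Taylor with integral remainder, and substitute $t-\ts=\tau^2$; the shock‑curve Taylor expansion and the preshock identity follow directly from Definition~\ref{def:admissible:pair}. This is precisely the chain of deductions that makes the statement a corollary rather than a theorem, so no further comparison is needed.
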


\subsection{The Interior Region $\mathcal{L}^E$: Setup and Boundary Data}

We now analyze the interior region. To fully determine the preshock profile at $t=\ts$ and prepare for the forward evolution in Section 5, we must consider the domain of dependence defined by the characteristics reaching $t=\ts$. This requires analyzing an extended time interval $[\ts, \tfl]$.

\paragraph{The Extended Domain $\mathcal{L}^E$.}
We define the time $\tfl>\tsh$ such that the interior slow characteristic starting at $(\s(\tsh), \tsh)$ reaches $t=\ts$ at the same spatial location as the interior fast characteristic starting at $(\s(\tfl), \tfl)$. That is, $\eta(\tsh,\ts)=\psi(\tfl,\ts)$. We define the time difference $\dfl:=|\tfl-\ts|$. The relationship between the shock strengths (characterized by $\varepsilon$) implies that $\dfl\sim \dsh^{1/(1+\varepsilon)}$, so $\dfl \to 0$ as $\dsh\to 0$.

We define the extended interior region $\mathcal{L}^{E}$ bounded by the interior characteristics and the composite shock curve $\mathfrak{s}(t)$ (where $\mathfrak{s}(t)=\l(t)$ for $t\le\tsh$ and $\mathfrak{s}(t)=\s(t)$ for $t\ge\tsh$).

We aim to solve the Euler system in $\mathcal{L}^E$:
\begin{subequations}
\label{eul:rv:L}
\begin{align}
(\partial_t+\lambda_3\,\partial_r)w&=\tfrac{\alpha}{2\gamma}\,\sigma^2 \,\partial_r S - \tfrac{\alpha(d-1)}{4r}\,(w^2-z^2),\\
(\partial_t+\lambda_1\,\partial_r)z&=\tfrac{\alpha}{2\gamma}\,\sigma^2 \,\partial_r S + \tfrac{\alpha(d-1)}{4r}\,(w^2-z^2),\\
(\partial_t+\lambda_2\,\partial_r)S&=0.
\end{align}
\end{subequations}
The boundary data along the shock $r=\mathfrak{s}(t)$ is determined by the exterior solution (constructed in $\mathcal{D}$ for $t\le\tsh$ and given by the Guderley solution for $t\ge\tsh$) via the Rankine-Hugoniot conditions.

\paragraph{Transformation to Characteristic Coordinates.}
As in the exterior region, we transform to coordinates aligned with the fast characteristic flow $\psi(t,s)$, now defined in the interior region $\mathcal{L}^E$. We reuse the capitalization notation \eqref{ring:def}, where now $\psi(t,t)=\mathfrak{s}(t)$. The system transforms to \eqref{euler:bp:lag} on the extended characteristic triangle
\begin{equation*}
\mathcal{T}^{E}=\{(t,s):\ \ts\le s\le t\le \tfl\}.
\end{equation*}

The Jacobian $J(t,t) = \dot{\mathfrak{s}}(t) - \lambda_1\big|_{\mathfrak{s}}^{-}(t)$. Note that by the Lax conditions, $J(t,t)<0$ in the interior region.

\subsection{Estimates from the Rankine--Hugoniot Conditions}
We first establish bounds on the exterior traces along the composite shock curve, derived from the admissible pair properties (for $t\le\tsh$) and Proposition~\ref{prop:choice:g} (for $t\ge\tsh$).

For $t\in[\ts,\tsh]$ (using the properties of the admissible pair $(\l(t), \lambda_1|_{\l}^+)$), we have that
\begin{subequations}\label{bounds:sm}
\begin{align}
\dot{\l}(t)-\lambda_1|_{\l}^{+}(t) &> \tfrac{\dsh^{\varepsilon-\frac{1}{2}}(t-\ts)^{\frac{1}{2}}}{\m},\\
|\partial_t (z|_{\l}^{+})(t)|&< \m\,\dsh^{\varepsilon-\frac{1}{2}}(t- \ts)^{-\frac{1}{2}},\\
|\ddot{\l}(t)| &\le \m\,\dsh^{\varepsilon-1}.
\end{align}
\end{subequations}
Using the RH inversion (Lemma~\ref{lemma:rh:inversion}) and the Taylor expansions \eqref{RH:taylor:pt}, we obtain bounds on the interior traces. For $t\in[\ts,\tsh]$, we find that
\begin{align*}
\lambda_1|_{\l}^{-}(t)-\dot{\l}(t) &> \tfrac{\dsh^{\varepsilon-\frac{1}{2}}(t-\ts)^{\frac{1}{2}}}{2\m},\\
|\partial_t (w|_{\l}^{-})(t)| \le 2\m\,\dsh^{\varepsilon-1},\qquad &|\partial_t (z|_{\l}^{-})(t)|< 2\m\,\dsh^{\varepsilon-\frac{1}{2}}(t-\ts)^{-\frac{1}{2}}.
\end{align*}

For the interval $t\in[\tsh,\tfl]$ (using the properties of $\s(t)$ from Proposition~\ref{prop:choice:g}), we have that
\begin{align*}
|\partial_t \sm{z}(t)| \le \m\,(t-\ts)^{\varepsilon-1},\qquad
\tfrac{1}{\m}\,(t-\ts)^{\varepsilon} \le \dot{\s}(t)-\sp{\lambda_1}(t) \le \m\,(t-\ts)^{\varepsilon}.
\end{align*}

We summarize the crucial bounds on the boundary data for the interior bootstrap on $\mathcal{T}^E$.

For $t\in[\ts,\tsh]$, we obtain that
\begin{subequations}
\label{bounds:data:L}
\begin{align}
|W(t,t)|+|Z(t,t)|+|\mathsf{S}(t,t)| &\le \m,\qquad \tfrac{1}{\m}\le \Sigma(t,t)\le \m,\\
|\rW(t,t)|+|\rS(t,t)| &\le \m\,\dsh^{\varepsilon-1},\\
J(t,t) &< -\,\tfrac{\dsh^{\varepsilon-\frac{1}{2}}(t-\ts)^{\frac{1}{2}}}{2\m}<0,  \label{bt:e:L} \\
|J\rZ(t,t)|&\le \m\,\dsh^{\varepsilon-\frac{1}{2}}(t-\ts)^{-\frac{1}{2}} .
\end{align}
\end{subequations}
For $t\in[\tsh,\tfl]$, we obtain that
\begin{subequations}
\label{bounds:data:tsh}
\begin{align}
|\rW(t,t)|+|\rS(t,t)| &\le \m\,(t-\ts)^{\varepsilon-1},\\
-\,\m\,(t-\ts)^{\varepsilon}<J(t,t)&< -\,\tfrac{1}{2\m}\,(t-\ts)^{\varepsilon}<0,  \label{bt:e:L:tsh}\\
|\rZ(t,t)|&\le \m\,(t-\ts)^{\varepsilon-1}.
\end{align}
\end{subequations}

\subsection{Bootstrap Analysis in $\mathcal{L}^{E}$}

We now establish \emph{a priori} bounds for the interior system on $\mathcal{T}^E$. Since $J<0$, the standard transport maximum principle (Lemma~\ref{lemma:max:pr:T}) does not apply directly. We require a modified version adapted to characteristics running backward-in-time in the $t$-label.

\begin{lemma}[Transport maximum principle for $J<0$]\label{lemma:max:prin:L}
Assume the bounds \eqref{bounds:data:L}–\eqref{bounds:data:tsh} hold. Let $Y(t,s)$ solve the transport equation on $\mathcal{T}^{E}$, 
\begin{equation}\label{transport:L}
J\,\partial_s Y + \beta\,\Sigma\,\partial_t Y = G(t,s) + F(t,s)Y, \qquad Y(t,t)=Y_0(t) \,,
\end{equation}
for $\beta>0$. If the integrated normalized coefficients $\nu$ and $K$ (defined analogously to Lemma~\ref{lemma:max:pr:T}, but integrating along the $s$-direction and normalizing by $|J|$) are sufficiently small, then $Y$ is stable and bounded by the boundary data and the sources.
\end{lemma}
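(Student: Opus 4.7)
The plan is to adapt the proof of Lemma~\ref{lemma:max:pr:T} to the sign-reversed setting $J<0$, where the data is prescribed on the shock edge $\{s=t\}$ rather than on the inflow edge $\{t=\tfl\}$. The characteristic vector field $(\beta\Sigma,J)$ now has components of opposite sign, so its slope in the $(t,s)$-plane is $ds/dt = J/(\beta\Sigma) < 0$. Since the shock edge has slope $+1$, the characteristics cross $\{s=t\}$ transversally and can be parameterized by $s$ decreasing from any starting value $t_0\in[\ts,\tfl]$, with $t$ increasing correspondingly. For sufficiently small $\dsh$, the data bound \eqref{bt:e:L} combined with \eqref{bt:Sigma:T} gives $|J|\ll\Sigma$ uniformly, so the characteristics stay inside $\mathcal{T}^E$ until they exit through either the corner edge $\{s=\ts\}$ or the right edge $\{t=\tfl\}$, yielding a complete foliation of $\mathcal{T}^E$ by curves issued from the shock.

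First, I would verify this foliation property by analyzing the ODE $dt/ds = \beta\Sigma/J$ and showing that the uniform transversality above implies every point of $\mathcal{T}^E$ lies on a unique characteristic starting from $\{s=t\}$. Second, I would divide equation \eqref{transport:L} by $J$ and derive the scalar ODE along characteristics
\[
\tfrac{d}{ds} Y\bigl(t(s),s\bigr) \;=\; \tfrac{G+FY}{J}\bigl(t(s),s\bigr),
\]
then integrate backward from $s=t_0$, using the boundary condition $Y(t_0,t_0)=Y_0(t_0)$, and apply Gr\"onwall's inequality. Setting $\nu := \sup_{\text{char}} \int |F/J|\,ds$ and $K := \sup_{\text{char}} \int |G/J|\,ds$ (the integrals taken along each characteristic in the $s$-direction), one obtains the pointwise bound
\[
\|Y\|_{\mathcal{T}^E} \;\le\; e^{\nu}\bigl(\|Y_0\|_{\infty} + K\bigr),
\]
which is precisely the stability estimate claimed in the lemma.

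The main obstacle is controlling the normalization $1/|J|$ near the preshock corner $(\ts,\ts)$, where \eqref{bt:e:L} forces $|J(t,t)|\gtrsim \dsh^{\varepsilon-1/2}(t-\ts)^{1/2}$. To overcome this, I would propagate the boundary lower bound into the interior by integrating \eqref{eq:J:sl} along the fast characteristic, exactly as for the improvement \eqref{2bt:J:T} in the proof of Lemma~\ref{lemma:D}, yielding $|J(t,s)|\gtrsim |J(t,t)|$ uniformly in $s$. Combined with the DRV and trace bounds \eqref{bounds:data:L}--\eqref{bounds:data:tsh}, the worst-case integrand $|F/J|$ then behaves at most like $\dsh^{1/2-\varepsilon}(t-\ts)^{-1/2}$, which is integrable in $s$ over intervals of length at most $\dfl$. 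This gives $\nu,\,K = O(\dfl^{1/2}\dsh^{1/2-\varepsilon})$, which can be made arbitrarily small by choosing $\dsh$ (and hence $\dfl\sim \dsh^{1/(1+\varepsilon)}$) sufficiently small, closing the argument. The same smallness simultaneously guarantees the transversality $|J/(\beta\Sigma)|\ll 1$ used to set up the foliation, so the two ingredients are compatible.
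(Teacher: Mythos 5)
Your proposal is correct and follows essentially the same route as the paper: you foliate $\mathcal{T}^E$ by the characteristics of $(\beta\Sigma,J)$, which, since $J<0$, cross the shock edge $\{s=t\}$ transversally and sweep into the interior as $s$ decreases; you then reduce \eqref{transport:L} to the scalar ODE $\frac{d}{ds}Y(t(s),s)=(G+FY)/J$ along each characteristic and close with Gr\"onwall. This is exactly the paper's construction via the auxiliary flow $\Phi(s',t)$.

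One small caveat on your final paragraph, which is extra material beyond what the lemma's proof requires (the lemma already \emph{assumes} $\nu,K$ small). Your pointwise claim that the worst-case integrand $|F/J|$ behaves like $\dsh^{1/2-\varepsilon}(t-\ts)^{-1/2}$ understates the singularity: for the $\rS$ and $\rW$ equations one has $F/J \sim \rZ = (J\rZ)/J$, and combining the boundary bound $|J\rZ(t,t)|\lesssim\dsh^{\varepsilon-1/2}(t-\ts)^{-1/2}$ with the \emph{lower} bound $|J(t,s)|\gtrsim\dsh^{\varepsilon-1/2}(t-\ts)^{1/2}$ produces a pointwise bound $|\rZ|\lesssim (t-\ts)^{-1}$, not $(t-\ts)^{-1/2}$. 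The quantity $\int|F/J|\,ds$ is nevertheless small, but the clean way to see this is to change the integration variable from $s$ to $t$ along the characteristic, since $|ds|=\tfrac{|J|}{\beta\Sigma}|dt|$ cancels the $1/|J|$ normalization exactly, giving $\int|F/J|\,ds=\int\tfrac{|J\rZ|}{\beta\Sigma}\,dt\lesssim\dsh^{\varepsilon-1/2}\dfl^{1/2}+\dfl$, which tends to zero. This cancellation is the mechanism that makes the normalized coefficients small despite the individual pointwise singularities of $\rZ$ and $1/|J|$, and it is worth spelling out if the smallness verification is included.
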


\begin{proof}[Proof of \cref{lemma:max:prin:L}]
We define the auxiliary characteristic flow $\Phi(s',t)$ in the $t$-direction, parameterized by $s'$, starting at $t$ when $s'=t$.
\[
\partial_{s'}\Phi(s',t) = \tfrac{\beta \Sigma(\Phi(s',t), s')}{J(\Phi(s',t), s')},\qquad \Phi(t,t)=t.
\]
Since $J<0$ and $\Sigma, \beta>0$, we have $\partial_{s'}\Phi < 0$. The flow is well-defined and covers $\mathcal{T}^{E}$.

Let $\hat{Y}(s') = Y(\Phi(s',t), s')$. By the chain rule, we obtain
\begin{align*}
\tfrac{d\hat{Y}}{ds'} &= \partial_s Y + \partial_t Y \cdot \partial_{s'}\Phi = \tfrac{1}{J} ( J\partial_s Y + \beta\Sigma \partial_t Y ) = \tfrac{G+FY}{J}.
\end{align*}
This is a linear ODE. We solve it using an integrating factor, similar to Lemma~\ref{lemma:max:pr:T}, integrating from $s$ to $t$. The stability relies on controlling the integral of $F/J$ and $G/J$. Using the bounds on $J$ from \eqref{bounds:data:L} and \eqref{bounds:data:tsh}, these integrals are controlled if $\dfl$ (and thus $\dsh$) is small, yielding the stability bound.
\end{proof}

\begin{lemma}\label{lemma:bootstrap:L}
If $\dsh$ (and thus $\dfl$) is sufficiently small, there exists a unique solution of \eqref{eul:rv:L} in $\mathcal{L}^{E}$ satisfying the following improved bounds:
\begin{subequations}
\label{bootstrap:tp:L}
\begin{align}
\|W\|_{\infty}+\|Z\|_{\infty}+\|\mathsf{S}\|_{\infty}&\le 2\m, \quad \tfrac{1}{2\m}\le \|\Sigma\|_{\infty}\le 2\m, \\
|\rW(t,s)|+|\rS(t,s)|&\le 2\m\,\min\{\dsh^{\varepsilon-1},(s-\ts)^{\varepsilon-1}\},\\
-4\m\,\dfl^{\varepsilon}\le J(t,s)&\le \tfrac{1}{2}\,J(t,t),\\
|J\rZ(t,s)|&\le 2\m\,|J\rZ(t,t)|+2\m.
\end{align}
\end{subequations}
\end{lemma}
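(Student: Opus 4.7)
My plan is to close this result by running a bootstrap on $\mathcal{T}^E$ that mirrors the argument of Lemma~\ref{lemma:D}, but adapted to the two essential features of the interior geometry: the Jacobian $J$ is strictly negative (so the shock edge $s=t$ is outflow, not inflow, for the fast-acoustic transport), and the shock-edge data has two different quantitative scalings depending on whether $t\in[\ts,\tsh]$ or $t\in[\tsh,\tfl]$ (cf.\ \eqref{bounds:data:L}--\eqref{bounds:data:tsh}). Consequently, I will control the slowly varying quantities $W,\mathsf{S},\Sigma,\rW,\rS$ via the modified transport principle of Lemma~\ref{lemma:max:prin:L}, integrating backward-in-$t$ along the auxiliary characteristic $\Phi(s',t)$ emanating from the shock edge, while the dominant quantities $Z$ and $J\rZ$ will be controlled by direct integration in $s$ from the shock edge, where their values are already quantified by the RH expansions.

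The steps I envision, in order, are as follows. First I would set up bootstrap assumptions by doubling each constant in the target envelope \eqref{bootstrap:tp:L}. Next, using the RH inversion of Lemma~\ref{lemma:rh:inversion} together with Definition~\ref{def:admissible:pair} and Corollary~\ref{cor:taylor:shock:plus} on $[\ts,\tsh]$, and Proposition~\ref{prop:choice:g} on $[\tsh,\tfl]$, I would upgrade the shock-edge values of $Z$, $J$ and $J\rZ$ so that they already sit strictly inside the target constants. I would then integrate \eqref{eq:Z:sl} in $s$ backward from $s=t$ to improve $Z$ throughout $\mathcal{T}^E$ (the sources are $O(\m^3)$ under the bootstrap and the $s$-length is at most $\dfl$, hence small), integrate \eqref{eq:JrZ:sl} to improve $J\rZ$, and feed the improved $J\rZ$ into \eqref{eq:J:sl} so that $\log|J(t,s)|$ is only a small fluctuation of $\log|J(t,t)|$, yielding the two-sided envelope on $J$. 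At this point the equations for $W,\mathsf{S},\Sigma,\rW,\rS$ fit the linear form \eqref{transport:L}, and Lemma~\ref{lemma:max:prin:L} closes them provided the integrals $\int |F/J|\,ds'$ and $\int |G/J|\,ds'$ along the auxiliary $t$-characteristics are small. The critical integral is $\int_{\ts}^{\tfl}|J\rZ|/|J|\,dt'$, which by the improved shock-edge scaling and the lower bounds \eqref{bt:e:L}--\eqref{bt:e:L:tsh} is $O(\dsh^{\varepsilon}+\dfl^{\varepsilon})$ and can be made arbitrarily small by shrinking $\dsh$.

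The hard part will be establishing the two-scale bound $\min\{\dsh^{\varepsilon-1},(s-\ts)^{\varepsilon-1}\}$ for $|\rW|+|\rS|$. This envelope is forced because the shock-edge data itself has two distinct singular rates in the two sub-intervals: $O(\dsh^{\varepsilon-1})$ on $[\ts,\tsh]$ (controlled uniformly by the admissible pair) and $O((t-\ts)^{\varepsilon-1})$ on $[\tsh,\tfl]$ (inherited from Proposition~\ref{prop:choice:g}). I would handle this by splitting the auxiliary $t$-characteristic used in Lemma~\ref{lemma:max:prin:L} according to which sub-interval its foot on $s'=t'$ lies in, and applying the corresponding data bound on each piece. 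Since the characteristic speed $\beta\Sigma/|J|$ becomes singular only at the corner $(\ts,\ts)$, a direct estimate using the envelope on $J$ shows that a characteristic leaving $(t,s)$ with $s$ near $\ts$ strikes the shock edge at a time $t'$ with $t'-\ts\sim s-\ts$; this gives the data contribution $(s-\ts)^{\varepsilon-1}$. When $s$ is bounded away from $\ts$, the characteristic exits into the regime where only the uniform bound $\dsh^{\varepsilon-1}$ is available. Taking the worse of the two cases produces exactly the claimed minimum. The quadratic source contributions in \eqref{eq:rW:sl}--\eqref{eq:rS:sl} are then integrable along characteristics with a gain of $\dfl^{\varepsilon}$, and a standard Grönwall step closes the bootstrap.

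Once the bootstrap is closed, I would verify that the fast-acoustic characteristics remain in $\mathcal{L}^E$ and bounded away from the origin as in Lemma~\ref{lemma:D}, and promote the closure to a genuine existence statement via a Picard iteration in the same function space; uniqueness follows by applying Lemma~\ref{lemma:max:prin:L} to the difference of two putative solutions with identical shock-edge data, which satisfies a linear transport system with vanishing data and hence must vanish. Higher regularity away from $s=\ts$ follows by differentiating \eqref{euler:bp:lag} in $s$ and $t$ and inductively applying Lemma~\ref{lemma:max:prin:L}, exactly as in Lemma~\ref{lemma:higher:T}.
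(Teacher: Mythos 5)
Your high-level strategy (bootstrap with slack constants, the modified transport principle of Lemma~\ref{lemma:max:prin:L} for the slowly varying quantities, direct $s$-integration from the shock edge for $Z$, $J$, $J\rZ$, Picard iteration for existence, difference argument for uniqueness) is exactly the path the paper takes — the paper's proof is only a few lines long and says precisely "the argument mirrors Lemma~\ref{lemma:D}, utilizing Lemma~\ref{lemma:max:prin:L}... direct integration of \eqref{eq:J:sl}–\eqref{eq:JrZ:sl}, utilizing the boundary data bounds \eqref{bounds:data:L}–\eqref{bounds:data:tsh}." What you propose is a fully fleshed-out version of that sketch.

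However, the paragraph in which you derive the two-scale envelope $\min\{\dsh^{\varepsilon-1},(s-\ts)^{\varepsilon-1}\}$ for $|\rW|+|\rS|$ has the two regimes reversed, and as written it would fail to close the bootstrap. You assert that for $s$ near $\ts$ the characteristic foot satisfies $t'-\ts\sim s-\ts$ and hence "the data contribution is $(s-\ts)^{\varepsilon-1}$," and that for $s$ bounded away from $\ts$ "only the uniform bound $\dsh^{\varepsilon-1}$ is available." Both are backwards. For $s\le\tsh$ the target of the $\min$ is the \emph{smaller} number $\dsh^{\varepsilon-1}$; since $(s-\ts)^{\varepsilon-1}>\dsh^{\varepsilon-1}$ there, your claimed bound is strictly larger than the target and does not close the bootstrap. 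The correct reasoning in that regime is that the foot $t'$ lies in $(s,t)$, so either $t'\le\tsh$ and the data is $\m\dsh^{\varepsilon-1}$ by the uniform bound in \eqref{bounds:data:L}, or $t'>\tsh$ and $(t'-\ts)^{\varepsilon-1}<\dsh^{\varepsilon-1}$ by \eqref{bounds:data:tsh}; either way the data is $\le\m\dsh^{\varepsilon-1}$, as required. Symmetrically, for $s>\tsh$ the target is the \emph{smaller} number $(s-\ts)^{\varepsilon-1}$, and you get it because the foot satisfies $t'>s>\tsh$, hence the data bound $\m(t'-\ts)^{\varepsilon-1}<\m(s-\ts)^{\varepsilon-1}$; the uniform bound $\dsh^{\varepsilon-1}$ is strictly weaker here and would not close. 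Note also that the two-sided asymptotic $t'-\ts\sim s-\ts$ is neither established nor needed: the one-sided inequality $t'>s$ is forced by the geometry of $\mathcal{T}^E$ (the characteristic stays below the diagonal $s\le t$ and the shock edge is at $s'=t'$), and that one-sided bound together with the monotonicity of $x\mapsto x^{\varepsilon-1}$ is all that is used. Your concluding phrase "taking the worse of the two cases produces exactly the claimed minimum" compounds the confusion — in each regime one needs the \emph{better} of the two data scalings, and they are paired the opposite way from what you wrote.
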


\begin{proof}[Proof of \cref{lemma:bootstrap:L}]
We employ a bootstrap argument, assuming the bounds in \eqref{bootstrap:tp:L} hold with a factor of $4$ on the right-hand side. The argument mirrors Lemma~\ref{lemma:D}, utilizing Lemma~\ref{lemma:max:prin:L} (the transport principle for $J<0$) instead of Lemma~\ref{lemma:max:pr:T}. The estimates for $J$ and $J\rZ$ follow by direct integration of their evolution equations \eqref{eq:J:sl}–\eqref{eq:JrZ:sl}, utilizing the boundary data bounds \eqref{bounds:data:L}–\eqref{bounds:data:tsh}. The smallness of $\dfl^\varepsilon$ ensures the bootstrap closes.
\end{proof}

\subsubsection{Higher–order derivatives in $\mathcal{L}^{E}$}
Having established the uniform bounds for the solution and its first derivatives (the DRVs) in the interior region (Lemma~\ref{lemma:bootstrap:L}), we now confirm that the solution possesses higher regularity away from the preshock time $t=\ts$. This follows because, for any $T>\ts$, the evolution on the interval $[T, \tfl]$ is strictly hyperbolic and the boundary data is smooth.

\begin{lemma}\label{lemma:higher:L}
Fix $T$ such that $\ts<T<\tfl$. For every $k\in\mathbb{N}$, there exists a constant $\mathsf{C}(k,T)$ such that the solution in $\mathcal{L}^E$ is $C^k$ smooth on the time interval $[T, \tfl]$, satisfying bounds analogous to those established for the exterior region (Lemma \ref{lemma:higher:T}).
\end{lemma}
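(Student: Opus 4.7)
The plan is to mirror the strategy used in the proof of \cref{lemma:higher:T}, but with two modifications dictated by the interior geometry: the Jacobian $J$ is now negative (the Lax condition yields $J(t,t)=\dot{\mathfrak{s}}(t)-\sm{\lambda_1}(t)<0$), so one must substitute the backwards transport principle \cref{lemma:max:prin:L} for its exterior analogue \cref{lemma:max:pr:T}; and the characteristic triangle is $\mathcal{T}^E$ rather than $\mathcal{T}$. The first step is to fix $T \in (\ts, \tfl)$ and establish a uniform lower bound $|J(t,s)| \ge c(T) > 0$ on the truncated triangle $\mathcal{T}^E \cap \{t \ge T\}$. This is immediate from \cref{lemma:bootstrap:L} combined with the boundary estimates \eqref{bt:e:L} and \eqref{bt:e:L:tsh}: along the shock edge we have $|J(t,t)| \gtrsim \min\{\dsh^{\varepsilon-\frac{1}{2}}(T-\ts)^{\frac{1}{2}},(T-\ts)^\varepsilon\}$, and the bootstrap bound $J(t,s) \le \tfrac{1}{2}J(t,t)$ propagates this lower bound into the whole triangle. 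With $|J|$ bounded away from zero on $\{t \ge T\}$, the transport operator $J\partial_s + \beta\Sigma\partial_t$ is uniformly non-degenerate and the coefficients $F/J$, $G/J$ appearing after normalization are as regular as the source terms themselves.

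Next I would proceed by induction on the order $k$ of differentiation. The base case $k=0,1$ is exactly the content of \cref{lemma:bootstrap:L}. For the inductive step, one differentiates the pulled-back system \eqref{euler:bp:lag} — namely the equations for $Z$, $W$, $\mathsf{S}$, $J$, $J\rZ$, $\rW$, $\rS$ — $k$ times with respect to $s$ (respectively $t$). Because the system is quasilinear of first order and the characteristic field $(\beta\Sigma, J)$ has smooth coefficients in terms of the unknowns, each $k$-th order derivative $\partial_s^k Y$ (or $\partial_t^k Y$) satisfies a linear transport equation of the form
\[
J\,\partial_s(\partial_s^k Y) + \beta\Sigma\,\partial_t(\partial_s^k Y) = G_k + F_k\,\partial_s^k Y,
\]
where $G_k$ and $F_k$ are polynomial expressions in derivatives of order at most $k$ of $Z$, $W$, $\mathsf{S}$, $J$, $\rZ$, $\rW$, $\rS$ and of the geometric factor $\psi$ — all of which are controlled by the inductive hypothesis on $\{t \ge T\}$. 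The boundary data along $\mathcal{T}^{E,\mathfrak{s}} = \{(t,t) : t \ge T\}$ inherits $C^\infty$ smoothness from the admissible pair (\cref{def:admissible:pair}) for $t\in[T,\tsh]$ and from the Guderley-based construction of \cref{prop:large2weak} for $t\in[\tsh,\tfl]$, while the Rankine--Hugoniot inversion of \cref{lemma:rh:inversion} is smooth on this range since all denominators are bounded away from zero.

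Applying \cref{lemma:max:prin:L} to $\partial_s^k Y$ (respectively $\partial_t^k Y$) then yields the uniform bound $\|\partial_s^k Y\|_{L^\infty(\mathcal{T}^E \cap \{t\ge T\})} \le \mathsf{C}(k,T)$. The commutation between $\partial_t$-derivatives and the transport operator produces additional commutator terms proportional to $\partial_t J$ and $\partial_t \Sigma$, but these are controlled by the previous step of the induction, so the same argument closes for $\partial_t^k$. The ordering of the induction is to first establish the $\partial_s$-bounds for all variables at level $k$ (since $\partial_s$ differentiates along characteristics), then use these together with the first equation in \eqref{euler:bp:lag} and the identity $\partial_t \psi = J$ to recover the $\partial_t^k$-bounds.

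The only technical obstacle is bookkeeping: writing out $F_k$ and $G_k$ explicitly involves combinatorial terms from Fa\`a di Bruno, and one must verify that the radial source $\mathcal{R}$ in \eqref{eq:R:sl}, which contains $\psi^{-1}$ and $\psi^{-2}$, differentiates nicely — but since $\psi \ge 1/(2\m) > 0$ by the geometric bound in \cref{lemma:bootstrap:L}, this is routine. No fundamentally new estimate beyond what is already in \cref{lemma:bootstrap:L,lemma:max:prin:L} is needed; the lemma is essentially a propagation-of-regularity result in the strictly hyperbolic regime $t \ge T > \ts$, completely analogous to \cref{lemma:higher:T}.
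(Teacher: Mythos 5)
Your proof matches the paper's sketch essentially verbatim: induction on $k$, strict hyperbolicity (uniform lower bound on $|J|$ for $t\ge T$), smooth boundary data for $t>\ts$ via the smooth Rankine--Hugoniot inversion, then a transport maximum principle to close the induction. One small point in your favor: the paper's sketch cites \cref{lemma:max:pr:T} (which assumes $J>0$), whereas you correctly invoke \cref{lemma:max:prin:L}, the version adapted to $J<0$, which is what actually holds in the interior region $\mathcal{L}^E$.
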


\begin{proof}[Sketch of Proof]
The argument proceeds by induction on the derivative order $k$. The base case (control of the first derivatives) is established by Lemma~\ref{lemma:bootstrap:L}.

We must verify the prerequisites for standard hyperbolic regularity theory on the interval $[T, \tfl]$.
\begin{enumerate}
    \item \textsl{Strict Hyperbolicity:} For $t \in [T, \tfl]$, the shock strength is strictly positive. This guarantees that the Jacobian $J(t,s)$ (defined in the fast acoustic frame for the interior region) is bounded away from zero (see the bounds in \eqref{bt:e:L}, \eqref{bounds:data:L}, and \eqref{bounds:data:tsh}).
    \item \textsl{Smooth Boundary Data:} The shock path ($\l(t)$ or $\s(t)$) is $C^\infty$ for $t>\ts$ (by Definition~\ref{def:admissible:pair} and Proposition~\ref{prop:choice:g}). The exterior traces are also smooth for $t>\ts$. Consequently, the interior traces, derived via the smooth Rankine-Hugoniot inversion (Lemma~\ref{lemma:rh:inversion}), are $C^\infty$ on $[T, \tfl]$.
\end{enumerate}
Assuming the solution is $C^{k-1}$ on $[T, \tfl]$, the coefficients of the governing system for the interior region, \eqref{euler:bp:lag}, are also $C^{k-1}$. Differentiating the system yields a system of linear transport equations for the $k$-th order derivatives. We can then apply the transport maximum principle (Lemma~\ref{lemma:max:pr:T}) to control these higher derivatives, closing the induction.

The constant $\mathsf{C}(k,T)$ necessarily degenerates (blows up) as $T\to\ts$, reflecting the formation of the singularity at the preshock time.
\end{proof}

\subsection{Taylor Expansions and the Preshock Profile}

We now synthesize the results from the exterior region $\mathcal{D}$ and the interior region $\mathcal{L}^E$ to determine the precise spatial regularity of the solution at the preshock time $t=\ts$. Let $r_{*}=\l(\ts)$ be the preshock location.

We utilize the temporal expansions of the traces along the shock curve $\l(t)$ near $t=\ts$. We use the convenient time variable $\tau=(t-\ts)^{\frac{1}{2}}$. The exterior traces were summarized in Corollary~\ref{cor:taylor:shock:plus}.

\paragraph{Interior Traces.}
The expansions of the interior traces are determined by the exterior ones via the Rankine-Hugoniot conditions.

\begin{lemma}\label{lemma:taylor:s}
Assuming the exterior expansions \eqref{taylor:shock:plus}, the interior traces admit the representations:
\begin{subequations}
\label{taylor:shock:minus}
\begin{align}
\wsm(\ts+\tau^2) &= \mathsf{w}_0 + \mathsf{w}_{1}^{-}\tau^2 + \mathsf{w}_{2}^{-}\tau^3 + O(\tau^4),\\
S\big|_{\l}^{-}(\ts+\tau^2) &= \mathsf{s}_0 + \mathsf{s}_{1}^{-}\tau^2 + \mathsf{s}_{2}^{-}\tau^3 + O(\tau^4),\\
\zsm(\ts+\tau^2) &= \mathsf{z}_0 + \mathsf{z}_{1}^{-}\tau + \mathsf{z}_{2}^{-}\tau^2 + O(\tau^3).
\end{align}
\end{subequations}
The Rankine-Hugoniot relations \eqref{RH:taylor} imply $\mathsf{w}_{1}^{-}=\mathsf{w}_{1}^{+}$ and $\mathsf{s}_{1}^{-}=\mathsf{s}_{1}^{+}$. However, $\mathsf{z}_{1}^{-} \neq \mathsf{z}_{1}^{+}$. The higher-order jump coefficients $\jump{\mathsf{z}_2} = \mathsf{z}_{2}^{+} - \mathsf{z}_{2}^{-}$, $\jump{\mathsf{w}_2}$, $\jump{\mathsf{s}_2}$ depend explicitly on the lower-order exterior coefficients and the shock acceleration $\ddot{\l}(\ts)$.
\end{lemma}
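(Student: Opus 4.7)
The plan is to read off the interior trace expansions from the exterior ones in Corollary~\ref{cor:taylor:shock:plus} by substituting into the Rankine--Hugoniot Taylor expansions of Lemma~\ref{lemma:rh:inversion} and matching like powers of $\tau = (t-\ts)^{1/2}$.

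First I would expand the small parameter $\chi(t) := \dot{\l}(t) - \lambda_1|_{\l}^{+}(t)$ in powers of $\tau$. Using the admissible-pair expansions \eqref{Taylor-lambda1}--\eqref{Taylor-dotc} together with the preshock identity $\dot{\l}(\ts) = \lambda_1|_{\l}^{+}(\ts)$, one obtains
\[
\chi(\ts+\tau^2) = \chi_1 \tau + \chi_2 \tau^2 + \chi_3 \tau^3 + O(\tau^4),
\]
with $\chi_1 = -\mathsf{l}_1 > 0$ (consistent with \eqref{speedgap}), $\chi_2$ a linear expression in $\ddot{\l}(\ts)$ and $\mathsf{l}_2$, and $\chi_3$ built from the next fractional coefficient in \eqref{Taylor-lambda1}. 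In particular, $\chi_2$ inherits an explicit $\ddot{\l}(\ts)$-dependence, which is the mechanism by which the shock acceleration will enter the higher-order jump coefficients.

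Second I would substitute this expansion into \eqref{RH:taylor}. Writing $Q_2$ for the $\chi^2$-coefficient in the expansion of $\jump{z}$ frozen at $t = \ts$ (an explicit function of $(\mathsf{z}_0,\mathsf{w}_0,\mathsf{s}_0,\chi_1)$ via \eqref{RH:taylor}), one gets
\[
\jump{z}(\ts+\tau^2) = -\tfrac{4\chi_1}{1+\alpha}\,\tau + \Big[-\tfrac{4\chi_2}{1+\alpha} + Q_2\,\chi_1^2\Big]\tau^2 + O(\tau^3),
\]
while $\jump{w} = O(\chi^3) = O(\tau^3)$ and $\jump{S} = O(\chi^3) = O(\tau^3)$. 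Matching these against the exterior expansions in Corollary~\ref{cor:taylor:shock:plus} forces
\[
\mathsf{w}_1^- = \mathsf{w}_1^+,\qquad \mathsf{s}_1^- = \mathsf{s}_1^+,\qquad \mathsf{z}_1^- = \mathsf{z}_1^+ + \tfrac{4\chi_1}{1+\alpha},
\]
which gives the first-order identities asserted in the lemma and shows $\mathsf{z}_1^-\neq\mathsf{z}_1^+$ since $\chi_1>0$. Continuing one order further, $\jump{\mathsf{z}_2} = \mathsf{z}_2^+-\mathsf{z}_2^- = -\tfrac{4\chi_2}{1+\alpha} + Q_2\,\chi_1^2$ exhibits the explicit $\ddot{\l}(\ts)$-dependence through $\chi_2$, and the $\tau^3$ contributions to $\jump{w},\jump{S}$ come from the leading cubic terms in \eqref{RH:taylor} and are explicit rational functions of $\chi_1$ and $(\mathsf{z}_0,\mathsf{w}_0,\mathsf{s}_0)$.

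The existence of these fractional-power expansions with remainders of the stated order is guaranteed by the higher-order regularity away from $t=\ts$ (Lemmas~\ref{lemma:higher:T} and~\ref{lemma:higher:L}) together with the uniform bounds of Lemma~\ref{lemma:bootstrap:L} which propagate the fractional structure from the boundary up to the preshock. The only delicate bookkeeping is to justify term-by-term Taylor expansion of the explicit RH inversion formulas \eqref{RH:re}--\eqref{ugly:rh} in half-integer powers of $t-\ts$: since the exterior trace $(\usp,\rsp,\psp)$ remains in a compact set on which $F_z,F_w$ are smooth and $\chi(t)$ vanishes at the prescribed rate $\tau = (t-\ts)^{1/2}$, Lemma~\ref{lemma:RH:taylor:pt} and its higher-derivative analogues supply the remainder control needed to legitimize the formal substitution and close the argument.
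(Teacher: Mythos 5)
Your proposal is correct and follows essentially the same route as the paper's (quite terse) proof: substitute the exterior trace expansions from Corollary~\ref{cor:taylor:shock:plus} into the Rankine--Hugoniot Taylor formulas \eqref{RH:taylor}, expand the speed gap $\chi(t)=\dot{\l}(t)-\lambda_1|_\l^+(t)$ in powers of $\tau=(t-\ts)^{1/2}$ using the admissible-pair asymptotics \eqref{Taylor-lambda1}--\eqref{Taylor-dotc}, and match like powers of $\tau$. Your expansion is in fact slightly more detailed than the paper's, correctly identifying $\chi_1 = -\mathsf{l}_1$ and $\chi_2 = \ddot{\l}(\ts) - \mathsf{l}_2$ as the carriers of the $\ddot{\l}(\ts)$-dependence in $\jump{\mathsf{z}_2}$ (note only a sign-convention mismatch in $\jump{\cdot}=f^+-f^-$ vs.\ $f^--f^+$ which does not affect the conclusions), and your invocation of Lemmas~\ref{lemma:higher:T}, \ref{lemma:higher:L} and \ref{lemma:RH:taylor:pt} to justify the fractional Taylor expansions is the right way to make the formal matching rigorous.
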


\begin{proof}[Proof of \cref{lemma:taylor:s}]
These identities follow directly by substituting the expansions \eqref{taylor:shock:plus} into the Rankine–Hugoniot–Taylor formulas \eqref{RH:taylor} and matching powers of $\tau$. Let $\chi(t) = \dot{\l}(t) - \lambda_1^+(t)$. Since $\chi(t) = O(\tau)$, the RH conditions imply $\jump{z}(t) = O(\tau)$, while $\jump{w}(t)=O(\tau^3)$ and $\jump{S}(t)=O(\tau^3)$. This confirms the relations between the coefficients stated above.
\end{proof}

\paragraph{Spatial Preshock Profile.}
We now determine the spatial regularity at the preshock point $r_*=\l(\ts)$ by analyzing the characteristic geometry.

\begin{lemma}\label{lemma:taylor:pre}
Assuming the trace expansions \eqref{taylor:shock:plus}–\eqref{taylor:shock:minus}, the preshock profile of the dominant variable $z$ at $r_{*}=\l(\ts)$ is given by
\begin{align*}
z(r,\ts)=\mathsf{z}_0 + \mathsf{a}\,(r-r_{*})^{\frac{1}{3}}
+\begin{cases}
\mathsf{b}_{1}\,(r-r_{*})^{\frac{2}{3}}+O(|r-r_{*}|), & r>r_{*},\\
\mathsf{b}_{2}\,(r-r_{*})^{\frac{2}{3}}+O(|r-r_{*}|), & r<r_{*}.
\end{cases}
\end{align*}
The leading coefficient $\mathsf{a}$ is determined by the characteristic geometry and the leading temporal coefficients $\mathsf{z}_1^\pm$. The sub-leading coefficients $\mathsf{b}_1$ and $\mathsf{b}_2$ generally differ, with the asymmetry $\Delta \mathsf{b} := \mathsf{b}_{1} - \mathsf{b}_{2}$ proportional to the jump coefficient $\jump{\mathsf{z}_2}$.
\end{lemma}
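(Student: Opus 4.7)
The plan is to invert the fast--acoustic characteristic map at the preshock time, composing it with the temporal Taylor expansions already established in \cref{lemma:taylor:T} for the exterior region $\mathcal{D}$ and with the analogous expansions in the interior region $\mathcal{L}^E$, which follow from \cref{lemma:bootstrap:L} by an identical argument. Parameterize the spatial slice $\{t=\ts\}$ near $r_*$ by $t\mapsto r(t):=\psi(t,\ts)$, using the fast--acoustic flow constructed in $\mathcal{D}$ for $r>r_*$ and its interior counterpart for $r<r_*$. The preshock condition $\dot\l(\ts)=\lambda_1\big|_{\l}^+(\ts)$ forces $J(\ts,\ts)=0$ on both sides, so the corner-edge expansion of $J$ in \cref{lemma:taylor:T} reads
\[
J(t,\ts) = \mathsf{j}_1^-\,\tau + \mathsf{j}_2^-\,\tau^2 + O(\tau^3),\qquad \tau:=(t-\ts)^{\frac{1}{2}}.
\]

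First, I would use the chain rule $\tfrac{dr}{d\tau}=2\tau\,J(\ts+\tau^2,\ts)$ to integrate and obtain the geometric asymptotic $r(\tau)-r_* = \tfrac{2\mathsf{j}_1^-}{3}\,\tau^3 + \tfrac{\mathsf{j}_2^-}{2}\,\tau^4 + O(\tau^5)$. Since $\mathsf{j}_1^-\neq 0$, this cubic degeneracy is invertible (via the implicit function theorem applied to the reparametrization $y:=\tau^3$), yielding $\tau=\mathsf{k}_1\,(r-r_*)^{\frac{1}{3}}+\mathsf{k}_2\,(r-r_*)^{\frac{2}{3}}+O(r-r_*)$ on each side of $r_*$. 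Composing this with the corner-edge expansion of $Z$---whose leading $\tau$-behavior is a direct consequence of the shock-edge expansion $Z(t,t)-\mathsf{z}_0=\mathsf{z}_1^+\tau+\ldots$ from \cref{lemma:taylor:T} together with the uniform boundedness of $\partial_s Z$ from \eqref{eq:Z:sl}---and collecting like powers of $(r-r_*)$ produces the claimed form of $z(r,\ts)-\mathsf{z}_0$, with $\mathsf{a}$ and $\mathsf{b}_{1,2}$ identified as explicit polynomial combinations of $(\mathsf{z}_1^-,\mathsf{z}_2^-,\mathsf{k}_1,\mathsf{k}_2)$ on the two sides.

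The main obstacle---and the heart of the proof---will be verifying that the leading coefficient $\mathsf{a}$ genuinely agrees from the two sides, while the sub-leading coefficients generically do not. For $\mathsf{a}$, I would appeal to the Rankine--Hugoniot--Taylor relation \eqref{RH:taylor:l1}, which gives $\sm{\lambda_1}-\ds=(\ds-\sp{\lambda_1})(1+O(\ds-\sp{\lambda_1}))$, so the interior and exterior Lax gaps share a common leading $\tau$-scaling; combined with \cref{lemma:taylor:s}---which ensures $\mathsf{w}_1^-=\mathsf{w}_1^+$ and $\mathsf{s}_1^-=\mathsf{s}_1^+$ across the shock---and the compatibility identity \eqref{bc:rZ:T}, this forces the leading geometric coefficient $\mathsf{j}_1^-$ and the leading trace coefficient $\mathsf{z}_1^-$ to coincide on the two sides, so $\mathsf{a}=\mathsf{z}_1^-\mathsf{k}_1$ is common. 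The asymmetry $\Delta\mathsf{b}:=\mathsf{b}_1-\mathsf{b}_2$ then reduces, via \cref{lemma:taylor:s}, to the sub-leading quantities $\jump{\mathsf{z}_2}$ and the shock acceleration $\ddot\l(\ts)$, both of which are generically nonzero; an explicit computation exhibits $\Delta\mathsf{b}$ as a bilinear function of these two quantities, concluding the proof.
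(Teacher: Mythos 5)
Your overall approach — parameterizing the time slice $\{t=\ts\}$ by the fast characteristic $r(\tau)=\psi(\ts+\tau^2,\ts)$, integrating $\tfrac{dr}{d\tau}=2\tau J$ to get the cubic separation $r-r_*\sim\tau^3$, inverting, and composing with the temporal expansion of $Z$ at the corner edge — is exactly the paper's strategy (the paper's proof is terse: "spatial separation scales like $\tau^3$, value scales like $\tau$, invert"). You also correctly identify that the corner-edge expansion of $Z$ has leading order $\tau$ (not $\tau^2$; this fixes what appears to be a typographical slip in \cref{lemma:taylor:T}), justified by the boundedness of $\partial_s Z$ from \eqref{eq:Z:sl}.

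However, there is a genuine error in your explanation of why $\mathsf{a}$ is common to the two sides. You write that \eqref{RH:taylor:l1} and \cref{lemma:taylor:s} "force the leading geometric coefficient $\mathsf{j}_1^-$ and the leading trace coefficient $\mathsf{z}_1^-$ to coincide on the two sides." They do \emph{not} coincide — they flip sign. On the exterior side $J(t,t)=\dot\l-\sp{\lambda_1}>0$ while on the interior $J(t,t)=\dot\l-\sm{\lambda_1}<0$, and \eqref{RH:taylor:l1} gives $|\mathsf{j}_1^{\text{int}}|=|\mathsf{j}_1^{\text{ext}}|$, so $\mathsf{j}_1^{\text{int}}=-\mathsf{j}_1^{\text{ext}}$; similarly, the leading $\tau$-coefficient of $\jump{z}$ is nonzero (from \eqref{RH:taylor}), which forces $\mathsf{z}_1^{\text{int}}=-\mathsf{z}_1^{\text{ext}}$ at leading order. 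The reason $\mathsf{a}$ nevertheless agrees is that $\mathsf{k}_1=(3/(2\mathsf{j}_1^-))^{\frac{1}{3}}$ is a \emph{cube root}, so the sign flip in $\mathsf{j}_1^-$ produces a compensating sign flip in $\mathsf{k}_1$, and the product $\mathsf{z}_1^-\mathsf{k}_1$ is invariant. Your "coinciding" premise is false even though the conclusion is correct.

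A second, smaller concern: you claim $\Delta\mathsf{b}$ is a "bilinear function of $\jump{\mathsf{z}_2}$ and $\ddot\l(\ts)$," whereas the lemma asserts $\Delta\mathsf{b}\propto\jump{\mathsf{z}_2}$. This distinction is not cosmetic: \cref{lemma:modulate} kills the asymmetry solely by tuning $\ddot\l(\ts)$ until $\jump{\mathsf{z}_2}=0$, so the vanishing of $\Delta\mathsf{b}$ must be slaved to the vanishing of $\jump{\mathsf{z}_2}$ alone. Your "bilinear" claim (if taken literally, $\Delta\mathsf{b}\sim\jump{\mathsf{z}_2}\cdot\ddot\l(\ts)$) would give this for free, but you should verify it, since the sign-flip structure above makes the $(\mathsf{k}_1)^2$ terms cancel symmetrically while the $\mathsf{z}_1\mathsf{k}_2$ cross-terms require the relation $\mathsf{k}_2^{\text{int}}=-\mathsf{k}_2^{\text{ext}}$ (equivalently $\mathsf{j}_2^{\text{int}}=\mathsf{j}_2^{\text{ext}}$), which is not automatic from \eqref{RH:taylor:l1} at subleading order and needs to be traced through the RH expansion.
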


\begin{proof}[Proof of \cref{lemma:taylor:pre}]
The profile is obtained by analyzing the spatial separation of characteristics near the preshock point. Due to the cancellation of the leading order speeds, the spatial separation $|r-r_*|$ scales like $(t-\ts)^{\frac{3}{2}} = \tau^3$. The value $|z(r,\ts)-z_*|$ scales like $(t-\ts)^{\frac{1}{2}} = \tau$. Inverting the spatial relationship yields $\tau \sim |r-r_*|^{\frac{1}{3}}$, leading to the $C^{\frac{1}{3}}$ regularity. The difference in the $\mathsf{b}_i$ coefficients arises from the difference in the expansions of $\zsp$ and $\zsm$ at order $\tau^2$ (Lemma~\ref{lemma:taylor:s}).
\end{proof}

\begin{remark}
Generically, $\jump{\mathsf{z}_2} \neq 0$, implying an asymmetry in the $C^{{\frac{2}{3}} }$ term. While this $C^{{\frac{1}{3}} }$ profile is sufficient for the subsequent analysis in Section~\ref{sec:pre2smooth}, we can fine-tune the construction to achieve symmetry at this order.
\end{remark}

\begin{lemma}[Modulation for Symmetry]\label{lemma:modulate}
There exists a choice of the admissible pair $(\lambda_1\big|_{\l}^{+},\l)$ (still satisfying Lemma~\ref{lemma:l1:l}) such that the resulting asymmetry coefficient $\Delta \mathsf{b}=0$ (equivalently, $\jump{\mathsf{z}_2}=0$).
\end{lemma}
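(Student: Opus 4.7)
The plan is to exploit one remaining degree of freedom left unfixed by Lemma~\ref{lemma:l1:l}, and tune it so that the obstruction $\jump{\mathsf{z}_2}$ vanishes. Inspecting the construction of $\lambda_1|_{\l}^{+}$ in \eqref{def:l1:c} and the resulting shock speed $\dot{\l}$, one sees that the coefficient $\mathsf{l}_2$ in the Taylor expansion \eqref{Taylor-lambda1} (equivalently, the shock acceleration $\ddot{\l}(\ts) = \mathsf{l}_2 + \text{finite contributions from }\chi$) is never used to enforce any of the admissibility conditions 1--5 of Definition~\ref{def:admissible:pair}; those conditions only constrain $\Lambda_*, \mathsf{l}_1, \mathsf{c}_1$ (the last two being tied by $\mathsf{c}_1=-\mathsf{l}_1$), together with the size of $|\mathsf{l}_2|$. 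So $\mathsf{l}_2$ is free to vary within an open interval, and this is the parameter to be modulated.

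The first step is to track the dependence $\mathsf{l}_2 \mapsto \jump{\mathsf{z}_2}$. Following the derivation of Lemma~\ref{lemma:taylor:s}: the exterior coefficient $\mathsf{z}_2^+$ is computed in the Goursat region $\mathcal{D}$ from \eqref{bc:Z:T} at second order in $\tau=(t-\ts)^{1/2}$, so it depends affinely on $\mathsf{l}_2$ via the linear term of $\lambda_1|_\l^+$; the interior coefficient $\mathsf{z}_2^-$ is obtained by inserting into the Rankine--Hugoniot Taylor expansion \eqref{RH:taylor} together with the shock expansion \eqref{Taylor-dotc}, in which $\ddot{\l}(\ts)$ enters linearly at the matching order. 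All other Taylor coefficients at lower order (namely $\mathsf{w}_0,\mathsf{s}_0,\mathsf{z}_0,\mathsf{z}_1^{\pm},\mathsf{w}_1^{\pm},\mathsf{s}_1^{\pm},\mathsf{j}_1^{\pm}$) are determined purely by $\Lambda_*$ and $\mathsf{l}_1$ and are therefore inert under variations of $\mathsf{l}_2$. Thus one obtains a representation
\[
\jump{\mathsf{z}_2}(\mathsf{l}_2) = A\,\mathsf{l}_2 + B,
\]
with $A,B$ expressible in closed form in terms of the Guderley traces at $t=\ts$ and the fixed constants $\Lambda_*,\mathsf{l}_1$.

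The second step is to verify the non-degeneracy $A\neq 0$. This is the main obstacle. One has to extract $A$ by expanding the quadratic correction in \eqref{RH:taylor} and comparing with the $O(\tau^2)$ coefficient arising from the linear $\mathsf{l}_2(t-\ts)$ contribution to $Z(t,t)$ in \eqref{bc:Z:T} and to $\ddot\l(\ts)$ in \eqref{Taylor-dotc}. A direct computation (using $\lwsp-\ds = (1+\alpha)\csp + O(\tau)$, which is bounded away from $0$ at $\ts$) shows that $A$ equals, up to a universal nonzero prefactor involving $\alpha$ and $\csp(\ts)$, the coefficient $\tfrac{2}{1+\alpha}$ from \eqref{bc:Z:T}; in particular $A$ is a nonzero rational function of $\gamma$ and of the strictly positive trace $\csp(\ts)$, hence $A\neq 0$.

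The third step is purely algebraic: set $\mathsf{l}_2^\star := -B/A$. Because the admissibility conditions in Definition~\ref{def:admissible:pair} only involve $\mathsf{l}_2$ through the upper bound $|\mathsf{l}_2|\lesssim \dsh^{\varepsilon-1}$ used in the proof of Lemma~\ref{lemma:l1:l}, and since $\mathsf{l}_2^\star$ is a fixed number independent of $\dsh$, for $\dsh$ sufficiently small the choice $\mathsf{l}_2 = \mathsf{l}_2^\star$ still lies in the admissible range. The resulting pair $(\lambda_1|_\l^+, \l)$ satisfies all of items 1--5 of Definition~\ref{def:admissible:pair} and, by construction, yields $\jump{\mathsf{z}_2}=0$, hence $\Delta \mathsf{b} = 0$ by Lemma~\ref{lemma:taylor:pre}. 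The main point where care is needed is the non-degeneracy computation above; once $A\neq 0$ is established, the rest is the standard open-condition argument used already in Lemma~\ref{lemma:l1:l}.
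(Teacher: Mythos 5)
The overall blueprint (find an unused free parameter, show $\jump{\mathsf{z}_2}$ is affine in it with a nonzero slope, solve the linear equation, check the solution stays admissible) is the same as in the paper. But you have chosen the wrong parameter, and the non-degeneracy $A\neq 0$ — which you correctly flag as ``where care is needed'' — fails for your choice.

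The issue is that $\jump{\mathsf{z}_2}=\mathsf{z}_2^+-\mathsf{z}_2^-$ is determined by the Rankine--Hugoniot conditions purely through the speed gap $\chi(t)=\dot{\l}(t)-\lambda_1|_\l^+(t)$. In the construction of Lemma~\ref{lemma:l1:l}, $\dot{\l}$ is defined as $\lambda_1|_\l^++\chi$ with $\chi(t)=-\mathsf{l}_1\tau+O(\tau^3)$ containing no $\tau^2$ term and no $\mathsf{l}_2$. Consequently $\ddot{\l}(\ts)=\mathsf{l}_2$ \emph{exactly}: varying $\mathsf{l}_2$ shifts $\lambda_1|_\l^+$ and $\dot{\l}$ by the same amount and leaves $\chi$ untouched. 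Expanding \eqref{RH:taylor} in $\tau$ then gives $\jump{\mathsf{z}_2}=-\tfrac{4}{1+\alpha}\chi_2+\tfrac{4\mathsf{l}_1^2}{(1+\alpha)\,(\sp{\lambda_3}-\ds)(\ts)}$ with $\chi_2=\ddot{\l}(\ts)-\mathsf{l}_2=0$ fixed, so $\jump{\mathsf{z}_2}$ is $\mathsf{l}_2$-independent at leading order. Your slope computation $A\sim\tfrac{2}{1+\alpha}$ is tracking the $\mathsf{l}_2$-dependence of the one-sided trace $\mathsf{z}_2^+$ via \eqref{bc:Z:T}; but $\mathsf{z}_2^-=\mathsf{z}_2^+-\jump{\mathsf{z}_2}$ carries the identical $\tfrac{2}{1+\alpha}\mathsf{l}_2$ contribution, and the two cancel in the jump. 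So $A=0$, not $\neq 0$.

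The fix, and the route the paper takes, is to modulate $\ddot{\l}(\ts)$ \emph{independently} of the coefficient $\mathsf{l}_2$ in $\lambda_1|_\l^+$ — equivalently, to introduce a free $\tau^2$ coefficient $\mathsf{c}_2$ in $\chi$ (so $\chi(t)=-\mathsf{l}_1\tau+\mathsf{c}_2\tau^2+\cdots$ and $\ddot{\l}(\ts)=\mathsf{l}_2+\mathsf{c}_2$). Then $\jump{\mathsf{z}_2}=C_A\mathsf{c}_2+C_B\mathsf{l}_1^2$ with $C_A=-\tfrac{4}{1+\alpha}\neq 0$, and one solves for $\mathsf{c}_2$, checking that the resulting $|\ddot{\l}(\ts)|\lesssim\dsh^{\varepsilon-1}$ bound of Definition~\ref{def:admissible:pair}(4) is preserved (the required correction is $O(\dsh^{2\varepsilon-1})$, which is subdominant). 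Also a minor point in your Step 3: $B\sim C_B\mathsf{l}_1^2\sim\dsh^{2\varepsilon-1}$ is not a ``fixed number independent of $\dsh$''; the admissibility check there should be a scaling argument, not a fixed-constant one, though it does go through.
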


\begin{proof}[Proof of \cref{lemma:modulate}]
We analyze the dependence of the jump coefficient $\jump{\mathsf{z}_2}$ on the shock acceleration $\ddot{\l}(\ts)$.

Let $\chi(t) = \dot{\l}(t) - \lambda_1^+(\l(t), t)$. From the definition of the admissible pair (Definition~\ref{def:admissible:pair}), we expand $\chi(t)$ near $t=\ts$ using $\tau=(t-\ts)^{1/2}$. We use the expansions (noting $\dot{\l}(t)$ is smooth in $t$):
$\lambda_1^+(t) = \lambda_* + \mathsf{l}_1 \tau + \mathsf{l}_2 \tau^2 + O(\tau^3)$,
$\dot{\l}(t) = \lambda_* + \ddot{\l}(\ts) \tau^2 + O(\tau^4)$.
Thus, 
\[
\chi(t) = -\mathsf{l}_1 \tau + (\ddot{\l}(\ts) - \mathsf{l}_2) \tau^2 + O(\tau^3).
\]
(We require $\mathsf{l}_1 < 0$ for $\chi(t)>0$).

The Rankine-Hugoniot expansion (Lemma~\ref{lemma:rh:inversion}, Eq. \eqref{RH:taylor}) gives
\[
\jump{z}(t) = C_A \chi(t) + C_B \chi(t)^2 + O(\chi(t)^3),
\]
where $C_A = -4/(1+\alpha)$ and $C_B$ depends on the state at the preshock. Substituting the expansion for $\chi(t)$:
\begin{align*}
\jump{z}(t) &= C_A \left(-\mathsf{l}_1 \tau + (\ddot{\l}(\ts) - \mathsf{l}_2) \tau^2\right) + C_B (-\mathsf{l}_1 \tau)^2 + O(\tau^3) \\
&= (-C_A \mathsf{l}_1) \tau + \left(C_A(\ddot{\l}(\ts) - \mathsf{l}_2) + C_B \mathsf{l}_1^2\right) \tau^2 + O(\tau^3).
\end{align*}
The jump coefficient $\jump{\mathsf{z}_2}$ is precisely the coefficient of the $\tau^2$ term
\[
\jump{\mathsf{z}_2} = C_A(\ddot{\l}(\ts) - \mathsf{l}_2) + C_B \mathsf{l}_1^2.
\]
Since $C_A \neq 0$, $\jump{\mathsf{z}_2}$ depends linearly and non-trivially on $\ddot{\l}(\ts)$. The acceleration $\ddot{\l}(\ts)$ is a parameter allowed to vary within the bounds specified in Definition~\ref{def:admissible:pair} (Item 4). By the Intermediate Value Theorem, there exists a choice of $\ddot{\l}(\ts)$ (and corresponding admissible pair) for which $\jump{\mathsf{z}_2}=0$.
\end{proof}

By selecting the modulated admissible pair from Lemma~\ref{lemma:modulate}, we arrive at the final description of the preshock profile.

\begin{lemma}[Preshock Regularity]\label{lemma:expansion:0}
By choosing the admissible pair such that $\jump{\mathsf{z}_2}=0$, the preshock data $(w,z,b)$ at $t=\ts$ satisfy the following expansions near $r_{*}=\l(\ts)$:
\begin{subequations}
\label{taylor:rs}
\begin{align}
z(r,\ts) &= z(r_{*},\ts) + \mathsf{a}\,(r-r_{*})^{\frac{1}{3}} + \mathsf{b}\,(r-r_{*})^{\frac{2}{3}} + O(|r-r_{*}|).
\end{align}
The subdominant variables exhibit $C^{1, {\frac{1}{3}} }$ regularity, with potential asymmetry only at the $O(|r-r_*|^{{\frac{4}{3}} })$ level:
\begin{align}
w(r,\ts) &= w(r_{*},\ts) + \mathsf{c}_{w}\,(r-r_{*}) +
\begin{cases}
\mathsf{d}_{w}^+\,(r-r_{*})^{\frac{4}{3}}+O(|r-r_{*}|^{\frac{3}{2}}), & r>r_{*},\\
\mathsf{d}_{w}^-\,(r-r_{*})^{\frac{4}{3}}+O(|r-r_{*}|^{\frac{3}{2}}), & r<r_{*},
\end{cases}\\
b(r,\ts) &= b(r_{*},\ts) + \mathsf{c}_{b}\,(r-r_{*}) +
\begin{cases}
\mathsf{d}_{b}^+\,(r-r_{*})^{\frac{4}{3}}+O(|r-r_{*}|^{\frac{3}{2}}), & r>r_{*},\\
\mathsf{d}_{b}^-\,(r-r_{*})^{\frac{4}{3}}+O(|r-r_{*}|^{\frac{3}{2}}), & r<r_{*}.
\end{cases}
\end{align}
\end{subequations}
\end{lemma}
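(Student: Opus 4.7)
The plan is to combine the three preceding lemmas and then extend the spatial analysis of Lemma~\ref{lemma:taylor:pre} to the subdominant variables $w$ and $b$. First, invoke Lemma~\ref{lemma:modulate} to fix an admissible pair with $\jump{\mathsf{z}_2}=0$; the conclusions of Lemma~\ref{lemma:taylor:pre} then collapse to $\mathsf{b}_1=\mathsf{b}_2=:\mathsf{b}$ and directly produce the asserted symmetric $C^{\frac{1}{3}}$ cusp expansion for $z$ in \eqref{taylor:rs}.

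For $w$ and $b$, the key geometric input is that their transport characteristics cross the shock transversally at the preshock vertex. The strict Lax conditions \eqref{lax} give $(\lambda_3^{\pm}-\dot{\l})(\ts)\neq 0$ and $(\lambda_2^{\pm}-\dot{\l})(\ts)\neq 0$, while continuity of $w$ and $b$ at $(r_*,\ts)$ forces $\lambda_3^+(r_*,\ts)=\lambda_3^-(r_*,\ts)$ and similarly for $\lambda_2$. Thus the map $t\mapsto r^{\pm}(t)$ sending the shock label to the intersection of the backward $\lambda_3^{\pm}$-characteristic from $(\l(t),t)$ with the slice $\{s=\ts\}$ is a one-sided $C^1$ reparametrization of $r_*$ with common leading slope $\mu=\dot{\l}(\ts)-\lambda_3(r_*,\ts)$. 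Inverting this map and composing with the temporal expansions of $w^{\pm}$ and $S^{\pm}$ from Lemma~\ref{lemma:taylor:s}---using the Rankine--Hugoniot identities $\mathsf{w}_1^+=\mathsf{w}_1^-$ and $\mathsf{s}_1^+=\mathsf{s}_1^-$---yields the Lipschitz leading behavior $\mathsf{c}_w(r-r_*)$, resp.\ $\mathsf{c}_b(r-r_*)$, continuous across $r_*$. The $(r-r_*)^{\frac{4}{3}}$ correction arises from the next term in the reparametrization: since $\lambda_3=\tfrac{1-\alpha}{2}z+\tfrac{1+\alpha}{2}w$ and $\lambda_2=\tfrac{1}{2}(z+w)$ both inherit the $C^{\frac{1}{3}}$ cusp of $z$ at $(r_*,\ts)$, a single time-integration of the $|r-r_*|^{\frac{1}{3}}$ Hölder perturbation of $\lambda_i$ along the characteristic ODE $\dot r=\lambda_i(r,s)$ produces the expansion $r^{\pm}(t)-r_*=\mu(t-\ts)+\nu^{\pm}(t-\ts)^{\frac{4}{3}}+O((t-\ts)^{\frac{3}{2}})$, where the exponent $1+\tfrac{1}{3}=\tfrac{4}{3}$ is manifest, and the one-sided coefficients $\nu^{\pm}$ do not agree because $(r-r_*)^{\frac{1}{3}}$ takes opposite signs on the two sides of $r_*$. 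Composing with the temporal expansions once more delivers the asserted asymmetric $(r-r_*)^{\frac{4}{3}}$ coefficients $\mathsf{d}_w^{\pm}$ and $\mathsf{d}_b^{\pm}$; the single modulation parameter $\ddot{\l}(\ts)$ of Lemma~\ref{lemma:modulate} has already been spent to enforce $\jump{\mathsf{z}_2}=0$, so this sub-leading asymmetry is not generically removable.

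The main obstacle is the rigorous derivation of the $(t-\ts)^{\frac{4}{3}}$ term in the reparametrization $t\mapsto r^{\pm}(t)$. The difficulty is that the $C^{\frac{1}{3}}$ spatial Hölder regularity of $z$ is a terminal statement at $s=\ts$, whereas the $\lambda_{2,3}$-characteristic ODE samples $z$ at intermediate times $s\in(\ts,t]$ where $\rZ$ blows up at the rate $(s-\ts)^{-\frac{1}{2}}$ dictated by Lemma~\ref{lemma:bootstrap:L}. The cleanest route is to pull the characteristic ODE back into the fast-acoustic Lagrangian frame $(t,s)$ on $\mathcal{T}^{E}$ of Section~\ref{sec:weak2pre}, where $Z=z\circ\psi$ admits the smooth expansion of Lemma~\ref{lemma:taylor:T}, the Jacobian identity $\psi(t,\ts)-r_*\sim \tau^3$ with $\tau=(t-\ts)^{\frac{1}{2}}$ is already recorded, and the $\lambda_2$- and $\lambda_3$-flows appear as $O(\tau)$-perturbations of the $\lambda_1$-foliation. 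A direct Taylor expansion in $\tau$ of the composed flow then furnishes the claimed $(t-\ts)^{\frac{4}{3}}$ asymptotics, and pushing back to physical coordinates yields the expansions in \eqref{taylor:rs} together with the asserted $C^{1,\frac{1}{3}}$ regularity of $w$ and $b$.
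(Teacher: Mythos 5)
Your high-level strategy matches the paper: invoke Lemma~\ref{lemma:modulate} to kill $\jump{\mathsf{z}_2}$, read off the symmetric $z$-cusp from Lemma~\ref{lemma:taylor:pre}, and then determine $w$ and $b$ by composing their slow-/entropy-characteristic trace expansions (Lemmas~\ref{lemma:taylor:T}, \ref{lemma:taylor:s}) with the shock-to-$\{s=\ts\}$ reparametrization $t\mapsto r^{\pm}(t)$. The observation that, under the leading \emph{linear} reparametrization, the $\tau^3=(t-\ts)^{3/2}$ trace corrections land at $|r-r_*|^{3/2}$ (i.e.\ in the error term) so that the $(r-r_*)^{4/3}$ coefficient must come from the \emph{sub-leading, non-smooth} part of $t\mapsto r^{\pm}(t)$ is correct, and I regard it as the essential point.

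However, the way you obtain the $(t-\ts)^{4/3}$ reparametrization term is a genuine gap, and a larger one than you acknowledge. You treat the $C^{\frac{1}{3}}$ cusp of $z$ as if it were a frozen-in-time spatial perturbation $\mathsf{a}(r-r_*)^{1/3}$ of the speed $\lambda_{2,3}$ along the whole characteristic segment $s\in[\ts,t]$, and then integrate $\int_\ts^t |r(s)-r_*|^{1/3}\,ds$. But the cusp in $z$ exists only on the slice $\{s=\ts\}$; for $s>\ts$ the field is smooth away from the shock, and along the characteristic the actual deviation $z(r(s),s)-z(r_*,\ts)$ is controlled by the fast-acoustic label $t'(s)$ of the point $(r(s),s)$ and scales like $(t'(s)-\ts)^{1/2}$, which ranges from $(t-\ts)^{1/3}$ at $s=\ts$ up to $(t-\ts)^{1/2}$ at $s=t$. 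The "frozen cusp" ansatz pins the integrand at its largest value throughout; a priori the integral could come in anywhere between $(t-\ts)^{4/3}$ and $(t-\ts)^{3/2}$, and nailing it to exactly $(t-\ts)^{4/3}$ (with a nonzero, one-sided coefficient) is precisely the content that needs proof, not a corollary of the final $s=\ts$ expansion. Your last paragraph correctly identifies that the fix is to pull back to the fast-acoustic Lagrangian frame and use the recorded expansions of $J(t,\ts)\sim(t-\ts)^{1/2}$ and $\rW(t,\ts),\rS(t,\ts)$ in half-integer powers; indeed, in that chart $\partial_t W(t,\ts)=J(t,\ts)\bigl(\rW+\tfrac{\alpha}{2\gamma}\Sigma\,\rS\bigr)(t,\ts)$ has a clean $(t-\ts)^{1/2}$-power series, $r-r_*\sim(t-\ts)^{3/2}$, and the $(t-\ts)^2$ term maps directly to $(r-r_*)^{4/3}$ without ever needing to resolve the slow-characteristic ODE through the cusp. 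But you present this as a remark rather than carrying it out, and the proposal as written leans on the unjustified integral estimate.

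A second, smaller discrepancy: you attribute the one-sidedness of $\mathsf{d}_{w}^{\pm},\mathsf{d}_{b}^{\pm}$ entirely to the sign of $(r-r_*)^{1/3}$ entering the perturbed speed. The paper's own (terse) proof attributes the asymmetry to the nonvanishing jumps $\jump{\mathsf{w}_2},\jump{\mathsf{s}_2}$ surviving after modulation. In the fast-Lagrangian accounting these manifest as different half-power Jacobian and DRV coefficients on the two sides of the shock, which then feed into the $(t-\ts)^2$ coefficients of $W(t,\ts)$ and $\mathsf{S}(t,\ts)$; your sign argument is a heuristic shadow of this but does not by itself establish $\mathsf{d}^{+}\neq\mathsf{d}^{-}$. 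In a revision you should make the $(r-r_*)^{4/3}$ coefficient explicit in terms of $\mathsf{j}_{1,2}^{\pm}$, $\rW(\ts,\ts)$, $\mathsf{w}_1^{\pm}$ (and the analogous entropy quantities), and show that the Rankine--Hugoniot relations in Lemma~\ref{lemma:taylor:s} force the interior and exterior versions of these coefficients to differ.
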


\begin{proof}[Proof of \cref{lemma:expansion:0}]
The expansion for $z$ follows from Lemma~\ref{lemma:taylor:pre} with $\jump{\mathsf{z}_2}=0$ (so $\mathsf{b}_1=\mathsf{b}_2=\mathsf{b}$). The expansions for $w$ and $b$ are derived similarly by analyzing their evolution along the slow and intermediate characteristics, utilizing the expansions established in Lemmas~\ref{lemma:taylor:T} and \ref{lemma:taylor:s}. The asymmetry in the $O(|r-r_*|^{{\frac{4}{3}} })$ terms arises from the jumps in $w$ and $b$ across the shock (the coefficients $\jump{\mathsf{w}_2}, \jump{\mathsf{s}_2}$ in Lemma~\ref{lemma:taylor:s}), which are generally non-zero even after modulating for $\jump{\mathsf{z}_2}=0$.
\end{proof}

\subsection{Proof of Proposition~\ref{prop:weak2pre}}
\begin{proof}
We synthesize the constructions to define the global patching solution on $[\ts, \tsh]$ and verify its properties.

\textit{Construction of the Global Solution.}
We fix the admissible pair $(\l, \lambda_1\big|_{\l}^{+})$ provided by Lemma~\ref{lemma:l1:l}. Let $(u,\rho,E)^{\mathcal{D}}$ denote the solution constructed in the exterior region $\mathcal{D}$ (Lemmas~\ref{lemma:D}–\ref{lemma:higher:T}), and let $(u,\rho,E)^{\mathcal{L}}$ denote the solution constructed in the interior region $\mathcal{L}$ (part of the extended region $\mathcal{L}^E$, Lemmas~\ref{lemma:bootstrap:L}–\ref{lemma:higher:L}). Let $(u,\rho,E)^{L}$ be the solution from Proposition~\ref{prop:large2weak}.

We define the global solution $(u,\rho,E)$ on $\R_{+}\times[\ts,\tsh]$ by patching these solutions:
\[
(u,\rho,E)(r,t)=
\begin{cases}
(u,\rho,E)^{\mathcal{D}}(r,t), & (r,t)\in\mathcal{D},\\
(u,\rho,E)^{\mathcal{L}}(r,t), & (r,t)\in\mathcal{L},\\
(u,\rho,E)^{L}(r,t), & \text{otherwise}.
\end{cases}
\]

\textit{Verification of Properties.}
Item~\ref{prop:weak2pre:item2} (Characteristic construction) is satisfied by the definition of the Goursat problem in $\mathcal{D}$ and the subsequent solution in $\mathcal{L}$. Item~\ref{prop:weak2pre:item4} (Controlled approach) follows from the properties of the admissible pair (Lemma~\ref{lemma:l1:l}) and the Rankine-Hugoniot analysis (Lemma~\ref{lemma:taylor:s}). Item~\ref{prop:weak2pre:item5} (Preshock profile) is established by the analysis in Lemma~\ref{lemma:expansion:0}.

It remains to verify the global consistency and regularity claimed in Items~\ref{prop:weak2pre:item1} (Seamless matching) and \ref{prop:weak2pre:item3} (Regular shock evolution). The solution is $C^\infty$ within each region for $t>\ts$. The Rankine-Hugoniot conditions and Lax conditions are satisfied across $r=\l(t)$ by construction. The quiescent core $\mathcal{H}$ is preserved.

The crucial remaining step is to verify the seamless ($C^\infty$) matching across the characteristic boundaries separating the regions, specifically across the exterior fast characteristic $\Gamma_\psi = \{(\psi(\tsh, t), t) : t\in[\ts, \tsh]\}$. We must show that $(u,\rho,E)^{\mathcal{D}}$ matches $(u,\rho,E)^{L}$ smoothly across $\Gamma_\psi$. We argue using the Riemann variables $(w, z, S)$.

\textit{$C^\infty$ Matching Across $\Gamma_\psi$.}

\textit{Step 1: Matching 0-th order variables.}
By construction (the Goursat data), we have that $w^{\mathcal{D}}=w^L$ and $S^{\mathcal{D}}=S^L$ along $\Gamma_\psi$.
The curve $\Gamma_\psi$ is a $\lambda_1$-characteristic. The variable $z$ evolves along this characteristic according to Eq. \eqref{eulerz}. Since $w$ and $S$ match, the coefficients and source terms of this transport equation are identical for both solutions along $\Gamma_\psi$. Since the initial data at $t=\tsh$ matches (by Item~\ref{prop:weak2pre:item1}), uniqueness of solutions to transport equations implies $z^{\mathcal{D}}=z^L$ along $\Gamma_\psi$.

\textit{Step 2: Matching first derivatives.}
We now show the spatial derivatives $\p_r w, \p_r S, \p_r z$ match across $\Gamma_\psi$. Let $D_t$ denote the tangential derivative operator along $\Gamma_\psi$. Since $\Gamma_\psi$ is a $\lambda_1$-characteristic, $D_t = \p_t + \lambda_1 \p_r = L_1$.

For $S$, we use the equation $L_2(S)=0$ (\cref{eulerS}). We can write
\[
D_t S = L_1(S) = L_2(S) + (\lambda_1-\lambda_2)\p_r S = \alpha\sigma \p_r S.
\]
We solve for the spatial derivative to obtain
\begin{equation} \label{eq:rS:psi}
\p_r S = \tfrac{1}{\alpha \sigma} D_t S.
\end{equation}
Since $w, z, S$ match along $\Gamma_\psi$ (Step 1), $\sigma$ matches. The tangential derivatives $D_t S$ also match. Therefore, we have that $\p_r S^{\mathcal{D}} = \p_r S^L$ along $\Gamma_\psi$.

For $w$, we use $L_3(w)=\text{Sources}_w$ (\cref{eulerw}). We can write
\[
D_t w = L_1(w) = L_3(w) + (\lambda_1-\lambda_3)\p_r w = \text{Sources}_w + 2\alpha\sigma \p_r w.
\]
We solve for the spatial derivative to obtain
\begin{equation} \label{eq:rw:psi}
\p_r w = \tfrac{1}{2\alpha\sigma} (D_t w - \text{Sources}_w).
\end{equation}
The sources depend on $w, z, S, \p_r S, \tfrac{1}{r} $. Since all these quantities match along $\Gamma_\psi$, and $D_t w$ matches, we conclude that $\p_r w^{\mathcal{D}} = \p_r w^L$ along $\Gamma_\psi$.

For $z$, we consider the evolution of $\p_r z$. The DRV equation for $\mathring{z}$ (\cref{DRV-z}) is a transport equation along $\lambda_1$-characteristics. Thus, $\p_r z$ satisfies an ODE along $\Gamma_\psi$. The coefficients of this ODE depend on the variables and derivatives already matched ($w, z, S, \p_r w, \p_r S$). Since the initial data for $\p_r z$ at $t=\tsh$ matches (due to the $C^\infty$ matching of the shock curves $\s$ and $\l$, Item~\ref{prop:weak2pre:item1}), uniqueness of ODE solutions implies $\p_r z^{\mathcal{D}} = \p_r z^L$ along $\Gamma_\psi$.

\textit{Step 3: Induction.}
A standard induction argument shows that higher derivatives $\p_r^k$ also satisfy transport equations along $\Gamma_\psi$ with coefficients depending on lower-order derivatives. Thus, all derivatives match across $\Gamma_\psi$, establishing $C^\infty$ regularity across this boundary. A similar argument applies to the interior characteristic boundary $\eta(\tsh, \cdot)$. This completes the proof of Proposition~\ref{prop:weak2pre}.
\end{proof}

\section{Backward Regularization: From Preshock to $C^{1,{\frac{1}{3}}}$ Initial Data}
\label{sec:pre2smooth}

\subsection{Overview and Main Results}

The constructions in the preceding sections established a solution that, at the preshock time $t=\ts$, exhibits a specific singularity structure at the location $r_*$. As established in Proposition~\ref{prop:weak2pre} (specifically Lemma~\ref{lemma:expansion:0}), the state variables at $t=\ts$ possess the following regularity near $r_*$:
\[
z(\cdot, \ts) \in C^{\frac{1}{3}}, \qquad (w, b)(\cdot , \ts)\in C^{1,{\frac{1}{3}}}.
\]
The dominant variable $z$ possesses a symmetric $C^{\frac{1}{3}}$ cusp at $r_*$.

In this final section, we demonstrate the backward-in-time regularizing effect of the Euler equations for this specific data profile. While the forward evolution involved compression leading to shock formation (a loss of regularity), the backward evolution corresponds to an expansion, which smooths the flow. We prove that the singular state at $t=\ts$ can be evolved backward to an earlier time $t=\ti < \ts$, resulting in a state that is globally $C^{1, {\frac{1}{3}}}$ (i.e., $C^1$ with Hölder-$ \tfrac{1}{3}$  continuous derivatives). This provides the desired regular initial data for the entire shock development process constructed in this paper.

The main result of this section is the following:

\begin{prop}\label{prop:pre2smooth}
Let $(w_L,z_L,b_L)$ be the solution from Proposition~\ref{prop:weak2pre}. There exists a time $\ti<\ts$ and a unique solution $(w,z,b)$ of the Euler equations \eqref{euler:rv} on $[\ti,\ts)$ with the following properties:
\begin{enumerate}
\item \textup{Regularity Gain:} For every $t\in [\ti,\ts)$, the spatial profile $(w,z,b)(\cdot,t)$ belongs to $C^{1,{\frac{1}{3}}}(\R_+)$. Furthermore, the quiescent core is maintained: $(u,\rho,E)=(0,1,0)$ in the region
\[
\mathcal{H}_i=\{(r,t):\,0<r<(-\tf)^{\frac{1}{\lambda}},\ t\in[\ti,\ts]\}.
\]

\item \textup{Structure of the Singularity Propagation:} The solution is $C^2$ (or smoother) everywhere except along the two characteristic curves emanating backward-in-time from the preshock point $(r_*,\ts)$. Let $\phi(r_*,\cdot)$ and $\eta(r_*,\cdot)$ denote the entropy ($\lambda_2$) and slow ($\lambda_3$) characteristics starting at $r_*$.
\begin{itemize}
    \item Along the slow characteristic $\eta(r_*, \cdot)$, the variable $w$ exhibits a $C^{1,{\frac{1}{3}}}$ cusp on both sides.
    \item Along the entropy characteristic $\phi(r_*, \cdot)$, all three variables $(w, z, b)$ exhibit $C^{1,{\frac{1}{3}}}$ cusps on both sides.
\end{itemize}

\item \textup{Continuity up to $\ts$:} The solution $(w,z,b)$ is continuous in time up to $t=\ts$, and matches the terminal data:
\[
\lim_{t\nearrow\ts}(w,z,b)(r,t)=(w_L,z_L,b_L)(r,\ts)\quad\text{for every }r>0.
\]
\end{enumerate}
\end{prop}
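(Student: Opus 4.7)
The plan is to convert the backward Cauchy problem into a regular hyperbolic system by introducing a degenerate Lagrangian reparametrization aligned with the fast acoustic flow, carry out a bootstrap in those coordinates, and then transfer the gained regularity back to the physical $r$-variable. Following the strategy indicated in the remark after Proposition~\ref{prop:pre2smooth}, I would define the fast acoustic flow $\psi(x,s)$ by
\[
\partial_s \psi(x,s) = \lambda_1(\psi(x,s),s), \qquad \psi(x,\ts) = r_* + \tfrac{x^3}{3},
\]
so that $J(x,\ts) = \partial_x \psi(x,\ts) = x^2$ vanishes quadratically at $x=0$ exactly where the cusp sits. Substituting the preshock expansions \eqref{taylor:rs} from Lemma~\ref{lemma:expansion:0}, the composed variables $Z(x,\ts), W(x,\ts), \mathsf{S}(x,\ts)$ become $C^2$ in $x$ (the cube root is neutralized, and the $C^{1,{\frac{1}{3}}}$ symmetry of $w,b$ at order $(r-r_*)$ transfers to smoothness up to a possible jump of the third derivative at $x=0$). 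This is the input data for the backward evolution.

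Next, I would propagate this data backward by solving the pulled-back Euler system \eqref{euler:bp:lag} on an interval $[\ti,\ts]$ with $\ts - \ti$ chosen small, using the same weighted structure that controlled the forward construction in \S\ref{sec:weak2pre}. The main point here is that for $s < \ts$ the Jacobian $J(x,s)$ becomes strictly positive and smooth (the expansion of $\lambda_1$ off the preshock moves $J$ away from zero at rate $\ts - s$), so the map $x \mapsto \psi(x,s)$ is a $C^2$ diffeomorphism of $\R_+$ onto itself. I would run a bootstrap for $(W,Z,\mathsf{S}, J, \rW, \rZ, \rS)$ entirely analogous to Lemma~\ref{lemma:bootstrap:L}, but now with boundary data supplied on the time slice $\{s=\ts\}$ rather than on a shock curve. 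The bootstrap closes because the source terms in \eqref{eq:JrZ:sl}--\eqref{eq:rW:sl} are quadratic in the DRVs with bounded coefficients, and because the characteristic weights yield integrated $\partial_s \rW, \partial_s \rS$ bounds of the form
\[
|\partial_s \rW(x,s)| \lesssim (\ts - \Upsilon_3(x))^{-\tfrac{2}{3}}, \qquad |\partial_s \rS(x,s)| \lesssim (\ts - \Upsilon_2(x))^{-\tfrac{2}{3}},
\]
where $\Upsilon_2, \Upsilon_3$ are the $\lambda_2, \lambda_3$-trajectories through $(r_*,\ts)$ expressed in $x$-coordinates. Integrating in $s$ gives a ${\frac{1}{3}}$-H\"older gain, so $\partial_x (W,Z,\mathsf{S})(\cdot,s) \in C^{0,{\frac{1}{3}}}_x$ uniformly on $[\ti,\ts)$.

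With these bounds in hand, I would transfer the regularity back to physical coordinates. For every $t \in [\ti,\ts)$, the map $x \mapsto r = \psi(x,t)$ is a $C^2$ diffeomorphism with strictly positive derivative, so $(w,z,b)(\cdot,t) \in C^{1,{\frac{1}{3}}}_r(\R_+)$ by the chain rule; continuity up to $t=\ts$ (Item 3) then follows from the uniform bounds and standard weak-$*$ compactness, combined with the $L^\infty$ agreement at $t=\ts$ enforced by construction. The quiescent core is preserved because $\lambda_1, \lambda_2, \lambda_3$ all vanish identically on $\mathcal{H}$ at $s=\ts$, so the backward characteristic cones from $\mathcal{H}$ stay inside $\mathcal{H}$. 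To identify the singular loci claimed in Item 2, I would solve the Euler system separately on the two open sectors cut out by the slow and entropy characteristics $\eta(r_*,\cdot), \phi(r_*,\cdot)$ emanating backward from $(r_*,\ts)$, using the fact that in each such sector the preshock data is smooth up to the boundary; standard weak-singularity propagation for first-order hyperbolic systems then shows that any jump in higher derivatives remains confined to those two characteristic curves, with the precise $C^{1,{\frac{1}{3}}}$ cusp strength inherited from the $(r-r_*)^{{\frac{4}{3}}}$ asymmetries in \eqref{taylor:rs}.

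The main obstacle I anticipate is controlling the DRVs in the $(x,s)$ chart near $x=0$ uniformly as $s\nearrow \ts$, because $J(x,\ts)=x^2$ degenerates there and several terms in \eqref{eq:JrZ:sl}--\eqref{eq:rW:sl} formally look singular in the form $\partial_t W / J$. The remedy is to rewrite these terms as $\partial_x W$ (which is bounded by the smoothness of the terminal data) and to exploit the sign of $\partial_s J > 0$ at $x=0$, as determined by $(\partial_s J)(0,\ts) = \tfrac{1+\alpha}{2} \mathsf{a} \neq 0$ from the preshock profile; this ensures $J(x,s) \ge c(\ts - s) + c\,x^2$ in a neighborhood of $(0,\ts)$, restoring ellipticity of the weight in the combined variable $\ts - s + x^2$ and allowing the bootstrap to close with precisely a ${\frac{1}{3}}$-H\"older gain.
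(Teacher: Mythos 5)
Your proposal tracks the paper's argument almost verbatim: the cubic terminal labeling $\psi(x,\ts)=r_*+\tfrac{x^3}{3}$, the pull-back to $(x,s)$ coordinates so that $J(x,\ts)=x^2$ cancels the cusp, the auxiliary trajectories $\Upsilon_2,\Upsilon_3$ tracking the weak singularities, the weighted bootstrap delivering $|\p_s\rW|\lesssim(s-\Upsilon_3(x))^{-2/3}$ (note you wrote $\ts-\Upsilon_3(x)$, which would not give the $\tfrac13$-H\"older gain upon integrating in $s$), the Jacobian ellipticity $J\gtrsim(\ts-s)+x^2$ from $J\rZ$ being bounded away from zero at $(0,\ts)$, and the return to $r$-coordinates via the diffeomorphism $\psi(\cdot,t)$ for $t<\ts$. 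The one place your sketch underspecifies what the paper actually does is in handling the degenerate transport: the paper proves a dedicated maximum principle (Lemma~\ref{lemma:max:p:z}) and a transported-weight comparison (Lemma~\ref{lemma:2weights}) to close the bootstrap on $\rW,\rS$, whereas your remedy ``rewrite these terms as $\p_x W$ which is bounded'' conflates the bounded quantity $(\p_r w)\circ\psi$ with $\p_x\rW$, which is not bounded but only controlled via the weighted bound; this is the spot where your proposal would need the paper's weighted-norm machinery spelled out.
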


\paragraph{Localization Strategy.}
The proof relies on analyzing the backward-in-time Cauchy problem starting from the singular data at $t=\ts$. Due to the finite speed of propagation, it suffices to analyze the solution locally near the singularity $(r_*,\ts)$.

We fix a small spatial scale $\theta>0$ and define a localized spacetime neighborhood $\mathcal{B}$. The domain is chosen to widen as time goes backward (since $t-\ts < 0$), ensuring it encompasses the relevant domain of dependence and facilitates the analysis of characteristic interactions. We define the domain as
\begin{equation*}
\mathcal{B}
=\Bigl\{(r,t):\ \ti<t<\ts,\ r_*-\theta<r<r_*+\theta-4\m^2\,\tfrac{(t-\ts)}{\theta^2}\Bigr\}.
\end{equation*}

The proof of Proposition~\ref{prop:pre2smooth} follows from the following localized result, which provides the technical core of the regularization analysis.

\begin{prop}\label{prop:main:pre2smooth}
Consider the backwards Cauchy problem for \eqref{euler:rv} in $\mathcal{B}$ with terminal data $(w_0,z_0,b_0)$ prescribed at $t=\ts$. Assume the data satisfies the Taylor expansions \eqref{taylor:rs} at $r_*$ and is $C^2$ away from $r_*$. Then there exists a time $\ti<\ts$ and a unique solution $(w,z,b)$ on $[\ti,\ts)$ in $\mathcal{B}$ such that:
\begin{enumerate}
\item \textup{Local Regularity and Structure:} The solution $(w,z,b)(\cdot,t)$ is in $C^{1,{\frac{1}{3}}}$ for every $t<\ts$. The solution is $C^2$ away from the characteristics $ \eta(r_*,\cdot)$ and $\phi(r_*, \cdot)$. The specific structure of the $C^{1, {\frac{1}{3}}}$ cusps along these characteristics is as described in Proposition~\ref{prop:pre2smooth}, Item 2.

\item \textup{Local Continuity:} The solution $(w,z,b)$ is continuous in time and matches the terminal data $(w_0, z_0, b_0)$ continuously as $t\nearrow\ts$.
\end{enumerate}
\end{prop}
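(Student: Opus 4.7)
\emph{Strategy.} I will follow the blueprint sketched in the remark on backward regularization: straighten the fast acoustic flow via a degenerate Lagrangian chart whose cubic terminal labeling absorbs the $C^{1/3}$ cusp of $z_0$. Concretely, I define the fast flow map $\psi(x,s)$ by
\[
\partial_s \psi(x,s)=\lambda_1(\psi(x,s),s),\qquad \psi(x,\ts)=r_{*}+\tfrac{x^{3}}{3},
\]
so that at the terminal time the Jacobian $J(x,\ts)=\partial_x \psi(x,\ts)=x^{2}$ vanishes quadratically at $x=0$ and exactly cancels the $|r-r_{*}|^{-2/3}$ blow-up of $\partial_r z_0$. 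Substituting $r-r_{*}=x^{3}/3$ into the expansions \eqref{taylor:rs} converts the terminal data into
\[
Z(x,\ts)=z_{*}+\mathsf{a}\,x+\mathsf{b}\,x^{2}+O(x^{3}),\qquad W(\cdot,\ts),\ \mathsf{S}(\cdot,\ts)\in C^{2}_{x},
\]
with the $(r-r_{*})^{4/3}$ asymmetries in $w_0,b_0$ contributing only jumps in the third $x$-derivative at $x=0$. I then solve the pulled back system \eqref{euler:bp:lag} backwards in $s$ on a short interval $[\ti,\ts]$ and a truncated $x$-strip, with lateral boundary data inherited from the solution of Proposition~\ref{prop:weak2pre} on $\partial\mathcal{B}$.

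\emph{Bootstrap and weighted DRV estimates.} Existence and uniqueness on $[\ti,\ts]$ reduce to a Picard/bootstrap argument for $(W,Z,\mathsf{S},\Sigma,J,\rW,\rZ,\rS)$ closed once $|\ts-\ti|$ is small enough. The central unknowns are the DRVs, for which I would track weighted bounds
\[
|\partial_s \rW(x,s)|\le \m\,(\ts-\Upsilon_3(x))^{-2/3},\qquad |\partial_s \rS(x,s)|\le \m\,(\ts-\Upsilon_2(x))^{-2/3},
\]
where $\Upsilon_3(x),\Upsilon_2(x)$ denote, in the Lagrangian chart, the slow acoustic and entropy characteristic trajectories emanating backwards-in-time from the preshock. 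The product $J\rZ$ stays bounded because \eqref{eq:JrZ:sl} has no dangerous terms once $Z(\cdot,\ts)$ is smooth in $x$; this is precisely what lets the degenerate Jacobian absorb $\rZ=(J\rZ)/J$. The $\tfrac{1}{3}$-H\"older gain then arises by integrating $\partial_s \rW,\partial_s \rS$ in $s$: a change of variables in $\int_s^{\ts}(\ts-\Upsilon_i(\cdot))^{-2/3}\,ds'$ yields an upper bound proportional to $|x-\Upsilon_i^{-1}(t)|^{1/3}$, so that $\partial_x(W,Z,\mathsf{S})(\cdot,t)\in C^{0,1/3}_x$ for every $t<\ts$.

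\emph{Main obstacle: opening up the Jacobian.} The delicate step, and the one that drives the proof, is a matching lower bound on $J$ as we descend from the degenerate terminal line $\{s=\ts\}$. Equation \eqref{eq:J:sl} has forcing $\partial_s J\sim J\,\rZ=$ (bounded), but to conclude the chart is invertible I must extract a quantitative opening of the form
\[
J(x,s)\ge \tfrac{1}{\m}\bigl(x^{2}+(\ts-s)\bigr)\qquad\text{for all }s\in[\ti,\ts].
\]
I would obtain this by integrating \eqref{eq:J:sl} backwards in $s$ and exploiting the strictly compressive sign information $\mathsf{a}\ne 0$ from the preshock Taylor expansion, combined with the boundedness of $J\rZ$. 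This is the quantitative analogue in Lagrangian coordinates of the statement that the backward expansion \emph{unfolds} the cusp, and it is the principal obstacle precisely because it couples the bootstrap for $J$ to the bootstrap for $\rZ$ through the identity $\rZ=(J\rZ)/J$.

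\emph{Pullback and cusp identification.} Once the bootstrap closes, $x\mapsto\psi(x,t)$ is a strict $C^{2}$ diffeomorphism for every $t\in[\ti,\ts)$, and pulling back the $C^{1,1/3}_x$ profile $(W,Z,\mathsf{S})(\cdot,t)$ through $\psi(\cdot,t)^{-1}$ transfers the same regularity to $(w,z,b)(\cdot,t)$. Interior $C^{2}$ smoothness away from the singular curves follows by the argument of Lemma~\ref{lemma:higher:T}, since for $x$ bounded away from $\{\Upsilon_3^{-1}(t),\Upsilon_2^{-1}(t)\}$ the weighted DRV bounds are $O(1)$ and higher derivatives can be propagated by induction. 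The precise location of the cusps is identified by tracking how the $(r-r_{*})^{4/3}$ asymmetries in the terminal expansions for $w_0$ and $b_0$ propagate along $\lambda_3$ and $\lambda_2$: the $w$-asymmetry generates the $C^{1,1/3}$ cusp along $\eta(r_{*},\cdot)$, while the $b$-asymmetry generates the cusp in $b$ along $\phi(r_{*},\cdot)$ and, through the source term $\rho^{2\alpha}b\,\partial_r b$ in the $\rw$- and $\rz$-equations, the companion cusps in $w$ and $z$ along $\phi(r_{*},\cdot)$. Continuity and matching of terminal data as $t\nearrow \ts$ is then immediate from continuity of the $(x,s)$-solution at $s=\ts$ combined with the identity $\psi(x,\ts)=r_{*}+x^{3}/3$.
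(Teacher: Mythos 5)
Your proposal follows essentially the same route as the paper's proof: the cubic terminal labeling $\psi(x,\ts)=r_*+\tfrac{x^3}{3}$ with $J(x,\ts)=x^2$, a bootstrap for the pulled-back system with weighted bounds on $\partial_s\rW,\partial_s\rS$ along the trajectories $\Upsilon_3,\Upsilon_2$, the quantitative opening of the Jacobian driven by the bounded, strictly negative $J\rZ$ (the paper's bound \eqref{bt:e:tp}, obtained via Lemmas \ref{lemma:bootstrap:B}, \ref{lemma:max:p:z}, \ref{lemma:2weights}), and the pullback, cusp identification, and continuity arguments. The only slips are cosmetic: the singular weight should be $(s-\Upsilon_i(x))^{-\frac{2}{3}}$, measured from the running time $s$ rather than from $\ts$, which is what yields the $\tfrac13$-H\"older gain upon integration, and the $(r-r_*)^{\frac43}$ asymmetries in $w_0,b_0$ produce a jump only at fourth order in $x$ (the third-order jump comes from the $O(r-r_*)$ asymmetry in $z_0$).
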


The remainder of this section is devoted to the proof of Proposition~\ref{prop:main:pre2smooth}.

\subsection{Fast acoustic characteristic coordinates}
To analyze the backward evolution from the singular state at $t=\ts$ and demonstrate the regularization effect, we introduce a coordinate transformation specifically designed to resolve the $C^{{\frac{1}{3}}}$ cusp.
As in Section~\ref{sec:weak2pre}, we work along the $\lambda_1$–flow.  Following our convention (introduced in  \eqref{bw:flows:bt}), we continue to use $s$ to denote
evolutionary time; note, however, that in this application,  $s \in [\ti,\ts]$ exactly matches coordinate time $t$. 

We define the backward-in-time map
\begin{equation}\label{choice:psi:smooth}
\p_s\psi(x,s)=(\lambda_1\circ\psi ) (x,s),\qquad
\psi(x,\ts)=\tfrac{x^3}{3}+r_*,\qquad x\in(-\theta^{\frac{1}{3}},\theta^{\frac{1}{3}}).
\end{equation}

This specific cubic initialization is chosen to exactly counteract the $C^{{\frac{1}{3}}}$ singularity of $z(r,\ts)$ at $r_*$. 
Recall from \eqref{taylor:rs} that the preshock profile implies $\partial_r z \sim (r-r_*)^{-{\frac{2}{3}}}$. The choice of $\psi(x,\ts)$ means $r-r_* \sim x^3$, and thus the Jacobian is $J(x,\ts) = \partial_x \psi(x,\ts) = x^2$. With $Z=z\circ\psi$, by the chain rule, the transformed derivative is $\partial_x Z = J \cdot (\partial_r z \circ \psi)$. This yields the cancellation: $\partial_x Z \sim x^2 \cdot (x^3)^{-{\frac{2}{3}}} = O(1)$. Consequently, the transformed data $Z_0(x):=Z(x,\ts)$ is $C^2$, effectively unfolding the cusp.

We pull back the Riemann  variables and the DRVs by the fast acoustic characteristics $\psi$,  and set
\begin{equation*}
\begin{gathered}
Z=z\circ\psi,\quad W=w\circ\psi,\quad \mathsf{S}=S\circ\psi,\quad \Sigma=\sigma\circ\psi, \\
\rZ=\left(\partial_r z +\tfrac{\alpha}{2\gamma}\sigma \p_r S\right)\circ\psi,\quad \rW=\left(\partial_r w -\tfrac{\alpha}{2\gamma}\sigma \p_r S\right)\circ\psi,\quad \rS=\partial_r S\circ\psi.
\end{gathered}
\end{equation*}
and define $J=\p_x\psi$. The terminal conditions at time $s=\ts$ are
\begin{subequations}
\label{in:cond}
\begin{align}
&W(x,\ts)=W_0(x),\quad Z(x,\ts)=Z_0(x),\quad \mathsf{S}(x,\ts)=S_0(x),\\
&\rW(x,\ts)=\rW_0(x),\quad \rZ(x,\ts)=\rZ_0(x),\quad \rK(x,\ts)=\rK_0(x),\\
&\Sigma(x,\ts)=\Sigma_0(x),\qquad J(x,\ts)=J_0(x)=x^2.
\end{align}
\end{subequations}
Using the regularity established by the coordinate transformation and \eqref{taylor:rs}, and shrinking $\theta$ if necessary, we may assume the following bounds on the data:
\begin{align*}
&\|Z_0\|_\infty+\|W_0\|_\infty+\|K_0\|_\infty<\m,\qquad
\|\rW_0\|_\infty+\|\rK_0\|_\infty<\m,\\
&-\m\le J_0\rZ_0\le -\tfrac{1}{2}\m,\qquad
\|\p_x\rW_0\|_\infty+\|\p_x\rK_0\|_\infty+\|\p_x(J_0\rZ_0)\|_\infty\le \m,\\
&\tfrac{1}{\m}\le \Sigma_0\le \m,\qquad
J_0\rZ_0\le -\tfrac{1}{\m}.
\end{align*}

\subsection{Equations along the fast acoustic characteristic}

We now rewrite the Euler system in the new characteristic coordinates $(x,s)$ and identify the challenges posed by the degeneracy of the system. In these coordinates, the Euler equations transform into a coupled set of transport equations (for $W, \mathsf{S},\rK,\rW$) and ODE-like equations along the characteristics (for $Z, J, J\rZ$). The main challenge arises because the transport speeds depend on the Jacobian $J$. Since $J(x,\ts)=x^2$, the system becomes degenerate at the singularity ($x=0$).

By composing with the fast acoustic characteristics $\psi$, we can write the Euler equations as
{\allowdisplaybreaks
\begin{subequations}\label{fast:bw:B}
\begin{align}
\p_sZ&=\tfrac{\alpha}{2\gamma}\,\Sigma^2\rK+(d-1)\alpha\,\tfrac{W^2-Z^2}{2\,\psi}, \label{eq:Z:B}\\[2pt]
J\,\p_sW&=-2\alpha\,\Sigma\,\p_xW+\tfrac{\alpha}{2\gamma}\,\Sigma^2\rK\,J+(d-1)\alpha\,J\,\tfrac{W^2-Z^2}{2\,\psi}, \label{eq:W:B}\\[2pt]
J\,\p_s \mathsf{S}&=-\alpha\,\Sigma\,\p_x \mathsf{S}, \label{eq:K:B}\\[2pt]
\p_s\Sigma&=\alpha\,\Sigma\Bigl((d-1)\tfrac{W+Z}{\psi}\Bigr)+\tfrac{\alpha}{2\gamma}\,\Sigma^2\rK, \label{eq:Sigma:B}\\[2pt]
\mathcal{R}&=(d-1)\tfrac{\gamma\alpha(\rW W-\rZ Z)+(W+Z)\Sigma\rK}{2\gamma\,\psi}-\tfrac{W^2-Z^2}{\psi^2}, \label{eq:R:B}\\[2pt]
\p_sJ&=\tfrac{1+\alpha}{2}\,J\,\rZ+\tfrac{1-\alpha}{2}\,J\,\rW+\tfrac{\alpha}{2\gamma}\,J\,\Sigma\,\rK, \label{eq:J:B}\\[2pt]
\p_s(J\rZ)&=-\tfrac{\alpha}{4\gamma}\,\Sigma\rK\,J(\rW+\rZ)+J\,\mathcal{R}, \label{eq:JrZ:B}\\[2pt]
J\,\p_s\rK&=-\alpha\,\Sigma\,\p_x\rK-\tfrac{1}{2}J\,\rK\,\rW-\tfrac{1}{2}J\,\rK\,\rZ, \label{eq:rK:B}\\[2pt]
J\,\p_s\rW&=-2\alpha\,\Sigma\,\p_x\rW-\tfrac{1-\alpha}{2}J\,\rW\,\rZ-\tfrac{1+\alpha}{2}J\,\rZ^2
-\tfrac{\alpha}{4\gamma}\,\Sigma\rK\,J\,\rZ+\tfrac{\alpha}{4\gamma}J\,\Sigma\rK\,\rZ+J\,\mathcal{R}. \label{eq:rW:B}
\end{align}
\end{subequations}}

At $s=\ts$ we may (after enlarging $\m$ if needed) assume that
\[
\|\p_s\Sigma(\cdot,\ts)\|_\infty+\|\p_x\Sigma(\cdot,\ts)\|_\infty\le \m.
\]
Crucially, from \eqref{eq:rK:B}–\eqref{eq:rW:B}, we observe that the initial time derivatives are singular at $x=0$:
\begin{equation}\label{bound:initial:B}
|\p_s\rK(x,\ts)|\le \tfrac{\m \alpha }{x^2},
\qquad
|\p_s\rW(x,\ts)|\le \tfrac{\m \alpha }{x^2}.
\end{equation}

The singularities in these initial time derivatives propagate backward along the slow acoustic and entropy characteristics, respectively. We introduce the auxiliary trajectories $\Upsilon_3(x)$ (slow acoustic) and $\Upsilon_2(x)$ (entropy) to track the location of these propagating singularities in the transformed coordinates. These trajectories define the loci where the derivatives $\rW$ and $\rK$  lose higher regularity, necessitating the use of weighted estimates, and these
auxiliary functions solve
\begin{subequations}\label{traj:B}
\begin{align}
\p_x\Upsilon_3(x)&=\big(\tfrac{J}{2\alpha\,\Sigma}\big)(x,\Upsilon_3(x)),\qquad \Upsilon_3(0)=\ts, \label{ups:1}\\
\p_x\Upsilon_2(x)&=\big(\tfrac{J}{\alpha\,\Sigma}\big)(x,\Upsilon_2(x)),\qquad\ \ \Upsilon_2(0)=\ts. \label{ups:2}
\end{align}
\end{subequations}
\begin{remark} 
The structure of the  transport equations \eqref{eq:W:B}, \eqref{eq:K:B},  \eqref{eq:rK:B}, and \eqref{eq:rW:B} show that it is possible to convert the transport 
structure to an ODE equation by composition with either a \textit{ temporal} flow  or a \textit{spatial} flow.    Consider, for example, Eq. \eqref{eq:rW:B}.  We can view this in the traditional form
\[
\p_s\rW + \tfrac{2\alpha\,\Sigma}{J}\p_x\rW = \text{RHS} \,.
\]
or we can view the label $x$ as playing the role of the evolutionary variable, and instead write \eqref{eq:rW:B} as
\[
\p_x\rW + \tfrac{J}{2\alpha\,\Sigma}\p_s\rW = \text{RHS} \,,
\]
To convert the transport structure to ODE structure, one can compose the transport operator with the flow of $\tfrac{2\alpha\,\Sigma}{J}$
in the first equation or the flow of $\tfrac{J}{2\alpha\,\Sigma}$ in the second equation.  In the former, the vector field becomes singular,
while in the latter the vector field merely vanishes at $x=0$.  
As such,  we have chosen to define the flows in \eqref{traj:B} by using  $x$ as the evolutionary variable.
This  allows us to deduce that the functions $\Upsilon_2, \Upsilon_3$ are $C^1$ smooth, and by using Lemma \ref{lemma:2weights}, we are able to compare the spatial regularity of $\rW$ and $\rK$ with the regularity of $(s-\Upsilon_3)^{\frac{1}{3}}$ and $(s-\Upsilon_2)^{\frac{1}{3}}$, respectively.
\end{remark} 

\subsection{A Priori Estimates via Bootstrap}
The core of the regularization analysis lies in establishing uniform control over the solution and its derivatives in these characteristic coordinates via a bootstrap argument. We construct solutions by a Picard iteration (convergence is proved in Appendix~\ref{appendix:B}). Here we record the a~priori bounds that deliver the regularization. The bootstrap argument aims to control the degeneracy of the Jacobian $J$ (ensuring $J>0$ for $s<\ts$ via \eqref{bt:e:tp}, which guarantees the invertibility of the transformation) and manage the singular behavior of the derivatives using the weighted norms in \eqref{bt:kt:tp}.

\begin{lemma}\label{lemma:bootstrap:B}
There exists $\ti<\ts$, with $\delta_*:=\ts-\ti\ll1$, such that any solution of \eqref{fast:bw:B} on $[\ti,\ts]$ satisfies
\begin{subequations}\label{bootstrap:tp:B}
\begin{align}
&\|W\|_\infty\le 2\m,\quad \|Z\|_\infty\le 2\m,\quad \|\mathsf{S}\|_\infty\le \m, \label{bt:z:tp}\\
&-2\m\le J\rZ<-\tfrac{2}{\m},\qquad \|\p_x(J\rZ)\|_\infty<2\m, \label{bt:rz:tp}\\
&\|\rW\|_\infty<2\m,\quad \|\rK\|_\infty<2\m, \\
&\tfrac{1}{2\m}\le \Sigma\le 2\m,\qquad |\p_s\Sigma|+|\p_x\Sigma|<2\m, \label{bt:s:tp}\\
&|\p_s\rW|\ <\ \tfrac{2\m}{(s-\Upsilon_3(x))^{\frac{2}{3}}},\qquad 
 |\p_s\rK|\ <\ \tfrac{2\m}{(s-\Upsilon_2(x))^{\frac{2}{3}}}, \label{bt:kt:tp}\\
&x^2-\tfrac{\m}{2}(s-\ts)<J(x,s)<x^2-2\m(s-\ts) ,\qquad \|\p_xJ\|_\infty<2\m. \label{bt:e:tp}
\end{align}
\end{subequations}\end{lemma}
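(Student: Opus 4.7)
\subsection*{Proof Proposal for Lemma~\ref{lemma:bootstrap:B}}

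The plan is a standard continuity/bootstrap argument: we assume \eqref{bootstrap:tp:B} holds with the constants $2\m$ replaced by $4\m$ (and $\tfrac{1}{2\m}$ by $\tfrac{1}{4\m}$), work on the largest time interval $[T_*,\ts]\subset[\ti,\ts]$ on which this weaker set of bounds persists, and then improve them to \eqref{bootstrap:tp:B} by choosing $\delta_*=\ts-\ti$ sufficiently small. Since along $s=\ts$ the bounds hold with strict margin (inherited from the data and the choice $J_0=x^2$), continuity then forces $T_*=\ti$. The ordering of the improvements is dictated by the algebraic/transport structure of \eqref{fast:bw:B}: first we control the Jacobian and the dominant weighted derivative $J\rZ$ (both ODEs along the $x$--label), then the transport variables $W,\mathsf{S}$ and the subdominant DRVs $\rW,\rK$, then $\Sigma$, then the singular $\p_s$ bounds, and finally~$Z$.

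First I would close the bounds on $J$ and $J\rZ$ in \eqref{bt:rz:tp}--\eqref{bt:e:tp}. Equation \eqref{eq:JrZ:B} is an ODE in $s$ at each fixed label $x$, and its right–hand side is controlled purely by the bootstrap bounds on DRVs, $\Sigma$, $W,Z$, and the geometric factor $1/\psi$; integrating in $|s-\ts|\le\delta_*$ from the terminal data gives $|J\rZ-J_0\rZ_0|\le C\m^2\delta_*$, improving $4\m$ to $2\m$ once $\delta_*$ is small. Plugging this into \eqref{eq:J:B} written as $\p_s J=J\rZ\cdot\tfrac{1+\alpha}{2}+\tfrac{1-\alpha}{2}J\rW+\tfrac{\alpha}{2\gamma}J\Sigma\rK$ and integrating backward from $J(x,\ts)=x^2$ yields the sandwich $x^2-\tfrac{\m}{2}(s-\ts)<J<x^2-2\m(s-\ts)$ (recall $s-\ts<0$); differentiating in $x$ produces a linear ODE for $\p_xJ$ whose coefficients are bootstrap–controlled, giving $\|\p_xJ\|_\infty<2\m$. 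Crucially, these estimates certify $J(x,s)>0$ strictly for $s<\ts$, so the transformation is invertible off the terminal slice and the transport speeds $2\alpha\Sigma/J$ and $\alpha\Sigma/J$ in \eqref{eq:W:B}, \eqref{eq:K:B}, \eqref{eq:rK:B}, \eqref{eq:rW:B} are finite on $[\ti,\ts)$. At the same time, the auxiliary trajectories $\Upsilon_2,\Upsilon_3$ from \eqref{traj:B} are well defined and $C^1$ because $J/\Sigma$ is $C^0$ and vanishes like $x^2$ at $x=0$.

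Next I would close the transport bounds. For $W,\mathsf{S}$ I apply the backwards transport maximum principle along the characteristics of \eqref{eq:W:B}, \eqref{eq:K:B}; the sources are quadratic in bootstrap quantities and are therefore of size $\delta_*\cdot O(\m^3)$ after integration, which improves the $L^\infty$ bounds. For $\rW,\rK$ the same transport approach applied to \eqref{eq:rK:B}--\eqref{eq:rW:B} closes the $L^\infty$ bounds (again with a gain $O(\delta_*)$ from integrating quadratic sources). The ODE \eqref{eq:Sigma:B} for $\Sigma$ is driven by bootstrap quantities and integrates directly to improve $\Sigma\in[\tfrac{1}{2\m},2\m]$ and $|\p_s\Sigma|\le 2\m$; the spatial derivative $\p_x\Sigma$ is handled by differentiating \eqref{eq:Sigma:B} in $x$ and running the same argument. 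Finally, $Z$ satisfies the pure ODE \eqref{eq:Z:B} along $x$--labels, and a direct integration gives $\|Z\|_\infty\le 2\m$.

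The main obstacle, and the step where the cubic choice $\psi(x,\ts)=\tfrac{x^3}{3}+r_*$ is essential, is closing the weighted bounds \eqref{bt:kt:tp} on $\p_s\rW$ and $\p_s\rK$. The idea is to write \eqref{eq:rW:B} as an evolution equation in the $x$--direction along the trajectory $\Upsilon_3$: dividing by $2\alpha\Sigma$ and using $\p_x\Upsilon_3=J/(2\alpha\Sigma)$, the function $(\rW\circ\Upsilon_3)(x)$ solves an ODE whose forcing is $(J\mathcal{R}+\text{quadratic})/(2\alpha\Sigma)$. The subtle point is that at $s=\ts$, $\p_s\rW$ blows up like $\alpha/x^2$ (see \eqref{bound:initial:B}), while $s-\Upsilon_3(x)$ vanishes like $x^3$ because $\Upsilon_3'(0)=0$ and $\Upsilon_3''(0)=0$ (as $J\sim x^2$). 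Hence $(s-\Upsilon_3(x))^{2/3}\sim x^2$, and the asserted bound $|\p_s\rW|\lesssim(s-\Upsilon_3(x))^{-2/3}$ exactly captures this matching of scales. The argument uses an auxiliary two–weight absorption lemma (the one alluded to by the remark referencing Lemma~\ref{lemma:2weights}) to compare the $1/x^2$ blow up of the initial data with the $(s-\Upsilon_3(x))^{2/3}$ weight generated by transport; this propagates the weighted bound away from $s=\ts$ with a constant controlled by the data. A parallel argument along $\Upsilon_2$ handles~$\rK$. Once these weighted derivative bounds are established, a continuity argument in $T_*$ closes the full set of bootstrap inequalities \eqref{bootstrap:tp:B}.
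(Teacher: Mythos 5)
Your proposal follows the same bootstrap strategy as the paper (weaken the target constants by a factor of $2$, improve via smallness of $\delta_*$, conclude by continuation), and you've correctly identified the central matching of scales $(s-\Upsilon_3(x))^{2/3}\sim x^2$ that makes the weighted bounds on $\p_s\rW$, $\p_s\rK$ work with the cubic terminal labeling. The choice to close $J$ and $J\rZ$ first, rather than $W,\mathsf{S},Z,\Sigma$ as the paper does, is a harmless reordering within the bootstrap.

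However, there is a genuine gap: you never address the bound $\|\p_x(J\rZ)\|_\infty<2\m$ in \eqref{bt:rz:tp}, and the estimate for $\p_xJ$ is dispatched too quickly. Differentiating \eqref{eq:J:B} or \eqref{eq:JrZ:B} in $x$ produces terms like $J\,\p_x\rW$ and $J\,\p_x\rK$. These are \emph{second-order} spatial derivatives of the solution and are not directly in the bootstrap norm, so the claim that the resulting linear ODE for $\p_xJ$ has ``bootstrap-controlled coefficients'' does not hold as written. The paper's Step~3 handles exactly this: one uses \eqref{eq:rW:B} to trade the uncontrolled spatial derivative for the controlled singular time derivative,
\[
\p_x\rW=\tfrac{J}{2\alpha\,\Sigma}\,\p_s\rW+\tfrac{\mathcal{N}}{\Sigma},\qquad |\mathcal{N}|\lesssim\m^3,
\]
so that $J\,\p_x\rW$ becomes $\tfrac{J^2}{2\alpha\Sigma}\,\p_s\rW$ plus bounded terms, and then the weighted bootstrap bound $|\p_s\rW|\lesssim(s-\Upsilon_3(x))^{-2/3}$, which is integrable in $s$ by Lemma~\ref{lemma:2weights}, yields a contribution of size $O(\delta_*^{1/3})$ after integration. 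Without this interchange, the $\p_x(J\rZ)$ and $\p_xJ$ estimates are unproven, and the lemma's conclusion is not reached.

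One smaller inaccuracy: when describing the weighted bound you say the function $(\rW\circ\Upsilon_3)(x)$ solves an ODE, but the target bound in \eqref{bt:kt:tp} is on $\p_s\rW$, not on $\rW$. The mechanism the paper actually uses is to multiply $\p_s\rW$ by the transported weight $\mathfrak{w}$ (which satisfies $L_{2\alpha}\mathfrak{w}=0$ with $\mathfrak{w}(x,\ts)=x^2$), exploit the commutator identity $[L_{2\alpha},\mathfrak{w}]=0$ from \eqref{commutator:w} so that $\mathfrak{w}\,\p_s\rW$ solves a transport equation with bounded source, and then invoke the degenerate maximum principle of Lemma~\ref{lemma:max:p:z}. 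Your paragraph gestures at the right intuition, but the precise commutator step should be made explicit since it is what produces a transport equation to which the maximum principle applies.
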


\begin{proof}[Proof of Lemma~\ref{lemma:bootstrap:B}]
We aim to improve the following weaker bootstrap assumptions by choosing $\delta_*$ sufficiently small:
\begin{subequations}\label{bootstrap:B}
\begin{align}
&\|W\|_\infty+\|Z\|_\infty\le 4\m,\qquad \|\mathsf{S}\|_\infty\le 4\m, \label{bt:z}\\
&-4\m\le J\rZ\le -\tfrac{1}{4\m},\qquad \|\p_x(J\rZ)\|_\infty<4\m, \label{bt:rz}\\
&\|\rW\|_\infty\le 2\m,\qquad \|\rK\|_\infty\le 4\m, \label{bt:w}\\
&\tfrac{1}{2\m}\le \Sigma\le 4\m,\qquad |\p_s\Sigma|<2\m,\quad |\p_x\Sigma|<4\m, \label{bt:s}\\
&|\p_s\rW|<\tfrac{4\m}{(s-\Upsilon_3(x))^{\frac{2}{3}}},\quad |\p_s\rK|<\tfrac{4\m}{(s-\Upsilon_2(x))^{\frac{2}{3}}}, \label{bt:kt}\\
&x^2-\tfrac{\m}{4}(s-\ts)<J<x^2-4\m (s-\ts),\qquad \|\p_xJ\|_\infty<4\m. \label{bt:e}
\end{align}
\end{subequations}

\emph{Step 1: Bounds on Undifferentiated Variables ($W, \mathsf{S}, Z, \Sigma$).}
Equation \eqref{eq:K:B} is a transport equation, so $\|K\|_\infty$ is controlled by the terminal trace. For $W$, we use the maximum principle, Lemma~\ref{lemma:max:p:z} (with $\beta=2\alpha$). For $Z$, we integrate \eqref{eq:Z:B}; using the bootstrap assumptions \eqref{bootstrap:B}, we obtain that
\[
|\p_sZ|\le 64\m^3\quad\Longrightarrow\quad |Z(x,s)|\le |Z(x,\ts)|+64\m^3\,\delta_*.
\]
Choosing $\delta_*$ small gives the improved bounds in \eqref{bt:z:tp}. Similarly, from \eqref{eq:Sigma:B},
\[
|\p_s\Sigma|\ \lesssim\ \m^2+\m\quad\Rightarrow\quad \tfrac{1}{2\m}\le \Sigma\le 2\m,
\]
for $\delta_*$ small, which gives the improved bounds on $\Sigma$ in \eqref{bt:s:tp}.

\emph{Step 2: Bounds on the Dominant Derivative $J\rZ$ and the Jacobian $J$.}
From \eqref{eq:JrZ:B}, using the bootstrap assumptions, we have
\[
|\p_s(J\rZ)|\le 256\m^4\quad\Longrightarrow\quad |J\rZ(x,s)-J\rZ(x,\ts)|\le 256\m^4\,\delta_*,
\]
which yields the improved bounds in \eqref{bt:rz:tp}.
Differentiating \eqref{eq:J:B} and applying Grönwall's inequality (utilizing the bounds on $J\rZ$) yields the improved two-sided bound for $J$ and the bound on $\p_x J$ in \eqref{bt:e:tp}.

\emph{Step 3: Bounds on Spatial Derivatives $\p_x(J\rZ)$.}
Differentiating \eqref{eq:JrZ:B} with respect to $x$ gives
\begin{align*}
\p_s\!\bigl(\p_x(J\rZ)\bigr)
&=\tfrac{\alpha}{4\gamma}\,\Sigma\rK\Bigl(\p_x(J\rZ)+\p_xJ\,\rW+J\,\p_x\rW\Bigr)
+\tfrac{\alpha}{4\gamma}\,\p_x\Sigma\,\rK\,(J\rZ+J\rW)\\[-2pt]
&\quad+\tfrac{\alpha}{4\gamma}\,\Sigma\,\p_x\rK\,(J\rZ+J\rW)+\p_xJ\,\mathcal{R}+J\,\p_x\mathcal{R}. \nonumber\end{align*}
Under \eqref{bootstrap:B}, the terms not containing $\p_x\rW$ are integrable on $[\ti,\ts]$ with total size $\le 164\m^4\delta_*$. For the $\p_x\rW$ term, we use \eqref{eq:rW:B} to express $\p_x\rW$ in terms of $\p_s\rW$:
\[
\p_x\rW=\tfrac{J}{2\alpha\,\Sigma}\,\p_s\rW+\tfrac{\mathcal{N}}{\Sigma},
\qquad |\mathcal{N}|\le 256\m^3.
\]
We then utilize the weighted bootstrap assumption \eqref{bt:kt} and the integrability property from Lemma \ref{lemma:2weights} to obtain
\[
\int_s^{\ts}\!\!|J\,\p_x\rW|\,ds'\ \lesssim\ \m^3\,\delta_*^{\frac{1}{3}}+\m^4\,\delta_*.
\]
Combining these bounds and taking $\delta_*$ small gives the improved bound $\|\p_x(J\rZ)\|_\infty<2\m$.

\emph{Step 4: Weighted Bounds for Singular Derivatives ($\p_s\rW, \p_s\rK$).}
Because of the singular initial conditions \eqref{bound:initial:B}, $\p_s\rW(\cdot,\ts)$ and $\p_s\rK(\cdot,\ts)$ are not bounded. We must employ the technical tools developed in Lemmas \ref{lemma:max:p:z} and \ref{lemma:2weights}.  We introduce the weight $\mathfrak{w}$ from Lemma~\ref{lemma:2weights} (with $\beta=2\alpha$) and apply it to \eqref{eq:rW:B}. Using the commutator identity in \eqref{commutator:w}, we obtain a transport equation for the weighted quantity:
\[
J\,\p_s\bigl(\mathfrak{w}\,\p_s\rW\bigr)-2\alpha\,\Sigma\,\p_x\bigl(\mathfrak{w}\,\p_s\rW\bigr)=\mathcal{N},
\]
where $\mathcal{N}$ is bounded. Applying the maximum principle (Lemma~\ref{lemma:max:p:z}) to this equation yields a uniform bound on $\mathfrak{w}\,\p_s\rW$. Using the comparison $\mathfrak{w} \sim (s-\Upsilon_3(x))^{{\frac{2}{3}}}$ (Lemma \ref{lemma:2weights}), we obtain the improved weighted bound
\[
|\p_s\rW|\ \lesssim\ \tfrac{1}{(s-\Upsilon_3(x))^{\frac{2}{3}}},
\]
and similarly for $\p_s\rK$ with $\Upsilon_2$. This establishes \eqref{bt:kt:tp}.
\end{proof}

\subsection{Proof of Proposition~\ref{prop:main:pre2smooth}}
\begin{proof}
We translate the bounds established in the characteristic coordinates back to the physical space. From Lemma~\ref{lemma:bootstrap:B}, for every $s\in[\ti,\ts)$, we have that 
\[
J( \cdot, s),\ \rZ( \cdot, s),\ \Sigma( \cdot, s)\in W^{1,\infty},\qquad
\rW( \cdot, s),\ \rK( \cdot, s) \in C^{\frac{1}{3}}.
\]
The crucial regularization mechanism is encapsulated by the weighted bounds in \eqref{bt:kt:tp}. These imply the pointwise spatial bounds:
\begin{equation*}
|\p_x\rW|\ \lesssim\ \tfrac{1}{(s-\Upsilon_3(x))^{\frac{2}{3}}},
\qquad
|\p_x\rK|\ \lesssim\ \tfrac{1}{(s-\Upsilon_2(x))^{\frac{2}{3}}}.\end{equation*}

While these weights are singular along the trajectories $\Upsilon_3$ and $\Upsilon_2$, they are integrable in the spatial variable $x$. Integrating these bounds with respect to $x$ yields a gain of one-third of a derivative (as $\int (s-\Upsilon(x))^{-{\frac{2}{3}} } dx$ behaves like $(s-\Upsilon(x))^{{\frac{1}{3}} }$). This establishes that $\rW,\rK\in C^{\frac{1}{3}}$ in $x$ (and are $W^{1,\infty}$ away from the curves $s=\Upsilon_3(x)$ and $s=\Upsilon_2(x)$).

Since $J=\p_x\psi$ and we established $J>0$ on $[\ti,\ts)$ (by \eqref{bt:e:tp}), the map $x\mapsto\psi(x,s)$ is a $C^2$ diffeomorphism for each fixed $s<\ts$.

Returning to Eulerian variables $(r,t)$ with $r=\psi(x,s)$, we have
\[
\p_r w=\rW+\tfrac{\alpha}{2\gamma}\,\sigma\,\rK,\quad
\p_r z=\rW-\tfrac{\alpha}{2\gamma}\,\sigma\,\rK,\quad
\p_r S=\rK.
\]
The $C^{{\frac{1}{3}}}$ regularity of the right-hand side in $x$ translates directly to $C^{{\frac{1}{3}}}$ regularity of the derivatives in $r$. Thus, $w(\cdot,t),z(\cdot,t),k(\cdot,t)\in C^{1,{\frac{1}{3}}}$ for each $t<\ts$. The loci where $C^2$ fails are the images of $s=\Upsilon_3(x)$ and $s=\Upsilon_2(x)$, namely the characteristics $\psi(r_*,\cdot)$ (fast) and $\eta(r_*,\cdot)$ (slow). Time continuity follows from \eqref{fast:bw:B} and \eqref{bootstrap:tp:B}, which give $(\p_t+\lambda_1\p_r)w$, $(\p_t+\lambda_1\p_r)z$ and $(\p_t+\lambda_1\p_r)k$ bounded, while the spatial regularity is at least $C^{\frac{1}{3}}$. This proves the claim.
\end{proof}

\subsection{Technical Lemmas: Maximum Principle and Weighted Estimates}

We rely on two key technical tools to close the bootstrap argument: a maximum principle adapted for the backward transport equations and the analysis of specific weights used to control the singular derivatives.

\begin{lemma}\label{lemma:max:p:z}
Let $\beta\ge\alpha:= \tfrac{\gamma-1}{2}$. Consider the transport equation on the domain $\mathcal{D} =
\{ (x,s) \colon  - X_{\max} <x < X_{\max} + \epsilon (s-\ts), s\in [\ti, \ts]\}$ given by
\begin{equation}\label{transport:B}
J\,\p_s Q +\beta\,\Sigma\,\p_x Q=f_1(x,s)\,Q+f_2(x,s),
\end{equation}
with data prescribed at the final time $s=\ts$: $Q(x,\ts)=Q_0(x)\in C^0$. We assume $X_{\max}=\theta^{\frac{1}{3}}$ and that $\epsilon=\epsilon(\delta_*, \m)>0$ is taken sufficiently small.
Assume that  $J,\Sigma$ satisfy the bootstrap assumptions (including \eqref{bt:s:tp} and \eqref{bt:e:tp}, and sufficient spatial regularity), and that $f_1,f_2 \in L^\infty(\mathcal{D})$. Then \eqref{transport:B} has a unique $C^0$ solution and (adjusting constants $\m$ as necessary)
\[
\|Q\|_{L^\infty_{x,s}}
\le \exp\!\Bigl(C(\m,\beta)\,\delta_*^{\frac{1}{3}}\,\|f_1\|_{L^\infty_{x,s}}\Bigr)
\Bigl(\|Q_0\|_{L^\infty_{x}}+C(\m,\beta)\,\|f_2\|_{L^\infty_{x,s}}\,\delta_*^{\frac{1}{3}}\Bigr).
\]\end{lemma}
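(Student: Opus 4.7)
The plan is to solve \eqref{transport:B} by the method of characteristics, exploiting the lower bound $J(x,s)\ge x^2+\frac{\m}{2}(\ts-s)$ from \eqref{bt:e:tp} to quantify the degeneracy of the transport speed $\beta\Sigma/J$ at the corner $(0,\ts)$. First I would introduce the forward characteristic ODE
\begin{equation*}
\frac{dX}{ds}=\frac{\beta\Sigma(X,s)}{J(X,s)},
\end{equation*}
which is well posed away from the corner because $J>0$ on $\mathcal{D}\setminus\{(0,\ts)\}$ and $\Sigma$ is Lipschitz in $x$ (by \eqref{bt:s:tp} and the assumed spatial regularity of $\Sigma$). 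Along each characteristic the PDE \eqref{transport:B} reduces to the scalar ODE
\begin{equation*}
\frac{d}{ds}(Q\circ X)=\frac{f_1\,Q+f_2}{J}\circ X,
\end{equation*}
so the stated estimate follows from backward Gr\"onwall provided the weight $\int_s^{\ts}(1/J)(X(s'),s')\,ds'$ is controlled uniformly along trajectories.

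The key analytic input is a self-similar estimate on this weight. Setting $\tau=\ts-s$, the lower bound on $J$ compared against the model ODE $\dot Y=-\beta\Sigma/(Y^2+c\tau)$ yields the critical self-similar scaling $Y(\tau)\sim \tau^{1/3}$ for the trajectory passing through the degenerate corner. A monotonicity/comparison argument on the characteristic ODE (trajectories cannot cross away from $(0,\ts)$) then promotes this to every characteristic, giving $J(X(\tau),\ts-\tau)\ge c(\m,\beta)\,\tau^{2/3}$, so
\begin{equation*}
\int_s^{\ts}\frac{ds'}{J(X(s'),s')}\ \le\ C(\m,\beta)\int_0^{\delta_*}\tau^{-2/3}\,d\tau\ \le\ C(\m,\beta)\,\delta_*^{1/3}.
\end{equation*}
Substituting this into the Gr\"onwall argument along characteristics yields the claimed $L^\infty$ bound with exponential factor $\exp(C(\m,\beta)\,\delta_*^{1/3}\|f_1\|_\infty)$ and affine term $C(\m,\beta)\,\delta_*^{1/3}\|f_2\|_\infty$.

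I would then verify that the backward characteristics from the data set $\{s=\ts,\,|x|<X_{\max}\}$ foliate $\mathcal{D}$. The horizontal displacement $|X(\ts)-X(s)|$ along any characteristic is bounded by $\beta\|\Sigma\|_\infty$ times the weight above, hence by $C(\m,\beta)\,\delta_*^{1/3}$. The parameter $\epsilon=\epsilon(\delta_*,\m)$ is calibrated to this worst-case displacement so that the slope $\epsilon$ of the right boundary $\{x=X_{\max}+\epsilon(s-\ts)\}$ makes that boundary non-characteristic for the backward flow; monotonicity in the initial datum then provides a continuous increasing bijection between $\{(x,\ts):|x|<X_{\max}\}$ and each cross-section of $\mathcal{D}$. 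Existence and uniqueness of a $C^0$ solution $Q$ follow by defining $Q(x,s)$ as the Gr\"onwall solution along the unique characteristic through $(x,s)$, with continuity up to the corner $(0,\ts)$ obtained from a limit argument on shrinking neighborhoods, exploiting that the Gr\"onwall constant $C\delta_*^{1/3}\to 0$ as $\delta_*\to 0$.

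The main obstacle is the comparison argument that upgrades the pointwise lower bound $J\ge x^2+\frac{\m}{2}\tau$ into the trajectory-uniform bound $J(X(\tau),\ts-\tau)\ge c\,\tau^{2/3}$ for \emph{every} characteristic, not only the critical one through $(0,\ts)$. For $|x_0|$ large compared to $\delta_*^{1/3}$ the characteristic stays far from the singularity and $J\ge x_0^2$ is more than enough; for $|x_0|\le C\delta_*^{1/3}$ one must track the transition into the self-similar regime and use trajectory monotonicity to pin the characteristic's lower envelope to the critical profile $Y(\tau)\sim \tau^{1/3}$. A secondary, more bookkeeping difficulty is the calibration of $\epsilon$ to the cubic scaling imposed by the terminal labeling $\psi(x,\ts)=r_*+x^3/3$, which is precisely why $\epsilon$ is allowed to depend on both $\delta_*$ and $\m$.
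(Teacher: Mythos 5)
There is a genuine gap at the step you yourself flag as the main obstacle: the trajectory-uniform pointwise bound $J(X(\tau),\ts-\tau)\ge c(\m,\beta)\,\tau^{2/3}$ (with $\tau=\ts-s$) is false for general characteristics, and no comparison/monotonicity argument can rescue it. The bootstrap \eqref{bt:e:tp} gives the two-sided bound $x^2+\tfrac{\m}{2}\tau<J(x,s)<x^2+2\m\,\tau$. Now take a characteristic whose terminal label $X_0=X(\ts)>0$ is small: since the speed $\beta\Sigma/J>0$, going backward in time the trajectory moves left and crosses $x=0$ at a time $\tau_0$ which, by the upper bound on $J$ and the lower bound on $\Sigma$, satisfies $\tau_0\lesssim X_0^3\to 0$ as $X_0\to 0$. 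At that crossing, $J(0,\ts-\tau_0)<2\m\,\tau_0$, which is $o(\tau_0^{2/3})$; so for any fixed $c$ the claimed bound fails once $X_0$ is small. Only the critical characteristic through the corner obeys $|X(\tau)|\sim\tau^{1/3}$; the nearby ones hug the axis at times where $J$ is merely of size $\tau$, and with only $J\gtrsim\tau$ your weight integral $\int \tfrac{ds'}{J}$ threatens exactly the logarithmic divergence you were trying to avoid. Since your displacement bound is in turn deduced from this weight bound, everything downstream of the false estimate is unsupported.

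The paper's proof closes this by never asking for a pointwise lower bound along trajectories: it bounds the weight by the \emph{displacement}, not the other way around. Using the lower bound $\Sigma\ge\tfrac{1}{2\m}$ one writes $\tfrac{1}{J}\le \tfrac{2\m}{\beta}\,\tfrac{\beta\Sigma}{J}=\tfrac{2\m}{\beta}\,\p_{s}\Phi$ along the flow, so $\int_s^{\ts}\tfrac{ds'}{J\circ\Phi}\le\tfrac{2\m}{\beta}\,\bigl(X-\Phi(X,s)\bigr)$; and the displacement is then estimated \emph{independently} by the cubic flattening: $\p_s(\Phi^3)=3\Phi^2\,\beta\Sigma/J$ is uniformly bounded because \eqref{bt:e:tp} gives $J\ge x^2$, hence $|X-\Phi(X,s)|\le C(\m,\beta)(\ts-s)^{\frac13}$. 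This yields your target $\int \tfrac{ds'}{J}\le C\,\delta_*^{\frac13}$ for \emph{every} characteristic, including those that cross the axis, after which the Gr\"onwall step and your calibration of $\epsilon$ go through essentially as you describe. A secondary (fixable) omission: the existence, uniqueness, and continuity of the solution through the degenerate corner are handled in the paper by regularizing $J_\updelta=J+\updelta$, running the above estimates uniformly in $\updelta$, and passing to the limit via Arzel\`a--Ascoli applied to $\Phi_\updelta^3$ (not to $\Phi_\updelta$, whose time derivative is unbounded); your ``shrinking neighborhoods'' sketch would need an argument of this kind to be rigorous, but the real gap is the false pointwise bound.
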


\begin{proof}[Proof of Lemma~\ref{lemma:max:p:z}]
We aim to establish the maximum principle for the degenerate transport equation \eqref{transport:B} on the domain $\mathcal{D}$. The difficulty lies in the singularity of the temporal vector field $V(x,s) = \tfrac{\beta\Sigma(x,s)}{J(x,s)}$ near $(0,\ts)$.

\textbf{Step 1: The Regularized System.}
For $\updelta>0$, we introduce the regularized Jacobian $J_\updelta(x,s) = J(x,s) + \updelta$. Since $J\ge 0$ (by the bootstrap assumptions for small $\delta_*$), $J_\updelta \ge \updelta$.
We consider the regularized transport equation for $Q_\updelta$:
\begin{equation}\label{transport:B:reg}
J_\updelta\,\p_s Q_\updelta +\beta\,\Sigma\,\p_x Q_\updelta=f_1\,Q_\updelta+f_2, \quad Q_\updelta(x,\ts)=Q_0(x).
\end{equation}
The regularized vector field $V_\updelta(x,s) = \tfrac{\beta\Sigma(x,s)}{J_\updelta(x,s)}$ is bounded and spatially Lipschitz (with constants depending on $\updelta$).

\textbf{Step 2: Regularized Temporal Characteristics and Domain Geometry.}
We define the regularized characteristic flow $\Phi_\updelta(X,s)$ by
\begin{equation}\label{ode:Phi:reg}
\p_s\Phi_\updelta(X,s)=V_\updelta(\Phi_\updelta(X,s),s), \qquad \Phi_\updelta(X,\ts)=X.
\end{equation}

The geometry of the domain $\mathcal{D}$ is crucial for ensuring that the solution depends only on the terminal data $Q_0$. The slanted spatial boundary, defined by the speed $\epsilon$, ensures that $\mathcal{D}$ is the Domain of Dependence for the backward evolution. In the $(s,x)$-plane, the slope of the characteristics is $V_\updelta$, and the slope of the boundary is $\epsilon$. To ensure containment as time evolves backward (decreasing $s$), we require the characteristics to be steeper than the boundary: $V_\updelta > \epsilon$. Near the spatial boundary $X_{\max}$, the Jacobian $J$ is bounded away from zero by the bootstrap assumptions. Consequently, $V_\updelta$ is bounded below by a positive constant $V_{\min}$. The parameter $\epsilon$ is chosen sufficiently small such that $\epsilon < V_{\min}$, guaranteeing the characteristics remain strictly within $\mathcal{D}$ for all $s\in[\ti, \ts]$. Standard theory now guarantees a unique smooth solution $Q_\updelta$ exists for \eqref{transport:B:reg} within $\mathcal{D}$.

Along these contained characteristics, the quantity $\mathcal{Q}_\updelta(s) = Q_\updelta(\Phi_\updelta(X,s),s)$ satisfies the ODE
\begin{equation}\label{ode:Q:reg}
\p_s \mathcal{Q}_\updelta(s) = \left(\tfrac{f_1}{J_\updelta} \mathcal{Q}_\updelta + \tfrac{f_2}{J_\updelta}\right)\circ \Phi_\updelta(X,s).
\end{equation}

\textbf{Step 3: Uniform Estimates (Independent of $\updelta$).}
We establish estimates independent of $\updelta$.

\textit{3a. Uniform Bound on Displacement.} We analyze the flow using the kinematic flattening transformation. Letting $Y_\updelta = \Phi_\updelta^3$, we have that
\begin{align*}
\p_s Y_\updelta &= 3\Phi_\updelta^2 V_\updelta(\Phi_\updelta, s) = 3\beta \tfrac{\Phi_\updelta^2 \Sigma(\Phi_\updelta, s)}{J(\Phi_\updelta, s) + \updelta} =: V_{3,\updelta}(\Phi_\updelta, s).
\end{align*}
Crucially, $V_{3,\updelta}(x,s)$ is uniformly bounded, independent of $\updelta$. This relies on the bootstrap assumption \eqref{bt:e:tp}, which implies $J(x,s) \ge c x^2$ for some $c>0$ (since $s\le \ts$). Thus,
\[
|V_{3,\updelta}(x,s)| \le 3\beta \|\Sigma\|_{L^\infty} \tfrac{x^2}{J(x,s)+\updelta} \le 3\beta \|\Sigma\|_{L^\infty} \tfrac{x^2}{J(x,s)} \le C(\m,\beta).
\]
Integrating this bound backward from $s=\ts$ yields
\[
|\Phi_\updelta(X,s)^3-X^3|\le C(\m,\beta)\,(\ts-s).
\]
This implies the $\updelta$-independent  uniform displacement bound 
\begin{equation}\label{eq:displacement_uniform}
|X - \Phi_\updelta(X,s)| \le C'(\m,\beta) (\ts-s)^{\frac{1}{3}}.
\end{equation}

\textit{3b. Uniform Bound on the Integral Weight.} We analyze the integral $I_\updelta(s) = \int_s^{\ts} \tfrac{1}{J_\updelta}\circ\Phi_\updelta ds'$. Using the definition of the flow and the assumption $\Sigma \ge \tfrac{1}{2\m}$, noting that  $V_\updelta>0$ (so $\Phi_\updelta$ increases with $s$), we obtain that
\begin{align*}
I_\updelta(s) &\le \int_s^{\ts} \tfrac{2\m\Sigma}{J_\updelta}\circ\Phi_\updelta ds' = \tfrac{2\m}{\beta} \int_s^{\ts} V_\updelta(\Phi_\updelta, s') ds'
= \tfrac{2\m}{\beta} \int_s^{\ts} \p_{s'} \Phi_\updelta(X,s') ds' = \tfrac{2\m}{\beta} (X - \Phi_\updelta(X,s)).
\end{align*}
Using the uniform displacement bound \eqref{eq:displacement_uniform} yields the following key uniform integral estimate:
\begin{equation}\label{int:J:B_final_reg}
I_\updelta(s) \le C''(\m,\beta) (\ts-s)^{\frac{1}{3}} \le C''(\m,\beta) \delta_*^{\frac{1}{3}}.
\end{equation}

\textbf{Step 4: The Maximum Principle (Uniform in $\updelta$).}
We apply Grönwall's inequality to the ODE \eqref{ode:Q:reg} backward from $s=\ts$, where $\mathcal{Q}_\updelta(\ts)=Q_0(X)$, and find that
\begin{align*}
|\mathcal{Q}_\updelta(s)| &\le \exp\left(\int_s^{\ts} \left|\tfrac{f_1}{J_\updelta}\circ\Phi_\updelta\right| ds'\right) \left( \|Q_0\|_{L^\infty_x} + \int_s^{\ts} \left|\tfrac{f_2}{J_\updelta}\circ\Phi_\updelta\right| ds' \right) \\
&\le \exp\left(\|f_1\|_\infty I_\updelta(s)\right) \left( \|Q_0\|_{L^\infty_x} + \|f_2\|_\infty I_\updelta(s) \right).
\end{align*}
Substituting the uniform bound for $I_\updelta(s)$ yields the desired maximum principle estimate, which holds uniformly for all $\updelta>0$.

\textbf{Step 5: Convergence and Conclusion.}
We have established that the sequence $\{Q_\updelta\}$ is uniformly bounded in $L^\infty(\mathcal{D})$. We must show convergence to the unique solution $Q$ of the original equation \eqref{transport:B}.

We cannot apply Arzelà-Ascoli directly to $\Phi_\updelta$ since $\p_s \Phi_\updelta = V_\updelta$ is not uniformly bounded. However, we apply it to the transformed sequence $Y_\updelta$. Since $\p_s Y_\updelta = V_{3,\updelta}$, and $V_{3,\updelta}$ *is* uniformly bounded (Step 3a), the sequence $\{Y_\updelta\}$ is equicontinuous. This guarantees uniform convergence (up to a subsequence) $Y_{\updelta_k} \to Y$, and consequently, $\Phi_{\updelta_k} \to \Phi = Y^{1/3}$ uniformly.

We can pass to the limit in the integral formulation of the solution:
\[
\mathcal{Q}_\updelta(s) = Q_0(X) + \int_{\ts}^s \left(\tfrac{f_1}{J_\updelta} \mathcal{Q}_\updelta + \tfrac{f_2}{J_\updelta}\right)\circ \Phi_\updelta(s') ds'.
\]
Due to the uniform boundedness of $Q_\updelta$, the uniform convergence of $\Phi_\updelta$, and the convergence of the weights $\tfrac{1}{J_\updelta}$ in $L^1$ along the characteristics (which follows from the uniform integrability established in Step 3b), we can pass to the limit using the Dominated Convergence Theorem. The limit $Q$ satisfies the integral equation corresponding to the original degenerate transport equation \eqref{transport:B} and inherits the maximum principle estimate. Uniqueness of the $C^0$ solution follows from the linearity of the equation and the maximum principle.
\end{proof}

\begin{remark}[Intuition for the Weight $\mathfrak{w}$]\label{remark:intuition:w}
The analysis of the backward regularization relies on controlling terms involving the inverse Jacobian $1/J$. At the preshock time, $J(x,\ts)=x^2$, which is degenerate at $x=0$. To understand how this degeneracy behaves backward in time, we introduce the weight $\mathfrak{w}$.

The weight $\mathfrak{w}$ is explicitly designed to be the backward-in-time transport of this initial degeneracy along the characteristic flow associated with the operator $L_\beta:=J\,\p_s + \beta\,\Sigma\,\p_x$. It satisfies $L_\beta \mathfrak{w} = 0$ with terminal data $\mathfrak{w}(x,\ts)=x^2$.

The crucial result of the following lemma is the comparison \eqref{bounds:weight:2}: $\mathfrak{w}(x,s) \approx (s-\Upsilon(x))^{\frac{2}{3}}$. This comparison translates the spatial degeneracy ($x^2$) into a temporal behavior relative to the singular trajectory $\Upsilon(x)$. The weight $\mathfrak{w}$ vanishes as the time difference approaches zero with a power of $\frac{2}{3}$. Consequently, the singular coefficients involving $\tfrac{1}{\mathfrak{w}}$ blow up at the rate $(s-\Upsilon(x))^{-\frac{2}{3}}$. Because this rate is integrable in time, the comparison is essential for controlling the transport equations (as seen in the proof of Lemma \ref{lemma:bootstrap:B}).
\end{remark}

\begin{lemma}[\textbf{Analysis of the Transported Weight}]\label{lemma:2weights}
Let $\beta>\alpha$. Define the characteristic transport operator $L_\beta:=J\,\p_s + \beta\,\Sigma\,\p_x$.
Define the singular trajectory $\Upsilon(x)$ by the ODE
\begin{equation*}
\p_x\Upsilon(x)= \big(\tfrac{J}{\beta\,\Sigma}\big) (x,\Upsilon(x)),\qquad \Upsilon(0)=\ts,\end{equation*}
and let the weight $\mathfrak{w}(x,s)$ solve the transport equation
\begin{equation*}
L_\beta\mathfrak{w}=0,\qquad \mathfrak{w}(x,\ts)=x^2.\end{equation*}
If  $J,\Sigma$ satisfy the bootstrap assumptions \eqref{a:priori}, then the following bounds hold (where $\m$ is a sufficiently large constant depending on the bootstrap parameters):
\begin{subequations}\label{bounds:weight}
\begin{align}
&\int_{s}^{\ts}\!\tfrac{1}{(s'-\Upsilon(x))^{\frac{2}{3}}}\,ds'\ <\ \m\,(\ts-s)^{\frac{1}{3}},
\qquad
\int_{s}^{\ts}\!\tfrac{1}{\mathfrak{w}(x,s')}\,ds'\ <\ \m\,(\ts-s)^{\frac{1}{3}}, \label{bounds:weight:1}\\
&\tfrac{1}{\m}\ <\ \tfrac{\mathfrak{w}(x,s)}{(s-\Upsilon(x))^{\frac{2}{3}}}\ <\ \m, \label{bounds:weight:2}\\
&\|\mathfrak{w}^{\frac{1}{2}}\p_x\mathfrak{w}\|_\infty\le \m. \label{bounds:weight:3}
\end{align}
\end{subequations}
Moreover, the following commutator relations hold:
\begin{subequations} 
\label{commutator:w}
\begin{equation}
\bigl[L_\beta,\ \mathfrak{w}\bigr]=0 \,,
\end{equation} 
and for some bounded function $f(x,s)$,
\begin{equation} 
\bigl[L_\beta,\ (s-\Upsilon(x))^{\frac{2}{3}}\bigr]=f(x,s) \,.
\end{equation}
\end{subequations} 
\end{lemma}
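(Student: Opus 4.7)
The plan is to read everything off the characteristic structure of $L_\beta$. By construction the defining ODE for $\Upsilon$ says that $s = \Upsilon(x)$ is the integral curve of $L_\beta$ through $(0, \ts)$, and the equation $L_\beta \mathfrak{w} = 0$ says that $\mathfrak{w}$ is transported along every $L_\beta$ characteristic. I would parametrize each characteristic by the $x$-coordinate $x_0$ of its foot on $\{s = \ts\}$: let $s = \Upsilon_{x_0}(x)$ solve $\partial_x \Upsilon_{x_0} = J/(\beta\Sigma)$ with $\Upsilon_{x_0}(x_0) = \ts$, and let the inverse flow define a map $(x, s) \mapsto x_0(x, s)$. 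Conservation then yields the explicit formula $\mathfrak{w}(x, s) = x_0(x, s)^2$, which reduces every claim in the lemma to a statement about the behavior of $x_0$.

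The two commutator identities in \eqref{commutator:w} follow directly from this setup. Since $L_\beta$ is a first-order derivation, $[L_\beta, f] g = (L_\beta f) g$ for any smooth multiplier $f$ and any $g$, so the first identity is immediate from $L_\beta \mathfrak{w} = 0$. For the second, a direct computation gives
\begin{equation*}
L_\beta(s - \Upsilon(x)) = J(x, s) - \Sigma(x, s) \, \tfrac{J(x, \Upsilon(x))}{\Sigma(x, \Upsilon(x))},
\end{equation*}
which vanishes identically on $\{s = \Upsilon(x)\}$ and is therefore $O(s - \Upsilon(x))$ by the $C^1$ regularity of $J$ and $\Sigma$ inherited from \eqref{bt:e:tp}--\eqref{bt:s:tp}. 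Multiplying by $\tfrac{2}{3}(s - \Upsilon(x))^{-1/3}$ then produces the bounded function $f(x, s) = L_\beta((s-\Upsilon(x))^{2/3})$ claimed in the statement.

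The core estimate \eqref{bounds:weight:2} comes from integrating the characteristic ODE. Using the bootstrap \eqref{bt:e:tp} in the form $J(x, s) = x^2 + O(|s - \ts|)$ together with $\Sigma \sim 1$ from \eqref{bt:s:tp}, integration along the arc from $(x_0, \ts)$ to $(x, s)$ gives
\begin{equation*}
\ts - s = \tfrac{x_0^3 - x^3}{3 \beta \bar\Sigma} + O(\delta_*^{4/3}),
\end{equation*}
where $\bar\Sigma$ is an average of $\Sigma$ along the arc. Specializing to $x_0 = 0$ recovers the singular characteristic, $\ts - \Upsilon(x) = -x^3/(3\beta\bar\Sigma) + \text{lower order}$; subtracting gives $s - \Upsilon(x) = -x_0^3/(3\beta\bar\Sigma) + \text{lower order}$, so $|x_0|^3 \sim |s - \Upsilon(x)|$ and hence $\mathfrak{w} = x_0^2 \sim (s - \Upsilon(x))^{2/3}$, which is the comparison. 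The first integral bound in \eqref{bounds:weight:1} is then a one-line computation,
\begin{equation*}
\int_s^{\ts}(s' - \Upsilon(x))^{-2/3}\, ds' = 3 \bigl[(\ts - \Upsilon(x))^{1/3} - (s - \Upsilon(x))^{1/3}\bigr] \le 3 (\ts - s)^{1/3},
\end{equation*}
using the elementary inequality $a^{1/3} - b^{1/3} \le (a - b)^{1/3}$ for $a \ge b \ge 0$; the $\mathfrak{w}^{-1}$ integral bound then follows by comparison. For the derivative bound \eqref{bounds:weight:3}, implicit differentiation of the same characteristic relation in $x$ at fixed $s$ gives $\partial_x x_0 \approx x^2 / x_0^2$ to leading order, whence $\mathfrak{w}^{1/2} \, \partial_x \mathfrak{w} = 2 |x_0| x_0 \, \partial_x x_0 \sim x^2$ is bounded uniformly by $2 \theta^{2/3}$ on the strip $|x| < \theta^{1/3}$.

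The main obstacle in this plan is the $x^2$-degeneracy of the transport vector field $\beta \Sigma / J$ at $x = 0$, which prevents a naive Lipschitz flow argument and threatens the smoothness of the map $(x, s) \mapsto x_0$. I would resolve it with the same flattening substitution used for Lemma~\ref{lemma:max:p:z}: in the variable $Y = x^3/3$, the characteristic ODE reads $\partial_s Y = -\beta \Sigma \, x^2/J = -\beta \Sigma + O(|s - \ts|)$, so the flow in $(Y, s)$ coordinates is Lipschitz with constants uniform in the foot label, and depends smoothly on it. Translating back gives the rigorous leading-order expansions used above, with the $O(\delta_*^{4/3})$ error terms harmlessly absorbed once $\delta_*$ is chosen small (depending only on the bootstrap constants). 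The remaining bookkeeping is then routine.
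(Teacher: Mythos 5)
Your route is genuinely different from the paper's. You exploit the exact conservation law $\mathfrak{w}(x,s)=x_0(x,s)^2$, where $x_0$ is the foot of the $L_\beta$–characteristic on $\{s=\ts\}$, and then reduce all four bounds to estimates on the flow map $(x,s)\mapsto x_0$. The paper instead never introduces $x_0$: it proves the cubic separation $\ts-\Upsilon(x)\sim |x|^3$ by a Gr\"onwall argument on $H(x)=\ts-\Upsilon(x)$ (Step~1), introduces the normalized weight $U=\mathfrak{w}/(s-\Upsilon)^{2/3}$ and shows $|L_\beta U|\le C\,U$ with bounded terminal data, then invokes the maximum principle of Lemma~\ref{lemma:max:p:z} (Steps~2--3); and for \eqref{bounds:weight:3} it transports $Q=\p_s(\mathfrak{w}^{3/2})$ and applies the maximum principle again (Step~5). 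Your approach buys a more concrete picture (the weight is literally the squared label), while the paper's reuses the already-built machinery of Lemma~\ref{lemma:max:p:z} and therefore never has to control $x_0(x,s)$ or its $x$-derivative directly. Your commutator computation, the elementary integral bound in \eqref{bounds:weight:1}, and the implicit-differentiation route to \eqref{bounds:weight:3} are correct and, modulo the point below, would close.

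The gap is in the sentence ``so the flow in $(Y,s)$ coordinates is Lipschitz with constants uniform in the foot label, and depends smoothly on it.'' The flattening $Y=x^3/3$ makes the transported speed $V_3=\beta\Sigma\,x^2/J$ \emph{bounded}, but it does \emph{not} make it Lipschitz in $Y$: since $\p_Y=x^{-2}\p_x$, one finds $\p_Y V_3\sim 2\beta\Sigma/(xJ)$, which blows up like $|x|^{-3}$ near $(0,\ts)$. (The simplification $x^2/J=1+O(|s-\ts|)$ is also not correct; the bootstrap \eqref{bt:e:tp} only gives $J=x^2+\Theta(|\ts-s|)$, so $x^2/J\in(0,1]$ but is not close to $1$ when $|x|^2\lesssim|\ts-s|$.) Consequently the two-sided comparison $\mathfrak{w}\sim(s-\Upsilon)^{2/3}$, and the identity $\p_x x_0\sim J(x,s)/x_0^2$ used for \eqref{bounds:weight:3}, do not follow from a Picard--Lindel\"of argument in the flattened variable. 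What actually closes the argument is a Gr\"onwall estimate on the variational ODE for the flow in the $x$-parametrization: $\p_{x_0}s(x;x_0)=-\tfrac{J(x_0,\ts)}{\beta\Sigma(x_0,\ts)}\exp\bigl(\int_{x_0}^{x}\p_s(J/\beta\Sigma)\,dx''\bigr)\sim-x_0^2$ uniformly, from which both $|s-\Upsilon(x)|\sim|x_0|^3$ and $\p_x x_0=-\p_x s/\p_{x_0}s\sim J(x,s)/x_0^2$ drop out. This Gr\"onwall step is exactly the content of the paper's Steps~1, 3 and 5 in disguise; it is the technical crux of the lemma, not ``routine bookkeeping,'' and should be carried out explicitly to make the proof complete.
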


\begin{proof}[Proof of Lemma~\ref{lemma:2weights}]
The proof proceeds by analyzing the relationship between the weight $\mathfrak{w}$ and the temporal distance to the singular trajectory $\Upsilon(x)$.

\paragraph{Step 1: Cubic separation for \(\Upsilon(x)\).}
We first establish how the time-to-singularity $\ts-\Upsilon(x)$ relates to the spatial position $x$. This requires showing that $J(x,\Upsilon(x))$ remains comparable to $J(x,\ts)=x^2$.

The bootstrap assumptions \eqref{bootstrap:tp:B} imply that $\Sigma$ is bounded above and below ($\tfrac{1}{2\m} \le \Sigma \le 2\m$). Furthermore, $J$ is Lipschitz continuous in the time variable $s$, with a Lipschitz constant $C(\m)$ (which follows from the bounds on $\p_s J$ derived from \eqref{eq:J:B} and \eqref{fast:bw:B}).

Using the Lipschitz continuity of $J$ in time, we compare $J$ at time $\Upsilon(x)$ with $J$ at time $\ts$ to obtain that
\begin{align*}
|J(x,\Upsilon(x)) - J(x,\ts)| \le C(\m) |\Upsilon(x)-\ts|.
\end{align*}
Substituting this into the ODE for $\Upsilon(x)$, we have that 
\begin{align*}
|\p_x\Upsilon(x)| &= \left| \tfrac{J(x,\Upsilon(x))}{\beta\Sigma(x,\Upsilon(x))} \right| \le \tfrac{2\m}{\beta} |J(x,\Upsilon(x))| 
\\
&
\le \tfrac{2\m}{\beta} \left( |J(x,\ts)| + C(\m)|\Upsilon(x)-\ts| \right) 
= \tfrac{2\m}{\beta} \left( x^2 + C(\m)(\ts-\Upsilon(x)) \right).
\end{align*}
Let $H(x) = \ts-\Upsilon(x)$. Then $H(0)=0$ and $|\p_x H(x)| \le C_1(\m) (x^2 + H(x))$. By Grönwall's inequality, this implies that $H(x) \le C_2(\m)|x|^3$.

Now we use this improved estimate on the time difference to find that
\begin{align*}
J(x,\Upsilon(x)) = J(x,\ts) + O(|\Upsilon(x)-\ts|) = x^2 + O(|x|^3).
\end{align*}
For sufficiently small $x$, the $x^2$ term dominates, confirming the comparability
\[
c_*(\m)\,x^2 \;\le\;J\bigl(x,\Upsilon(x)\bigr)\;\le\;C_*(\m)\,x^2.
\]
Substituting this back into the ODE for $\Upsilon(x)$ and integrating from $0$ to $x$ yields the rigorous \textit{cubic separation}:
\begin{equation}\label{eq:cubic_separation}
c_1(\m)\,|x|^3 \ \le\ \ts-\Upsilon(x)\ \le\ C_1(\m)\,|x|^3.
\end{equation}

\paragraph{Step 2: Analyzing the temporal evolution near the trajectory.}
We analyze how the transport operator $L_\beta$ acts on the time difference $(s-\Upsilon(x))$.  We have that 
\begin{align*}
L_\beta(s-\Upsilon(x)) &= (J\p_s + \beta\Sigma\p_x)(s-\Upsilon(x)) 
= J(x,s) - \beta\Sigma(x,s) \p_x\Upsilon(x).
\end{align*}
Substituting  the definition of $\p_x\Upsilon(x)$, we obtain that
\begin{align*}
L_\beta(s-\Upsilon(x)) &= J(x,s) - \beta\Sigma(x,s) \tfrac{J(x,\Upsilon(x))}{\beta\Sigma(x,\Upsilon(x))} 
= \tfrac{J(x,s)\Sigma(x,\Upsilon(x)) - \Sigma(x,s)J(x,\Upsilon(x))}{\Sigma(x,\Upsilon(x))}.
\end{align*}
The numerator is the difference of the quantity $J\Sigma$ evaluated at time $s$ and time $\Upsilon(x)$. By the Mean Value Theorem (MVT) in the time variable, this difference is proportional to $(s-\Upsilon(x))$, and hence
\[
J(s)\Sigma(\Upsilon)-\Sigma(s)J(\Upsilon) = \p_s(J\Sigma)|_{(x,\tau^*)} (s-\Upsilon(x)),
\]
for some intermediate time $\tau^*$. The bootstrap assumptions ensure that the time derivatives $\p_s J$ and $\p_s \Sigma$ are bounded. Therefore, we obtain the crucial bound
\begin{equation}\label{eq:Lbeta_s_minus_Upsilon}
\left|\tfrac{L_\beta(s-\Upsilon(x))}{s-\Upsilon(x)}\right| \le C(\m).
\end{equation}

\paragraph{Step 3: Normalization and the Maximum Principle.}
We introduce the normalized weight $U(x,s)$, motivated by the cubic separation (Step 1) which suggests $x^2 \sim (\ts-\Upsilon)^{\frac{2}{3}}$, defining
\[
U(x,s):=\tfrac{\mathfrak{w}(x,s)}{(s-\Upsilon(x))^{\frac{2}{3}}}.
\]
We analyze the evolution of $U$. Since $L_\beta\mathfrak{w}=0$, we have that
\begin{align*} 
L_\beta U &= \mathfrak{w} L_\beta((s-\Upsilon)^{-\frac{2}{3}}) = \mathfrak{w} \left(-\tfrac{2}{3} (s-\Upsilon)^{-\frac{5}{3}} L_\beta(s-\Upsilon)\right) 
= -\tfrac{2}{3} U \tfrac{L_\beta(s-\Upsilon(x))}{s-\Upsilon(x)}.
\end{align*}
From \eqref{eq:Lbeta_s_minus_Upsilon} in Step 2, the coefficient of $U$ is bounded. Thus, $|L_\beta U| \le C(\m) U$.

We check the terminal condition at $s=\ts$. Using $\mathfrak{w}(x,\ts)=x^2$ and the cubic separation \eqref{eq:cubic_separation}, it follows that
\[
U(x,\ts)=\tfrac{x^2}{(\ts-\Upsilon(x))^{\frac{2}{3}}} \sim \tfrac{x^2}{(x^3)^{\frac{2}{3}}} = 1.
\]
Thus, $U(x,\ts)$ is bounded above and below by positive constants depending on $\m$. We apply the Maximum Principle (Lemma~\ref{lemma:max:p:z}) to $U$. This confirms that $U(x,s)$ remains bounded above and below throughout the domain, establishing the key comparability result \eqref{bounds:weight:2}.

\paragraph{Step 4: Integral Bounds and Commutators.}
The integral bounds \eqref{bounds:weight:1} follow directly from the comparability \eqref{bounds:weight:2}. We integrate the temporal weight:
\begin{align*}
\int_{s}^{\ts}\!\tfrac{ds'}{(s'-\Upsilon(x))^{\frac{2}{3}}}
&=3\left((\ts-\Upsilon(x))^{\frac{1}{3}}-(s-\Upsilon(x))^{\frac{1}{3}}\right).
\end{align*}
Since $(a^{\frac{1}{3}}-b^{\frac{1}{3}}) \le (a-b)^{\frac{1}{3}}$ for $a>b>0$, this is bounded by $3(\ts-s)^{\frac{1}{3}}$. The bound for $\int \frac{1}{\mathfrak{w}} ds'$ follows immediately.

The commutator relations \eqref{commutator:w} are also immediate.  We have that $[L_\beta, \mathfrak{w}] = 0$, and
\[
[L_\beta,(s-\Upsilon)^{\frac{2}{3}}] = \tfrac{2}{3}\,(s-\Upsilon)^{-\frac{1}{3}}\,L_\beta(s-\Upsilon).
\]
Using \eqref{eq:Lbeta_s_minus_Upsilon}, this is $\sim (s-\Upsilon)^{-\frac{1}{3}} (s-\Upsilon) = (s-\Upsilon)^{\frac{2}{3}}$, which is bounded.

\paragraph{Step 5: Bounding the spatial derivative of $\mathfrak{w}$.}
We establish the bound \eqref{bounds:weight:3} on $\|\mathfrak{w}^{\frac{1}{2}}\p_x\mathfrak{w}\|_\infty$. We analyze the auxiliary function $Y = \mathfrak{w}^{\frac{3}{2}}$.
\begin{equation} \label{transport:frak:w_Y}
L_\beta Y = 0, \qquad Y(x, \ts) = (x^2)^{\frac{3}{2}} = |x|^3.
\end{equation}
We want to bound $\p_x Y = \tfrac{3}{2} \mathfrak{w}^{\frac{1}{2}} \p_x \mathfrak{w}$. Since $L_\beta Y=0$, we have $\p_x Y = -\tfrac{J}{\beta\Sigma} \p_s Y$. It suffices to bound $\p_s Y$.

Let $Q(x,s) = \p_s Y$. We derive the evolution equation for $Q$ by differentiating $L_\beta Y=0$ with respect to $s$ to find that
\begin{align*}
0 &= \p_s (L_\beta Y) = L_\beta (\p_s Y) + [\p_s, L_\beta] Y \\
&= L_\beta Q + (\p_s J) \p_s Y + (\p_s(\beta\Sigma)) \p_x Y.
\end{align*}
We substitute $\p_x Y = -\tfrac{J}{\beta\Sigma} Q$ back into the equation and obtain that
\begin{align*}
L_\beta Q &= -(\p_s J) Q - (\p_s(\beta\Sigma)) \left(-\tfrac{J}{\beta\Sigma} Q\right) \\
&= \left( -(\p_s J) + \tfrac{J}{\beta\Sigma}(\p_s (\beta\Sigma)) \right) Q.
\end{align*}
The coefficient of $Q$ on the RQS is bounded due to the bootstrap assumptions (boundedness of $J, \Sigma, \p_s J, \p_s \Sigma$, and $\Sigma$ bounded away from zero). Thus $L_\beta Q = f_1(x,s) Q$ with $f_1$ bounded.

We check the terminal condition at $s=\ts$. Using $L_\beta Y=0$ at $s=\ts$, we have
\begin{align*}
Q(x,\ts) &= -\tfrac{\beta\Sigma(x,\ts)}{J(x,\ts)} \p_x Y(x,\ts).
\end{align*}
Since $J(x,\ts)=x^2$ and $\p_x (|x|^3) = 3x|x|$, we have
\begin{align*}
Q(x,\ts) &= -\tfrac{\beta\Sigma(x,\ts)}{x^2} (3x|x|) = -3\beta\Sigma(x,\ts) \text{sgn}(x).
\end{align*}
This is bounded. Therefore, the Maximum Principle (Lemma \ref{lemma:max:p:z}) applies to $Q$, implying $Q(x,s)$ is bounded everywhere. Consequently, $\p_x Y$ is also bounded, which completes the proof of \eqref{bounds:weight:3}.
\end{proof}

\appendix

\appendix
\section{Iteration and convergence in~$\mathcal{D}$}\label{appendix:D}

The goal of this appendix is to prove the existence and uniqueness of the solution to the Goursat system \eqref{euler:rv:bp:S} (equivalently \eqref{euler:bp:lag}) in the region $\mathcal{D}$. We employ a Picard iteration scheme and demonstrate its convergence using a weighted norm. We work in the characteristic coordinates $(t,s)$ on the triangular domain $\mathcal{T}=\{(t,s):\, t\in[\ts,\tsh],\ \ts\le s\le t\}$.

\subsection{The Iteration Scheme}
We construct the solution using a fully lagged Picard iteration. The scheme is linearized by evaluating all coefficients (e.g., $J, \Sigma$) and nonlinear source terms using the previous iterate $(n-1)$. This results in a sequence of linear transport equations (solved backward-in-time from $t=\tsh$) and ODEs (integrated forward from $s=t$).

The iteration scheme for $n\ge 1$ is defined as follows. (We omit the explicit definitions of the source terms $\mathcal{R}^{(n-1)}$ for brevity, referring to their definitions in the main text).

{\allowdisplaybreaks[4]
\begin{subequations}\label{iteration:T}
\noindent\textbf{Transport Equations (Solved backward-in-time from $t=\tsh$):}
\begin{align}
 J^{(n-1)}\p_s \Sigma^{(n)} - \alpha\,\Sigma^{(n-1)} \p_t \Sigma^{(n)} &= \tfrac{\alpha}{2}\,\Sigma^{(n-1)}\!\left(J^{(n-1)} (\rW^{(n-1)}+\rZ^{(n-1)}) + (d-1)\,\tfrac{ W^{(n-1)}+ Z^{(n-1)}}{\psi^{(n-1)}} \right), \label{it:Sigma:T}\\[2mm]
 J^{(n-1)} \p_s W^{(n)} - 2\alpha\,\Sigma^{(n-1)} \p_t W^{(n)} &= \tfrac{\alpha}{2\gamma} \bigl(\Sigma^{(n-1)}\bigr)^{2} \rK^{(n-1)} J^{(n-1)}
 +(d-1)\alpha\, J^{(n-1)} \tfrac{(W^{(n-1)})^{2}-(Z^{(n-1)})^{2}}{2 \psi^{(n-1)}}, \label{it:W:T}\\[2mm]
J^{(n-1)} \p_s K^{(n)} - \alpha\,\Sigma^{(n-1)} \p_t K^{(n)} &= 0, \label{it:K:T}\\[2mm]
J^{(n-1)}\p_s \rK^{(n)} + \alpha\,\Sigma^{(n-1)} \p_t \rK^{(n)} &=
 - \tfrac{1}{2}\, J^{(n-1)} \rK^{(n-1)} (\rW^{(n-1)} + \rZ^{(n-1)}), \label{it:rK:T}\\[2mm]
J^{(n-1)} \p_s \rW^{(n)} + 2 \alpha\, \Sigma^{(n-1)} \p_t \rW^{(n)} &=
 - \tfrac{1-\alpha}{2}\, J^{(n-1)} \rW^{(n-1)} \rZ^{(n-1)}
 - \tfrac{1+\alpha}{2}\, J^{(n-1)} \big(\rZ^{(n-1)}\big)^{2} + \mathcal{R}^{(n-1)}J^{(n-1)}. \label{it:rW:T}
\end{align}

\noindent\textbf{ODE Equations (Integrated forward-in-time in $s$ from $s=t$):}
\begin{align}
\p_s Z^{(n)} &= \alpha(d-1)\,\tfrac{ (W^{(n-1)})^{2}- (Z^{(n-1)})^{2}}{\psi^{(n-1)}} + \tfrac{\alpha}{2\gamma} \bigl(\Sigma^{(n-1)}\bigr)^{2}\rK^{(n-1)}, \label{it:Z:T}\\[2mm]
\p_s J^{(n)} &= \tfrac{1+\alpha}{2}\, J^{(n-1)} \rZ^{(n-1)} + \tfrac{1-\alpha}{2}\, J^{(n-1)} \rW^{(n-1)} + \tfrac{\alpha}{2\gamma}\, J^{(n-1)} \Sigma^{(n-1)} \rK^{(n-1)}, \label{it:J:T}\\[2mm]
\p_s \!\big(J^{(n)} \rZ^{(n)}\big) &= -\tfrac{\alpha}{4\gamma}\, \Sigma^{(n-1)} \rK^{(n-1)} J^{(n-1)} \big( \rW^{(n-1)} + \rZ^{(n-1)} \big) + J^{(n-1)} \mathcal{R}^{(n-1)}. \label{it:rZ:T}
\end{align}
\noindent\textbf{Geometry Update:}
\begin{align}
\psi^{(n)}(t,s) &= \l(s) + \int_{s}^{t} J^{(n)}(t', s)\,dt'. \label{it:psi:T}
\end{align}

\noindent\textbf{Boundary Conditions:}
On the inflow boundary $t=\tsh$,
\begin{align}
W^{(n)}(\tsh, s) &= W_{\!\circ}(s), \quad K^{(n)}(\tsh, s) = K_{\!\circ}(s), \quad \Sigma^{(n)}(\tsh, s) = \Sigma_{\!\circ}(s), \label{it:bc:W:T}\\[1mm]
\rW^{(n)}(\tsh, s) &= \rW_{\!\circ}(s), \quad \rK^{(n)}(\tsh, s) = \rK_{\!\circ}(s). \label{it:bc:rW:T}
\end{align}
On the shock boundary $s=t$,
\begin{align}
J^{(n)}(t,t) &= \dot{\l}(t) - \lambda_{1}\big|_{\l}^{+}(t). \label{it:bc:J:T}
\end{align}
The variables $Z$ and $J\rZ$ are determined on the boundary using the prescribed $\lambda_1|_\l^+$ and the lagged variables, ensuring consistency with the definition of $\lambda_1$ and the compatibility conditions (see \eqref{bc:rZ:T})
\begin{align}
Z^{(n)}(t, t) &= \tfrac{2}{1+\alpha}\lambda_1\big|_{\l}^{+}(t) - \tfrac{1-\alpha}{1+\alpha}\, W^{(n-1)}(t, t). \label{it:bc:Z:T}\\[1mm]
J^{(n)}\rZ^{(n)}(t,t) &= \text{RHS defined in \eqref{bc:rZ:T}, evaluated at } (n-1). \label{it:bc:rZ:T}
\end{align}
\end{subequations}}

\subsection{Initialization and Uniform Bounds}
\paragraph{Initialization.}
We initialize the scheme ($n=0$) by freezing the boundary data across $\mathcal{T}$. For instance, $W^{(0)}(t,s):= W_{\!\circ}(s)$ and $J^{(0)}(t,s):= \dot{\l}(t)-\lambda_{1}\big|_{\l}^{+}(t)$. The remaining variables are initialized similarly, consistent with the bounds \eqref{data:bounds:T}.

\paragraph{A Priori Bounds for the Iterates.}
The iteration scheme preserves the uniform bounds established in the bootstrap analysis of Lemma~\ref{lemma:D}. We restate these bounds (denoted here as \eqref{a:priori:T}) for the iterates.

\begin{subequations}
\label{a:priori:T}
\begin{align}
\| W^{(n)} \|_{\infty}, \| K^{(n)} \|_{\infty}, \| Z^{(n)} \|_{\infty}, \| \rW^{(n)} \|_{\infty}, \| \rK^{(n)} \|_{\infty} &\le 2\m, \\
\| \p_s \rW^{(n)} \|_{\infty}, \| \p_s \rK^{(n)} \|_{\infty}, \| \p_s \Sigma^{(n)}\|_{\infty} &\le 2\m, \\
\tfrac{1}{2\m} \le \Sigma^{(n)} &\le 2\m, \\
|J^{(n)} \rZ^{(n)}(t,s)| &\le 2\m\, \dsh^{\varepsilon-\tfrac12}\,(t-\ts)^{-\tfrac12}, \\
\tfrac12\,J^{(n)}(t,t) < J^{(n)}(t,s) &< 4\m\,\dsh^{\varepsilon}.
\end{align}
\end{subequations}

\begin{lemma}\label{a:priori:le:T}
There exists \(\dsh\ll 1\) such that if the iterate $(n-1)$ satisfies the bounds \eqref{a:priori:T}, then the solution $(n)$ of the system \eqref{iteration:T} satisfies the same bounds.
\end{lemma}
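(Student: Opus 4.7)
The plan is to mirror the bootstrap structure of Lemma~\ref{lemma:D}, exploiting the fact that the linearized system \eqref{iteration:T} decouples the iterate $(n)$ from itself in all nonlinear products: every quadratic (and every variable coefficient) is evaluated at $(n-1)$, so each equation in \eqref{iteration:T} is either a linear transport equation with bounded coefficients and source, or a plain ODE in $s$. We will therefore propagate the bounds \eqref{a:priori:T} in a definite order, starting from the pieces that depend only on $(n-1)$ data on the shock edge, and finishing with the transport equations, using smallness of $\dsh$ to absorb all losses.

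First, I would verify the improved boundary bounds on $\mathcal{T}^{\l}$. The formulas \eqref{it:bc:J:T}, \eqref{it:bc:Z:T}, \eqref{it:bc:rZ:T} depend only on $(n-1)$ quantities and on the admissible pair data \eqref{data:bounds:T}. Plugging the $(n-1)$ bounds from \eqref{a:priori:T} into \eqref{bc:rZ:T} (as done in Step~1 of the proof of Lemma~\ref{lemma:D}) gives $|Z^{(n)}(t,t)|\le \tfrac{3}{2}\m$ and the singular bound $|J^{(n)}\rZ^{(n)}(t,t)|\le \tfrac{3}{2}\m\,\dsh^{\varepsilon-\frac{1}{2}}(t-\ts)^{-\frac{1}{2}}$, together with $J^{(n)}(t,t)\in[\tfrac{1}{\m}\dsh^{\varepsilon-\frac{1}{2}}(t-\ts)^{\frac{1}{2}},\m\dsh^{\varepsilon}]$ directly from \eqref{it:bc:J:T} and \eqref{lax:T}.

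Second, I would treat the ODEs \eqref{it:Z:T}, \eqref{it:J:T}, \eqref{it:rZ:T}. Their right-hand sides are bounded in $L^\infty_{t,s}$ by $C\m^{3}$ together with the singular contribution $C\m^{3}\dsh^{\varepsilon-\frac{1}{2}}(t-\ts)^{-\frac{1}{2}}$ coming from the factor $J^{(n-1)}\rZ^{(n-1)}$ in $\mathcal{R}^{(n-1)}$. Integrating in $s$ over an interval of length at most $t-\ts\le \dsh$, the non-singular contributions gain a factor $\dsh$, while the singular one gains a factor $(t-s)\le (t-\ts)$, so the product $(t-\ts)\cdot(t-\ts)^{-\frac{1}{2}}= (t-\ts)^{\frac{1}{2}}\le \dsh^{\frac{1}{2}}$ provides the extra smallness needed. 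Combined with the boundary bounds from the previous step, this improves the constants on $Z^{(n)}$, $J^{(n)}$ and $J^{(n)}\rZ^{(n)}$ back to the target in \eqref{a:priori:T}. The geometric bound $\psi^{(n)}>\tfrac{1}{\m}$ follows from \eqref{it:psi:T} and the bounds on $J^{(n)}$ by taking $\dsh$ small relative to $\l(\ts)$.

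Third, I would handle the transport equations \eqref{it:Sigma:T}--\eqref{it:rW:T} by applying the transport maximum principle (Lemma~\ref{lemma:max:pr:T}) with $\beta\in\{\tfrac{\alpha}{2},\alpha,2\alpha\}$ depending on the equation. The key integrability check is
\[
\int_{\ts}^{\tsh}\!\bigl\|J^{(n-1)}\rZ^{(n-1)}(t,\cdot)\bigr\|_{\infty}\,dt \le \int_{\ts}^{\tsh} 2\m\,\dsh^{\varepsilon-\frac{1}{2}}(t-\ts)^{-\frac{1}{2}}\,dt =4\m\,\dsh^{\varepsilon},
\]
so the normalized coefficient $F/J$ is integrable with small integral, and the sources $G/J$ are similarly $O(\dsh^{\varepsilon})$. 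Lemma~\ref{lemma:max:pr:T} then gives $\|W^{(n)}\|_{\infty}\le e^{C\dsh^{\varepsilon}}(\m + C\m^{3}\dsh^{\varepsilon})$, and similarly for $K^{(n)}$, $\rW^{(n)}$, $\rK^{(n)}$, $\Sigma^{(n)}$. Taking $\dsh$ small shrinks the exponential to $\le 2$ and the perturbation to $\le \m$, which recovers the target bounds \eqref{a:priori:T} on all transported quantities. The bounds on $\p_{s}\rW^{(n)},\p_{s}\rK^{(n)},\p_{s}\Sigma^{(n)}$ follow by reading them off the respective equations once the other bounds are in place.

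The main obstacle, as in Lemma~\ref{lemma:D}, is the singular behavior of $J^{(n-1)}\rZ^{(n-1)}$ near $t=\ts$, which enters both the ODE for $J^{(n)}\rZ^{(n)}$ and as a linear coefficient in the transport equations; however, the time-integrability with gain $\dsh^{\varepsilon}$ (or $\dsh^{1/2}$ through the factor $(t-s)^{1/2}$) is exactly what makes the bootstrap close, and this was essentially the content of Steps~1--3 of Lemma~\ref{lemma:D}. The only genuinely new point compared with the nonlinear argument is that, because the linearization is fully lagged, no fixed-point or contraction estimate is needed at this stage; only propagation of the $L^{\infty}$-type bounds \eqref{a:priori:T}. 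Contraction in a weighted norm, used to obtain convergence of the scheme, will be established separately.
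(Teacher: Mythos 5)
Your proposal is correct and follows essentially the same route as the paper: the paper's proof of Lemma~\ref{a:priori:le:T} simply invokes the arguments of Lemma~\ref{lemma:D}, i.e.\ boundary control on the shock edge, direct integration of the lagged ODEs for $Z^{(n)}$, $J^{(n)}$, $J^{(n)}\rZ^{(n)}$ exploiting the $(t-\ts)^{-\frac{1}{2}}$ integrability, and the transport maximum principle (Lemma~\ref{lemma:max:pr:T}) with the $\int \|J\rZ\|_\infty\,dt \lesssim \m\,\dsh^{\varepsilon}$ check for the transported variables, closing with smallness of $\dsh$. Your observation that the fully lagged linearization removes any need for a fixed-point argument at this stage (contraction being deferred to the weighted-norm lemma) matches the paper's structure exactly; the only nitpick is that the relevant transport speeds are $\beta\in\{\alpha,2\alpha\}$, not $\alpha/2$, which is immaterial to the argument.
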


\begin{proof}[Proof of Lemma~\ref{a:priori:le:T}]
The propagation of these bounds follows exactly the arguments presented in the proof of Lemma~\ref{lemma:D}, utilizing the maximum principle (Lemma~\ref{lemma:max:pr:T}) for the linear transport equations and direct integration for the ODEs.
\end{proof}

\subsection{Convergence}
To establish convergence, we analyze the differences between successive iterates, $\delta f^{(n)}:=f^{(n)}-f^{(n-1)}$. Due to the lagged boundary conditions for $Z^{(n)}$ and $(J\rZ)^{(n)}$ at $s=t$ (see \eqref{it:bc:Z:T} and \eqref{it:bc:rZ:T}), the differences for these variables are not necessarily small even if $\dsh$ is small. We must introduce a weighted norm to control their influence.

We define the vector of differences:
\[
V^{(n)}:=\big(\delta\Sigma^{(n)},\ \delta W^{(n)},\ \delta K^{(n)},\ \delta\psi^{(n)},\ \delta J^{(n)},\ \delta\rW^{(n)},\ \delta\rK^{(n)},\ \mathsf{c}_{1}\,\delta(J\rZ)^{(n)},\ \mathsf{c}_{2}\,\delta Z^{(n)}\big).
\]
We define the norm $\|V^{(n)}\|_{\infty}$ as the maximum of the absolute values of the components of $V^{(n)}$. We will choose the weights according to the hierarchy $0<\mathsf{c}_{1}\ll \mathsf{c}_{2}\ll \dsh^{\varepsilon}\ll 1$.

\begin{lemma}\label{lemma:conv:D}
With an appropriate hierarchical choice of weights \(\mathsf{c}_{1},\mathsf{c}_{2}\) and sufficiently small $\dsh$, the iteration scheme \eqref{iteration:T} is a contraction:
\[
\|V^{(n)}\|_{\infty}\le \tfrac12\,\|V^{(n-1)}\|_{\infty}.
\]
\end{lemma}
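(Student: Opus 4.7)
The plan is to subtract consecutive iterates in \eqref{iteration:T} to obtain equations for the differences $\delta f^{(n)} := f^{(n)} - f^{(n-1)}$ that mirror the structure of the iteration itself, but with right-hand sides that are \emph{linear in $V^{(n-1)}$} and with either homogeneous data (for the seven ``good'' components $\delta\Sigma, \delta W, \delta K, \delta\psi, \delta J, \delta\rW, \delta\rK$) or lagged shock-edge data (for $\delta Z$ and $\delta(J\rZ)$). The uniform bounds \eqref{a:priori:T} control every coefficient and every occurrence of $f^{(n-1)}$ by~$\m$, so the whole problem reduces to an inhomogeneous linear estimate in nine components.

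First I would dispose of the seven ``good'' differences. Each of $\delta\Sigma, \delta W, \delta K, \delta\rW, \delta\rK$ satisfies a linear transport equation in the form of Lemma~\ref{lemma:max:pr:T} with zero inflow data on $t=\tsh$, while $\delta\psi$ and $\delta J$ are produced by direct $s$-integration from their prescribed boundaries. Repeating the integrations of Steps~2--5 of the proof of Lemma~\ref{lemma:D}, the source terms split into two parts: a ``tame'' part that is bounded pointwise by $C(\m)\|V^{(n-1)}\|_\infty$ and integrates to an overall factor $\dsh^\varepsilon$ via
\[
\int_{\ts}^{\tsh}\!\dsh^{\varepsilon-\frac{1}{2}}(t-\ts)^{-\frac{1}{2}}\,dt \;\lesssim\; \dsh^{\varepsilon},
\]
and a ``bad'' part containing $\delta Z^{(n-1)}$ or $\delta(J\rZ)^{(n-1)}$, which, after dividing by the norm weights, contributes $C(\m)(\mathsf{c}_2^{-1}+\mathsf{c}_1^{-1})\dsh^{\varepsilon}\,\|V^{(n-1)}\|_\infty$.

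Second I would treat the two exceptional components. Both are pure $s$-ODEs integrated from $s=t$, and their shock-edge values are \emph{not} small: by \eqref{it:bc:Z:T} one has $\delta Z^{(n)}(t,t)=-\tfrac{1-\alpha}{1+\alpha}\delta W^{(n-1)}(t,t)$, and \eqref{it:bc:rZ:T} expresses $\delta(J\rZ)^{(n)}(t,t)$ as a linear combination of the boundary traces of $\delta W^{(n-1)},\delta\Sigma^{(n-1)},\delta K^{(n-1)},\delta \rW^{(n-1)}$ and $\delta Z^{(n-1)}(t,t)$. All these traces are already $O(\|V^{(n-1)}\|_\infty)$ by the previous step, so $\|\delta Z^{(n)}\|_\infty\le C(\m)\|V^{(n-1)}\|_\infty$ and, after weighting, $\mathsf{c}_1\,\|\delta(J\rZ)^{(n)}\|_\infty \le C(\m)\,\mathsf{c}_1\|V^{(n-1)}\|_\infty$. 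Assembling the nine estimates and closing the hierarchy amounts to choosing $\mathsf{c}_2$ so that $C(\m)\mathsf{c}_2\le \tfrac{1}{8}$, then $\mathsf{c}_1\le \mathsf{c}_2^2$ so that the cross-coupling $\mathsf{c}_1 C(\m)/\mathsf{c}_2$ is $\le \tfrac{1}{8}$, and finally taking $\dsh$ so small that $C(\m)(1+\mathsf{c}_1^{-1}+\mathsf{c}_2^{-1})\dsh^{\varepsilon}\le \tfrac{1}{4}$, yielding the contraction factor $\tfrac{1}{2}$.

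The main obstacle is the self-feedback of $\delta(J\rZ)$: it appears on the right-hand sides of \eqref{it:W:T}, \eqref{it:rW:T}, \eqref{it:Z:T} and of the equation for $\delta(J\rZ)$ itself, multiplied by $O(1)$ factors that are only pointwise bounded by the singular profile $\dsh^{\varepsilon-\frac{1}{2}}(t-\ts)^{-\frac{1}{2}}$. The smallness needed for the contraction is recovered only upon integrating this factor in $t$, which exactly reproduces the $\dsh^{\varepsilon}$ gain used in the bootstrap; the weight $\mathsf{c}_1$ must be small enough to compensate the $\mathsf{c}_1^{-1}$ loss from dividing by the weight, but large enough not to defeat the corresponding estimate on $\delta(J\rZ)^{(n)}$ itself. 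The delicate aspect of the proof is therefore not any new analytic input, but a careful bookkeeping of this hierarchy together with a verification that the maximum principle of Lemma~\ref{lemma:max:pr:T} applies to each linearized transport equation with coefficients whose integrability in $t$ is inherited from the a priori bounds \eqref{a:priori:T}; both checks are direct adaptations of the bootstrap analysis already carried out in Lemma~\ref{lemma:D}.
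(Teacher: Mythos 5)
Your proposal is correct and follows essentially the same route as the paper: difference the iterates, apply the transport maximum principle (Lemma~\ref{lemma:max:pr:T}) to the components with homogeneous inflow data, integrate the $s$-ODEs for the geometry and for $\delta Z^{(n)}$, $\delta(J\rZ)^{(n)}$ whose lagged shock-edge data are not small, and close with the hierarchical choice $\mathsf{c}_2$, then $\mathsf{c}_1\ll\mathsf{c}_2$, then $\dsh$ last. The only imprecision is the passing claim that all boundary traces feeding $\delta(J\rZ)^{(n)}(t,t)$ are $O(\|V^{(n-1)}\|_\infty)$ (the $\delta Z^{(n-1)}$ trace costs a factor $\mathsf{c}_2^{-1}$), but your final assembly explicitly carries the cross-coupling term $\mathsf{c}_1 C(\m)/\mathsf{c}_2$, exactly as in the paper, so the argument closes.
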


\begin{proof}[Proof of Lemma~\ref{lemma:conv:D}]
Let $C(\m)$ and $C'(\m)$ denote constants depending only on the uniform bounds \eqref{a:priori:T}.

\emph{Step 1: Estimates for Transport Variables.}
The differences for the transport variables (e.g., $\delta W^{(n)}$) satisfy linear transport equations with zero inflow data. The source terms $G$ arise from linearization and are bounded by $C(\m)\|V^{(n-1)}\|_{\infty}$. Applying the maximum principle (Lemma~\ref{lemma:max:pr:T}) and using the lower bound $J^{(n-1)} \ge C(\m)^{-1} \dsh^\varepsilon$ (from \eqref{a:priori:T}), we obtain:
\begin{equation}\label{conv:w:T}
\|\delta W^{(n)}\|_{\infty} \le C(\m) \tfrac{\|V^{(n-1)}\|_{\infty}}{\dsh^\varepsilon} \dsh = C(\m) \dsh^{1-\varepsilon} \|V^{(n-1)}\|_{\infty}.
\end{equation}
The same bound applies to $\delta\Sigma^{(n)}, \delta K^{(n)}, \delta\rW^{(n)}, \delta\rK^{(n)}$.

\emph{Step 2: Estimates for Geometry (\(\delta J^{(n)}\) and \(\delta\psi^{(n)}\)).}
By differencing the ODE \eqref{it:J:T} and integrating from $s=t$ (where $\delta J^{(n)}(t,t)=0$), we obtain:
\begin{align}\label{con:J:T}
\|\delta J^{(n)}\|_{\infty} &\le C(\m)\,\dsh\, (\|\delta(J\rZ)^{(n-1)}\|_{\infty} + \|\delta \rW^{(n-1)}\|_{\infty} + \dots)  
\le C(\m)\,\tfrac{\dsh}{\mathsf{c}_1} \|V^{(n-1)}\|_{\infty}.
\end{align}
The estimate for $\delta\psi^{(n)}$ follows similarly.

\emph{Step 3: Estimates for ODEs (\(\delta Z^{(n)}\) and \(\delta(J\rZ)^{(n)}\)).}
Differencing \eqref{it:Z:T} and the boundary condition \eqref{it:bc:Z:T}, the boundary term introduces a direct dependence on $\|\delta W^{(n-1)}\|_{\infty}$, which is not multiplied by $\dsh$:
\begin{align}\label{con:Z:T}
\|\delta Z^{(n)}\|_{\infty}
&\le C(\m)\|\delta W^{(n-1)}\|_{\infty} + C(\m)\,\dsh\,\|V^{(n-1)}\|_{\infty} 
\le C'(\m)\,\|V^{(n-1)}\|_{\infty}.
\end{align}
Similarly, differencing \eqref{it:rZ:T} and its boundary condition \eqref{it:bc:rZ:T} yields:
\begin{align}\label{con:jrZ:T}
\|\delta(J\rZ)^{(n)}\|_{\infty}
&\le C(\m)\,(\|\delta Z^{(n-1)}\|_{\infty} + \|\delta W^{(n-1)}\|_{\infty} + \dots) + C(\m)\,\dsh\,\|V^{(n-1)}\|_{\infty} \notag\\
&\le \left( \tfrac{C(\m)}{\mathsf{c}_{2}} + C'(\m) \right) \|V^{(n-1)}\|_{\infty}.
\end{align}

\emph{Step 4: Contraction.}
We now combine these estimates (using the definition of the weighted norm $\|V^{(n)}\|_{\infty}$ as the maximum of its components) as follows:
\begin{align*}
\|V^{(n)}\|_{\infty} \le \max\Bigg\{ &\underbrace{C(\m) (\dsh^{1-\varepsilon} + \tfrac{\dsh}{\mathsf{c}_1)}}_{\text{From Steps 1 \& 2}} \,, 
\underbrace{\mathsf{c}_{1}\left( \tfrac{C(\m)}{\mathsf{c}_{2}} + C'(\m) \right)}_{\text{From } \mathsf{c}_1 \delta(J\rZ)} \,,
\underbrace{\mathsf{c}_{2} C'(\m)}_{\text{From } \mathsf{c}_2 \delta Z} \Bigg\} \cdot \|V^{(n-1)}\|_{\infty} \,.
\end{align*}
We next utilize the hierarchy $0<\mathsf{c}_{1}\ll \mathsf{c}_{2}\ll \dsh^{\varepsilon}\ll 1$ sequentially to ensure the maximum coefficient is less than $\tfrac{1}{2} $:
\begin{enumerate}
    \item We choose $\mathsf{c}_2$ small enough such that $\mathsf{c}_{2} C'(\m) < \tfrac{1}{4}   $.
    \item W choose the ratio $R = \tfrac{\mathsf{c}_2}{\mathsf{c}_1}$ large enough such that $\tfrac{\mathsf{c}_1 C(\m)}{\mathsf{c}_{2}} = \tfrac{C(\m)}{R} < 
    \tfrac{1}{8} $. Since $\mathsf{c}_1 \ll \mathsf{c}_2$, the remaining term $\mathsf{c}_1 C'(\m)$ is small (e.g., $< \tfrac{1}{8} $). 
    Thus, the coefficient from $\delta(J\rZ)$ is less than $\tfrac{1}{4} $.
    \item Finally, with $\mathsf{c}_1$ and $\mathsf{c}_2$ fixed, we choose $\dsh$ sufficiently small  so that $C(\m) (\dsh^{1-\varepsilon} + \tfrac{\dsh}{\mathsf{c}_1}) < \tfrac{1}{4} $.
\end{enumerate}
This yields the contraction
\[
\|V^{(n)}\|_{\infty} \le \max\left( \tfrac{1}{4}, \tfrac{1}{4}, \tfrac{1}{4} \right) \|V^{(n-1)}\|_{\infty} = \tfrac14 \|V^{(n-1)}\|_{\infty} < \tfrac12\,\|V^{(n-1)}\|_{\infty}.
\]
\end{proof}

 \section{Iteration and convergence in \texorpdfstring{$\mathcal{B}$}{B}}\label{appendix:B}

In this appendix, we construct a solution to the backward-in-time system \eqref{fast:bw:B} in the domain~$\mathcal{B}$. Standard local well‑posedness theory is not applicable because the system is degenerate at the terminal time $s=\ts$ (the Jacobian $J(x,\ts)=x^2$ vanishes at $x=0$) and the time derivatives of the data are singular (see \eqref{bound:initial:B}). We introduce a specialized iteration scheme and prove its convergence using weighted norms tailored to the degeneracy.

\subsection{The Iteration Scheme}

We employ a semi-implicit iteration scheme in the characteristic coordinates $(x,s)$. We define the transport operators $L_\beta^{(n-1)} = J^{(n-1)}\p_s + \beta \Sigma^{(n-1)}\p_x$, consistent with the structure of \eqref{fast:bw:B}.

{\allowdisplaybreaks[4]
\begin{subequations}\label{iteration:shock:formation}
\noindent\textbf{Geometry Update (Characteristic Map):}
\begin{align}
\p_s \psi^{(n)}(x,s) &= \lambda_1^{(n-1)}(x,s) = \left(\tfrac{1+\alpha}{2} Z^{(n-1)} + \tfrac{1-\alpha}{2} W^{(n-1)}\right)(x,s), \quad \psi^{(n)}(x,\ts) = x^3/3+r_*. \label{it:psi:S}
\end{align}

\noindent\textbf{Transport Equations (Solved backward from $s=\ts$):}
\begin{align}
L_\alpha^{(n-1)} \Sigma^{(n)}
&= \tfrac{\alpha}{2}\,\Sigma^{(n-1)}\!\left(J^{(n-1)} (\rW^{(n-1)} + \rZ^{(n-1)})
 + (d-1)\,\tfrac{ W^{(n-1)}+ Z^{(n-1)}}{\psi^{(n-1)}}\right), \label{it:Sigma:S}\\[1mm]
L_{2\alpha}^{(n-1)} W^{(n)}
&= \tfrac{\alpha}{2\gamma}\big(\Sigma^{(n-1)}\big)^{2} \rK^{(n-1)} J^{(n-1)}
 + (d-1)\alpha\,J^{(n-1)}\,\tfrac{(W^{(n-1)})^{2}-(Z^{(n-1)})^{2}}{2\,\psi^{(n-1)}}, \label{it:W:S}\\[1mm]
L_{\alpha}^{(n-1)} K^{(n)} &= 0. \label{it:K:S}
\end{align}

\noindent\textbf{Gradient Equations (Riccati Structure):}
\begin{align}
L_{\alpha}^{(n-1)} \rK^{(n)}
&= - \tfrac{1}{2}\,J^{(n-1)} \rK^{(n)} (\rW^{(n-1)} + \rZ^{(n-1)}), \label{it:rK:S}\\[1mm]
L_{2\alpha}^{(n-1)} \rW^{(n)}
&= - \tfrac{1-\alpha}{2}\,J^{(n-1)} \rW^{(n)} \rZ^{(n-1)}
 - \tfrac{1+\alpha}{2}\,J^{(n-1)}\big(\rW^{(n)}\big)^{2} \label{it:rW:S}\\
&\quad - \tfrac{\alpha}{4\gamma}\,\Sigma^{(n-1)} \rK^{(n-1)} J^{(n-1)} \rZ^{(n-1)}
 + \tfrac{\alpha}{4\gamma}\,J^{(n-1)} \Sigma^{(n-1)} \rK^{(n-1)} \rZ^{(n-1)}
 + \mathcal{R}^{(n-1)} J^{(n-1)}. \notag
\end{align}

\noindent\textbf{ODE Equations (Integrated backward-in-time in $s$):}
\begin{align}
\p_s Z^{(n)}
&= \alpha(d-1)\,\tfrac{ (W^{(n-1)})^{2}- (Z^{(n-1)})^{2}}{\psi^{(n-1)}}
 + \tfrac{\alpha}{2\gamma}\big(\Sigma^{(n-1)}\big)^{2}\,\rK^{(n-1)}, \label{it:Z:S}\\[1mm]
\p_s J^{(n)}
&= \tfrac{1+\alpha}{2}\,J^{(n-1)} \rZ^{(n-1)}
 + \tfrac{1-\alpha}{2}\,J^{(n-1)} \rW^{(n-1)}
 + \tfrac{\alpha}{2\gamma}\,J^{(n-1)} \Sigma^{(n-1)} \rK^{(n-1)}, \label{it:J:S}\\[1mm]
\p_s\!\big(J^{(n)} \rZ^{(n)}\big)
&= -\tfrac{\alpha}{4\gamma}\,\Sigma^{(n-1)} \rK^{(n-1)} J^{(n-1)}\big(\rW^{(n-1)}+\rZ^{(n-1)}\big)
 + J^{(n-1)} \mathcal{R}^{(n-1)}. \label{it:rZ:S}
\end{align}

\noindent\textbf{Terminal Conditions (at $s=\ts$):}
All variables satisfy the terminal conditions specified in \eqref{in:cond}, e.g., $J^{(n)}(x,\ts) = J_{0}(x)$, $W^{(n)}(x,\ts) = W_{0}(x)$, etc.
\end{subequations}}

\begin{remark}[Motivation for the Semi-Implicit Scheme]
The scheme for the DRVs \(\rW^{(n)}\) and \(\rK^{(n)}\) (Eqs. \eqref{it:rK:S} and \eqref{it:rW:S}) is semi-implicit. In particular, \(\rW^{(n)}\) appears quadratically in \eqref{it:rW:S}. When viewed along the characteristics, these become Riccati-type (for $\rW^{(n)}$) or linear (for $\rK^{(n)}$) scalar ODEs with coefficients depending on the previous iterate. This structure is essential for propagating the weighted derivative bounds (see \eqref{ap:kt} below) required to handle the singular initial data, as it allows for the effective use of the maximum principle and the weighted analysis (Lemma~\ref{lemma:2weights}).
\end{remark}

\subsection{Initialization and Uniform Bounds}

\paragraph{Auxiliary flows.}
We utilize the auxiliary characteristic flows associated with the lagged coefficients $(J^{(n-1)}, \Sigma^{(n-1)})$, which track the propagation of singularities:
\begin{align*}
\p_x \Upsilon_{1}^{(n)} &= -\,\tfrac{J^{(n-1)}}{2\alpha\,\Sigma^{(n-1)}}\circ \Upsilon_{1}^{(n)}, \qquad
\Upsilon_{1}^{(n)}(0)=\ts, \\
\p_x \Upsilon_{2}^{(n)} &= -\,\tfrac{J^{(n-1)}}{\alpha\,\Sigma^{(n-1)}}\circ \Upsilon_{2}^{(n)}, \qquad
\Upsilon_{2}^{(n)}(0)=\ts.
\end{align*}

\paragraph{Initialization.}
We initialize the scheme ($n=0$) by freezing the terminal data in time, except for \(J^{(0)}\), which we adjust slightly based on its evolution equation to ensure strict positivity for $s<\ts$.

\paragraph{A Priori Bounds for the Iterates.}
The iteration scheme \eqref{iteration:shock:formation} propagates the uniform bounds established in the bootstrap analysis of Lemma~\ref{lemma:bootstrap:B}. We restate the key bounds (denoted here as \eqref{a:priori}) for the iterates.

\begin{subequations}\label{a:priori}
\begin{align}
\|W^{(n)}\|_{\infty}, \|Z^{(n)}\|_{\infty}, \|\rW^{(n)}\|_{\infty}, \|\rK^{(n)}\|_{\infty} &\le 2\m, \label{ap:z}\\
-\,2\m \le J^{(n)}\rZ^{(n)} &< -\,\tfrac{1}{2\m}, \quad \|\p_x\!\big(J^{(n)}\rZ^{(n)}\big)\|_{\infty} < 2\m, \label{ap:rzx}\\
\tfrac{1}{2\m} < \Sigma^{(n)} &< 2\m, \quad |\p_s \Sigma^{(n)}|+|\p_x \Sigma^{(n)}| < 2\m, \label{ap:ps}\\
|\p_s \rW^{(n)}| &< \tfrac{4\m}{\big(s-\Upsilon_{1}^{(n)}(x)\big)^{\frac{2}{3}}}, \qquad
|\p_s \rK^{(n)}| < \tfrac{4\m}{\big(s-\Upsilon_{2}^{(n)}(x)\big)^{\frac{2}{3}}}, \label{ap:kt}\\
x^{2}-\tfrac{\m}{4}\,(s-\ts) < J^{(n)}(x,s) &< x^{2}-4\m\,(s-\ts), \quad \|\p_x J^{(n)}\|_{\infty} < 4\m. \label{ap:ex}
\end{align}
\end{subequations}

\begin{lemma}
There exists \(\delta_{*}=\ts-\ti \ll 1\) such that if the iterate $(n-1)$ satisfies the bounds \eqref{a:priori}, then the solution $(n)$ of the system \eqref{iteration:shock:formation} satisfies the same bounds.
\end{lemma}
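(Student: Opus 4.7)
The plan is to mirror, iterate by iterate, the bootstrap argument of Lemma~\ref{lemma:bootstrap:B}, exploiting the fact that every equation in \eqref{iteration:shock:formation} is linear in the $n$-th iterate with coefficients built from the $(n-1)$-th iterate, for which the hypotheses \eqref{a:priori} furnish uniform $L^\infty$ control and the requisite lower bound on $J^{(n-1)}$. The closure proceeds in the same logical order as in Lemma~\ref{lemma:bootstrap:B}: (i) the undifferentiated variables $W^{(n)}, K^{(n)}, \Sigma^{(n)}$ via the transport maximum principle; (ii) the ODE variables $Z^{(n)}, J^{(n)}, J^{(n)}\rZ^{(n)}$ by direct integration in $s$ from the terminal data at $s=\ts$; (iii) pointwise bounds on $\rW^{(n)}$ and $\rK^{(n)}$ exploiting the semi-implicit (Riccati/linear) structure of \eqref{it:rK:S}–\eqref{it:rW:S}; (iv) the spatial derivative bounds $\|\p_x (J^{(n)}\rZ^{(n)})\|_\infty$, $\|\p_x J^{(n)}\|_\infty$; and finally (v) the weighted estimates \eqref{ap:kt} on $\p_s\rW^{(n)}$ and $\p_s\rK^{(n)}$.

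For step (i), Lemma~\ref{lemma:max:p:z} applied to the linear transport equations \eqref{it:Sigma:S}–\eqref{it:K:S} yields bounds of the form $\|Q^{(n)}\|_\infty \le e^{C(\m)\delta_*^{1/3}}(\|Q_0\|_\infty + C(\m)\delta_*^{1/3})$, since the coefficients $f_1, f_2$ built from the $(n-1)$-iterate are uniformly bounded in terms of $\m$; for $\delta_*$ small this improves $4\m$ to $2\m$. For step (ii), integrating \eqref{it:J:S} backwards from $s=\ts$ and applying Gr\"onwall with $\rZ^{(n-1)} = (J^{(n-1)}\rZ^{(n-1)})/J^{(n-1)}$ (using the upper/lower control of $J^{(n-1)}$ and the pointwise bound on $J^{(n-1)}\rZ^{(n-1)}$ provided by \eqref{ap:ex} and \eqref{ap:rzx}) gives the two-sided bound on $J^{(n)}$ in \eqref{ap:ex}; the bound on $J^{(n)}\rZ^{(n)}$ follows from integrating \eqref{it:rZ:S}, noting that the source is $L^\infty$ of size $C(\m)$, so the perturbation is $O(\delta_*)$. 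Step (iii) for $\rW^{(n)}$ reduces along the characteristics of $L_{2\alpha}^{(n-1)}$ to a scalar Riccati ODE $\p_s \rW^{(n)} = -\tfrac{1+\alpha}{2}(\rW^{(n)})^2 + F^{(n-1)}$ with $\|F^{(n-1)}\|_\infty \le C(\m)$; starting from bounded terminal data $\rW_0$ and integrating backward over a time window of length $\delta_*$, the standard Riccati estimate (mirroring the mechanism in \eqref{simple-rz}) keeps $|\rW^{(n)}| < 2\m$ provided $\delta_* \m \ll 1$. The estimate for $\rK^{(n)}$ from \eqref{it:rK:S} is linear in $\rK^{(n)}$ and follows immediately from Lemma~\ref{lemma:max:p:z}.

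The main obstacle, just as in Lemma~\ref{lemma:bootstrap:B}, is step (v): propagating the singular weighted bound \eqref{ap:kt} through the iteration. Following Lemma~\ref{lemma:2weights}, I introduce the iterate-level weight $\mathfrak{w}^{(n)}$ solving $L_{2\alpha}^{(n-1)} \mathfrak{w}^{(n)} = 0$ with $\mathfrak{w}^{(n)}(x,\ts) = x^2$, and analogously an entropy weight for $L_{\alpha}^{(n-1)}$. Since the hypotheses \eqref{a:priori} on $(J^{(n-1)}, \Sigma^{(n-1)})$ are precisely the bootstrap hypotheses required by Lemma~\ref{lemma:2weights}, the comparison $\mathfrak{w}^{(n)} \sim (s - \Upsilon_1^{(n)}(x))^{2/3}$, the commutator identity $[L_{2\alpha}^{(n-1)}, \mathfrak{w}^{(n)}] = 0$, and the time-integrability bound \eqref{bounds:weight:1} all hold at the iteration level. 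Differentiating \eqref{it:rW:S} in $s$, multiplying by $\mathfrak{w}^{(n)}$, and using the vanishing commutator converts the equation for $\mathfrak{w}^{(n)} \p_s \rW^{(n)}$ into a linear transport equation of the form \eqref{transport:B}, with sources of size $C(\m)$ uniformly in $n$; the terminal datum $\mathfrak{w}^{(n)}(x,\ts)\, \p_s \rW^{(n)}(x,\ts) = x^2 \cdot \p_s \rW^{(n)}(x,\ts)$ is bounded by $C(\m)$ thanks to \eqref{bound:initial:B}, the $x^2$ factor cancelling the inverse-square singularity exactly. Applying Lemma~\ref{lemma:max:p:z} and dividing through by $\mathfrak{w}^{(n)} \sim (s-\Upsilon_1^{(n)})^{2/3}$ improves $4\m$ to $2\m$ in \eqref{ap:kt}; the analogous argument with the entropy weight handles $\p_s \rK^{(n)}$. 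Finally, the spatial derivative estimate on $\p_x(J^{(n)} \rZ^{(n)})$ required in step (iv) follows verbatim the proof of Lemma~\ref{lemma:bootstrap:B}, substituting $\p_x \rW^{(n)} = \tfrac{J^{(n-1)}}{2\alpha \Sigma^{(n-1)}} \p_s \rW^{(n)} + \mathcal{N}/\Sigma^{(n-1)}$ with $\|\mathcal{N}\|_\infty \le C(\m)$, and controlling the $\p_s \rW^{(n)}$ contribution by its weighted bound together with the $s$-integrability $\int_s^{\ts} \mathfrak{w}^{(n)}(x,s')^{-1}\, ds' \le \m \delta_*^{1/3}$ from \eqref{bounds:weight:1}. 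Choosing $\delta_* = \ts - \ti$ sufficiently small (depending only on $\m$) closes all five steps simultaneously.
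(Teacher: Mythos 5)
Your plan is correct and coincides with the paper's: the paper's own proof is the single sentence that propagation of \eqref{a:priori} follows the bootstrap argument of Lemma~\ref{lemma:bootstrap:B} via Lemmas~\ref{lemma:max:p:z} and \ref{lemma:2weights}, and you supply the iterate-level details in exactly that order, including the crucial observation that the iterate weight $\mathfrak{w}^{(n)}$ and the $x^2$ cancellation at the terminal datum carry over verbatim because the lagged coefficients $(J^{(n-1)},\Sigma^{(n-1)})$ satisfy the hypotheses of Lemma~\ref{lemma:2weights}.

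One caveat worth flagging in step (iii): after dividing \eqref{it:rW:S} by $J^{(n-1)}$ to write a Riccati ODE in the $s$-parametrization, the remaining linear coefficient and the inhomogeneity $F^{(n-1)}$ contain $\rZ^{(n-1)}=(J^{(n-1)}\rZ^{(n-1)})/J^{(n-1)}$ and $\mathcal{R}^{(n-1)}$, both of which scale like $(J^{(n-1)})^{-1}$ near $(0,\ts)$ and are therefore not pointwise bounded by $C(\m)$ as you claim. What closes the Riccati estimate, and what Lemma~\ref{lemma:max:p:z} actually provides, is the \emph{time-integrability} $\int_s^{\ts}(J^{(n-1)})^{-1}\circ\Phi\,ds'\lesssim\delta_*^{1/3}$ along the regularized characteristics: the integrated coefficients and sources are $O(\m\,\delta_*^{1/3})$ even though the pointwise ones are not $O(\m)$. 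This does not invalidate the argument (you cite the correct lemma), but the claim $\|F^{(n-1)}\|_\infty\le C(\m)$ should be replaced by the corresponding $L^1_s$-along-characteristics bound, or one should keep the equation in the form $L_{2\alpha}^{(n-1)}\rW^{(n)}=-\tfrac{1+\alpha}{2}J^{(n-1)}(\rW^{(n)})^2+\dots$ with the $J^{(n-1)}$ factor intact, where all coefficients and sources are genuinely bounded.
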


\begin{proof}
The propagation of these bounds follows exactly the arguments presented in the proof of Lemma~\ref{lemma:bootstrap:B}, utilizing the maximum principle (Lemma~\ref{lemma:max:p:z}) and the weighted analysis (Lemma~\ref{lemma:2weights}).
\end{proof}

\subsection{Convergence}
The convergence proof requires specialized techniques due to the singular behavior of $\p_s\rW$ and $\p_s\rK$. We cannot establish a contraction in a standard $L^\infty$ norm and must use a weighted norm combined with smooth correctors.

\paragraph{Weights and Correctors.}
Let \(\mathfrak{w}_{1}^{(n)}\) and \(\mathfrak{w}_{2}^{(n)}\) be the weights defined in Lemma~\ref{lemma:2weights} (with \(\beta=\alpha\) and \(\beta=2\alpha\), respectively), constructed using the lagged coefficients \(J^{(n-1)},\Sigma^{(n-1)}\).

We introduce smooth correctors $\Gamma_{1}^{(n)},\Gamma_{2}^{(n)}$ that capture the leading-order, smooth behavior of $\rK$ and $\rW$ near the singularity $(0,\ts)$. They solve the transport/Riccati equations derived from \eqref{it:rK:S} and \eqref{it:rW:S}, but with smooth terminal data chosen such that \(\p_x\Gamma_{j}\in L^{\infty}\) and they match the traces at the singularity:
\[
\Gamma_{2}(0,\ts)=\rW_{0}(0),\qquad \Gamma_{1}(0,\ts)=\rK_{0}(0).
\]

The key idea is that the difference between the DRV and its corrector, e.g., $\rW^{(n)} - \Gamma_2^{(n)}$, represents the singular part. When weighted by $\mathfrak{w}_j^{(n)}$, this quantity becomes bounded and allows for a contraction argument.

\paragraph{Weighted Norm.}
We define the vector of differences \(\delta f^{(n)}:=f^{(n)}-f^{(n-1)}\) and the associated norm using the vector $V^{(n)}$:
\[
\begin{aligned}
V^{(n)}:=\big(&\delta\Sigma^{(n)},\,\delta W^{(n)},\,\delta K^{(n)},\,\delta \psi^{(n)},\,\delta J^{(n)},\,
\delta(J\rZ)^{(n)},\,\delta Z^{(n)}, \\
&\delta\big((\rW-\Gamma_{2}^{(n)})\,\mathfrak{w}_{2}^{(n)}\big),\,
\delta\big((\rK-\Gamma_{1}^{(n)})\,\mathfrak{w}_{1}^{(n)}\big),\,
\delta\Gamma_{1}^{(n)},\,\delta\Gamma_{2}^{(n)},\, \dots \big).
\end{aligned}
\]
(The vector also includes differences of the weights themselves, required for technical estimates).

\begin{lemma}\label{lemma:it:B}
For \(\delta_{*}\) sufficiently small, the iteration scheme is a contraction:
\[
\|V^{(n)}\|_{\infty}\ \le\ \tfrac{1}{2}\,\|V^{(n-1)}\|_{\infty}.
\]\end{lemma}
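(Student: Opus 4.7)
The strategy is to exploit three mechanisms to absorb each component of the difference vector $V^{(n)}$: (i) the gain of $\delta_*^{1/3}$ from the maximum principle in Lemma~\ref{lemma:max:p:z} applied to the degenerate transport operators $L_\beta^{(n-1)}$; (ii) the gain of $\delta_*$ from ODE integration backward from $s=\ts$, where every difference has \emph{zero} terminal data by construction; and (iii) the corrector--weight decomposition that neutralises the singular $(s-\Upsilon_j)^{-2/3}$ blow-up in $\p_s\rW$ and $\p_s\rK$. I will treat the components in the order listed below; the main obstacle will be the last one (iv).

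\textbf{(i) Transport variables $\delta\Sigma^{(n)}, \delta W^{(n)}, \delta K^{(n)}$.} Differencing \eqref{it:Sigma:S}--\eqref{it:K:S}, each difference satisfies a linear equation of the form $L_\beta^{(n-1)} \delta f^{(n)} = F_1\,\delta f^{(n)} + F_2\cdot V^{(n-1)}$ with terminal data $\delta f^{(n)}(\cdot,\ts)=0$ and bounded $F_1,F_2$ by \eqref{a:priori}. The extra source terms $(L_\beta^{(n-1)}-L_\beta^{(n-2)}) f^{(n-1)}$ produced by lagging the coefficients are of size $|\delta J^{(n-1)}|+|\delta\Sigma^{(n-1)}|$ multiplied by the (bounded) $s$-- and $x$--derivatives of $f^{(n-1)}$, hence controlled by $\|V^{(n-1)}\|_\infty$. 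Lemma~\ref{lemma:max:p:z} then yields $\|\delta f^{(n)}\|_\infty \le C(\m)\,\delta_*^{1/3}\,\|V^{(n-1)}\|_\infty$.

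\textbf{(ii) Geometry $\delta\psi^{(n)}, \delta J^{(n)}$ and scalar ODEs $\delta Z^{(n)}, \delta(J\rZ)^{(n)}$.} Each quantity satisfies an $s$-ODE of the form $\p_s (\delta f^{(n)}) = \mathcal{N}\cdot V^{(n-1)}$ with zero terminal data (here the key simplification compared with the $\mathcal{D}$-iteration of Appendix~\ref{appendix:D} is that no lagged boundary condition at $s=t$ is present, because the data live at $s=\ts$). Direct integration gives $\|\delta f^{(n)}\|_\infty \le C(\m)\,\delta_*\,\|V^{(n-1)}\|_\infty$, a stronger gain than (i).

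\textbf{(iii) Weights $\mathfrak{w}_j^{(n)}$ and correctors $\Gamma_j^{(n)}$.} The differences $\delta\mathfrak{w}_j^{(n)}$ satisfy $L_{\beta_j}^{(n-1)}(\delta\mathfrak{w}_j^{(n)})$ equal to a commutator of size $(|\delta J^{(n-1)}|+|\delta\Sigma^{(n-1)}|)|\p\mathfrak{w}_j^{(n-1)}|$. Using the weight bound \eqref{bounds:weight:3}, $|\p_x\mathfrak{w}_j^{(n-1)}|\le \m\,\mathfrak{w}_j^{(n-1)\,-1/2}$, which is integrable in the characteristic direction with a $\delta_*^{1/3}$ gain by \eqref{bounds:weight:1}. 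A parallel argument, now with smooth terminal data, gives $\|\delta\Gamma_j^{(n)}\|_\infty \le C(\m)\,\delta_*^{1/3}\,\|V^{(n-1)}\|_\infty$ via Lemma~\ref{lemma:max:p:z} applied to the linearised Riccati equation for $\Gamma_2$.

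\textbf{(iv) Weighted singular DRVs $\delta\big((\rW-\Gamma_2^{(n)})\mathfrak{w}_2^{(n)}\big)$ and the analogous $\rK$ term.} This is the crux. Let $R^{(n)}:=\rW^{(n)}-\Gamma_2^{(n)}$ and $Y^{(n)}:=R^{(n)}\mathfrak{w}_2^{(n)}$. By the definition of $\Gamma_2^{(n)}$ as the smooth Riccati background, the singular inhomogeneity cancels in the equation for $R^{(n)}$, and the commutator identity $[L_{2\alpha},\mathfrak{w}_2]=0$ from \eqref{commutator:w} gives a closed linear transport equation for $Y^{(n)}$, schematically
\[
L_{2\alpha}^{(n-1)} Y^{(n)} = F_1^{(n-1)}\,Y^{(n)} + \mathfrak{w}_2^{(n)} \mathcal{S}^{(n-1)},
\]
with $F_1^{(n-1)}$ bounded by \eqref{a:priori}, and $\mathcal{S}^{(n-1)}$ the coupling to $\delta(J\rZ)^{(n-1)}, \delta\Gamma_2^{(n-1)}$, etc. The terminal condition at $s=\ts$ vanishes because $\Gamma_2$ is designed to match $\rW_0$ to leading order. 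The factor $\mathfrak{w}_2^{(n)}\sim(s-\Upsilon_1^{(n)})^{2/3}$ kills the apparent singularity, and Lemma~\ref{lemma:max:p:z} again yields $\delta_*^{1/3}$. An additional subtlety is that $Y^{(n)}-Y^{(n-1)}$ involves $\delta\mathfrak{w}_2^{(n)}$ and $\delta\Upsilon_1^{(n)}$; the trajectory difference is handled via $\|\delta\Upsilon_1^{(n)}\|_\infty \lesssim \|\delta J^{(n-1)}\|_\infty + \|\delta\Sigma^{(n-1)}\|_\infty$, which has already been bounded in (ii)--(iii). This step is the main obstacle because one must simultaneously (a) verify that the terminal error from the corrector truncation is of size $o(1)$ as $s\to\ts$ uniformly in $x$, and (b) ensure that the integrability $\int_s^{\ts}(s'-\Upsilon_j)^{-2/3}ds'\le \m\delta_*^{1/3}$ survives the perturbation $\Upsilon_j^{(n)}\to\Upsilon_j^{(n-1)}$.

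\textbf{Closing the contraction.} Summing all estimates, every component of $V^{(n)}$ is bounded by $C(\m)\,\delta_*^{1/3}\,\|V^{(n-1)}\|_\infty$. Choosing $\delta_*$ so that $C(\m)\,\delta_*^{1/3}\le \tfrac{1}{2}$ yields the claimed contraction. Existence and uniqueness of the fixed point, and thus of the solution to \eqref{fast:bw:B} satisfying the a priori bounds \eqref{bootstrap:tp:B}, follow by the Banach fixed-point theorem in the complete metric space defined by the weighted norm $\|\cdot\|_\infty$ on $V$.
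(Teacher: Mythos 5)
Your proposal follows the same overall strategy as the paper (maximum principle for degenerate transport, corrector--weight decomposition for the singular DRVs, and Banach fixed point in a weighted sup-norm), and your identification of step (iv) as the crux is correct. However, there is a genuine gap in step (ii), and it reflects a missed structural point that the paper makes central.

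In (ii) you write that the scalar ODEs for $\delta J^{(n)}$, $\delta (J\rZ)^{(n)}$, $\delta Z^{(n)}$ have the form $\p_s(\delta f^{(n)}) = \mathcal{N}\cdot V^{(n-1)}$ with $\mathcal{N}$ bounded, and you claim a gain of $\delta_*$ (``a stronger gain than (i)''). This is not correct. Differencing, for instance, \eqref{it:J:S}, the source contains $J^{(n-1)}\,\delta\rW^{(n-1)}$ and $J^{(n-1)}\Sigma^{(n-1)}\,\delta\rK^{(n-1)}$. The unweighted differences $\delta\rW^{(n-1)}$ and $\delta\rK^{(n-1)}$ are \emph{not} components of $V^{(n-1)}$; the vector $V$ deliberately stores only the \emph{weighted} quantities $\delta\big((\rW-\Gamma_2)\mathfrak{w}_2\big)$, $\delta\big((\rK-\Gamma_1)\mathfrak{w}_1\big)$, together with $\delta\Gamma_j$, precisely because $\delta\rW^{(n-1)}$ and $\delta\rK^{(n-1)}$ are in general not pointwise bounded by $\|V^{(n-1)}\|_\infty$ near $(0,\ts)$. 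To recover $\delta\rW^{(n-1)}$ from the stored components one must divide by $\mathfrak{w}_2\sim(s-\Upsilon_1)^{2/3}$, which is singular. What survives is only an $L^1_s$ bound,
\[
\int_{\ti}^{\ts}\big|\delta\rW^{(n-1)}(x,s)\big|\,ds + \int_{\ti}^{\ts}\big|\delta\rK^{(n-1)}(x,s)\big|\,ds \ \le\ C\,\delta_*^{\frac{1}{3}}\,\|V^{(n-1)}\|_\infty,
\]
obtained by integrating the singular weight via \eqref{bounds:weight}. This is the paper's Step 2, labeled the ``regularization effect,'' and it is the pivotal observation that lets the ODE estimates close. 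Once you integrate the ODEs against these $L^1_s$-controlled sources you recover $C\,\delta_*^{1/3}\|V^{(n-1)}\|_\infty$, not $C\,\delta_*\|V^{(n-1)}\|_\infty$, for the ODE components. Without this intermediate step your estimate in (ii) is unjustified, and as stated it asserts the wrong power of $\delta_*$. The same remark applies to the source terms of the transport equations \eqref{it:Sigma:S}--\eqref{it:W:S} in step (i), whose linearizations also produce $\delta\rW^{(n-1)}$, $\delta\rK^{(n-1)}$: there too you should pass through the $L^1_s$ estimate rather than a naive pointwise bound. Since the final contraction only needs the uniform $\delta_*^{1/3}$ factor, the conclusion still holds after you repair (ii); but the missing $L^1_s$ bridge between the weighted norm on $V$ and the raw gradient differences must be made explicit.
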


\begin{proof}[Proof of Lemma~\ref{lemma:it:B}]
We outline the main steps of the contraction argument, emphasizing how the smallness of the time interval $\delta_*$ is utilized. The contraction relies on the regularization effect of the backward evolution, which yields a factor of $\delta_*^{{\frac{1}{3}}}$ (Lemma \ref{lemma:max:p:z} and Lemma \ref{lemma:2weights}).

\emph{Step 1: Weighted Gradient Differences.}
We analyze the evolution of the weighted differences, e.g., $\delta\big((\rW-\Gamma_{2}^{(n)})\,\mathfrak{w}_{2}^{(n)}\big)$. This quantity satisfies a linearized transport equation derived from the Riccati structure of \eqref{it:rW:S}. Applying the maximum principle (Lemma~\ref{lemma:max:p:z}) yields:
\[
\Big\|\delta\big((\rW-\Gamma_{2}^{(n)})\,\mathfrak{w}_{2}^{(n)}\big)\Big\|_{\infty} + \Big\|\delta\big((\rK-\Gamma_{1}^{(n)})\,\mathfrak{w}_{1}^{(n)}\big)\Big\|_{\infty}
\ \le\ C\,\delta_{*}^{{\frac{1}{3}}}\,\|V^{(n-1)}\|_{\infty}.
\]
We also obtain similar bounds for the differences of the correctors and weights themselves.

\emph{Step 2: Integral Control of Gradients (Regularization Effect).}
The weighted bounds from Step 1 are crucial. We use the comparability $\mathfrak{w}_j(x,s) \sim (s-\Upsilon_j(x))^{{\frac{2}{3}}}$ (Lemma~\ref{lemma:2weights}) to control the time integral of the unweighted differences $\delta\rW^{(n)}$ and $\delta\rK^{(n)}$. Integrating the singular weights in time yields a gain of ${\frac{1}{3}}$ power:
\begin{equation}\label{bound:grad-int}
\int_{\ti}^{\ts}\big|\delta \rW^{(n)}(x,s)\big|\,ds
+\int_{\ti}^{\ts}\big|\delta \rK^{(n)}(x,s)\big|\,ds
\ \le\ C\,\delta_{*}^{{\frac{1}{3}}}\,\|V^{(n-1)}\|_{\infty}.
\end{equation}

\emph{Step 3: Control of ODE Variables and Geometry.}
We analyze the differences for the ODE variables ($\delta J^{(n)},\ \delta(J\rZ)^{(n)},\ \delta Z^{(n)}$). The source terms involve the gradient differences $\delta\rW, \delta\rK$. Using the $L^1_s$ control established in Step 2, we obtain:
\[
\|\delta J^{(n)}\|_{\infty}
+\|\delta(J\rZ)^{(n)}\|_{\infty}
+\|\delta Z^{(n)}\|_{\infty}
\ \le\ C\,\delta_{*}^{{\frac{1}{3}}}\,\,\|V^{(n-1)}\|_{\infty}.
\]
The estimate for $\delta\psi^{(n)}$ follows similarly.

\emph{Step 4: Control of Transported Variables.}
The differences $\delta\Sigma^{(n)},\ \delta W^{(n)},\ \delta K^{(n)}$ satisfy linear transport equations with zero terminal data. Applying the maximum principle (Lemma~\ref{lemma:max:p:z}) yields:
\[
\|\delta\Sigma^{(n)}\|_{\infty}
+\|\delta W^{(n)}\|_{\infty}
+\|\delta K^{(n)}\|_{\infty}
\ \le\ C\,\delta_{*}^{{\frac{1}{3}}}\,\|V^{(n-1)}\|_{\infty}.
\]

\emph{Step 5: Contraction.}
Combining the estimates from Steps 1-4, we find that every component of $V^{(n)}$ is bounded by $C \delta_*^{{\frac{1}{3}}} \|V^{(n-1)}\|_{\infty}$. By choosing \(\delta_{*}>0\) sufficiently small (depending only on the uniform bounds \(\m\)), we achieve the contraction:
\[
\|V^{(n)}\|_{\infty}\le \tfrac{1}{2}\,\|V^{(n-1)}\|_{\infty}.
\]
\end{proof}

\section{Uniqueness from a preshock}
\label{app:uniqueness}

In this section of the appendix,  we prove the uniqueness, on a short time interval after the preshock, of a \emph{regular shock solution emanating from a preshock} (in the sense of Definitions~\ref{def:regular:shock}–\ref{def:regular:shock:pre}). Throughout, we use the Riemann variables 
$(w,z)$ and the entropy $S$; the fast and slow characteristic speeds are $\lambda_1$ and $\lambda_3$, respectively, and $r=\s(t)$ denotes the shock.

\begin{lemma}[\textbf{Uniqueness from a preshock}]\label{lemma:uniqueness:preshock}
Let \((w_0,z_0,S_0)\) be a radially symmetric preshock data set at \(t=\ts\), such that
\((w_0,z_0,S_0)\in C^2(\R^+)\setminus\{r_*\}\) and the preshock asymptotics at \(r_*\) agree with \eqref{taylor:rs}.
There exists \(\tsh>\ts\) such that the following holds. Suppose \((w^{(i)},z^{(i)},S^{(i)},\mathcal{S}^{(i)})\), \(i=1,2\), are two regular shock solutions on \([\ts,\tsh]\) emanating from the same preshock data, with shock fronts \(r=\s^{(i)}(t)\), and that there is a constant \(\mathcal{C}\) so that:
\begin{enumerate}
\item[\emph{(a)}] \(\s^{(i)}\in C^2([\ts,\tsh])\).
\item[\emph{(b)}] For all \(t\in[\ts,\tsh]\) and \(r\neq \s^{(i)}(t)\),
\[
\|w^{(i)}(\cdot,t)\|_{C^1}+\|S^{(i)}(\cdot,t)\|_{C^1}\le \mathcal{C},\qquad
|\p_{rr} z^{(i)}(r,t)|\le \mathcal{C}\,(t-\ts)^{-2}.
\]
\item[\emph{(c)}] \emph{Closeness assumptions:}
\[
|\dot{\s}^{(1)}(t)-\dot{\s}^{(2)}(t)|\le \mathcal{C}(t-\ts),\qquad
\|\lambda_1^{(1)}(\cdot,t)-\lambda_1^{(2)}(\cdot,t)\|_{L^\infty}\le \mathcal{C}(t-\ts).
\]
\end{enumerate}
Then \((w^{(1)},z^{(1)},S^{(1)},\mathcal{S}^{(1)})\equiv (w^{(2)},z^{(2)},S^{(2)},\mathcal{S}^{(2)})\) on \([\ts,\tsh]\).\end{lemma}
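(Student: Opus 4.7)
The strategy is to mimic the Picard contraction from Appendix~\ref{appendix:D}, replacing the two successive iterates $(f^{(n)}, f^{(n-1)})$ by the two candidate solutions $(f^{(1)}, f^{(2)})$, and using the closeness hypothesis (c) to supply the smallness seed that the iteration scheme there extracted from $\dsh^{\varepsilon}$. The main reason to go through a Lagrangian frame rather than work directly in $(r,t)$ is that $\partial_r z^{(i)}$ is genuinely unbounded near $(r_*, \ts)$ (by (b), one only has $|\partial_{rr} z^{(i)}| \le \mathcal{C}(t-\ts)^{-2}$), whereas in the fast-acoustic characteristic frame of Section~\ref{sec:weak2pre} the pulled-back gradient $J\rZ$ is uniformly bounded.

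First, by finite propagation speed and the regularity (b) off the preshock, uniqueness outside a narrow spacetime neighborhood of $(r_*, \ts)$ is standard, so after shrinking $\tsh-\ts$ it suffices to treat such a neighborhood. I adopt the fast-acoustic flow of solution~$1$, $\psi^{(1)}(t,s)$ defined by $\partial_s\psi^{(1)} = \lambda_1^{(1)}\!\circ\!\psi^{(1)}$ with $\psi^{(1)}(t,t) = \s^{(1)}(t)$ on the exterior side, and the slow-acoustic flow $\eta^{(1)}$ on the interior side; both solutions are then pulled back in the capitalization of Section~\ref{sec:weak2pre} to the common triangle $\mathcal{T} = \{(t,s): \ts \le s \le t \le \tsh\}$, producing $W^{(i)}, Z^{(i)}, \mathsf{S}^{(i)}, J^{(i)}, \rW^{(i)}, \rZ^{(i)}, \rS^{(i)}$ for $i=1,2$. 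The shared preshock data forces the differences $\delta W := W^{(1)} - W^{(2)}$, $\delta Z$, $\delta \mathsf{S}$ to vanish on the corner edge $s=\ts$. On the shock edge $s=t$, integrating hypothesis (c) from the common point $\s^{(1)}(\ts)=\s^{(2)}(\ts)=r_*$ gives $|\s^{(1)}(t)-\s^{(2)}(t)| \le \mathcal{C}(t-\ts)^2$, and Lemma~\ref{lemma:rh:inversion} propagates this quadratic smallness to the traces of $(W, Z, \mathsf{S}, J, J\rZ)$.

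Each difference then satisfies a linear transport or characteristic ODE on $\mathcal{T}$ whose coefficients are furnished by solution~$1$ (which satisfies the bootstrap bounds of Lemma~\ref{lemma:D}) and whose source terms are bilinear in the full vector of differences. Applying the transport maximum principle of Lemma~\ref{lemma:max:pr:T} to $\delta W, \delta \mathsf{S}, \delta\rW, \delta\rS, \delta\Sigma$, and direct integration along characteristics to $\delta Z, \delta J, \delta(J\rZ)$, reproduces precisely the structure encountered in the convergence proof of Lemma~\ref{lemma:conv:D}. The hierarchical weighting $0<\mathsf{c}_1\ll\mathsf{c}_2\ll(\tsh-\ts)^{\varepsilon}\ll 1$ from Appendix~\ref{appendix:D} absorbs the shock-edge contributions from $\delta Z$ and $\delta(J\rZ)$, and choosing $\tsh-\ts$ small enough renders the resulting Gronwall constant strictly less than $1$. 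This forces the weighted difference vector to vanish identically, yielding $w^{(1)}\equiv w^{(2)}$, $z^{(1)}\equiv z^{(2)}$, $S^{(1)}\equiv S^{(2)}$ and $\s^{(1)}\equiv\s^{(2)}$ in the exterior; the interior region $\mathcal{L}$ is handled identically through the slow-acoustic flow $\eta^{(1)}$, and the quiescent core is trivially untouched.

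The main obstacle is the cross-term that arises when solution~$2$ is advected along solution~$1$'s flow map: the identity $\partial_s \psi^{(1)}=\lambda_1^{(1)}\!\circ\!\psi^{(1)}$ contributes an unwanted remainder $(\lambda_1^{(1)}-\lambda_1^{(2)})\!\circ\!\psi^{(1)}$ multiplied by $\partial_r z^{(2)}$ when one writes the $Z^{(2)}$ equation in solution~$1$'s frame. Hypothesis (c) bounds the first factor by $\mathcal{C}(t-\ts)$, while the second factor, after Lagrangian pullback, is controlled by $|J\rZ|/J$ with $J$ vanishing like $(t-\ts)^{1/2}$ at the shock edge; verifying that these two rates combine into an integrable source that closes the weighted Gronwall loop, rather than feeding back into a singular blow-up, is the technical heart of the argument. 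Once this cancellation is made quantitative (mirroring Step~3 of the proof of Lemma~\ref{lemma:conv:D} verbatim), the contraction closes and uniqueness follows.
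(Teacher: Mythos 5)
Your proposal pursues a genuinely different route from the paper: you propose to pull both candidate solutions back to a common triangle~$\mathcal{T}$ via solution~1's fast-acoustic flow $\psi^{(1)}$ (and $\eta^{(1)}$ on the interior) and then reuse the Picard--contraction machinery of Appendix~\ref{appendix:D}, whereas the paper works directly in the shock-attached physical frame $\mathsf{y}=r-\s^{(i)}(t)$ and closes a weighted Gr\"onwall inequality with a fractional time weight. Unfortunately, as written your argument has concrete gaps that a direct transcription of Appendix~\ref{appendix:D} does not fill.

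First, you invoke "solution~1 (which satisfies the bootstrap bounds of Lemma~\ref{lemma:D})", but the uniqueness lemma's hypotheses~(a)--(c) are much weaker than \eqref{a:priori:T}: they supply only $\|w^{(i)}\|_{C^1}+\|S^{(i)}\|_{C^1}\le\mathcal{C}$, the curvature bound $|\partial_{rr}z^{(i)}|\le\mathcal{C}(t-\ts)^{-2}$, and the closeness of $\dot\s$ and $\lambda_1$. Nothing among (a)--(c) gives a two-sided bound on the pulled-back Jacobian $J^{(1)}$, the sign and size of $J^{(1)}\rZ^{(1)}$, or $\|\p_x(J\rZ)\|_\infty$ -- yet every estimate in Lemmas~\ref{lemma:D} and \ref{lemma:conv:D} uses these. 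The candidate solutions in the uniqueness statement are arbitrary regular shock solutions satisfying (a)--(c), not the constructed solution of Section~4, so the bootstrap bounds cannot be quoted and would have to be re-derived from (a)--(c) alone.

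Second, the hierarchical weighting $0<\mathsf{c}_1\ll\mathsf{c}_2\ll(\tsh-\ts)^\varepsilon\ll 1$ that you import from Lemma~\ref{lemma:conv:D} references the exponent $\varepsilon$ from Proposition~\ref{prop:choice:g}, a free parameter of the \emph{construction}, not of this uniqueness lemma. The preshock in \eqref{taylor:rs} is the $C^{1/3}$ one (corresponding to $\dot\l-\lambda_1|_\l^+\sim(t-\ts)^{1/2}$), and the RH linearization -- cf.\ \eqref{eq:delta-sdot} in the paper's proof -- produces a $(t-\ts)^{-1/2}$ weight on $\|\delta\mathsf{w}\|_\infty$ multiplying $\delta\dot\s$. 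Your proposal never handles this weight. The paper's norm $N(t)$ carries a $(t-\ts)^{-3/4}$ prefactor on $\delta\mathsf{w}$ and $\p_\mathsf{y}\delta\mathsf{S}$ precisely so that the product of this $(t-\ts)^{-1/2}$ from the RH linearization and the $(t-\ts)^{-1}$ from the $z$-gradient (inherited from the curvature hypothesis~(b)) integrates to a small power of $(\tsh-\ts)$; no combination of constant weights $\mathsf{c}_1,\mathsf{c}_2$ absorbs a genuinely time-singular coefficient. Third, you explicitly defer the decisive computation ("verifying that these two rates combine into an integrable source\ldots\ is the technical heart of the argument"), which is exactly the step that distinguishes this uniqueness claim from the Goursat contraction; leaving it open leaves the proof incomplete. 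A direct argument in the shock-attached frame, as in the paper, avoids the frame-change cross-term entirely and closes with a single fractional-weight Gr\"onwall.
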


\begin{proof}
\emph{Step 1: Flows and moving frames.}
For \(i=1,2\) let \(\eta^{(i)}\) and \(\psi^{(i)}\) be the slow/fast characteristic flows,
\[
\p_t\eta^{(i)}(r,t)=\lambda_3^{(i)}(\eta^{(i)}(r,t),t),\ \ \eta^{(i)}(r,\ts)=r,
\qquad
\p_t\psi^{(i)}(r,t)=\lambda_1^{(i)}(\psi^{(i)}(r,t),t),\ \ \psi^{(i)}(r,\ts)=r.
\]
Fix a small \(\theta>0\) and set
\[
D^{(i)}=\{(r,t)\colon t\in[\ts,\tsh],\ \eta^{(i)}(r_*-\theta,t)\le r\le \psi^{(i)}(r_*+\theta,t)\}.
\]
Introduce the shock–attached spatial coordinate \(\mathsf{y}=r-\s^{(i)}(t)\) and the translated fields
\[
(\mathsf{w}^{(i)},\mathsf{z}^{(i)},\mathsf{S}^{(i)})(\mathsf{y},t)=(w^{(i)},z^{(i)},S^{(i)})(\mathsf{y}+\s^{(i)}(t),t).
\]
Define differences
\[
\delta\mathsf{w}=\mathsf{w}^{(2)}-\mathsf{w}^{(1)},\quad
\delta\mathsf{z}=\mathsf{z}^{(2)}-\mathsf{z}^{(1)},\quad
\delta\mathsf{S}=\mathsf{S}^{(2)}-\mathsf{S}^{(1)},\quad
\delta\dot{\s}=\dot{\s}^{(2)}-\dot{\s}^{(1)}.
\]

\emph{Step 2: Evolution of differences along characteristics.}
Write the \(w,z,S\)–equations in Riemann form (cf. \S\ref{sec:weak2pre}):
\[
\begin{aligned}
&(\p_t+\lambda_1\p_r)z=\tfrac{\alpha}{2\gamma}\sigma^2\,\p_r S-\tfrac{\alpha(d-1)}{4r}\,(w^2-z^2),\\
&(\p_t+\lambda_3\p_r)w=\tfrac{\alpha}{2\gamma}\sigma^2\,\p_r S+\tfrac{\alpha(d-1)}{4r}\,(w^2-z^2),\\
&(\p_t+\lambda_2\p_r)S=0,
\end{aligned}
\]
where \(\sigma=\tfrac{1}{2}(w-z)\) and \(\lambda_2=\tfrac{1}{2}\bigl((1-\alpha)z+(1+\alpha)w\bigr)\).
Along \(\psi^{(1)}\) we compute, using the above and the moving frame,
\[
\bigl(\p_t+\lambda_1^{(1)}\p_{\mathsf{y}}\bigr)\delta\mathsf{z}
=\mathcal{A}_1\,\delta\mathsf{z}+\mathcal{A}_2\,\delta\mathsf{w}+\mathcal{A}_3\,\p_{\mathsf{y}}\delta\mathsf{S}
+\mathcal{R}_1\,\delta\dot{\s}+\mathcal{E}_1,
\]
with coefficients \(\mathcal{A}_j,\mathcal{R}_1\) uniformly bounded by \(\mathcal{C}\) thanks to (b), and an error \(\mathcal{E}_1\) controlled by
\[
|\mathcal{E}_1|\ \le\ \mathcal{C}\,\bigl(|\lambda_1^{(2)}-\lambda_1^{(1)}|+|\s^{(2)}-\s^{(1)}|\bigr)\,\bigl(|\p_{\mathsf{y}}\mathsf{z}^{(2)}|+1\bigr).
\]
Integrating along \(\psi^{(1)}\) from \(\ts\) to \(t\), using \(\delta\mathsf{z}(\cdot,\ts)=0\), (c), and the curvature control \(|\p_{\mathsf{y}}\mathsf{z}^{(i)}|\le \mathcal{C}(t-\ts)^{-1}\) inherited from (b), yields
\begin{equation}\label{eq:delta-z}
\|\delta\mathsf{z}(\cdot,t)\|_{L^\infty}
\ \le\ C\!\int_{\ts}^{t}\!\Bigl(\|\delta\mathsf{z}\|_{L^\infty}
+\|\delta\mathsf{w}\|_{L^\infty}
+\|\p_{\mathsf{y}}\delta\mathsf{S}\|_{L^\infty}
+|\delta\dot{\s}|\Bigr)(s)\,ds
+C\,(t-\ts)^2.
\end{equation}
A symmetric argument along \(\eta^{(1)}\) gives
\begin{equation}\label{eq:delta-wS}
\begin{aligned}
\|\delta\mathsf{w}(\cdot,t)\|_{L^\infty}
&\ \le\ C\!\int_{\ts}^{t}\!\Bigl(\|\delta\mathsf{z}\|_{L^\infty}
+\|\delta\mathsf{w}\|_{L^\infty}
+\|\p_{\mathsf{y}}\delta\mathsf{S}\|_{L^\infty}
+|\delta\dot{\s}|\Bigr)(s)\,ds
+C\,(t-\ts)^2,\\
\|\p_{\mathsf{y}}\delta\mathsf{S}(\cdot,t)\|_{L^\infty}
&\ \le\ C\!\int_{\ts}^{t}\!\Bigl(\|\delta\mathsf{z}\|_{L^\infty}
+\|\delta\mathsf{w}\|_{L^\infty}
+|\delta\dot{\s}|\Bigr)(s)\,ds
+C\,(t-\ts)^2,
\end{aligned}
\end{equation}
since \(S\) is transported along \(\lambda_2\) and the difference of advections is again \(O(t-\ts)\) by (c).

\emph{Step 3: Control of the shock speed difference via Rankine–Hugoniot.}
Let \(\jump{\cdot}\) denote the jump across the shock.
Across \(r=\s^{(i)}(t)\) the planar Rankine–Hugoniot relations hold; in particular
\(
\dot{\s}^{(i)}\,\jump{\rho}^{(i)}=\jump{\rho u}^{(i)}
\)
and
\(
\dot{\s}^{(i)}\,\jump{\rho u}^{(i)}=\jump{\rho u^2+p}^{(i)}
\)
; see~\eqref{RH}.
Linearizing these two identities at the common preshock state \((w_0,z_0,S_0)\) (which satisfies \eqref{taylor:rs}) and using the identities
\(
u=\tfrac{1}{2}(w+z),\ 
\sigma=\tfrac{1}{2}(w-z),\ 
\lambda_1=\tfrac{1+\alpha}{2}\,z+\tfrac{1-\alpha}{2}\,w,
\)
together with the short‐time expansions of the jumps produced by the preshock cusp (cf. Proposition~\ref{prop:weak2pre}), one obtains (for \(t\in[\ts,\tsh]\) and \(\tsh-\ts\) small)
\begin{equation}\label{eq:delta-sdot}
|\delta\dot{\s}(t)|\ \le\ C\,(t-\ts)^{-1/2}\,\|\delta\mathsf{w}(\cdot,t)\|_{L^\infty}
+C\,\|\delta\mathsf{z}(\cdot,t)\|_{L^\infty}
+C\,(t-\ts)^2.
\end{equation}
Informally, \eqref{eq:delta-sdot} reflects that \(\jump{z}\sim (t-\ts)^{\frac{1}{2}}\) controls the leading shock correction and hence the \((t-\ts)^{-1/2}\) weight multiplies the subdominant difference.

\emph{Step 4: A weighted norm and Grönwall.}
Define, for \(t\in[\ts,\tsh]\),
\begin{align*}
N(t)
&:=\sup_{s\in[\ts,t]}\|\delta\mathsf{z}(\cdot,s)\|_{L^\infty}
\notag\\
&
+(t-\ts)^{-3/4}\!\left(\sup_{s\in[\ts,t]}\|\delta\mathsf{w}(\cdot,s)\|_{L^\infty}
+\sup_{s\in[\ts,t]}\|\p_{\mathsf{y}}\delta\mathsf{S}(\cdot,s)\|_{L^\infty}\right)
+\tfrac{100}{101}\sup_{s\in[\ts,t]}|\delta\dot{\s}(s)|.
\end{align*}
Combine \eqref{eq:delta-z}–\eqref{eq:delta-wS} with \eqref{eq:delta-sdot}; after multiplying the second line of \eqref{eq:delta-wS} by \((t-\ts)^{-3/4}\) and using \(\int_{\ts}^{t}(t-s)^{-1/2}\,ds\le C\,(t-\ts)^{\frac{1}{2}}\), we obtain
\[
N(t)\ \le\ C\!\int_{\ts}^{t}\!N(s)\,ds\ +\ C\,(t-\ts)^{1/4}.
\]
By shrinking \(\tsh-\ts\) so that \(C(\tsh-\ts)^{1/4}\le \tfrac{1}{100}\), Grönwall’s lemma yields \(N(t)\le \tfrac{1}{99}e^{C(\tsh-\ts)}\le \tfrac{1}{50}\) times \(N(t)\). Since \(N(\ts)=0\) (the two solutions emanate from the same preshock data), it follows that \(N(t)\equiv 0\) on \([\ts,\tsh]\). Hence \(\delta\mathsf{w}=\delta\mathsf{z}=\delta\mathsf{S}\equiv 0\) and \(\delta\dot{\s}\equiv 0\), which implies \(\s^{(1)}\equiv\s^{(2)}\) and thereby equality of the two solutions.
\end{proof}

\section*{Acknowledgements} The work of S.S. was in part supported by the Collaborative NSF grant DMS-
2307680. The work of V.V. was in part supported by the Collaborative NSF grant DMS-2307681 and a Simons
Investigator Award.

\addcontentsline{toc}{section}{References}

\end{document}